\newcommand{\Z}{{\mathbb Z}}
\newcommand{\Q}{{\mathbb Q}}
\newcommand{\N}{{\mathbb N}}
\newcommand{\f}{{\mathcal F}}
\newcommand{\Gal}{\operatorname{Gal}}
\newcommand{\Spec}{\operatorname{Spec}}
\newcommand{\Sp}{\operatorname{Sp}}
\newcommand{\Spa}{\operatorname{Spa}}
\newcommand{\Shv}{\operatorname{Shv}}
\newcommand{\Spf}{\operatorname{Spf}}
\newcommand{\rg}{\operatorname{R\Gamma}}
\newcommand{\colim}{\operatorname{colim}}
\newcommand{\an}{\operatorname{an}}
\newcommand{\solid}{\operatorname{solid}}
\newcommand{\Mod}{\operatorname{Mod}}
\newcommand{\eh}{\operatorname{\acute{e}h}}
\newcommand{\hk}{\operatorname{R\Gamma_{HK}}}
\newcommand{\dr}{\operatorname{R\Gamma_{dR}}}
\newcommand{\cris}{\operatorname{R\Gamma_{cr}}}
\newcommand{\et}{\operatorname{\acute{e}t}}
\newcommand{\ket}{\operatorname{k\acute{e}t}}
\newcommand{\proet}{\operatorname{pro\acute{e}t}}
\newcommand{\hkc}{\operatorname{R\Gamma_{HK,c}}}
\newcommand{\drc}{\operatorname{R\Gamma_{dR,c}}}
\newcommand{\etc}{\operatorname{R\Gamma_{\acute{e}t,c}}}
\newcommand{\syn}{\operatorname{R\Gamma_{syn}}}
\newcommand{\fil}{\operatorname{Fil}}
\numberwithin{equation}{section}
\newtheorem{theorem}{Theorem}
\numberwithin{theorem}{section}
\newtheorem{thm}[theorem]{Theorem}
\newtheorem{lem}[theorem]{Lemma}
\newtheorem{prop}[theorem]{Proposition}
\newtheorem{conj}[theorem]{Conjecture}
\theoremstyle{definition}
\newtheorem{defn}[theorem]{Definition}
\newtheorem{rem}[theorem]{Remark}
\newtheorem{ex}[theorem]{Example}
	\def\MR#1{}
\numberwithin{equation}{section}
\begin{document}
	
	\title{Hyodo-Kato cohomology in rigid geometry: some foundational results}
	
	\author{Xinyu Shao}
	\address{Institut de Mathématiques de Jussieu-Paris Rive Gauche, Sorbonne Universit\'e, 4 place Jussieu, 75005 Paris, France}
	\email{xshao@imj-prg.fr}
	
	\maketitle

	\begin{abstract}
		By exploring the geometric properties of Hyodo-Kato cohomology in rigid geometry, we establish several foundational results, including the semistable conjecture for \'etale cohomology of almost proper rigid analytic varieties, and GAGA (comparison between algebraic and analytic) for Hyodo-Kato cohomology. A central component of our approach is the Gysin sequence for Hyodo-Kato cohomology, which we construct using the open-closed exact sequence for compactly supported Hyodo-Kato cohomology and Poincaré duality.
	\end{abstract}
	
	\tableofcontents

	\section{Introduction}
	
	This article is devoted to prove geometric properties for Hyodo-Kato cohomology in rigid geometry. We build geometric properties for Hyodo-Kato cohomology of dagger varieties, including the Mayer-Vietoris property, Poincar\'e duality and the Gysin isomorphism. Then by using geometric properties of Hyodo-Kato cohomology, we establish a comparison between algebraic and analytic Hyodo-Kato cohomology (GAGA) and the semistable conjecture for \'etale cohomology of almost proper rigid analytic varieties.
	
	Let $\mathscr{O}_K$ be a complete discrete valuation ting with fraction field $K$ of characteristic 0 and with perfect residue field $k$ of characteristic $p$. Let $W(k)$ be the ring of Witt vectors of $k$ with fraction field $F$ (therefore $W({k})=\mathscr{O}_F$). Let $\bar{K}$ be an algebraic closure of $K$ and $C$ be its $p$-adic completion, and let $\mathscr{O}_{\bar{K}}$ denote the integer closure of $\mathscr{O}_K$ in $\bar{K}$. Let $W(\bar{k})$ be the ring of Witt vectors of $k$ with fraction field $\breve{F}$ (therefore$\breve{F}$ is the $p$-adic completion of $F^{\operatorname{nr}}$, the maximal unramified extension of $K$ in $\bar{K}$ and $W(\bar{k})=\mathscr{O}_{\breve{F}}$) and let $\phi$ be the absolute Frobnius on $W(\bar{k})$. Set $\mathscr{G}=\operatorname{Gal}(\bar{K}/K)$.
	
	We will denote by $\mathcal{O}_K,\mathcal{O}_K^{\times},\mathcal{O}_K^0$,  depending on the context, the scheme $\Spec(\mathcal{O}_K)$ or the formal scheme $\Spf(\mathcal{O}_K)$ with the trivial log structure, the canonical (i.e., associated to the closed point) log structure, and the log structure induced by $\N \to \mathcal{O}_K, 1 \to 0.$
	
	\subsection{Semistable conjecture for open varieties}
	
	The Hyodo-Kato cohomology for algebraic varieties first appeared in Fontaine-Jannsen conjecture (\cite{jannsen1989cohomology} and \cite{fontaine1994semistable}), also known as the semistable conjecture. This conjecture suggests the existence of a ``new crystalline cohomology group" $H^i_{\rm HK}$, which should compare with \'etale cohomology for $X/\mathcal O_K$ proper scheme with semistable reductions. Moreover, this group should serve as a deformation of de Rham cohomology group, i.e. there should exist a Hyodo-Kato morphism $\iota_{\operatorname{HK}}:H^i_{\rm HK}(X_K)\to H^i_{\operatorname{dR}}(X_K)$ such that the base change $$\iota_{\operatorname{HK}}\otimes_FK:H^i_{\rm HK}(X_K)\otimes_FK\xrightarrow{\simeq} H^i_{\operatorname{dR}}(X_K)$$is an isomorphism. Moreover, the group $H^i_{\rm HK}(X_K)$ is endowed with a Frobenius $\phi$ and a monodromy operator $N$. The conjecture can be formulated as follows.
	
	\begin{conj}
		Let $X$ be a proper and smooth algebraic variety over $K$ admitting
		a semistable model over $\mathscr{O}_K$. Let $i\geq 0$. We have a functorial $\mathscr{G}_K$-equivariant ${ B}_{{\rm st}}$-linear isomorphism commuting with $\varphi$ and $N$
		$$H^i_{\operatorname{\acute{e}t}}(X_C,\Q_p)\otimes_{\Q_p}{{ B}_{{\rm st}}}\simeq H^i_{\operatorname{HK}}(X)\otimes_F{{B}_{{\rm st}}},$$
		compatible with the de Rham period morphism, via the natural injection
		${{ B}_{{\rm st}}}{\subset} {{ B}_{{\rm dR}}}$, and the Hyodo-Kato morphism $\iota_{\operatorname{HK}}:H^i_{\rm HK}\to H^i_{\operatorname{dR}}$.
	\end{conj}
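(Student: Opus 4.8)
The plan is to deduce the $B_{\mathrm{st}}$-comparison not by constructing the period isomorphism directly, but from a $p$-adic syntomic description of étale cohomology, where the $(\varphi, N)$-structure on the Hyodo-Kato side is manifest. First I would reduce the algebraic statement to rigid analytic geometry: by the GAGA comparison for Hyodo-Kato cohomology established above, the groups $H^i_{\operatorname{HK}}(X)$ together with their Frobenius and monodromy operators depend only on the analytification $X^{\mathrm{an}}$, and the comparison between $H^i_{\et}(X_C, \Q_p)$ and the étale cohomology of $X^{\mathrm{an}}_C$ is classical. Hence it suffices to prove the statement for the proper smooth rigid analytic variety $X^{\mathrm{an}}$, which I present by a semistable formal model $\mathfrak{X}/\mathscr{O}_K$. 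This brings the full toolkit of the paper to bear and lets me work étale-locally on $\mathfrak{X}$ over a basis of affinoids with semistable coordinates.

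The central object is the syntomic cohomology $\syn(X^{\mathrm{an}}, r)$, which I realize as the homotopy fiber of a map built from the Hyodo-Kato and de Rham realizations; schematically it is the mapping fiber of
$$ \left(\hk(X^{\mathrm{an}})^{\varphi = p^r,\, N = 0}\right) \longrightarrow \dr(X^{\mathrm{an}})/\fil^r, $$
where the map is the composite of $\iota_{\operatorname{HK}}$ (base-changed to $K$) with the projection modulo the Hodge filtration, the monodromy $N$ being incorporated into the fiber. The key construction is then a Fontaine–Messing style period morphism $\rho_r \colon \syn(X^{\mathrm{an}}, r) \to \mathrm{R}\Gamma_{\proet}(X^{\mathrm{an}}_C, \Q_p(r))$, which I build by sheafifying, on small affinoids, the comparison of the local syntomic complex with the constant sheaf $\Q_p(r)$ over the period rings $A_{\mathrm{cris}}$ and $B_{\mathrm{st}}^+$.

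The heart of the argument, and the main obstacle, is to show that $\rho_r$ is an isomorphism in degrees $\le r$; passing over all twists this gives an isomorphism in every fixed degree $i$. I would prove this locally: on the basis of affinoids with good or semistable reduction, the claim reduces to Fontaine's fundamental exact sequence $0 \to \Q_p(r) \to B_{\mathrm{cris}}^{\varphi = p^r} \to B_{\mathrm{dR}}/\fil^r \to 0$, together with the identification $B_{\mathrm{st}}^{N=0} = B_{\mathrm{cris}}$ to accommodate the monodromy. The failure of surjectivity off the diagonal is controlled by a Galois-cohomological (Bloch–Kato exponential) estimate, and properness of $X^{\mathrm{an}}$ is what guarantees the finiteness and Hodge-theoretic degeneration needed to glue the local isomorphisms to a global one after the single twist required in degree $i$. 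This is precisely the step where the geometric inputs of the paper, Poincaré duality and the Gysin sequence, enter, and where the extension to the almost proper case would proceed through the open-closed triangle for compactly supported Hyodo-Kato cohomology.

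Finally I would globalize to the stated $B_{\mathrm{st}}$-linear comparison. Taking cohomology of $\rho_r$ and running over all $r$ exhibits $H^i_{\et}(X_C, \Q_p)$ as the Frobenius- and monodromy-compatible, filtration-respecting part of $H^i_{\operatorname{HK}}(X) \otimes_F B_{\mathrm{st}}$ cut out by the fundamental sequence; tensoring up and inverting the period element $t$ then yields the $\mathscr{G}_K$-equivariant, $\varphi$- and $N$-compatible isomorphism $ H^i_{\et}(X_C, \Q_p) \otimes_{\Q_p} B_{\mathrm{st}} \simeq H^i_{\operatorname{HK}}(X) \otimes_F B_{\mathrm{st}}$. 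Compatibility with the de Rham period morphism under $B_{\mathrm{st}} \subset B_{\mathrm{dR}}$ and with $\iota_{\operatorname{HK}}$ is automatic, since both are built into the definitions of $\syn$ and of $\rho_r$; the only remaining point is independence of the chosen semistable model, which follows from the functoriality of Hyodo-Kato cohomology established earlier.
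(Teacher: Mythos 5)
You should first be aware that the paper does not prove this statement at all: it is displayed as a \emph{conjecture}, purely as background (the classical Fontaine--Jannsen semistable conjecture), and the paper attributes its resolution to Beilinson \cite{beilinson2013crys} in the algebraic case and to Colmez--Nizio\l{} \cite{CN5} for proper smooth rigid analytic varieties. The paper's own new result, Theorem \ref{ss}, is the extension to \emph{almost proper} rigid varieties, and its proof explicitly takes the proper case from \cite{CN5} as a black box. Your proposal must therefore stand on its own as a self-contained proof, and it does not: the step you yourself call the heart of the argument is missing.

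Your reduction via Hyodo--Kato GAGA (Theorem \ref{hkgaga}) plus classical \'etale GAGA to the proper smooth rigid variety $X^{\an}$ is legitimate and non-circular, since the paper's GAGA proof uses only local comparisons, the open-closed sequence and Poincar\'e duality. The gaps come after that. First, your claim that on affinoids with semistable reduction the comparison ``reduces to Fontaine's fundamental exact sequence'' badly understates the difficulty: for rigid affinoids the pro-\'etale cohomology is enormous (the paper's own example of $\mathbb{A}^{1,\an}$ makes this point), and the statement $\tau^{\leq r}\syn(\,\cdot\,,\Q_p(r))\simeq\tau^{\leq r}\rg_{\proet}(\,\cdot\,,\Q_p(r))$ is the main theorem of \cite{CN4.3}, not a corollary of the fundamental sequence. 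Second, and decisively: even granting that theorem, the fiber sequence
$$\syn(X^{\an}_C,\Q_p(r))=[[\hk(X^{\an}_C)\otimes_{F^{\mathrm{nr}}}B_{\mathrm{st}}]^{N=0,\varphi=p^r}\to \rg_{\mathrm{dR}}(X^{\an}_C/B^+_{\mathrm{dR}})/F^r]$$
only exhibits $H^i_{\et}(X_C,\Q_p(r))$ as a subspace of $(H^i_{\operatorname{HK}}\otimes B^+_{\mathrm{st}})^{N=0,\varphi=p^r}$ with prescribed cokernel; deducing from this a $B_{\mathrm{st}}$-linear isomorphism $H^i_{\et}\otimes_{\Q_p}B_{\mathrm{st}}\simeq H^i_{\operatorname{HK}}\otimes_F B_{\mathrm{st}}$ is not ``tensoring up and inverting $t$.'' It is equivalent to the weak admissibility of the filtered $(\varphi,N)$-module $(H^i_{\operatorname{HK}},H^i_{\operatorname{dR}})$, i.e.\ to a dimension/slope count that your sketch never performs; this is precisely what \cite{CN5} proves using Banach--Colmez spaces, and precisely why the paper, in steps (3)--(4) of the proof of Theorem \ref{ss}, can only verify weak admissibility of its pair $(D,D^+_{\operatorname{dR}})$ \emph{after} quoting \cite{CN5} to know that $\alpha_{\operatorname{st}}(X)$ is an isomorphism. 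Your appeal to Poincar\'e duality and the Gysin sequence at this point has no force for a proper $X$ (there is nothing to excise; in the paper these tools serve only to pass from proper to almost proper), and your closing claim that compatibility with the de Rham period morphism is ``automatic'' contradicts the paper's own step (5), where establishing exactly this compatibility requires the nontrivial comparison theorem of Gilles \cite{gilles2023period}.
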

	
	The Hyodo-Kato cohomology was defined in \cite{hyodo1991derhamwitt} for varieties with semistable reductions, by introducing a modified de Rham Witt complex. Later in \cite{hyodokato1994}, Osamu Hyodo and Kazuya Kato introduced log crystalline cohomology, and used it to give another construction for varieties with log-smooth models. The most remarkable results are in \cite{beilinson2013crys}, where Beilinson constructed the Hyodo-Kato cohomology for algebraic varieties by h-descent \cite{voevodsky1996h}, and proved the semistable conjecture without any smooth, proper or reduction hypothesis.
	
	For proper smooth rigid analytic varieties, the above conjecture (without semistable reduction hypothesis) also holds by \cite{CN5}. The proof relies on the geometrization of the syntomic cohomology and the theory of Banach-Colmez spaces.
	
	Like the algebraic case, it is natural to think what will happen for non proper or non smooth varieties. The situation is more complicated, especially if one wants to recover the \'etale cohomology. For pro-\'etale cohomology, various results have been obtained in \cite{CDNstein}, where Pierre Colmez, Gabriel Dospinescu, and Wiesława Nizioł calculated the $p$-adic \'{e}tale cohomology and pro-\'{e}tale cohomology for the $p$-adic Drinfeld half plane. In the pro-\'{e}tale case, they proved, in general, that (combined with the result in \cite{CN4.3}) for $r \geq 0$ and $X$ a smooth Stein space over $C$, we have the following commutative Galois equivariant diagram, which can be regarded as pro-\'etale version of semistable conjecture:
	$$\begin{tikzcd}
		0 \arrow[r] & \Omega^{r-1}(X)/\ker d \arrow[r] \arrow[d, equal] & H_{\mathrm{pro\acute{e}t}}^r\left(X, \mathbb{Q}_p(r)\right) \arrow[r] \arrow[d] & \left(H_{\mathrm{HK},\breve{F}}^r(X) \widehat{\otimes}_{\breve{F}} {{B}_{{\rm st}}}\right)^{N=0, \varphi=p^r} \arrow[r] \arrow[d] & 0  \\
		0 \arrow[r] & \Omega^{r-1}(X)/\ker d \arrow[r] & \Omega^{r}(X)^{d=0} \arrow[r] & H^r_{\operatorname{dR}}(X) \arrow[r] & 0.
	\end{tikzcd}$$
	In \cite{CN5}, there is also a semistable conjecture which generalizes the above result for arbitrary quasi-compact dagger varieties. However, this conjecture remains open.
	
	In this article, we consider the \'etale version of semistable conjecture for open varieties. We show the semistable conjecture is true for almost proper rigid analytic varieties.
	
	\begin{theorem} (Theorem \ref{ss})
		Suppose $X$ is a proper smooth rigid analytic variety over $C$, $Z \subset X$ is a strictly normal crossing divisor, and $U=X-Z$.
		
		(1) We have a $B_{\mathrm{st}}$-linear functorial isomorphism commuting with $\varphi$ and $N$
		$$\alpha^i_{\operatorname{st}}(U):H^i_{\operatorname{\acute{e}t}}(U,\Q_p)\otimes_{\Q_p}{B_{{\rm st}}}\simeq H^i_{\operatorname{HK}}(U)\otimes_{F^{\operatorname{nr}}}{B_{{\rm st}}},$$ that induces a $B_{\mathrm{dR}}$-linear filtered isomorphism $$H^i_{\et}(U,\Q_p) \otimes_{\Q_p}B_{\operatorname{dR}} \simeq H^i_{B_{\operatorname{dR}}^+}(X) \otimes_{B_{\operatorname{dR}}^+}B_{\operatorname{dR}}.$$ Here, $H^i_{B_{\operatorname{dR}}^+}(X)$ is the logarithmic $B_{\operatorname{dR}}^+$-cohomology introduced in \cite{xsperiod}, where the log structure of $X$ is induced by $Z$, and the filtration on $H^i_{B_{\operatorname{dR}}^+}(X)$ is defined by $$\fil^{\star}H^i_{B_{\operatorname{dR}}^+}(X):=\operatorname{Im}(H^i(\fil^{\star}\rg_{B_{\operatorname{dR}}^+}(X))\to H^i_{B_{\operatorname{dR}}^+}(X)).$$
		
		(2) Let $i \leq r$. Then we have an exact sequence
		\begin{equation} \label{1.1}
			0 \xrightarrow{}H^i_{\et}(U,\Q_p(r))\xrightarrow{}(H_{\mathrm{HK}}^i(U) {\otimes}_{F^{\operatorname{nr}}} {B^+_{{\rm st}}})^{N=0, \varphi=p^r}\xrightarrow{}H^i_{B_{\operatorname{dR}}^+}(X)/F^r\xrightarrow{}0.
		\end{equation}
		
		Moreover, when $X$ descends to a rigid analytic variety over $K$, statements in (1) and (2) are Galois equivariant.	
	\end{theorem}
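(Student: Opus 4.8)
The plan is to deduce both statements from the proper smooth case by dévissage along the strata of the normal crossing divisor $Z$, using the compatibility of the period isomorphisms with the Gysin sequences in all the relevant cohomology theories. The proper smooth case of the semistable conjecture for rigid analytic varieties is known by \cite{CN5}, so the input isomorphism $\alpha^i_{\operatorname{st}}(W)$ exists and is compatible with $\varphi$, $N$ and the filtration for every proper smooth $W$ over $C$. I would argue by induction on $\dim X$ together with the number $n$ of irreducible components $Z_1,\dots,Z_n$ of $Z$, the case $n=0$ being exactly \cite{CN5}. For the inductive step, set $Z'=Z_1\cup\cdots\cup Z_{n-1}$, $U'=X\setminus Z'$ and note that $Y:=Z_n\cap U'$ is a smooth closed rigid analytic subvariety of codimension $1$ in $U'$ which is itself the complement of a strictly normal crossing divisor (namely $Z_n\cap Z'$) inside the proper smooth variety $Z_n$. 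Thus $U'$ has strictly fewer boundary components and $Y$ has strictly smaller dimension, so both satisfy the conclusions of the theorem by the inductive hypothesis.

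The heart of the argument is then the compatibility of the Gysin sequence for the smooth pair $(U',Y)$ across the three theories. On the Hyodo-Kato side I would invoke the Gysin sequence for Hyodo-Kato cohomology established above (built from the open-closed sequence for compactly supported cohomology together with Poincaré duality), which fits $H^i_{\operatorname{HK}}(U)$, $H^i_{\operatorname{HK}}(U')$ and $H^{i-1}_{\operatorname{HK}}(Y)(-1)$ into a long exact sequence compatible with $\varphi$ and $N$; on the étale side I would use the usual localization sequence with purity for the smooth closed immersion $Y\hookrightarrow U'$; and on the $B_{\mathrm{dR}}^+$ side I would use the residue/Gysin sequence for the logarithmic $B_{\mathrm{dR}}^+$-cohomology of \cite{xsperiod}, in which the Tate twist corresponds to the appropriate shift of the filtration $\fil^{\star}$. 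After tensoring the first two sequences with $B_{\mathrm{st}}$ and the third with $B_{\mathrm{dR}}$, the period isomorphisms for $U'$ and $Y$ supplied by the inductive hypothesis give a map of long exact sequences, and the five lemma (in its filtered refinement, using strictness of the boundary maps) yields $\alpha^i_{\operatorname{st}}(U)$ together with the filtered $B_{\mathrm{dR}}$-linear de Rham comparison, completing part (1). I expect the main obstacle to be precisely this compatibility: one must check that the period isomorphism intertwines the Gysin pushforward maps of the three theories, i.e. that the relevant cube commutes, which requires the functoriality of the comparison of \cite{CN5} with respect to proper pushforward along $Y\hookrightarrow U'$ and a careful matching of the boundary map produced by Poincaré duality on the Hyodo-Kato side with the topological boundary map on the étale side.

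Granting part (1), I would obtain part (2) by realizing $H^i_{\et}(U,\Q_p(r))$ as syntomic cohomology and invoking the fundamental exact sequence of $p$-adic Hodge theory. Applying the functor $(-)^{N=0,\varphi=p^r}$ to the suitably twisted $B_{\mathrm{st}}$-comparison of part (1) and feeding in the short exact sequence of period rings
\begin{equation*}
0\to \Q_p(r)\to (B^+_{\mathrm{st}})^{N=0,\varphi=p^r}\to B_{\mathrm{dR}}^+/F^r\to 0,
\end{equation*}
one obtains the long exact sequence of the homotopy fiber computing syntomic cohomology, whose middle terms are $(H^i_{\operatorname{HK}}(U)\otimes_{F^{\operatorname{nr}}}B^+_{\mathrm{st}})^{N=0,\varphi=p^r}$ and a filtration quotient of the $B_{\mathrm{dR}}^+$-cohomology; the de Rham comparison and the Hyodo-Kato morphism of part (1) then identify this quotient with $H^i_{B_{\mathrm{dR}}^+}(X)/F^r$. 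The hypothesis $i\le r$ is what forces the connecting homomorphism into the next Frobenius eigenspace to vanish, so that this long exact sequence collapses to the short exact sequence \eqref{1.1}; this is the exact analogue, in the étale setting, of the pro-étale fundamental diagram recalled in the introduction.

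Finally, for the Galois equivariance when $X$ descends to a rigid analytic variety over $K$, I would note that every construction used — the Gysin sequences, the comparison isomorphisms of \cite{CN5} and \cite{xsperiod}, and the fundamental exact sequence — is functorial for the descent datum and hence carries a compatible $\mathscr{G}$-action, so the isomorphisms of (1) and the exact sequence of (2) are automatically $\mathscr{G}$-equivariant. The one genuinely new technical point throughout remains the commutativity of the Gysin diagrams relating the period isomorphism to Poincaré duality, which is where I would concentrate the detailed work.
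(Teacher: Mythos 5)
Your overall dévissage skeleton (reduce to the proper case of \cite{CN5} by peeling off components of $Z$ and comparing Gysin sequences) is the same as the paper's, but as written your proposal has a genuine gap at its foundation: you never actually construct $\alpha_{\operatorname{st}}(U)$. Producing it as a five-lemma ``fill-in'' from the maps on $U'$ and $Y$ does not work: a fill-in of a map of distinguished triangles is neither unique nor functorial, so it cannot deliver the \emph{functorial} isomorphism the theorem asserts, nor its compatibility with $\varphi$, $N$, the filtration, or the Galois action — indeed, for Galois equivariance you would need the fill-in to be canonical, which it is not. The paper's key idea, which is missing from your proposal, is a \emph{direct} construction: compose $\rg_{\et}(U,\Q_p(r))\to\tau^{\leq 2d}\rg_{\proet}(U,\Q_p(r))\simeq\tau^{\leq 2d}\rg_{\operatorname{syn}}(U,r)\to[\hk(U)\otimes_{F^{\operatorname{nr}}}B_{\operatorname{st}}]^{N=0,\varphi=p^r}$ using the Colmez--Nizio{\l} syntomic--pro-étale comparison (note that for open $U$ étale and pro-étale cohomology differ drastically; one only has a map from the former to the latter). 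This gives a map natural in $U$ from the start, and the Gysin dévissage is then used only to prove it is an isomorphism — the logical order is opposite to yours.

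The second gap is the one you yourself flag and defer: matching the purity/localization boundary map on the étale side with the Poincaré-duality-constructed Gysin map on the Hyodo--Kato side. The paper avoids this comparison altogether by defining \emph{both} Gysin maps through duality, $g_*=PD_*(X)^{-1}\circ R\operatorname{Hom}_{B_{\operatorname{st}}}(i_*[2d],B_{\operatorname{st}})\circ PD_*(Z)[-2]$, where the étale (pro-étale) trace and pairing are themselves induced from the Hyodo--Kato ones via the syntomic fiber sequence; compatibility of $\alpha_{\operatorname{st}}$ with the Gysin maps then reduces to compatibility with pairings and traces, which is essentially formal from the syntomic construction. Without this (or an equivalent device), your ``relevant cube commutes'' step is not a check but an open problem. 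Finally, in part (2) your mechanism is too coarse: tensoring the fundamental exact sequence with $H^i_{\et}(U,\Q_p)$ produces the middle term $H^i_{\et}(U,\Q_p)\otimes(B^+_{\operatorname{st}})^{N=0,\varphi=p^r}$, which is not the desired $(H^i_{\operatorname{HK}}(U)\otimes_{F^{\operatorname{nr}}}B^+_{\operatorname{st}})^{N=0,\varphi=p^r}$ — passing from $B_{\operatorname{st}}$ to $B^+_{\operatorname{st}}$ inside the eigenspace is exactly where one needs weak admissibility of $(H^i_{\operatorname{HK}}(U),H^i_{\et}(U,\Q_p)\otimes_{\Q_p}B^+_{\operatorname{dR}})$ and the bound that the $\varphi$-slopes lie in $[0,r]$ (itself proved via the Gysin sequence), so that \cite[Remark 5.16]{CN5} applies; ``the connecting homomorphism vanishes for $i\le r$'' is not by itself an argument.
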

	
	\begin{rem}
		(1) Unlike the pro-\'etale semistable conjecture, $H^i_{\operatorname{\acute{e}t}}(U,\Q_p)$ and $H^i_{\operatorname{HK}}(U)$ are finite dimensional vector spaces.
		
		(2) We have a natural isomorphism $H^i_{B_{\operatorname{dR}}^+}(X)\simeq H^i_{{\operatorname{dR}}}(U/B_{\operatorname{dR}}^+)$, where $H^i_{{\operatorname{dR}}}(U/B_{\operatorname{dR}}^+)$ is the $B_{\operatorname{dR}}^+$-cohomology introduced in \cite{CN4.3}. But the two cohomology groups have different filtrations.
		
		(3) when $X$ descends to a rigid analytic variety $X_0$ over $K$, we have a natural filtered isomorphism $$H^i_{B_{\operatorname{dR}}^+}(X)= H^i_{\operatorname{logdR}}(X_0)\otimes_KB_{\operatorname{dR}}^+.$$ 
		
		(4) The short exact sequence (\ref{1.1}) is used in \cite{ertl2024vpicardgroupsteinspaces} to study the image of Hodge-Tate log map.
		
		(5) In a follow-up article \cite{xslogsyn}, we will prove an \'etale version of semistable conjecture for quasi-compact log rigid analytic varieties by introducing logarithmic syntomic cohomology. In particular, after extending the cohomology groups to the category of Vector Space, we can give another (more conceptual) proof of the above theorem.
	\end{rem}
	
	We will discuss the proof of semistable conjecture for \'etale cohomology in Section \ref{cc}, aftering introducing the compactly supported Hyodo-Kato cohomology.

	\subsection{Algebraic and analytic Hyodo-Kato cohomology}
	
	Another foundational question on Hyodo-Kato cohomology is if we can compare the algebraic and analytic Hyodo-Kato cohomology, i.e. if we have GAGA for Hyodo-Kato cohomology. 
	
	The main difficulty there is the different constructions for algebraic and analytic Hyodo-Kato cohomology. Briefly, the approach of defining Hyodo-Kato cohomology for rigid analytic varieties in \cite{CN4.3} is similar to the algebraic one defined in \cite{beilinson2013crys}, by using the fact that for smooth rigid analytic varieties \'etale locally there exist semistable formal models \cite{temkin2017alteration}. However, unlike the algebraic case, in general there is no ``compatification" for rigid analytic varieties, and we need to define Hyodo-Kato cohomology (locally) without adding horizontal divisors. This is the main problem when comparing the analytic and algebraic Hyodo-Kato cohomology, even if we have the Hyodo-Kato isomorphism in the geometric case.
	
	Let us give an example to show the difference between the definition of algebraic and analytic Hyodo-Kato cohomology, that is, we want to compute the Hyodo-Kato cohomology for $\mathbb A^1_K$ and $\mathbb A^{1,\an}_K$. For the algebraic affine line, let $(\mathbb P^1_{\mathcal{O}_K},\mathcal M)$ be the log scheme $\mathbb P^1_{\mathcal{O}_K}$ with log structure induced by the closed immersion $\{\infty\} \cup \mathbb P^1_k \hookrightarrow \mathbb P^1_{\mathcal{O}_K}.$ Denote by $(\mathbb P^1_{k},\mathcal M')$ be the log scheme over $\Spec(k)$ induced by base change $\Spec(k)\hookrightarrow\Spec(\mathcal{O}_K)$. Then the algebraic Hyodo-Kato cohomology of $\mathbb A^1_K$ can be computed by $$\hk(\mathbb A^1_K)=\cris((\mathbb P^1_{k},\mathcal M')^0/\mathcal{O}_K^0)_{\Q_p},$$ where $\mathcal{O}_K^0$ is the log scheme $\mathcal{O}_K$ with log structure induced by $\N\to \mathcal{O}_K, 1\mapsto 0.$
	
	On the other hand, to compute $\hk(\mathbb A^{1,\an}_K)$, by definition one has to cover it by rigid generic fiber of semistable formal models, then one uses descent. For example, let $\mathfrak X$ be the standard semistable formal model of $\mathbb A^{1,\an}_K$, i.e., coming from the Bruhat-Tits building of rigid analytic projective line. Let $\mathfrak X_n \subset \mathfrak X$ be the induced semistable formal model of $X_n:=\Spf(K\langle \omega^{n}x_1,\cdots,\omega^{n}x_N\rangle)$, the closed disk of radius $\omega^{-n}$, where $\omega$ is an uniformizer of $K$. Then the Hyodo-Kato cohomology $\hk(\mathbb A^{1,\an}_K)$ is computed by $$\hk(\mathbb A^{1,\an}_K)=R\lim_n\cris(\mathcal X_{n,k}^0/\mathcal{O}_K^0)_{\Q_p}.$$ It seems we don't even have a direct way to construct a morphism $\hk(\mathbb A^1_K)\to \hk(\mathbb A^{1,\an}_K),$ although one can show that both sides should be vanishing for nonzero degrees in this case.
	
	In this article, by studying the geometric properties of algebraic and analytic Hyodo-Kato cohomology, we will show that we indeed have GAGA for Hyodo-Kato cohomology:
	
	\begin{thm}  (Theorem \ref{hkgaga})
		(1) Let $X$ be an algebraic variety over $K$, and $X^{\operatorname{an}}$ be its analytification. Then there exists a natural quasi-isomorphism in $D(\Mod^{\mathrm{solid}}_{F^{\mathrm nr}}):$
		$$\hk (X) \xrightarrow{\simeq} \hk(X^{\an}),$$
		which is compatible with Frobenius, monodromy, and the GAGA morphism for de Rham cohomology, i.e. we have the following commutative square:
		$$\begin{tikzcd}
			\hk(X) \arrow[r,"\iota_{\operatorname{HK}}"] \arrow[d,"\simeq"] & \dr(X) \arrow[d,"\simeq"] \\ 
			\hk(X^{\an}) \arrow[r,"\iota_{\operatorname{HK}}"] & \dr(X^{\an}),
		\end{tikzcd}$$
		where the horizon maps are Hyodo-Kato morphisms.		
		
		(2) Let $X$ be an algebraic variety over $\overline K$, and $X_C^{\operatorname{an}}$ be the analytification of $X_C:=X \times_{\overline K}C$. Then there exist a natural quasi-isomorphism in $D(\Mod^{\mathrm{solid}}_{F^{\operatorname{nr}}})$:
		$$\hk (X) \xrightarrow{\simeq} \hk(X_C^{\an}),$$
		which is compatible with Galois action, Frobenius, monodromy, and the GAGA morphism for de Rham cohomology.	
	\end{thm}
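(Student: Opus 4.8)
The plan is to construct the natural transformation $\hk(X)\to\hk(X^{\an})$ explicitly in a suitable local model and then to prove it is a quasi-isomorphism by dévissage to the proper smooth case, using the Gysin sequences established earlier. The obstruction flagged in the introduction is that the two sides are built by genuinely different recipes --- Beilinson's $h$-descent from $\mathcal{O}_K^0$-log crystalline cohomology on the algebraic side, versus an $R\lim$ over generic fibres of semistable formal models on the analytic side --- so there is no tautological map to start from. I would first use $h$-descent on the algebraic side together with the Mayer-Vietoris and descent properties of analytic Hyodo-Kato cohomology to reduce the \emph{construction} to a pair $(\overline X,D)$ with $\overline X$ proper log-smooth over $\mathcal{O}_K$ and $D$ the horizontal part of a normal crossing divisor, $X=\overline X_K\setminus D_K$. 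In this semistable local situation both theories are presented through the log crystalline cohomology $\cris(-/\mathcal{O}_K^0)_{\Q_p}$ of a special fibre, and the analytification of formal models induces a compatible map on special fibres; this produces the comparison map locally, and one glues along the descent data.

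To prove the map is a quasi-isomorphism I would induct on $\dim X$ and on the number of irreducible components of the boundary divisor. The essential input is that both the algebraic and the analytic Hyodo-Kato cohomology sit in \emph{compatible} Gysin (localization) triangles: cutting a smooth component $D_1$ out of the boundary relates $\hk(X)$ to the Hyodo-Kato cohomology of a larger open subset and to that of the lower-dimensional stratum $D_1$, twisted by a Tate twist and shifted by the monodromy, and the analogous triangle holds analytically by the Gysin sequence constructed in this paper from the open-closed sequence for compactly supported cohomology and Poincaré duality. Checking that the comparison map intertwines these two triangles reduces the general statement to the base case $D=\emptyset$, that is, $X$ proper smooth.

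For $X$ proper smooth there is genuinely no horizontal boundary to worry about, so after a further reduction by $h$-descent (respectively étale descent on the analytic side) to a variety admitting a proper semistable formal model $\mathfrak X$, both $\hk(X)$ and $\hk(X^{\an})$ compute $\cris(\mathfrak X_k^0/\mathcal{O}_K^0)_{\Q_p}$ for the common special fibre $\mathfrak X_k^0$; hence the map is an isomorphism, compatibly with $\varphi$ and $N$. Compatibility with the Hyodo-Kato morphism to $\dr$ then follows from classical GAGA for algebraic de Rham cohomology together with the fact that $\iota_{\operatorname{HK}}$ becomes an isomorphism after base change to $K$, so that the de Rham square determines, and is determined by, the Hyodo-Kato square.

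I expect the main obstacle to be making the dévissage genuinely compatible on both sides: one must verify that the comparison map respects the algebraic and analytic Gysin triangles \emph{together with} their Tate twists and monodromy operators, and that the local comparison maps are functorial enough to descend after $h$-descent. A second, more technical, point is the solid and topological bookkeeping, which matters especially for part (2): there one first descends $X$ to a finite extension $L/K$, applies part (1) to $X_L$, and then passes to the colimit over $L$ together with the relevant completion in order to land in $D(\Mod^{\solid}_{F^{\operatorname{nr}}})$, all while keeping track of the Galois action and of the change of coefficient field to $F^{\operatorname{nr}}$.
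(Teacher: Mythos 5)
Your dévissage skeleton (reduce to strict semistable pairs, induct on boundary components, base case a proper variety with a semistable formal model where both sides compute the same log-crystalline complex) matches the paper's strategy in outline, and your base case is exactly the paper's Lemma on $\overline{U}_K$ and $Z$. But the crucial local step in your proposal is wrong, and it is precisely the point the introduction flags as the whole difficulty. You claim that for a semistable pair $(\overline X, D)$ with $U=\overline X_K\setminus D_K$, ``both theories are presented through the log crystalline cohomology $\cris(-/\mathcal{O}_K^0)_{\Q_p}$ of a special fibre, and the analytification of formal models induces a compatible map on special fibres.'' This is true for the algebraic side (local-global compatibility for pairs), but false for the analytic side: $\hk(U^{\an})$ is \emph{not} computed by the log special fibre of $(\overline X,D)$. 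It is defined by \'eh-descent from semistable formal models \emph{without horizontal divisors}, i.e.\ by covering $U^{\an}$ by quasi-compact opens and taking limits; the analytic local-global compatibility only applies to the quasi-compact generic fibre of a semistable formal scheme, hence to $\overline X_K^{\an}$ and (after reduction to smooth components) to $Z^{\an}$, never to the open piece $U^{\an}$. This is exactly the $\mathbb{A}^1_K$ versus $\mathbb{A}^{1,\an}_K$ example in the introduction, where no direct map exists. Your construction therefore assumes, in the local case, essentially the statement to be proved.

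The missing idea, which is how the paper resolves this, is to route the construction through compactly supported cohomology and Poincar\'e duality: $\hkc(U)$ is by definition the fibre of $\hk(\overline U_K)\to\hk(Z)$, and $\hkc(U^{\an})$ is identified with the fibre of $\hk(\overline U_K^{\an})\to\hk(Z^{\an})$ by the open-closed exact sequence for proper $X$ (Proposition \ref{properopenclosed}, which rests on the Grosse-Kl\"onne description of de Rham cohomology); since GAGA is tautological for the proper pieces, one gets GAGA for $\hkc(U)$, and one then \emph{defines} $\hk(U_{\overline K})\to\hk(U_C^{\an})$ as the map induced by the algebraic and analytic Poincar\'e dualities (Theorems \ref{algpoincare} and \ref{hkrigidpoincare}), checking trace compatibility through the de Rham trace maps. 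Your proposed verification that the comparison map intertwines the two Gysin triangles cannot substitute for this, because the analytic Gysin triangle is itself constructed from duality plus the open-closed sequence, so your argument circles back to the same duality input. Two further gaps: (a) in the globalization you ``glue along the descent data,'' but h-descent is not known for analytic Hyodo-Kato cohomology along analytified algebraic h-covers; the paper instead shows the analytic cosimplicial comparison map is a quasi-isomorphism by tensoring with $C$ over $F^{\operatorname{nr}}$ and invoking de Rham GAGA. (b) Your order of deduction for part (2) --- prove (1) over $K$, then pass to a colimit over finite extensions --- does not work as stated, because the analytic Poincar\'e duality needed for the construction over $K$ is only available for proper varieties with semistable formal models over $\Spf(\mathcal{O}_K)$; the paper proves the geometric statement (2) first, where duality over $C$ holds in the required generality, and then obtains (1) by Galois descent together with finiteness of Hyodo-Kato cohomology.
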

	
	\begin{rem}
		This theorem is also used in \cite{ertl2024vpicardgroupsteinspaces} to study the image of Hodge-Tate log map.
	\end{rem}

	\subsection{Cohomology with compact support} 
	
	To construct the comparison map between algebraic and analytic Hyodo-Kato cohomology, firstly we note that, if $\mathcal X$ is a projective semistable scheme over $\mathcal O_K,$ then the local-global compatibility assures a natural equality $$\hk(\mathcal X_K)=\hk(\mathcal X^{\an}_K)\simeq\cris(\mathcal X_{k}^0/\mathcal{O}_K^0).$$ We would like to reduce the proof of GAGA to the projective case (with a semistable model). As the example described in last section, in general one needs to work on the a strictly semistable pair. Briefly, a semistable pair $(U,\overline U)$ over a field $K$ is an open embedding $j:U\hookrightarrow \overline U$ with dense image of a $K$-variety $U$ into a proper flat regular $\mathcal{O}_K$-scheme $\overline U$, such that $\overline U-U$  is a regular divisor with normal crossings on $\overline U$, the irreducible components of $\overline U-U$ is regular, and the closed fiber $\overline U_k$ is reduced. Let $D=\overline U_K-U.$ If we have the compatible Gysin sequences $$\hk(D)\{-1\}[-2]\simeq [\hk(\overline U_K) \to \hk(U)],$$$$\hk(D^{\an})\{-1\}[-2]\simeq [\hk(\overline U_K^{\an}) \to \hk(U^{\an})]$$ where $D$ is smooth, then since $\overline U_K$ and $D$ are proper, we should be able to have the GAGA comparison for $U$, and therefore the general case.
	
	The algebraic Gysin sequence is true, since in \cite{nizioldeglise2018syntomic} they construct the realization functor for (geometric) Hyodo-Kato cohomology. However, it is still unclear how to construct the Gysin morphism directly in rigid geometry. On the other hand, the Gysin morphism can be deduced from the Poincar\'e duality and open-closed exact sequence of compactly supported Hyodo-Kato cohomology. We will review the definition of compactly supported Hyodo-Kato cohomology, which is developed in \cite{AGNcompact}.
	
	Therefore, the proof of GAGA heavily relies on the following propositions.	
	
	\begin{prop}[open-closed exact sequence] (Proposition \ref{open-closed})
		If $X$ is a smooth dagger variety over $L=K$ or $C$, $Z \subset X$ is a smooth divisor, $U=X-Z.$ Then we have
		$$\hkc(U)\simeq  [\hk(X) \xrightarrow{} \hk(Z)].$$
	\end{prop}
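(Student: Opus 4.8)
The plan is to realize all three terms by the log-crystalline cohomology of a single semistable model and to produce the asserted fibre sequence directly, from the extension-by-zero short exact sequence on the special fibre, carrying the Frobenius and monodromy along automatically. Following \cite{AGNcompact} and \cite{CN4.3}, both $\hk$ and $\hkc$ are defined by descent from semistable (dagger) models, so it suffices to argue \'etale-locally on $X$. There I choose a semistable model $\mathfrak X/\mathcal O_L$ together with a relative smooth divisor $\mathfrak Z\subset\mathfrak X$ lifting the pair $(X,Z)$, which exists after alteration by \cite{temkin2017alteration}. On it $\hk(X)$ is computed by the log-crystalline cohomology $\cris((\mathfrak X_k,\M)^0/\mathcal O_K^0)_{\Q_p}$ of the special fibre with its canonical (vertical) log structure (over $W(\bar k)^0$ when $L=C$), $\hk(Z)$ by the same recipe for $(\mathfrak Z_k,\M|_{\mathfrak Z_k})$, and the closed immersion $i:\mathfrak Z_k\hookrightarrow\mathfrak X_k$ induces the restriction $\hk(X)\to\hk(Z)$ appearing on the right-hand side.

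The heart of the argument is to identify $\hkc(U)$ with the fibre of this restriction \emph{before} invoking any duality. Writing $j:\mathfrak X_k\setminus\mathfrak Z_k\hookrightarrow\mathfrak X_k$ for the open complement, extension by zero yields the short exact sequence of sheaves on $\mathfrak X_k$
$$0\to j_!\mathbf 1\to\mathbf 1_{\mathfrak X_k}\to i_*\mathbf 1_{\mathfrak Z_k}\to 0.$$
Passing to log-crystalline cohomology, and using that the middle and right terms compute $\hk(X)$ and $\hk(Z)$, gives the distinguished triangle $\hkc(U)\to\hk(X)\to\hk(Z)$, that is $\hkc(U)\simeq[\hk(X)\to\hk(Z)]=\fib(\hk(X)\to\hk(Z))$. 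The point to stress is that the two outer terms are the \emph{ordinary} Hyodo-Kato complexes, being the crystalline cohomologies of $\mathbf 1_{\mathfrak X_k}$ and of $i_*\mathbf 1_{\mathfrak Z_k}$, while only the left-hand term, cut out by $j_!$, is compactly supported; this is exactly what distinguishes the statement from the all-compactly-supported localization triangle $\hkc(U)\to\hkc(X)\to\hkc(Z)$.

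It then remains to carry the extra structure through and to descend. The crystalline Frobenius acts on every sheaf in the displayed sequence, and the monodromy $N$ is induced functorially by the position of the special fibre in the log structure, so the short exact sequence and hence the fibre sequence are $(\varphi,N)$-equivariant; when $X$ is defined over $K$ the same functoriality gives $\mathscr G$-equivariance, and applying the de Rham realization recovers Grosse-Kl\"onne's excision as a consistency check. Finally I descend from the local model to $X$: the outer terms satisfy h-descent by the local-global compatibility of $\hk$, and the left-hand term, being their fibre, inherits descent and is thereby pinned down independently of the chosen model.

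The main obstacle is precisely this last identification: matching the intrinsically defined $\hkc(U)$ of \cite{AGNcompact} with the $j_!$-cohomology above, compatibly with $\varphi$ and $N$. As stressed in the introduction, rigid geometry offers neither an ambient compactification nor a direct Gysin map, so $j_!$ must be understood through the horizontal log structure along $\mathfrak Z$ rather than through an honest excision of spaces; one has to verify that the overconvergent (dagger) structure is preserved and that the connecting map of the triangle is the one coming from the crystalline monodromy. Once this is settled, the remaining equivariance and descent assertions are formal.
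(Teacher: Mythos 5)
Your plan puts the entire weight of the proof on the one step you explicitly defer: identifying the intrinsically defined $\hkc(U)$ of \cite{AGNcompact} --- a colimit over quasi-compact opens, computed through overconvergent presentations $\{X_k\}$ --- with the log-crystalline cohomology of the extension-by-zero $j_!\mathbf{1}$ on the special fibre of a semistable model. That identification is not a leftover verification; it \emph{is} the proposition, and your ``main obstacle'' paragraph concedes exactly the missing idea. The paper's own proof never performs such a model-theoretic identification: it works entirely on the generic fibre, first proving the de Rham statement via Grosse-Kl\"onne's tubular description of overconvergent de Rham cohomology (Lemma \ref{problemlemma}, which uses Kiehl's tubular neighborhood theorem for smooth $Z$ and an explicit colimit manipulation with the neighborhoods $V_{k,h}$, together with an excision square), and then transferring to Hyodo--Kato over $C$ by the Hyodo--Kato isomorphism (writing $C\simeq F^{\operatorname{nr}}\oplus W$, so the Hyodo--Kato fibre sequence is a direct summand of the de Rham one), with Galois descent handling $L=K$. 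Tellingly, where the paper does compare $\hkc$ with a crystalline $j_!$-style theory --- Tsuji's compactly supported log-crystalline cohomology \cite{tsuji1999poincare}, whose ideal sheaf $\mathcal{K}_{X/S}=\mathcal{I}_{X/S}\mathcal{O}_{X/S}$ is precisely your extension-by-zero --- it does so only \emph{a posteriori}, via Poincar\'e duality on both sides (Theorems \ref{tsujialg} and \ref{tsujian}), not by a direct sheaf-level match. This is strong evidence that the step you postpone is of the same order of difficulty as the whole result.

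The globalization step also fails as stated. Compactly supported cohomology is covariant for open immersions and contravariant for proper maps; it localizes as a cosheaf for analytic covers (this is how the paper reduces to $X$ dagger affinoid), but it has no \'etale or h-descent, so you cannot ``argue \'etale-locally'' and descend: the alterations of \cite{temkin2017alteration} are proper, generically finite covers, for which $\hkc$ admits no descent without trace or transfer arguments, and arranging a semistable model carrying a smooth relative divisor $\mathfrak{Z}$ lifting $Z$ requires the pair version of those results, under which $Z$ itself gets modified. Saying the left-hand term ``being their fibre, inherits descent'' is circular: that pins down the fibre of $\hk(X)\to\hk(Z)$, not $\hkc(U)$; to conclude you would need the unproven local identification \emph{plus} a matching descent property of $\hkc(U)$ itself. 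Finally, note the precise shape of the statement: with ordinary $\hk(X)$, $\hk(Z)$ on the right it holds for $X$ proper (Proposition \ref{properopenclosed}, proved by a separate colimit argument with the neighborhoods $V(\rho)$), whereas for general quasi-compact smooth dagger $X$ the correct sequence has compact supports on all three terms (Proposition \ref{open-closed}) --- already for the closed dagger disk $\hkc$ and $\hk$ differ --- and your emphasis on the outer terms being the ordinary complexes does not track this distinction.
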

	
	\begin{theorem} (\cite[Theorem 5.30]{AGNcompact} or Theorem \ref{hkrigidpoincare})
		Let $Y$ be a partially proper smooth rigid analytic varieties or a quasi-compact smooth dagger variety over $C$ of dimension $d$. Then:
		
		(i) There is a natural trace map in $D(\Mod_{F^{\operatorname{nr}}}^{\operatorname{solid}}):$ $${\operatorname{tr_{HK}}}:\hkc(Y)[2d] \to F^{\operatorname{nr}}\{-d\},$$ compatible with the Hyodo-Kato morphism.
		
		(ii) The pairing $$\hk(Y) \otimes^{L_{\blacksquare}}\hkc(Y)[2d] \xrightarrow{} \hkc(Y)[2d]\to F^{\operatorname{nr}}\{-d\}$$ is a perfect duality in $D(\Mod_{F^{\operatorname{nr}}}^{\operatorname{solid}})$, i.e. we have the induced quasi-isomorphism in $D(\Mod_{F^{\operatorname{nr}}}^{\operatorname{solid}}):$ $$\hk(Y) \simeq R\operatorname{Hom}_{F^{\operatorname{nr}}}(\hkc(Y),F^{\operatorname{nr}}\{-d\}).$$
	\end{theorem}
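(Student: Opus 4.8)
The strategy is to construct the trace and the cup-product pairing so that both are, by construction, compatible with the Hyodo–Kato morphism $\iota_{\mathrm{HK}}$, and then to deduce perfectness from the known de Rham Poincaré duality by a faithfully flat descent carried out in the solid formalism. Throughout I would use that $\iota_{\mathrm{HK}}$ and its base change identify $\hk(Y)\otimes_{F^{\operatorname{nr}}}B$ with the $B$-linearized de Rham cohomology for a suitable period ring $B$ (e.g.\ $B_{\mathrm{st}}^+$) that is faithfully flat over $F^{\operatorname{nr}}$ in the solid sense, and likewise for the compactly supported versions.

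For part (i) I would first treat the quasi-compact case. By the open-closed exact sequence of Proposition \ref{open-closed}, together with Mayer–Vietoris and dévissage along a strictly semistable model, the construction of the top-degree trace reduces to the generic fibre of such a model, where the top cohomology of the compactly supported complex is generated by the orientation class of the special fibre. I would then define $\operatorname{tr}_{\mathrm{HK}}\colon \hkc(Y)[2d]\to F^{\operatorname{nr}}\{-d\}$ by requiring that, after applying $\iota_{\mathrm{HK}}$ and base changing, it recovers the de Rham trace map in top degree; independence of the chosen model and gluing then follow from the functoriality of $\hkc$ and the normalization in top degree. The Frobenius twist $\{-d\}$ is forced by the orientation class, and Frobenius/Galois equivariance is checked on that class. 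Alternatively, one could import the trace directly from Berthelot's Poincaré duality for the rigid cohomology of the special fibre.

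For part (ii), the pairing is the cup product exhibiting $\hkc(Y)$ as a module over the algebra object $\hk(Y)$, followed by $\operatorname{tr}_{\mathrm{HK}}$; compatibility with $\iota_{\mathrm{HK}}$ again reduces its construction to the de Rham cup product. To prove that the induced map
$$\hk(Y)\longrightarrow R\operatorname{Hom}_{F^{\operatorname{nr}}}(\hkc(Y),F^{\operatorname{nr}}\{-d\})$$
is a quasi-isomorphism, I would check it after the faithfully flat base change $\otimes_{F^{\operatorname{nr}}}B$. Using that $\hk(Y)$ and $\hkc(Y)$ are suitably finite, so that forming $R\operatorname{Hom}$ commutes with $\otimes B$, the base-changed map is identified, via the two comparison isomorphisms and the compatibility of pairings, with the $B$-linearized de Rham duality map $\dr(Y)\otimes B\to R\operatorname{Hom}_B(\drc(Y)\otimes B, B\{-d\})$. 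Since de Rham Poincaré duality is known for smooth partially proper rigid (resp.\ quasi-compact dagger) varieties, this is a quasi-isomorphism, and faithful flatness lets perfectness descend to $F^{\operatorname{nr}}$. For partially proper $Y$, writing $Y=\colim_n Y_n$ as an increasing union of quasi-compact opens gives $\hkc(Y)=\colim_n\hkc(Y_n)$, and $R\operatorname{Hom}(-,F^{\operatorname{nr}}\{-d\})$ converts this colimit into the limit computing $\hk(Y)$; here the solid formalism is essential, so the partially proper case follows from the quasi-compact one.

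The main obstacle is the perfectness in (ii), and within it two points require care. First, the commutation of $R\operatorname{Hom}$ with the base change $\otimes_{F^{\operatorname{nr}}}B$ and the descent of quasi-isomorphisms: this is exactly where one needs the right finiteness properties of $\hkc(Y)$ and the solid machinery, and where the interchange of the colimit defining $\hkc$ with the limit defining $\hk$ in the partially proper case must be justified. Second, one must verify that the Hyodo–Kato and de Rham traces genuinely correspond under $\iota_{\mathrm{HK}}$, a compatibility of fundamental classes that I would pin down on the strictly semistable local model, where both sides are explicit.
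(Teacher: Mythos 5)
Your proposal takes essentially the same route as the paper: the paper's proof of Theorem \ref{hkrigidpoincare} is a citation of \cite[Theorem 5.34]{AGNcompact} together with the observation that the duality is deduced from de Rham Poincar\'e duality (Theorem \ref{logpoincaredr}) via the Hyodo--Kato isomorphism, which is precisely your strategy of constructing the trace and pairing compatibly with $\iota_{\operatorname{HK}}$ and descending perfectness, including the reduction of the partially proper case to quasi-compact dagger pieces. The one correction to make is your choice of period ring: the faithfully flat base must be $C$ itself, via $\hk(Y)\otimes^{L_\blacksquare}_{F^{\operatorname{nr}}}C\simeq\dr(Y)$ (Proposition \ref{hkisomorphism}) --- tensoring with $B_{\mathrm{st}}^+$ does not identify Hyodo--Kato cohomology with any $B_{\mathrm{st}}^+$-linearized de Rham cohomology --- whereas with $B=C$ the identification of pairings, the finiteness needed to commute $R\operatorname{Hom}$ with base change (Proposition \ref{finitenesshk}), and the solid splitting $C\simeq F^{\operatorname{nr}}\oplus W$ (the same trick the paper uses for the Mayer--Vietoris property of $\hk$) all go through.
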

	
	\subsection{The proof of semistable conjecture} \label{cc}
	
	 In this article, similar to the proof of GAGA, our strategy is to reduce the proof of semistable conjecture for almost proper rigid analytic varieties to the proper ones. The proof includes the following steps:
	 
	 \begin{itemize}
	 	\item Construction of the period morphism: in \cite[Theorem 6.9]{CN4.3}, for $r \geq 0,$ Pierre Colmez and Wiesława Nizioł construct a natural quasi-isomorphism $$\tau^{\leq r}\rg_{\mathrm{syn}}(U,\Q_p(r)) \xrightarrow{\simeq} \tau^{\leq r}\rg_{\proet}(U,\Q_p(r))$$ for any partially proper rigid analytic variety, and in our case the syntomic cohomology $\rg_{\mathrm{syn}}(U,\Q_p(r))$ can be described as 
	 	\begin{equation} \label{syn}
	 		\rg_{\mathrm{syn}}(U,\Q_p(r)):=[[\hk(U)\otimes_{F^{\mathrm{nr}}}B_{\mathrm{st}}]^{N=0, \varphi=p^r}\to \rg_{{\mathrm{dR}}}(U/B_{\mathrm{dR}}^+)/F^r],
	 	\end{equation}
	 	which induces a map for $r>2d$:
	 	\begin{align*}
	 		\tau^{\leq 2d}\rg_{\proet}(U,\Q_p(r)) \xrightarrow{\simeq} \tau^{\leq 2d}\rg_{\operatorname{syn}}(U,r) &\to [\hk(U) {\otimes}_{F^{\operatorname{nr}}} {B_{{\rm st}}}]^{N=0, \varphi=p^r} \\ & \xrightarrow{p^{-r}} \hk(U) {\otimes}_{F^{\operatorname{nr}}} {B_{{\rm st}}}.
	 	\end{align*}
	 	After imposing $B_{{\rm st}}$-linearity and twisting, this allows us to define the period morphism $$\alpha_{\operatorname{st}}(U):\rg_{\operatorname{\acute{e}t}}(U,\Q_p)\otimes_{\Q_p}{B_{{\rm st}}}\to \hk(U)\otimes_{F^{\operatorname{nr}}}{B_{{\rm st}}},$$ by simply applying the natural morphism $\rg_{\operatorname{\acute{e}t}}(U,\Q_p) \to \rg_{\proet}(U,\Q_p)$. This construction might seem strange, as $\rg_{{\et}}(U_C,\Q_p)$ and $\rg_{{\proet}}(U_C,\Q_p)$ are generally quite different. For example, when $U=\mathbb A^1,$ the analytic affine line, $H^1_{\et}(A^1_C,\Q_p)=0$ but $H^1_{\proet}(A^1_C,\Q_p)$ is very large, as indicated by the main theorem of \cite{CDNstein}. Nevertheless, this construction yields precisely the period morphism required. The same method also applies to define $\alpha_{\operatorname{st}}(X)$: in this case where $X$ is proper smooth, $\rg_{\operatorname{\acute{e}t}}(X,\Q_p) = \rg_{\proet}(X,\Q_p)$, and Pierre Colmez and Wiesława Nizioł prove that $\alpha_{\operatorname{st}}(X)$ is a quasi-isomorphism in \cite{CN5}, by using the theory of Banach-Colmez spaces.
	 	\item Reducing to the proper case: we can reduce to the case where $Z$ is smooth, and we check that $\alpha_{\operatorname{st}}$ is compatible with the Gysin sequence. Then we can apply the Gysin sequence to show that $\alpha_{\operatorname{st}}(U)$ is a natural quasi-isomorphism. The key point is that our construction for $\alpha_{\operatorname{st}}$ is natural with respect to $U \to X$, so it suffices to check that $\alpha_{\operatorname{st}}$ is compatible with the Gysin morphism, that the following square 
	 	$$\begin{tikzcd}
	 		\rg_{\et} (Z,\Q_p(-1))[-2 ]\otimes_{\Q_p}B_{\operatorname{st}} \arrow[r,"g_{\et}"] \arrow[d,"\alpha_{\operatorname{st}}(Z)(-1){[-2]}"] & \rg_{\et}(X,\Q_p)\otimes_{\Q_p}B_{\operatorname{st}}  \arrow[d,"\alpha_{\operatorname{st}}(X)"]  \\
	 		\hk (Z)\{-1\}[-2] \otimes_{F^{\operatorname{nr}}}B_{\operatorname{st}} \arrow[r,"g_{\operatorname{HK}}"] & \hk (X) \otimes_{F^{\operatorname{nr}}}B_{\operatorname{st}} ,
	 	\end{tikzcd}$$
	 	is commutative, where the horizontal maps are the Gysin morphisms. However, the Gysin morphisms are deduced from the Poincar\'e duality and open-closed exact sequence, and we can reduce to checking that $\alpha_{\operatorname{st}}$ is compatible with the Poincar\'e duality, that is to say, checking that $\alpha_{\operatorname{st}}$ is compatible with the pairing and the trace map, but both are automatic due to our construction of Poincar\'e duality for pro-\'etale cohomology (and hence for étale cohomology, since $X$ and $Z$ are proper smooth): in fact, the pairing and the trace map for pro-\'etale cohomology is induced from the one for Hyodo-Kato cohomology, by using the distinguished triangle (\ref{syn}).
	 	\item Construction of the short exact sequence (\ref{1.1}): we construct (\ref{1.1}) by using \cite[Remark 5.16]{CN5}, which is an extension of the standard result for admissible filtered $(\varphi,N)$-modules: once we are able to show that $\alpha_{\operatorname{st}}(U)$ is a natural quasi-isomorphism, we can show that for $i\leq r$, we have that $$(D,D^+_{\operatorname{dR}}):=(H^i_{\operatorname{HK}}(U),H^i_{\et}(U,\Q_p)\otimes_{\Q_p}B_{\operatorname{dR}}^+)$$ is a weakly admissible filtered $(\varphi,N)$-module over $C$ (in the sense of \cite[Definition 5.3.1]{CN5}) with $\varphi$-slopes in $[0,r]$. The key observation here is that we have $$(D \otimes_{F^{\operatorname{nr}}}B^+_{\operatorname{dR}})/t^rD^+_{\operatorname{dR}} \simeq H^i_{\operatorname{dR}}(U/B^+_{\operatorname{dR}})/t^rD^+_{\operatorname{dR}}\simeq H^i_{B_{\operatorname{dR}}^+}(X)/F^r,$$ where the first isomorphism is the geometric Hyodo-Kato isomorphism \cite[Theorem 4.6]{CN4.3}, and the second isomorphism follows from \cite[Proposition 5.17]{xsperiod} and \cite[Theorem 1.4]{xsperiod}. Then \cite[Remark 5.16]{CN5} gives the desired short exact sequence (\ref{1.1}).
	 	\item Checking that our construction of (\ref{1.1}) is Galois equivariant: however, the previous step intertwines different constructions of period morphisms, which could pose a problem when considering the Galois action when $X$ descends to a rigid analytic variety over $K$. To resolve this issue, we need to show that our period map $\alpha_{\operatorname{st}}(U)$, after tensoring with $B_{\mathrm{dR}}$, agrees with the de Rham period map of \cite[Theorem 1.4]{xsperiod}. Once again, we can reduce to $Z$ smooth, and by using the Gysin sequence, we are reduced to checking that the maps $\alpha_{\operatorname{st}}(X)\otimes_{B_{\operatorname{st}}}B_{\operatorname{dR}}$ and $\alpha_{\operatorname{st}}(Z)\otimes_{B_{\operatorname{st}}}B_{\operatorname{dR}}$ agree with the de Rham period morphism for proper smooth rigid analytic varieties constructed by Peter Scholze in \cite{scholze2013p}, but this compatibility has been established by Sally Gilles in  \cite{gilles2023period}.
	 \end{itemize}
	 
	 We refer the reader to Theorem \ref{ss} for a detailed proof.
	
	\subsection{Structure of the article}
	
	In Section 2, we review the h-topology from \cite{beilinson2012derham} and the \'eh-topology from \cite{guo2019hodgetate}, along with the (log-)de Rham comparison theorem. These tools are fundamental for defining Hyodo-Kato cohomology and proving the semistable conjecture.
	
	In Section 3, we present some key aspects of both algebraic and analytic Hyodo-Kato cohomology, including their compactly supported versions. We define the algebraic Hyodo-Kato cohomology with compact support, which agrees with the construction via the Hyodo-Kato realization functor from \cite{nizioldeglise2018syntomic}. For the analytic case, we review the compactly supported analytic Hyodo-Kato cohomology as defined in \cite{AGNcompact}.
	
	Section 4 focuses on the geometric properties of compactly supported Hyodo-Kato cohomology for analytic varieties. We establish the Mayer-Vietoris properties for Hyodo-Kato cohomology (and related theories) in the context of abstract blow-up squares. In the absence of six-functor formalism within the rigid analytic framework (which is still under development), we demonstrate Poincaré duality and the open-closed exact sequence for compactly supported Hyodo-Kato cohomology as defined here. This will also yield the Gysin sequence.
	
	In Section 5, we apply the geometric properties developed in Section 4 to establish key results in Hyodo-Kato cohomology. As outlined in the introduction, we compare algebraic and analytic Hyodo-Kato cohomology and establish the semistable conjecture for almost proper rigid analytic varieties. We will also examine the construction of log-crystalline cohomology with compact support by Tsuji in \cite{tsuji1999poincare}, showing that our definition of compactly supported Hyodo-Kato cohomology is consistent with the one developed in this paper.
	
	Finally, in the appendix, we verify that the overconvergent de Rham cohomology defined for (non-smooth) dagger varieties of \cite{bosco2023rational} agrees with the definition in \cite{grosse2004derham}. This result is used in Section 4 and enables us to confirm the finiteness of overconvergent Hyodo-Kato cohomology in broader cases.
	
	\subsection*{Acknowledgments}
	
	The author would like to express his sincere gratitude to Wiesława Nizioł for suggesting the problem, for many valuable discussions, and for carefully reading this paper. He is deeply indebted to her for identifying subtle errors and for strengthening the overall rigor of the manuscript. The author also thanks Yicheng Zhou and Zhenghui Li for helpful discussions. This article is part of the author’s Ph.D. thesis.

	\subsection*{Notation and conventions}
	
	In this article, we use the language of adic spaces developed in \cite{huber2013etale}. A rigid analytic variety is a quasi-separated adic space locally of finite type over $\Spa(L,\mathcal O_L)$ for a $p$-adic field $L$. All rigid analytic spaces considered will be over $K$ or $C$. We will also use the notion of dagger varieties \cite{gross2000overconvergent}. We assume all rigid analytic varieties and dagger varieties are separated, taut, and countable at infinity. We denote $\operatorname{Rig}_L$ (or $\operatorname{Rig}^{\dagger}_L$) the category of rigid analytic varieties (or dagger varieties) over $L$, and we denote $\operatorname{Sm}_L$ (or $\operatorname{Sm}^{\dagger}_L$) the category of smooth rigid analytic varieties (or smooth dagger varieties) over $L$.
	
	If $\mathcal A$ is an abelian category, unless otherwise stated, we always work with derived stable $\infty$-category $D(\mathcal{A})$.
	
	We will freely use the language of condensed mathematics developed in \cite{cscondensed}. In fact, most of the cohomology groups appeared in this article is finite dimension vector space, so this will not cause much trouble.
	
	We will also use the theory of log adic space. For definitions and properties of log adic space, see \cite{dllz2023logadic}. A (pre)log-structure on a condensed ring is simply a (pre)log-structure on the underlying ring.

	\section{Preliminaries}
	
	In this section, we review some preliminaries which are essential to define Hyodo-Kato cohomology and prove the semistable conjecture for \'etale cohomology of almost proper rigid analytic varieties.
	
	\subsection{The \'eh-topology and h-topology} \label{model}
	
	The \'eh-topology is used to define the Hyodo-Kato cohomology for singular rigid analytic varieties, which has a basis of semistable formal models by $\eta$-\'etale descent. On the algebraic side, there is a similar definition of \'eh-topology, but if we want to get a basis of semistable models in the algebraic setup, we need to refine the \'eh-topology to h-topology. We will recall the definition of the \'eh-topology for singular analytic varieties and h-topology for algebraic varieties. We begin with a generalization of Verdier’s criterion.
	
	\subsubsection{Beilinson basis} In \cite{beilinson2012derham}, Beilinson extended Verdier’s well-known criterion \cite[4.1]{SGA4I}, which provides conditions for changing sites while preserving their associated topoi. We will review the proposition quickly.
	
	Let $\mathcal C$ be a essentially small site, we denote $\operatorname{Sh}(\mathcal C)$ the category of sheaves of sets on $\mathcal C.$ The (Beilinson) basis is defined as follows.
	
	\begin{defn}
		A basis of $\mathcal C$ is a pair $(\mathcal B, \phi)$, where $\mathcal B$ is a a essentially small subcategory of $\mathcal C$ and $\phi:\mathcal B \to \mathcal C$ is a faithful functor such that for  $C\in\mathcal C$ and a finite family of pairs $(B_{\alpha},f_{\alpha}), B_{\alpha}\in \mathcal B, f_{\alpha}: V\to F(B_{\alpha}),\alpha \in I$ there exists a set of objects $B^{\prime}_{\beta}\in \mathcal B$ and a covering family $\{\phi(B^{\prime}_{\beta})\to C\}$ such that each composition $\phi(B^{\prime}_{\beta})\to C\to \phi(B_{\alpha})$ lies in $\operatorname{Hom}(B_{\beta}^{\prime},B_{\alpha})\subset \operatorname{Hom}(\phi(B^{\prime}_{\beta}),\phi(B_{\alpha}))$.
	\end{defn}	
	
	\begin{rem}
		The Verdier’s criterion requires that $\phi$ is fully faithful, where here we only require $\phi$ to be faithful. This is useful when we consider a basis of semistable models (where $\phi$ is the (rigid) generic fiber).
	\end{rem}
	
	Define a covering sieve in $\mathcal B$ as a sieve whose image by $\phi$ is a covering sieve in $\mathcal C$. We have the following theorem by \cite{beilinson2012derham}.
	
	\begin{thm}
		If $(\mathcal B, \phi)$ is a basis of $\mathcal C$, then
		
		(i) Covering sieves in $\mathcal B$ forms a Grothendieck topology in $\mathcal B.$
		
		(ii) The functor $\phi:\mathcal B \to \mathcal C$ is continuous.
		
		(iii) $\phi$ induces an equivalence of topoi $\operatorname{Sh}(\mathcal{B}) \xrightarrow{\simeq}\operatorname{Sh}(\mathcal{C}).$
	\end{thm}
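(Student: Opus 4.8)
The plan is to establish the three assertions in turn, using the basis condition as the replacement for the full faithfulness assumed in the classical Verdier criterion. For a sieve $S$ on $B\in\mathcal{B}$, write $\langle\phi(S)\rangle$ for the sieve on $\phi(B)$ generated by $\{\phi(s) : s\in S\}$, so that by definition $S$ is covering exactly when $\langle\phi(S)\rangle$ is covering in $\mathcal{C}$. For (i) I would check the three topology axioms. The maximal sieve is immediate: since $\phi(\operatorname{id}_B)=\operatorname{id}_{\phi(B)}$, the image of the maximal sieve on $B$ generates the maximal sieve on $\phi(B)$. The local-character axiom is formal: from the elementary inclusion $\langle\phi(g^{*}S)\rangle\subseteq\phi(g)^{*}\langle\phi(S)\rangle$ (valid for any $g$ in $\mathcal{B}$) together with the fact that a sieve containing a covering sieve is covering, transitivity in $\mathcal{B}$ reduces to transitivity and stability in $\mathcal{C}$. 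The genuine content is stability under base change: for a covering sieve $S$ on $B$ and $g\colon B'\to B$, the same inclusion now points the wrong way, so to see that $g^{*}S$ is covering I would pull $\langle\phi(S)\rangle$ back along $\phi(g)$ to a covering sieve of $\phi(B')$, pick a generating covering family $\{k_i\colon D_i\to\phi(B')\}$ with $\phi(g)\circ k_i=\phi(s_i)\circ m_i$ for some $s_i\in S$, and apply the basis condition to each $D_i$ with the pair of maps $k_i,m_i$. This yields covers of the $D_i$ by objects of $\mathcal{B}$ whose composite maps $a_{ij}$ into $\phi(B')$ lift to $\mathcal{B}$; faithfulness of $\phi$ then forces $g\circ a_{ij}\in S$, so these composites lie in $\langle\phi(g^{*}S)\rangle$ and assemble into a covering family of $\phi(B')$.

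For (ii), continuity means that $\phi^{*}F:=F\circ\phi$ is a sheaf on $\mathcal{B}$ for every sheaf $F$ on $\mathcal{C}$. Given a covering sieve $S$ on $B$, a matching family for $\phi^{*}F$ over $S$ is a compatible system of sections of $F$ over $\{\phi(s)\}_{s\in S}$. I would extend it to a matching family for $F$ over the generated covering sieve $\langle\phi(S)\rangle$: every morphism there has the form $\phi(s)\circ m$, and, after refining by the basis condition applied to the finitely many relevant maps, the prescribed value is independent of the chosen factorization. The sheaf axiom for $F$ on $\phi(B)$ then produces a unique amalgamation, which is the required section of $\phi^{*}F$, and its uniqueness descends along $\phi$.

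For (iii), I would prove that the restriction functor $\phi^{*}\colon\operatorname{Sh}(\mathcal{C})\to\operatorname{Sh}(\mathcal{B})$, well defined by (ii), is an equivalence. Taking the empty family in the basis condition shows that every $C\in\mathcal{C}$ admits a covering family $\{\phi(B'_\beta)\to C\}$ by objects in the image of $\phi$; this is the essential geometric input. For full faithfulness, a morphism $F\to F'$ of sheaves on $\mathcal{C}$ is determined by its restrictions to such covers, hence by $\phi^{*}F\to\phi^{*}F'$, and conversely any map of restrictions glues by descent. For essential surjectivity, given a sheaf $G$ on $\mathcal{B}$ I would reconstruct a sheaf $F$ on $\mathcal{C}$ by descent, defining $F(C)$ as the gluing of the $G(B'_\beta)$ along a cover of $C$; independence of the chosen cover and the sheaf property follow from the basis condition, and by construction $\phi^{*}F\cong G$.

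The main obstacle throughout is the non-fullness of $\phi$. In the classical criterion full faithfulness turns the passage between sieves on $B$ and on $\phi(B)$, and between Hom-sets, into a tautology; here a morphism $\phi(B')\to\phi(B)$ need not come from $\mathcal{B}$ and can be lifted only after refining by a cover. The delicate work, rather than the formal topos-theoretic bookkeeping, is to verify that every construction above — the refinement in the stability axiom, the well-definedness of the extended matching family, and the descent reconstruction in (iii) — is insensitive to the choice of such lifts. This is precisely what the compatibility of the basis condition across the finitely many maps occurring in each step guarantees, and organizing these independence checks is the heart of the proof.
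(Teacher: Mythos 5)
First, a remark on the ground truth: the paper does not prove this theorem at all — it is imported verbatim from Beilinson's paper \cite{beilinson2012derham} ("We have the following theorem by...''), so there is no in-paper argument to compare against, and your proposal has to be judged as a reconstruction of the Verdier/Beilinson argument.

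Your parts (i) and (ii), and the full-faithfulness half of (iii), are correct, and you identify exactly the right mechanism: given an auxiliary object $D\in\mathcal{C}$ with finitely many maps to objects $\phi(B_\alpha)$, refine $D$ by the basis condition so that all composites lift to $\mathcal{B}$, then use faithfulness of $\phi$ to convert equalities in $\mathcal{C}$ into equalities in $\mathcal{B}$ (e.g. $\phi(g\circ a_{ij})=\phi(s_i\circ b_{ij})$ forces $g\circ a_{ij}=s_i\circ b_{ij}\in S$ in your stability argument). This is precisely where the finite-family form of the basis condition is needed, and you deploy it correctly both for stability under base change and for the well-definedness of the extended matching family in (ii); the formal reductions (maximal sieve, local character via the inclusion $\langle\phi(g^{*}T)\rangle\subseteq\phi(g)^{*}\langle\phi(T)\rangle$ together with stability and local character in $\mathcal{C}$) also go through as you state.

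The one genuine gap is essential surjectivity in (iii). ``Defining $F(C)$ as the gluing of the $G(B'_\beta)$ along a cover of $C$'' is not yet a construction: since $\phi$ is not full, the comparison data over $C$ consist of morphisms $p\colon D\to\phi(B'_{\beta_1})$, $q\colon D\to\phi(B'_{\beta_2})$ in $\mathcal{C}$ with $f_{\beta_1}p=f_{\beta_2}q$, and the sheaf $G$ on $\mathcal{B}$ cannot be applied to these maps, so the descent diagram whose limit you want to take does not exist as written. Compatibility of a family $(g_\beta)\in\prod_\beta G(B'_\beta)$ must instead be defined by quantifying over all such $(D,p,q)$ and all refinements $c\colon\phi(B'')\to D$ with $pc=\phi(a)$, $qc=\phi(b)$ lifting to $\mathcal{B}$, requiring $G(a)(g_{\beta_1})=G(b)(g_{\beta_2})$; one must then show independence of the chosen cover (or eliminate the choice by taking the limit over the whole comma category), that the resulting presheaf is a sheaf on $\mathcal{C}$, that it is functorial in $C$, and that $\phi^{*}F\cong G$ — each of these being another round of the refine-lift-faithfulness mechanism, the last also using the sheaf property of $G$. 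Your proposal asserts all of this in one clause, but it is the longest part of any complete proof, and the issue it must overcome (no transition maps to apply $G$ to) is different in kind from the ``independence of lifts'' checks you flag as the heart of the matter. A cleaner organization that isolates the difficulty: factor $\phi$ as $\mathcal{B}\to\mathcal{B}^{\phi}\to\mathcal{C}$ through its full image, where $\mathcal{B}^{\phi}$ has the same objects as $\mathcal{B}$ and $\operatorname{Hom}_{\mathcal{B}^{\phi}}(B',B)=\operatorname{Hom}_{\mathcal{C}}(\phi(B'),\phi(B))$; the second functor is fully faithful and every object of $\mathcal{C}$ is covered by its image (your empty-family observation), so the classical Verdier criterion handles it, and all the new content is concentrated in the equivalence $\operatorname{Sh}(\mathcal{B})\simeq\operatorname{Sh}(\mathcal{B}^{\phi})$, where essential surjectivity reduces to defining the action of $\mathcal{B}^{\phi}$-morphisms on $G$ by exactly the descent procedure above.
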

	
	\subsubsection{h-topology for algebraic varieties} We begin with the definition of h-topology of \cite{voevodsky1996h}.
	
	\begin{defn}
		For a field $L$, the h-topology on $\operatorname{Var}_K$ is generated by the pretopology whose coverings are finite families of maps $\{Y_i \to X\}_{i\in I}$, such that $Y:=\coprod_{i \in I}Y_i\to X$ is a universal topological epimorphism, i.e. a subset $U$ of $X$ is Zariski open if and only the preimage of $U$ in $Y$ is open, and the same is true for any base change over $X$.
	\end{defn}
	
	\begin{rem}
		The h-topology is stronger than the proper topology and \'etale topology, but it is weaker than the v-topology.
	\end{rem}
	
	A pair $(U,\overline U)$ over a field $K$ is an open embedding $j:U\hookrightarrow \overline U$ with dense image of a $K$-variety $U$ into a reduced proper flat $\mathcal{O}_K$-scheme $\overline U$. For a field $K$, We denote by $\operatorname{Var}^{\operatorname{ss}}_{K}$ the category of strictly semistable pairs, i.e. pairs $(U,\overline U)$ such that $\overline U$ is regular, $\overline U-U$  is a divisor with normal crossings on $\overline U$, the irreducible components of $\overline U-U$ is regular, and the special fiber $\overline U_k$ is reduced. 
	
	In the geometric setup, we denote by $\operatorname{Var}^{\operatorname{ss},b}_{\overline{K}}$ the category of basic semistable $\mathcal{O}_{\overline{K}}$-pairs, i.e. for a semistable $\mathcal{O}_{\overline{K}}$-pairs $(U,\overline U)$, there exists a semistable scheme $(U',\overline U')$ over $\mathcal{O}_E$, where $E$ is a finite field extension of $K$, such that $(U,\overline U)$ is isomorphism to the base change $(U_{\overline{K}},\overline U_{\mathcal{O}_{\overline{K}}}).$ 
	
	By \cite[Proposition 2.5]{beilinson2012derham}, we have
	
	\begin{prop} \label{modelalgebraic}
		(i) $\operatorname{Var}^{\operatorname{ss}}_{K}$ forms a basis of $\operatorname{Var}_{K,\operatorname{h}}.$
		
		(ii) $\operatorname{Var}^{\operatorname{ss},b}_{\overline{K}}$ or $\operatorname{Var}^{\operatorname{ss}}_{\overline{K}}$ forms a basis of $\operatorname{Var}_{\overline{K},\operatorname{h}}.$
	\end{prop}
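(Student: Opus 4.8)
The plan is to verify the single basis axiom of the preceding definition for the faithful functor
$\phi\colon \operatorname{Var}^{\operatorname{ss}}_K \to \operatorname{Var}_K$, $(U,\overline U)\mapsto U$, which sends a strictly semistable pair to its open $K$-variety. First I would record that $\phi$ is genuinely faithful: a morphism of pairs $(U',\overline U')\to(U,\overline U)$ is by definition a morphism $\overline U'\to\overline U$ of $\mathcal{O}_K$-schemes carrying $U'$ into $U$, and since $U'$ is dense in $\overline U'$ and $\overline U$ is separated, such a morphism is determined by its restriction to $U'$. This is exactly the point of Beilinson's relaxation of Verdier's criterion: not every map $U'\to U$ need extend to the chosen models, but an extension is unique when it exists, so $\operatorname{Hom}(B',B_\alpha)\hookrightarrow\operatorname{Hom}(\phi B',\phi B_\alpha)$.

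The substance of (i) is the geometric input of de Jong's theorem on semistable alterations, in the refined form that allows one to prescribe a horizontal divisor. Given a $K$-variety $C=X$ together with finitely many maps $f_\alpha\colon X\to U_\alpha=\phi(B_\alpha)$, I would proceed in two stages. \emph{Existence of a model.} Choose (Nagata) a compactification $\overline X_K\supseteq X$ and spread out and flatten to obtain a reduced proper flat $\mathcal{O}_K$-model $\overline X$, with $X$ realized as an open of its generic fibre. \emph{Making the $f_\alpha$ extend.} Since each $\overline U_\alpha$ is proper over $\mathcal{O}_K$, the rational map $\overline X\dashrightarrow\overline U_\alpha$ determined by $f_\alpha$ on generic fibres becomes a morphism after blowing up its nowhere dense indeterminacy locus; performing this for the finitely many $\alpha\in I$ simultaneously produces a single proper flat model $\overline X'$ mapping to every $\overline U_\alpha$ whose generic fibre still dominates $X$. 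Now apply de Jong's theorem to $\overline X'$ relative to the divisor obtained as the union of the special fibre and the pullbacks of the boundaries $\overline U_\alpha\setminus U_\alpha$: after a finite extension one obtains a strictly semistable pair $B'=(U',\overline U')$ together with a proper surjective alteration $\overline U'\to\overline X'$. By construction $U'\to X$ is proper and surjective, hence a single-element h-covering family in the sense of \cite{voevodsky1996h}, and each composite $U'\to X\xrightarrow{f_\alpha}U_\alpha$ now extends to a morphism of pairs $\overline U'\to\overline U_\alpha$, i.e.\ lies in $\operatorname{Hom}(B',B_\alpha)$. This verifies the basis axiom and gives (i).

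For (ii) I would deduce the statement over $\overline K$ from (i) by a limit argument: every basic semistable $\mathcal{O}_{\overline K}$-pair is, by finite presentation, the base change of a strictly semistable pair over $\mathcal{O}_E$ for some finite $E/K$, and $\operatorname{Var}_{\overline K}=\operatorname{colim}_E\operatorname{Var}_E$ compatibly with the h-topology. Applying (i) over each $E$ and base-changing to $\overline K$ yields the required semistable-pair covers and the factorizations of the $f_\alpha$, while $\operatorname{Var}^{\operatorname{ss},b}_{\overline K}$ is cofinal inside $\operatorname{Var}^{\operatorname{ss}}_{\overline K}$ among such covers, so either family serves as a basis of $\operatorname{Var}_{\overline K,\operatorname{h}}$.

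The main obstacle is the simultaneous control demanded of the alteration: one must arrange \emph{at once} that the total space is strictly semistable, that the boundary $\overline U'\setminus U'$ is a strict normal crossings divisor with regular irreducible components, that the special fibre is reduced, and that the finitely many maps to the $\overline U_\alpha$ remain genuine morphisms of pairs. This is precisely the divisorial (relative) refinement of de Jong's semistable reduction theorem; the finiteness of the index set $I$ is what permits all the boundary and indeterminacy loci to be absorbed into a single normal crossings divisor before alteration, and the reducedness of the special fibre, rather than mere semistability up to multiplicities, is the most delicate point, typically secured by an additional base change followed by normalization.
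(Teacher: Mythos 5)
Your overall strategy is the correct one, and in fact it is essentially the proof of the result being quoted: the paper itself gives no argument here, but simply cites Beilinson's Proposition 2.5, whose proof is exactly this combination of Nagata compactification, extension of the $f_\alpha$ to proper models via graph closures, and de Jong's semistable alteration theorem with a prescribed divisor. Your faithfulness argument and your reduction of (ii) to (i) by a limit argument over finite subextensions $E/K$ are also fine, modulo the standing convention (used implicitly by the paper, and visible in your phrase ``after a finite extension'') that objects of $\operatorname{Var}^{\operatorname{ss}}_{K}$ are allowed to have models over $\mathcal{O}_{L}$ for finite $L/K$; de Jong forces such extensions, so the statement would be false with models over $\mathcal{O}_K$ only.

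There is, however, a concrete error in the key step. You feed to de Jong the divisor (special fibre) $\cup$ (pullbacks of the boundaries $\overline U_\alpha\setminus U_\alpha$), but you omit the boundary of $X$ itself, namely the closure in $\overline X'$ of $\overline X'_K\setminus X$. The open part $U'$ of the strictly semistable pair produced by de Jong is, by definition of a pair, the complement in $\overline U'$ of the horizontal part of the snc divisor; with your choice of divisor, $U'$ is therefore the preimage of the locus in $\overline X'_K$ where all the extended maps land in $U_\alpha$ --- a set that contains $X$ but is in general strictly larger. Consequently $U'$ does not map into $X$ at all, and the assertion ``by construction $U'\to X$ is proper and surjective'' is not meaningful. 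The degenerate case $I=\varnothing$ makes this stark: your prescribed divisor is then just the special fibre, and $U'$ is the entire generic fibre of the alteration, which surjects onto the compactification of $X$ rather than onto $X$. The repair is easy and is what Beilinson does: prescribe $Z:=\overline{\left(\overline X'_K\setminus X\right)}\cup(\text{special fibre of }\overline X')$. Then $U'$ is exactly the preimage of $X$ under the alteration, so $U'\to X$ is proper and surjective (hence an h-cover), and the pullbacks of the boundaries $\overline U_\alpha\setminus U_\alpha$ are automatically contained in $Z$ because $f_\alpha(X)\subset U_\alpha$; this is what makes the composites $\overline U'\to\overline U_\alpha$ genuine morphisms of pairs. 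Two smaller omissions: de Jong's theorem applies to integral (and, in the form usually cited, projective) schemes, so one must pass to irreducible components --- producing a finite family $\{B'_\beta\}$ rather than your single-element cover --- and invoke Chow's lemma, exactly as in Beilinson's argument.
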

	
	We will often use the following observation.
	
	\begin{lem} \label{nclemma}
		Suppose $(U,\overline U)$ is a strictly semistable pair, let $Z:=\overline U_{\eta}-U.$ If $Z$ is smooth, denote by $\overline{Z}$ the closure of $Z$ in $\overline{U}$, then $(Z,\overline{Z})$ is also in $\operatorname{Var}^{\operatorname{ss}}_K$.
	\end{lem}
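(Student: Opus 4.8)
The assertion is étale-local on $\overline U$, supplemented by the global statements that $\overline Z$ is proper and flat, so the plan is to reduce everything to a standard semistable chart and read off the defining conditions of $\operatorname{Var}^{\operatorname{ss}}_K$ for the pair $(Z,\overline Z)$.

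First I would record what must be proved and dispose of the global structure. Since $Z=\overline U_{\eta}-U$ is closed in the generic fibre $\overline U_{\eta}$, its scheme-theoretic closure $\overline Z$ in $\overline U$ is the flat closure, hence flat over $\mathcal O_K$ with generic fibre $\overline Z_{\eta}=Z$, and it is proper as a closed subscheme of the proper $\mathcal O_K$-scheme $\overline U$. Set-theoretically $\overline Z-Z=\overline Z_k$, so $(Z,\overline Z)$ has no horizontal boundary, and the remaining conditions for membership in $\operatorname{Var}^{\operatorname{ss}}_K$ are exactly: $\overline Z$ is regular (in particular reduced); $\overline Z_k=\overline Z-Z$ is a normal crossings divisor in $\overline Z$ with regular irreducible components; and $\overline Z_k$ is reduced. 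All of these are étale-local, so I would check them in a chart.

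Next I would pass to an étale-local semistable chart. Near a point of $\overline Z$ one may write $\overline U$ étale-locally as the standard semistable model $\Spec\mathcal O_K[t_0,\dots,t_n]/(t_0\cdots t_r-\pi)$, with $\pi$ a uniformizer of $\mathcal O_K$, in such a way that the strict normal crossings divisor $\overline U-U$ becomes a union of coordinate hyperplanes: the special fibre $\overline U_k=\{t_0\cdots t_r=0\}$, with vertical components $\{t_i=0\}$ for $0\le i\le r$, together with the horizontal components $\{t_j=0\}$ for $j$ in a subset $S\subseteq\{r+1,\dots,n\}$, whose union is $Z$ locally. Here the hypothesis that $Z$ is smooth is decisive: a normal crossings divisor is smooth exactly when its components are pairwise disjoint, so after shrinking the chart at most one horizontal coordinate survives, i.e. $|S|\le 1$. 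Thus locally $\overline Z=\{t_j=0\}$ for a single $j>r$, and eliminating the free variable $t_j$ identifies $\overline Z$ with the standard semistable model $\Spec\mathcal O_K[t_0,\dots,\widehat{t_j},\dots,t_n]/(t_0\cdots t_r-\pi)$.

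From this chart the three remaining conditions are immediate: a standard semistable model is regular and flat over $\mathcal O_K$ with reduced special fibre $\{t_0\cdots t_r=0\}$, a strict normal crossings divisor, and the irreducible components $\overline Z\cap\{t_i=0\}=\{t_i=t_j=0\}$ of $\overline Z_k$ are double strata of the ambient snc divisor $\{t_0\cdots t_r\,t_j=0\}$ in the regular scheme $\overline U$, hence regular. Globalizing (regularity, reducedness, and the normal crossings condition are all étale-local) then gives $(Z,\overline Z)\in\operatorname{Var}^{\operatorname{ss}}_K$. The only genuine subtlety, and the step I expect to carry the weight of the argument, is the regularity of $\overline Z$ together with the reducedness of $\overline Z_k$: both would fail if two horizontal components of $\overline U-U$ crossed along $\overline Z$, and it is precisely the smoothness of $Z$ that rules this out, forcing $\overline Z$ to be étale-locally cut out by a single coordinate and hence to inherit the semistable structure of $\overline U$ transversally to the special fibre.
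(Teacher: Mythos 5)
Your proof is correct, and it takes a genuinely different route from the paper's. The paper argues globally with irreducible components: it writes $\overline U-U=\overline Z\cup D_1\cup\dots\cup D_m$ with the $D_i$ vertical, and uses the strict normal crossings property of $\overline U-U$ in $\overline U$ directly to see that each intersection $\overline Z\cap D_J$ ($J\subset\{1,\dots,m\}$) is regular of codimension $|J|$ in $\overline Z$; since $\overline Z-Z=\bigcup_i(\overline Z\cap D_i)$, this exhibits $\overline Z-Z$ as a strict normal crossings divisor on $\overline Z$, regularity of $\overline Z$ being inherited from the components of $\overline U-U$. You instead reduce to the standard \'etale-local chart $\Spec\mathcal O_K[t_0,\dots,t_n]/(t_0\cdots t_r-\pi)$ and identify $\overline Z$ locally with a standard semistable model of one dimension lower. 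Your version has two real advantages: it verifies \emph{all} the defining conditions of $\operatorname{Var}^{\operatorname{ss}}_K$ (flatness, properness, regularity of $\overline Z$, reducedness of $\overline Z_k$) rather than concentrating on the normal crossings condition, and it makes explicit the one place where smoothness of $Z$ is truly used — forcing $|S|\le 1$, i.e.\ the horizontal components of $\overline U-U$ are pairwise disjoint. This last point is needed for regularity of $\overline Z$ when $Z$ is disconnected and is passed over silently in the paper's proof (there the SNC property alone rules out horizontal components crossing only in the special fiber: if two met at a point, their local equations would be part of a regular system of parameters, so the intersection could not be contained in $\{\pi=0\}$, contradicting smoothness of $Z$). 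The cost of your route is the appeal to the existence of standard semistable charts: the paper defines strict semistability by the conditions ``regular $+$ SNC boundary with regular components $+$ reduced special fiber,'' and deducing the chart from these conditions (local equations of the components through a point form part of a regular system of parameters, $\pi$ equals a unit times the product of the vertical ones, then approximate) is a standard but nontrivial structural fact — essentially the same computation the paper performs directly with regular systems of parameters. So the two arguments rest on the same underlying local algebra; yours packages it through the chart, the paper's through intersections of components.
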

	
	\begin{proof}
		In fact, since $(U, \overline{U})$ is a strictly semistable pair, we can write $\overline{U}-U$ as the union of irreducible components $\overline{Z} \cup D_1 ... \cup D_m$, and for $D_J=\bigcap_{j \in J}D_j$ where $J \subset \{1,2,...,m\}$ is a finite set, $\overline{Z} \cap D_J$ is a regular scheme each of whose irreducible components has codimension $|J|$ in $\overline{Z}$. Since we have assumed $Z$ is smooth and $\overline{U}$ is proper, $D_i$ are in the special fiber of $\overline{U}$ for all $i$, $\overline{Z}-Z=\bigcup_{i}(Z\cap D_i)$ and each $Z\cap D_j$ is smooth over the special fiber, hence by further writing $Z\cap D_i$ as disjoint union of smooth irreducible components $\cup_k D_{ik}$,  for $D_J'=\bigcap_{jl \in J'}D_{jl}$ where indexing $J'$ is a finite set, $\cap D_J'$ is a regular scheme each of whose irreducible components has codimension $|J'|$ in $\overline{Z}$, hence $\overline{Z}-Z$ is strictly normal crossing divisor in $\overline{Z}$.
	\end{proof}
	
	\subsubsection{\'eh-topology for rigid analytic varieties}
	
	We will briefly summarize \cite[2]{guo2019hodgetate}.
	
	\begin{defn}
		For a field $L$, the \'eh-topology on $\operatorname{Rig}_L$ is generated by the pretopology whose coverings can be refined by \'etale coverings, universal homeomorphisms, and morphisms $$\operatorname{Bl}_Z(X)\sqcup Z \to X,$$ where $Z$ is a closed analytic subset of $X$.
	\end{defn}

	For a field $K$, We denote by $\mathscr{M}^{\operatorname{ss}}_K$ the category of semistable formal $\mathcal{O}_K$-models, i.e. it is semistable over $\mathcal{O}_E$ for a finite field extension $E$ of $K$. We also denote by $\mathscr{M}^{\operatorname{ss},b}_C$ the category of semistable formal $\mathcal{O}_C$-models, i.e. a formal $\mathcal{O}_C$-model $\mathscr{X}$ such that there exists a semistable model $\mathscr{X}'$ over $\mathcal{O}_E$, where $E$ is a finite field extension of $K$, such that $\mathscr{X}$ is isomorphism to the base change $\mathscr{X}'_{\mathcal{O}_C}.$ By \cite[Proposition 2.8]{CNderham} and resolution of singularities, we have the following proposition.
	
	\begin{prop} \label{modelrigid}
		(i) $\operatorname{Sm}_K$ or $\mathscr{M}^{\operatorname{ss}}_K$ forms a basis of $\operatorname{Rig}_{K,\operatorname{\acute{e}h}}.$
		
		(ii) Over $C$, $\operatorname{Sm}_C$ or $\mathscr{M}^{\operatorname{ss},b}_C$ or $\mathscr{M}^{\operatorname{ss}}_K$ forms a basis of $\operatorname{Rig}_{C,\operatorname{\acute{e}h}}.$
	\end{prop}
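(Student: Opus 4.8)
The plan is to apply Beilinson's theorem on bases recorded above: to prove each pair $(\mathcal B, \phi)$ is a basis it suffices to verify the single combinatorial condition in the definition, namely that for every $C \in \operatorname{Rig}_{L,\eh}$ and every finite family $f_\alpha : C \to \phi(B_\alpha)$ with $B_\alpha \in \mathcal B$, there is an \'eh-covering $\{\phi(B'_\beta) \to C\}$ for which each composite $\phi(B'_\beta) \to C \to \phi(B_\alpha)$ is the image under $\phi$ of a morphism $B'_\beta \to B_\alpha$ in $\mathcal B$. I would split the verification into a \emph{covering} part (objects of $\mathcal B$ \'eh-cover everything) and a \emph{factorization} part (the relevant composites descend to $\mathcal B$).

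First I would treat the smooth bases $\operatorname{Sm}_K$ and $\operatorname{Sm}_C$, where $\phi$ is the fully faithful inclusion into $\operatorname{Rig}_L$. Here the factorization part is automatic by full faithfulness, so the only content is that smooth varieties \'eh-cover every rigid variety. This is exactly \cite[Proposition 2.8]{CNderham}, which rests on resolution of singularities: iterating the blow-up squares $\operatorname{Bl}_Z(X) \sqcup Z \to X$ generating the \'eh-topology resolves the singular locus and produces a smooth \'eh-cover. This settles the $\operatorname{Sm}$ assertions in (i) and (ii).

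Next I would pass from $\operatorname{Sm}_L$ to the semistable model categories by transitivity of the basis relation: since $\operatorname{Sm}_L$ is already a basis of $\operatorname{Rig}_{L,\eh}$, it is enough to show $\mathscr{M}^{\operatorname{ss}}_K$ (resp. $\mathscr{M}^{\operatorname{ss},b}_C$) forms a basis of the induced \'eh-site on $\operatorname{Sm}_L$, with $\phi$ now the rigid generic fiber. The covering part comes from Temkin's semistable reduction \cite{temkin2017alteration}: any smooth rigid variety admits, \'etale locally after a finite extension $E/K$, a semistable formal model, so the generic fibers of objects in $\mathscr{M}^{\operatorname{ss}}_K$ form an \'eh-cover. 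Over $C$ one checks in addition that these models descend from such an $E$, placing them in $\mathscr{M}^{\operatorname{ss},b}_C$; this is why both $\mathscr{M}^{\operatorname{ss},b}_C$ and $\mathscr{M}^{\operatorname{ss}}_K$ appear in (ii).

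The step I expect to be the main obstacle is the factorization condition, subtle precisely because the generic fiber functor is only faithful, not fully faithful (this is the very reason Beilinson's relaxation of Verdier's criterion is required, cf.\ the Remark after the definition of basis). A morphism of generic fibers $\mathscr{X}'_{\beta,K} \to \mathscr{X}_{\alpha,K}$ need not extend to the formal models. To remedy this I would spread the morphism out to the integral level after an admissible formal blow-up of the source $\mathscr{X}'_\beta$, using flattening in the formal setting; since admissible blow-ups leave the generic fiber unchanged, the resulting family is still an \'eh-covering. The catch is that such a blow-up may destroy semistability, so I would then reapply Temkin's semistable reduction \'etale locally to return to $\mathscr{M}^{\operatorname{ss}}_K$, which is permitted because the \'eh-topology absorbs both the blow-ups and the \'etale refinements. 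Iterating over the finite index set and assembling the covering and factorization statements, Beilinson's theorem yields the equivalence of topoi and completes the proof.
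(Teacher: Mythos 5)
Your overall architecture---Beilinson's criterion, resolution of singularities to make $\operatorname{Sm}_L$ a basis, transitivity of bases, Temkin's semistable reduction for the covering condition, and Raynaud's flattening by admissible formal blow-ups (followed by a second application of Temkin, absorbed by the \'eh-topology) for the factorization condition---is exactly the argument behind the result the paper invokes: the paper offers no independent proof of this proposition, but simply cites \cite[Proposition 2.8]{CNderham} together with resolution of singularities, and that cited result is proved along the lines you describe. For part (i), and for the $\operatorname{Sm}_C$ clause of part (ii), your proposal is essentially complete.

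The genuine gap is in the remaining clauses of part (ii), in the step you dispatch with ``one checks in addition that these models descend from such an $E$.'' Temkin's theorem applied to a smooth rigid space over $C$ produces semistable formal models over $\mathcal{O}_C$, and there is no direct procedure for descending a formal $\mathcal{O}_C$-scheme to some $\mathcal{O}_E$ with $E/K$ finite: formal models over $\mathcal{O}_C$ do not spread out to finite level. The correct order of operations is the reverse: one first descends the smooth rigid space itself (\'etale-locally, after resolution) to a finite extension $E/K$, using Elkik's approximation theorem---every smooth affinoid over $C$ is the base change of a smooth affinoid over some finite extension of $K$---and only then applies Temkin over $\mathcal{O}_E$ to obtain an object of $\mathscr{M}^{\operatorname{ss},b}_C$, respectively of $\mathscr{M}^{\operatorname{ss}}_K$. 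The same issue recurs in your factorization step over $C$: Raynaud's theorem over $\mathcal{O}_C$ extends a morphism of generic fibers to formal $\mathcal{O}_C$-models after an admissible blow-up, but to land back in the category of \emph{basic} models you must also spread the morphism out to finite level, which is again an approximation statement rather than a formality. Without these inputs your argument establishes only that arbitrary semistable formal $\mathcal{O}_C$-models form a basis of $\operatorname{Rig}_{C,\eh}$, not the basic ones; and the finite-level structure is precisely what the paper's later constructions need (the filtered colimit over finite extensions defining $\mathscr{A}_{\operatorname{HK}}$, with its Frobenius, monodromy, and Galois action, only makes sense for basic models).
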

	
	We have analogy results for dagger varieties, We denote by $\mathscr{M}^{\operatorname{wss}}_L$ the category of weakly semistable formal $\mathcal{O}_L$-models. We also denote by $\mathscr{M}^{\operatorname{wss},b}_C$ the category of weakly basic semistable formal $\mathcal{O}_C$-models. By \cite[Proposition 2.14]{bosco2023rational}, we have the following proposition.
	
	\begin{prop} \label{modeldagger}
		(i) $\operatorname{Sm}^{\dagger}_K$ or $\mathscr{M}^{\operatorname{wss}}_K$ forms a basis of $\operatorname{Rig}^{\dagger}_{K,\operatorname{\acute{e}h}}.$
		
		(ii) Over $C$, $\operatorname{Sm}^{\dagger}_C$ or $\mathscr{M}^{\operatorname{wss},b}_C$ or $\mathscr{M}^{\operatorname{wss}}_K$ forms a basis of $\operatorname{Rig}^{\dagger}_{C,\operatorname{\acute{e}h}}.$
	\end{prop}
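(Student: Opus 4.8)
The plan is to follow the strategy used for Proposition \ref{modelrigid}, transferring everything from rigid analytic varieties to their overconvergent (dagger) counterparts. By the generalized Verdier criterion recalled above, it suffices to verify that each candidate subcategory, together with the natural faithful functor $\phi$ into $\operatorname{Rig}^{\dagger}_{L}$ (the inclusion in the case of $\operatorname{Sm}^{\dagger}_L$, and the dagger generic fiber functor in the case of the model categories), satisfies the basis condition: given a dagger variety $X$, a finite family of objects $B_\alpha$ in the subcategory, and morphisms $X \to \phi(B_\alpha)$, one must produce an éh-covering $\{\phi(B'_\beta)\to X\}$ through which every composition factors inside the subcategory. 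The point of allowing $\phi$ to be merely faithful, rather than fully faithful, is exactly to accommodate the model categories, where $\phi$ is the generic fiber; here the dagger éh-topology on $\operatorname{Rig}^{\dagger}_L$ is understood as the evident analogue of the one defined above for $\operatorname{Rig}_L$.

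For part (i), I would first reduce to $\operatorname{Sm}^{\dagger}_K$ using resolution of singularities. A dagger variety and its associated rigid analytic variety share the same underlying reduced structure and the same notion of closed analytic subset, so the blow-up morphisms $\operatorname{Bl}_Z(X)\sqcup Z \to X$ that generate the éh-topology exist in the dagger setting as well. Iterating resolution, any dagger variety admits an éh-covering by smooth dagger varieties, which gives the basis condition for $\operatorname{Sm}^{\dagger}_K$. To pass further to $\mathscr{M}^{\operatorname{wss}}_K$, I would use that a smooth dagger variety admits, étale-locally, a weakly semistable formal model; this is the overconvergent analogue of the semistable reduction input behind Proposition \ref{modelrigid}, and it is precisely here that the étale-covering part of the éh-topology is consumed. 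Combining the two steps yields (i).

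For part (ii), over $C$ I would argue as over $K$, with the additional observation that a weakly semistable formal model over $\mathcal O_E$ (for $E/K$ finite) base-changes to a weakly basic semistable model over $\mathcal O_C$, so that both $\mathscr{M}^{\operatorname{wss},b}_C$ and $\mathscr{M}^{\operatorname{wss}}_K$ form bases of $\operatorname{Rig}^{\dagger}_{C,\operatorname{\acute{e}h}}$; the comparison with $\operatorname{Sm}^{\dagger}_C$ then follows from resolution of singularities over $C$, exactly as in part (i).

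The hard part will be the existence of weakly semistable formal models for smooth dagger varieties, étale-locally, which is the overconvergent counterpart of Temkin's semistable alteration and reduction results. One must arrange not merely a semistable model of the underlying rigid variety, but a weak formal model carrying the correct overconvergence data, so that its dagger generic fiber recovers the given smooth dagger variety. Everything else (essential smallness of the subcategories, faithfulness of $\phi$, and the formal bookkeeping of the basis condition) is routine once this geometric input is in hand, and it is supplied by \cite[Proposition 2.14]{bosco2023rational}.
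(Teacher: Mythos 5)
Your proposal is correct and takes essentially the same approach as the paper: the paper gives no argument beyond invoking \cite[Proposition 2.14]{bosco2023rational}, and your sketch (Beilinson's criterion, reduction to the smooth case via resolution, then \'etale-local existence of weakly semistable weak formal models with the right overconvergent structure) is exactly the argument packaged in that reference, which you correctly identify as the key geometric input. This also mirrors how the paper handles the rigid analytic analogue (Proposition \ref{modelrigid}), where the same two-step structure is delegated to \cite[Proposition 2.8]{CNderham} and resolution of singularities.
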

	
	\subsection{de Rham cohomology and $B^+_{\operatorname{dR}}$-cohomology} 
	
	For $i\geq 0$, let $\Omega^i_{\operatorname{\acute{e}h}}$ be the \'eh-sheafification of the presheaf $X\mapsto \Omega^i(X)$ for $X\in \operatorname{Sm}_L$ or $\operatorname{Sm}^{\dagger}_L$. We have the following \'eh-descent theorem, by \cite[Theorem 4.0.2]{guo2019hodgetate}.
	
	\begin{thm} \label{ehdescentrigid}
		If $X\in \operatorname{Sm}_L$ or $\operatorname{Sm}^{\dagger}_L$, then for any $i \in\N$, we have $$R\pi_{X,*}\Omega^i_{\operatorname{\acute{e}h}}=\Omega^i_{X/L}.$$
	\end{thm}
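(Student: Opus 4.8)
The map $\pi_X$ is the projection from the éh-site (localized over $X$) to the small analytic site of $X$, so the assertion is a statement about a complex of analytic sheaves on $X$ and may be checked locally on $X$. The plan is to reduce it to the single comparison
$$R\Gamma_{\eh}(X, \Omega^i_{\eh}) \simeq R\Gamma_{\an}(X, \Omega^i_{X/L}), \qquad X \in \operatorname{Sm}_L \text{ (resp. } \operatorname{Sm}^{\dagger}_L\text{)},$$
functorially in $X$: indeed, sheafifying the right-hand side over the small analytic site returns $\Omega^i_{X/L}$ placed in degree $0$, since the higher analytic cohomology sheafifies to zero, so this comparison immediately yields $R\pi_{X,*}\Omega^i_{\eh} = \Omega^i_{X/L}$. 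By the Beilinson-basis theorem together with Proposition \ref{modelrigid} (resp. Proposition \ref{modeldagger}), éh-cohomology may be computed on the basis of smooth objects, so it is enough to show that passing from the analytic topology to the éh-topology does not alter the cohomology of the coherent sheaf $\Omega^i$.

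Since the éh-topology is generated by étale coverings, universal homeomorphisms, and abstract blow-up squares $\operatorname{Bl}_Z(X) \sqcup Z \to X$, the comparison reduces to verifying descent for $\Omega^i$ along each type of generator. Étale descent is automatic: $\Omega^i$ is a coherent sheaf, and coherent cohomology satisfies étale descent in rigid geometry, so $\Omega^i_{\eh}$ agrees étale-locally with $\Omega^i$ on smooth objects; this also identifies $\pi_{X,*}\Omega^i_{\eh}$ with $\Omega^i_{X/L}$ in degree $0$. For universal homeomorphisms, I would use that a universal homeomorphism between smooth (hence normal) varieties over a field of characteristic $0$ is an isomorphism, so that, restricted to the smooth basis, the generators of this type are invertible and descent is trivial. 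The content is thus concentrated in the blow-up squares.

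The heart of the proof is the blow-up case. Let $X$ be smooth, $Z \subset X$ a smooth closed analytic subvariety of codimension $c$, $f\colon \operatorname{Bl}_Z(X) \to X$ the blow-up, and $g\colon E = \mathbb{P}(N_{Z/X}) \to Z$ its exceptional divisor. I would show that the square
$$\begin{tikzcd}
\Omega^i_{X/L} \arrow[r]\arrow[d] & Rf_*\Omega^i_{\operatorname{Bl}_Z(X)/L} \arrow[d] \\
\iota_{Z,*}\Omega^i_{Z/L} \arrow[r] & \iota_{Z,*}Rg_*\Omega^i_{E/L}
\end{tikzcd}$$
is homotopy cartesian, which is exactly the descent condition attached to the abstract blow-up square. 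This rests on the blow-up formula for the Hodge sheaves: $Rf_*\Omega^i_{\operatorname{Bl}_Z(X)/L}$ splits as $\Omega^i_{X/L}$ plus a term supported on $Z$, and the supported term matches the cone of $\Omega^i_{Z/L} \to Rg_*\Omega^i_{E/L}$ computed through the projective-bundle formula for $E \to Z$. Because this is a local question near $Z$, I would pass to a tubular neighbourhood modelled on the normal bundle $N_{Z/X}$, where $\operatorname{Bl}_Z(X)$ becomes the projectivized normal cone and the computation is governed by the explicit, analytically valid projective-bundle formula for $\Omega^j_{\mathbb{P}(N_{Z/X})/Z}$.

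The main obstacle is precisely this analytic blow-up formula: unlike in the algebraic setting, where the decomposition of $Rf_*\Omega^i$ for a smooth blow-up is classical, here it must be established directly in rigid (and overconvergent) geometry, with care taken that the local identification with the projectivized normal bundle is compatible with the differentials. Once the square above is shown to be homotopy cartesian, combining it with étale descent and the basis criterion of Proposition \ref{modelrigid} (resp. Proposition \ref{modeldagger}) shows that the éh-cohomology of $\Omega^i_{\eh}$ coincides with the analytic cohomology of $\Omega^i$ on every smooth object, functorially; sheafifying then yields $R\pi_{X,*}\Omega^i_{\eh} = \Omega^i_{X/L}$ with all higher direct images vanishing. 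The dagger case runs identically, using Proposition \ref{modeldagger} and the fact that overconvergent differentials obey the same étale- and blow-up-descent.
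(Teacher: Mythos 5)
The paper contains no proof of this statement to compare against: Theorem \ref{ehdescentrigid} is imported directly from the literature (``by \cite[Theorem 4.0.2]{guo2019hodgetate}''), for the rigid case and its dagger analogue alike. Your outline is in fact a reasonable reconstruction of the strategy of that cited proof: compute \'eh-cohomology on the Beilinson basis of smooth objects (Propositions \ref{modelrigid} and \ref{modeldagger}), then verify descent of the Hodge presheaf $X \mapsto \rg_{\an}(X,\Omega^i_{X/L})$ along the three kinds of generating covers, with the substance concentrated in the abstract blow-up squares with smooth center, treated via a tubular neighbourhood and the projective bundle formula.

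Judged as a proof, however, your proposal has two genuine gaps. First, and decisively, the rigid-analytic blow-up formula --- the homotopy cartesianness of your square relating $\Omega^i_{X/L}$, $Rf_*\Omega^i_{\operatorname{Bl}_Z(X)/L}$, $\Omega^i_{Z/L}$ and $Rg_*\Omega^i_{E/L}$ --- is essentially the entire mathematical content of the theorem, and you explicitly defer it as ``the main obstacle.'' Nothing is proven until this is done: the classical algebraic argument does not transfer formally, since the analytic blow-up, the identification of a neighbourhood of $Z$ with a neighbourhood in the normal bundle (a Kiehl-type tubular neighbourhood theorem), and the projective bundle formula for Hodge cohomology all require separate justification in the rigid category, and once more in the overconvergent (dagger) category. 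Second, your reduction ``the \'eh-topology is generated by these covers, so it suffices to check descent along each generator'' is not automatic for sheaves of complexes: to deduce (hyper)descent from the behaviour on distinguished squares one needs the formalism of complete, regular and bounded cd-structures in the sense of Voevodsky, or an explicit hypercovering argument; this scaffolding is precisely what the cited reference sets up and what your sketch passes over silently. A minor point in the same spirit: the claim that a universal homeomorphism between smooth rigid spaces in characteristic $0$ is an isomorphism is true but is itself a statement requiring proof in the non-archimedean setting, not a free reduction.
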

	
	With the \'eh-topology, we can define the de Rham cohomology for rigid analytic varieties as follows.
	
	\begin{defn}
		Let $X$ be a rigid analytic variety or a dagger variety over $L$, the de Rham cohomology of $X$ is defined to be $$\dr(X):=\rg(X,\Omega^{\bullet}_{\operatorname{\acute{e}h}}).$$
	\end{defn}
	
	For $X$ defined over $C$, to have a meaningful de Rham-\'etale comparison theorem, it is necessary to introduce the $B^+_{\operatorname{dR}}$-cohomology, which is a deformation of de Rham cohomology, i.e. we have $$H^i_{\operatorname{dR}}(X/B_{\operatorname{dR}}^+)\otimes_{B_{\operatorname{dR}}^+}C\simeq H^i_{\operatorname{dR}}(X).$$ The $B^+_{\operatorname{dR}}$-cohomology has various constructions, we refer the reader to \cite{bms1}, \cite{guo2019hodgetate}, \cite{CN4.3} and \cite{bosco2023rational} for different constructions.
	
	The following theorem is due to \cite[Theorem 1.1.4]{guo2019hodgetate} and \cite[Theorem 7.4]{bosco2023rational}.
	
	\begin{thm} \label{ehcomparison}
		Let $X$ be a proper rigid-analytic variety, for each $i \geq 0$, 
		
		(i) If $X$ is defined over $K$, we have a natural isomorphism $$H^i_{\et}(X,\Q_p) \otimes_{\Q_p}B_{\operatorname{dR}} \simeq H^i_{\operatorname{dR}}(X) \otimes_{K}B_{\operatorname{dR}},$$ compatible with filtrations.
		
		(i) If $X$ is defined over $C$, we have a natural isomorphism $$H^i_{\et}(X,\Q_p) \otimes_{\Q_p}B_{\operatorname{dR}} \simeq H^i_{\operatorname{dR}}(X/B_{\operatorname{dR}}^+) \otimes_{B_{\operatorname{dR}}^+}B_{\operatorname{dR}},$$ compatible with filtrations. When $X$ can be descent to $X_0$ over $K$, this isomorphism agrees with the comparison isomorphism in (i), under a canonical identification $$H^i_{\operatorname{dR}}(X/B_{\operatorname{dR}}^+)= H^i_{\operatorname{dR}}(X_0)\otimes_KB_{\operatorname{dR}}^+.$$
	\end{thm}
	
	There is a logarithmic analogy of the above theorem for proper smooth rigid analytic varieties as follows, which is a generalization of \cite[Theorem 1.7]{bms1}.
	
	\begin{theorem} \cite{xsperiod} \label{logcdr}
		Let $X$ is a proper smooth rigid analytic varieties over $C$,  $D \subset X$ is a strictly normal crossing divisor, and denote by $U=X-D.$ We endow $X$ with the logarithmic structure induced by the divisor $D$. Then there are cohomology groups $H^i_{B_{\operatorname{dR}}^+}(X)$ which come with a canonical filtered isomorphism $$H^i_{\ket}(X,\Q_p) \otimes_{\Q_p}B_{\operatorname{dR}} \simeq H^i_{B_{\operatorname{dR}}^+}(X) \otimes_{B_{\operatorname{dR}}^+}B_{\operatorname{dR}}.$$ When $X$ and $Z$ come from $X_0$ and $Z_0$ over a discrete valued field $K$, this isomorphism agrees with the comparison isomorphism (see \cite[Theorem 1.1]{dllz2023logrh}) $$H^i_{\ket}(X,\Q_p) \otimes_{\Q_p}B_{\operatorname{dR}} \simeq H^i_{\operatorname{logdR}}(X_0) \otimes_{K}B_{\operatorname{dR}},$$under a canonical identification $$H^i_{B_{\operatorname{dR}}^+}(X)= H^i_{\operatorname{logdR}}(X_0)\otimes_KB_{\operatorname{dR}}^+.$$ Moreover, if $X$ comes from $X_0$ over a discrete valued field $K$ or admitting a proper semistable formal model over $\operatorname{Spf}(\mathcal{O}_C)$, $H^i_{B_{\operatorname{dR}}^+}(X)$ is a finite free $B_{\operatorname{dR}}^+$-module, and we have
		
		(i) The Hodge–de Rham spectral sequence $$E^{ij}_1:=H^j(X_{\operatorname{log}},\Omega^{\log,i}_{X})\Rightarrow H^{i+j}_{\operatorname{logdR}}(X)$$ degenerates at $E_1$.
		
		(ii) The Hodge–Tate spectral sequence
		$$E^{ij}_2:=H^j(X_{\operatorname{log}},\Omega^{\log,i}_{X})(-j)\Rightarrow H^{i+j}_{\ket}(X,\Q_p)\otimes_{\Q_p}C$$ degenerates at $E_2$.
	\end{theorem}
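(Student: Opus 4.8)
The plan is to build the logarithmic $B_{\operatorname{dR}}^+$-cohomology on the pro-kummer-\'etale site of the log adic space $(X,\mathcal M_D)$, where $\mathcal M_D$ is the log structure induced by $D$, and then extract the comparison, finiteness, and degeneration statements from a logarithmic Poincar\'e lemma together with a kummer-\'etale primitive comparison theorem. Following the non-logarithmic construction of \cite{bms1} and the log-adic formalism of \cite{dllz2023logadic}, I would work with the logarithmic de Rham period sheaf $\mathcal O\mathbb B_{\operatorname{dR}}^+$ on $X_{\proket}$, equipped with its $t$-adic filtration and its connection $\nabla$ with log poles along $D$, and set
$$\rg_{B_{\operatorname{dR}}^+}(X):=\rg\bigl(X_{\proket},\operatorname{DR}(\mathcal O\mathbb B_{\operatorname{dR}}^+)\bigr),$$
filtered by the convolution of the filtration on $\mathcal O\mathbb B_{\operatorname{dR}}^+$ with the stupid (Hodge) filtration of the log de Rham complex. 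This is the construction whose induced filtration, as noted in Remark~(2), differs from the one carried by $H^i_{\operatorname{dR}}(U/B_{\operatorname{dR}}^+)$.

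First I would establish the logarithmic Poincar\'e lemma, namely that $\operatorname{DR}(\mathcal O\mathbb B_{\operatorname{dR}}^+)$ is a filtered resolution of $\mathbb B_{\operatorname{dR}}^+$ on $X_{\proket}$; this is the log analogue of Scholze's Poincar\'e lemma and is proved by a local computation on toric charts. Combined with the kummer-\'etale primitive comparison theorem (the log version of the \'etale comparison of \cite{scholze2013p}, available in the DLLZ setting), inverting $t$ yields the filtered isomorphism
$$H^i_{\ket}(X,\Q_p)\otimes_{\Q_p}B_{\operatorname{dR}}\simeq H^i_{B_{\operatorname{dR}}^+}(X)\otimes_{B_{\operatorname{dR}}^+}B_{\operatorname{dR}}.$$
Base change along $\theta\colon B_{\operatorname{dR}}^+\to C$ recovers $H^i_{\operatorname{logdR}}(X)$ and exhibits $H^i_{B_{\operatorname{dR}}^+}(X)$ as a $B_{\operatorname{dR}}^+$-deformation of log de Rham cohomology. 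When $X=X_{0,C}$ descends to $X_0/K$, I would invoke the logarithmic Riemann--Hilbert correspondence \cite[Theorem 1.1]{dllz2023logrh}: the trivial $\Q_p$-local system corresponds to the constant filtered log connection on $X_0$, whose log de Rham cohomology is $H^i_{\operatorname{logdR}}(X_0)$; this simultaneously produces the canonical identification $H^i_{B_{\operatorname{dR}}^+}(X)=H^i_{\operatorname{logdR}}(X_0)\otimes_K B_{\operatorname{dR}}^+$ and the compatibility of the two comparison isomorphisms.

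For finiteness, freeness, and degeneration I would split into the two cases. In the descendable case $H^i_{\operatorname{logdR}}(X_0)$ is a finite-dimensional $K$-vector space because $X_0$ is proper, so $H^i_{B_{\operatorname{dR}}^+}(X)$ is automatically finite free. In the semistable-model case I would run the logarithmic $A_{\mathrm{inf}}$-cohomology argument on a proper semistable formal model over $\Spf(\mathcal O_C)$, the log analogue of the main construction of \cite{bms1}: one proves the $A_{\mathrm{inf}}$-cohomology is perfect and that its specializations compute the log de Rham, Hodge--Tate, and $B_{\operatorname{dR}}^+$-cohomologies, whence finiteness and freeness of $H^i_{B_{\operatorname{dR}}^+}(X)$ over $B_{\operatorname{dR}}^+$ follow from the standard base-change-and-flatness argument. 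Degeneration of the Hodge--de Rham spectral sequence is then the numerical trick of \cite{bms1}: freeness forces the $B_{\operatorname{dR}}^+$-rank of $H^i_{B_{\operatorname{dR}}^+}(X)$ to equal $\sum_{p+q=i}\dim H^q(X_{\operatorname{log}},\Omega^{\log,p}_X)$, which is possible only if the spectral sequence degenerates at $E_1$; the Hodge--Tate degeneration at $E_2$ follows by the same count applied to the associated graded $\gr$ of the comparison.

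The hard part will be the logarithmic Poincar\'e lemma with exact control of the Hodge filtration produced by the log poles, and, in the non-descendable case, the construction and perfectness of the logarithmic $A_{\mathrm{inf}}$-cohomology: the log structure complicates both the local description of $\mathcal O\mathbb B_{\operatorname{dR}}^+$ near $D$ and the almost-purity inputs feeding the primitive comparison theorem.
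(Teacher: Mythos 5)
First, a point of order: this paper never proves Theorem \ref{logcdr}. It is quoted, statement and all, from the author's companion paper \cite{xsperiod}, and is used here only as a black box among the preliminaries, so there is no internal proof to measure your proposal against. All I can do is assess your reconstruction on its own merits and against the paper's framing of the result as a logarithmic generalization of \cite[Theorem 1.7]{bms1}. On that score your outline is the expected one: log $B_{\operatorname{dR}}^+$-cohomology built from the logarithmic de Rham period sheaf on the pro-Kummer-\'etale site of \cite{dllz2023logadic}, a log Poincar\'e lemma with filtration control, the Kummer-\'etale primitive comparison theorem, the log Riemann--Hilbert correspondence \cite{dllz2023logrh} for the arithmetic compatibility (precisely the reference the theorem itself points to), and a logarithmic $A_{\mathrm{inf}}$-theory over a proper semistable formal model for the non-descendable case (this last ingredient exists in the literature: it is Cesnavicius--Koshikawa's semistable $A_{\mathrm{inf}}$-cohomology, which you are implicitly re-deriving). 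The architecture is sound and the ingredients are all available.

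The step that does not hold up as written is the degeneration argument. Write $h^{i,j}=\dim_C H^j(X_{\operatorname{log}},\Omega^{\log,i}_X)$. Freeness of $H^n_{B_{\operatorname{dR}}^+}(X)$ together with the filtered comparison gives $\operatorname{rank}_{B_{\operatorname{dR}}^+}H^n_{B_{\operatorname{dR}}^+}(X)=\dim_{\Q_p}H^n_{\ket}(X,\Q_p)=\dim_C H^n_{\operatorname{logdR}}(X)$; it does \emph{not} give, as you assert, that this rank equals $\sum_{i+j=n}h^{i,j}$ --- that equality is exactly what is in question. Moreover, the two spectral sequences produce inequalities pointing the same way, namely $\dim_C H^n_{\operatorname{logdR}}(X)\leq\sum_{i+j=n}h^{i,j}$ and $\dim_{\Q_p}H^n_{\ket}(X,\Q_p)\leq\sum_{i+j=n}h^{i,j}$, so no amount of rank bookkeeping among these quantities can force degeneration; you need an independent input yielding the reverse inequality $\sum_{i+j=n}h^{i,j}\leq\dim_{\Q_p}H^n_{\ket}(X,\Q_p)$. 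In the semistable-model case this is supplied by the Hodge--Tate specialization of the log $A_{\mathrm{inf}}$-complex together with semicontinuity of ranks for perfect complexes over $A_{\mathrm{inf}}$, so there your sketch closes once that specialization is made explicit. In the descendable case it is not: the standard route runs in the opposite order, first killing the Hodge--Tate differentials by Galois equivariance and Tate's vanishing $(C(j))^{\mathscr{G}}=0$ for $j\neq 0$ (each $d_r$, $r\geq 2$, is a $\mathscr{G}$-equivariant map between distinct Tate twists of finite-dimensional $K$-vector spaces, hence zero), and only then deducing Hodge--de Rham degeneration at $E_1$ from the resulting dimension count. Your proposal should separate the two cases along these lines rather than appeal to a single ``numerical trick''.
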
	
	
	\subsection{Cohomology for dagger varieties}
	
	Following \cite{CNderham} and \cite{bosco2023rational}, we briefly review a construction that allows us to canonically define a cohomology theory on $\mathrm{Rig}_L^\dagger$ from a cohomology theory on $\mathrm{Rig}_L$.
	
	Let $\mathcal D$ be a presentable $\infty$-category. The continuous functor $$l:\mathrm{Rig}_{L,\eh}^\dagger \to \mathrm{Rig}_{L,\eh}: X\mapsto \widehat X$$ given by sending a dagger variety $X$ to its completion $\widehat X$ induces an adjunction $$l_*:\Shv^{\mathrm{hyp}}(\mathrm{Rig}_{L, \eh}^\dagger, \mathcal D)\rightleftarrows \Shv^{\mathrm{hyp}}(\mathrm{Rig}_{L, \eh}, \mathcal D):l^{* \mathrm{hyp}},$$ where $l^{* \mathrm{hyp}}$ is the hypercompletion of $l^*.$ For $\f\in \Shv^{\mathrm{hyp}}(\mathrm{Rig}_{L, \eh}, \mathcal D)$, denote by $$\f^\dagger:=l^{* \mathrm{hyp}}\f\in \Shv^{\mathrm{hyp}}(\mathrm{Rig}_{L, \eh}^\dagger, \mathcal D).$$
	
	Therefore, in the following context, once we introduce a cohomology theory (without compact support) for rigid analytic varieties (e.g., de Rham, Hyodo-Kato, \'etale, pro-\'etale, syntomic), we will automatically apply the above construction to get the associated definition for dagger varieties (by taking $\mathcal D=D(\Mod^{\mathrm{solid}}_\Q)$).

	We also note that the \'etale cohomology for a dagger variety $X$ is the same as the \'etale cohomology for $\widehat X,$ which is implied by \cite[Corollary A.28]{vezzani2018monskywashnitzer}.
	
	\section{Hyodo-Kato cohomology (with compact support)}
	
	%\colorbox{red}{how to define $\hkc$ for non smooth rigid analytic var?}
	
	%\colorbox{red}{compact support coh: contravariant for quasi compact; covariant for open immersion}
	
	In this section we review the definition of Hyodo-Kato cohomology for algebraic varieties of \cite{beilinson2013crys}, and Hyodo-Kato cohomology for rigid analytic varieties in \cite{CN4.3}. We will also define the Hyodo-Kato cohomology with compact support, and we will show that our definition agrees with Tsuji's log-crystalline cohomology with compact support in \cite{tsuji1999poincare} in the following chapters.
	
	\subsection{Hyodo-Kato cohomology for algebraic varieties}
	
	We review the Hyodo-Kato Cohomology (with compact support) for algebraic varieties. In particular, thanks to the Hyodo-Kato realization functor, we have the Poincar\'e duality for Hyodo-Kato Cohomology.
	
	\subsubsection{Definition and properties} We begin with recalling the definition and basic properties of algebraic Hyodo-Kato Cohomology of \cite{beilinson2013crys} and \cite{nizioldeglise2018syntomic}.
	
	(1) Arithmetic setup: We define here Hyodo-Kato cohomology of algebraic varieties over $K$. Suppose $(U, \overline{U})\in \operatorname{Var}^{\operatorname{ss}}_K$ is a strict semistable pair, which is defined in Definition \ref{modelalgebraic}. Let $\mathscr A_{\operatorname{HK}}$ be the h-sheafification of the presheaf $$(U, \overline{U})\mapsto \hk((U, \overline{U})_0):=\rg_{\operatorname{cris}}((U, \overline{U})_0/\mathcal O^0_{F_L})_{\Q_p}, $$ where we endow $(U, \overline{U})$ with the log structure defined by the compactifing log structure of the open immersion $U\hookrightarrow \overline U.$ The sheaf $\mathscr A_{\operatorname{HK}}$ is a h-sheaf of dg solid $F$-algebra on $\operatorname{Alg}_K$ equipped with a Frobenius operator $\phi$ and a monodromy operator $N$ such that $N\phi=p\phi N$. For an algebraic variety $X \in \operatorname{Alg}_K,$ we define the Hyodo-Kato cohomology of $X$ by $\hk(X):=\rg(X,\mathscr A_{\operatorname{HK}}).$ By \cite{janniziol2016hk}, there exist a Hyodo-Kato morphism $$\iota_{\operatorname{HK}}:\hk(X)\to \dr(X),$$ which is constructed locally for a semistable pair and then descend.
	
	(2) Geometric setup: Suppose now $(U, \overline{U})\in \operatorname{Var}^{\operatorname{b,ss}}_{\overline K}$ be a basic semistable model. Endow $(U, \overline{U})$ with the log structure as above, then by definition, $f:(U, \overline{U}) \to \Spec (\mathcal O_{\overline K})^{\times}$ is the base change of a semistable pair $\theta_L:(U_L, \overline{U_{\mathcal O_L}})\to \Spec (\mathcal O_{L})^{\times}$ for a finite field extension $L/K.$ The data $(L',\theta_L,U_L, \overline{U_{\mathcal O_L}})$ form a filtered set $\Sigma$. Note that for a morphism $(L,\theta_L,U_L, \overline{U_{\mathcal O_L}}) \to (L',\theta_{L'},U'_{L'}, \overline{U'_{\mathcal O_{L'}}})$ in $\Sigma$, the base change theorem tells us $$\hk((U'_{L'}, \overline{U'_{\mathcal O_{L'}}})_0)\otimes_{F_L'}F_L \simeq \hk((U_{L}, \overline{U_{\mathcal O_{L}}})_0).$$
	
	Let $\mathscr A_{\operatorname{HK}}$ be the h-sheafification of the presheaf $$(U, \overline{U})\mapsto \hk((U, \overline{U})_0):=\varinjlim_L\hk((U_L, \overline{U_{\mathcal O_L}})_0),$$ where the filtered limit is indexed by $\Sigma.$ This is a h-sheaf of dg solid $F^{\operatorname{nr}}$-algebra on $\operatorname{Alg}_{\overline K}$ equipped with a Frobenius operator $\phi$ and a monodromy operator $N$ such that $N\phi=p\phi N$. For an algebraic variety $X \in \operatorname{Alg}_{\overline K},$ we define the Hyodo-Kato cohomology of $X$ by $\hk(X):=\rg(X,\mathscr A_{\operatorname{HK}}).$ By a more careful data keeping, Beilinson proved in \cite{beilinson2013crys} that we have a natural Hyodo-Kato isomorphism (by fixing a pseudo-uniformizer): $$\iota_{\operatorname{HK}}\otimes_{F^{\operatorname{nr}}} \overline K:\hk(X)\otimes^L_{F^{\operatorname{nr}}}\overline K\xrightarrow{\simeq} \dr(X).$$
	
	We have the following local-global compatibility.
	
	\begin{prop} \label{hklocalglobal}
		(i)\cite[2.5]{beilinson2013crys} For any $(V,\overline{V}) \in \operatorname{Var}_{\overline{K}}^{\operatorname{ss}},$ the natural map $$\hk(V,\overline{V})\to \hk(V)$$ is a quasi-isomorphism.
		
		(ii)\cite[Theorem 3.18]{janniziol2016hk} For any $(V,\overline{V}) \in \operatorname{Var}_{K}^{\operatorname{ss}},$ the natural map $$\hk(V,\overline{V})\to \hk(V)$$ is a quasi-isomorphism.
	\end{prop}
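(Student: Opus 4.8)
The statement asserts that the presheaf defining $\mathscr A_{\operatorname{HK}}$ already satisfies h-descent on the basis of strictly semistable pairs; equivalently, the unit of h-sheafification is an equivalence at $(V,\overline V)$. The plan is to reduce this to the analogous local-global compatibility for de Rham cohomology, where descent is already available, by base changing along the Hyodo-Kato isomorphism. I would treat (i) and (ii) uniformly, the only difference being the coefficient field ($F^{\operatorname{nr}}$ with base change to $\overline K$ for (i), $F$ with base change to $K$ for (ii)).

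First I would reduce to the de Rham side by faithful flatness. Both $\hk(V,\overline V)$ and $\hk(V)$ are complexes of $F^{\operatorname{nr}}$- (resp. $F$-) vector spaces, and the natural map is a morphism in $D(\Mod^{\operatorname{solid}}_{F^{\operatorname{nr}}})$ (resp. $D(\Mod^{\operatorname{solid}}_{F})$) compatible with $\varphi$ and $N$. Since a field extension is faithfully flat, a map of complexes of vector spaces is a quasi-isomorphism if and only if it is so after $\otimes_{F^{\operatorname{nr}}}\overline K$ (resp. $\otimes_F K$); moreover a quasi-isomorphism which is simultaneously a morphism of $(\varphi,N)$-modules is automatically a quasi-isomorphism in the structured derived category. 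Hence it suffices to prove the map becomes a quasi-isomorphism after the Hyodo-Kato base change, noting that $K/F$ (though possibly ramified) and $\overline K/F^{\operatorname{nr}}$ are faithfully flat.

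Next I would identify the base-changed map with a de Rham comparison. The global Hyodo-Kato isomorphism gives $\hk(V)\otimes_{F^{\operatorname{nr}}}\overline K\simeq \dr(V)$ (resp. $\otimes_F K$), while the Hyodo-Kato isomorphism for the semistable pair identifies $\hk(V,\overline V)$, after the same base change, with the log-de Rham cohomology of $\overline V_{\overline K}$ (resp. $\overline V_K$) having poles along $D=\overline V_{\overline K}-V$. Because the global $\iota_{\operatorname{HK}}$ is, by construction, the h-sheafification of the local one (for a compatible choice of pseudo-uniformizer), the square formed by the sheafification map $\hk(V,\overline V)\to\hk(V)$ and the two Hyodo-Kato isomorphisms commutes, so the base-changed map is identified with the de Rham local-global comparison from the log-de Rham cohomology of $\overline V_{\overline K}$ to $\dr(V)$. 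For a strictly semistable pair the open $V$ is smooth and $D$ is a strict normal crossings divisor, so the classical log-de Rham comparison identifies the former with the usual de Rham cohomology $\rg(V,\Omega^{\bullet}_{V})$ of the smooth variety $V$; and de Rham h-descent \cite{beilinson2012derham} (the algebraic analogue of Theorem \ref{ehdescentrigid}) identifies this in turn with the h-sheafified $\dr(V)$. Thus the de Rham local-global map is an isomorphism, and the claim follows.

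The main obstacle is two-fold. First, one must verify that the comparison square above commutes, i.e. that the local and global Hyodo-Kato isomorphisms are genuinely compatible with the sheafification map; this forces one to track the construction of $\iota_{\operatorname{HK}}$ and of $\mathscr A_{\operatorname{HK}}$ and to fix compatible pseudo-uniformizers. Second, since the global Hyodo-Kato isomorphism is itself proved by analysing $\mathscr A_{\operatorname{HK}}$, to avoid any circularity one may prefer a direct argument: show that the defining presheaf satisfies descent for the covers generating the h-topology on the basis — proper (alteration) hypercovers and abstract blow-up squares — using proper cohomological descent for log-crystalline cohomology together with its blow-up invariance. Finally, the fact that $\otimes_{F^{\operatorname{nr}}}\overline K$ commutes with the homotopy limit computing $\hk(V)=\rg(V,\mathscr A_{\operatorname{HK}})$ is controlled by the finite h-cohomological dimension and the finiteness of Hyodo-Kato cohomology, and is otherwise subsumed into invoking the global Hyodo-Kato isomorphism as an input.
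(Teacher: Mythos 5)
The first thing to note is that the paper does not prove this proposition at all: it is imported by citation, part (i) from \cite[2.5]{beilinson2013crys} and part (ii) from \cite[Theorem 3.18]{janniziol2016hk}. So your proposal cannot be matched against a paper-internal argument, only against the cited ones. Measured against those, your route is genuinely different in organization: the cited proofs establish h-descent for the (log-)crystalline presheaf itself, working integrally (mod $p^n$) via comparison with de Rham--type theories, whereas you stay entirely rational and use the Hyodo--Kato isomorphism as a bridge so that the only descent input is Beilinson's de Rham h-descent from \cite{beilinson2012derham}. That is a legitimate and arguably more economical reduction, given that classical Hyodo--Kato theory for ss-pairs is taken as known.

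Your own worry about circularity is the right thing to scrutinize, but it is resolvable --- and more cleanly than by your proposed fallback. The sheaf-level isomorphism $\mathscr{A}_{\operatorname{HK}}\otimes_{F^{\operatorname{nr}}}\overline{K}\simeq \mathscr{A}_{\operatorname{dR}}$ is by construction the h-sheafification of the \emph{local} (classical) Hyodo--Kato isomorphisms on the basis of ss-pairs, and sheafification carries section-wise quasi-isomorphisms of presheaves to equivalences of sheaves; so the ``global'' isomorphism you invoke does not presuppose local--global compatibility, and your comparison square commutes simply by naturality of the sheafification unit. What this argument genuinely requires, and where the real content sits, is: (a) strict naturality of the local Hyodo--Kato isomorphisms in the pair, including across the filtered system of models over varying $\mathcal{O}_L$ in the geometric case --- this is exactly Beilinson's pseudo-uniformizer bookkeeping, since the isomorphisms attached to two uniformizers differ by $\exp(\log(\pi'/\pi)N)$; and (b) commuting $\otimes_{F^{\operatorname{nr}}}\overline{K}$ past $\rg(V_{\operatorname{h}},-)$, which needs a finite h-cohomological dimension (or finiteness) argument --- note this issue is vacuous in the arithmetic case (ii), where $K/F$ is finite. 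By contrast, your fallback ``direct argument'' (proper cohomological descent plus blow-up invariance for log-crystalline cohomology) should be dropped: h-descent for the crystalline presheaf is precisely the hard theorem being cited, and blow-up invariance of log-crystalline cohomology is not an off-the-shelf input, so as written that alternative is a gap rather than a fix. Keep the primary route, make points (a) and (b) explicit, and the proof is sound.
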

	
	\begin{rem}
		According to \cite[Theorem 3.18]{janniziol2016hk}, the property also holds for log smooth schemes over $\mathcal O_K^{\times},$ we will not use this fact in the following chapters.
	\end{rem}
	
	\subsubsection{Algebraic Hyodo-Kato Cohomology with compact support} 
	
	We define the Hyodo-Kato Cohomology with compact support in this section.
	
	We first recall some basic definition and properties of compactification. For a morphism $X \xrightarrow{} Y$ of schemes, we say $X$ has a compactification over $Y$ if there exists a quasi-compact open immersion $X \to \overline{X}$ such that $\overline X$ is proper over $Y$. For $X \xrightarrow{} Y$ separated and of finite type, such a compactification always exists. The category of compactifications of $X$ over $Y$ is cofiltered, which means for two compactifications $X\to \overline{X_1}$ and $X\to \overline{X_2}$, there exists a compactification $X\to \overline{X}$ and morphisms $\overline{X_1} \xleftarrow{} \overline{X} \xrightarrow{} \overline{X_2}$ over $Y$ such that both morphisms are isomorphisms on the open subset $X$. Moreover, suppose we have a morphism $X \xrightarrow{} Y$ of algebraic varieties over a field $L$, and a compactification $X \to \overline{X}$ over $L$, then one can construct a compactification $X \to \overline{X'}$ over $Y$, by taking $\overline{X'}$ to be the scheme theoretic image of $X \to \overline{X} \times Y.$
	
	\begin{defn} \label{hkcalgebraic}
		Let $X$ be an algebraic variety, and $X \hookrightarrow \overline{X}$ be a compactification of $X$ over $L$. We define the Hyodo-Kato cohomology with compact support of $X$ to be
		$$\hkc(X):=[\hk(\overline{X})\to \hk(\overline{X}-X)].$$
	\end{defn}	
	
	Since the Hyodo-Kato Cohomology satisfies h-descent, one checks easily that $\hkc(X)$ is well-defined: indeed, suppose we have two compactifications of $X$ in $L:$ $X\hookrightarrow \overline{X_1}$ and $X\hookrightarrow \overline{X_2}$, then we have a compactification $X\to \overline{X}$ and morphisms $\overline{X_1} \xleftarrow{} \overline{X} \xrightarrow{} \overline{X_2}$. Then we have a abstract blow up square:
	$$\begin{tikzcd}
		\overline X-X \arrow[r] \arrow[d] & \overline X \arrow[d] \\
		\overline{X_i}-X \arrow[r] & \overline{X_i}
	\end{tikzcd}$$
	for $i=1,2.$ This is in fact an \'eh-covering in the sense of \cite{thomas2006eh}.

	\begin{rem}
		For a general cohomological theory satisfying \'eh-descent, one can define similarly the cohomology with compact support.
	\end{rem}
	
	\begin{prop}
		Suppose $X$ be an algebraic variety, $U \hookrightarrow X$ be an open immersion. Then we have
		$$\hkc(U)\simeq  [\hkc(X) \xrightarrow{} \hkc(X-U)].$$
	\end{prop}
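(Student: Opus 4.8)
The plan is to compute all three compactly supported complexes with a single compactification and then read off the triangle from the octahedral axiom, so that no geometric input beyond the already-established well-definedness of $\hkc$ is needed. First I would fix a compactification $j\colon X\hookrightarrow\overline X$ over $L$ and set $Z:=X-U$. The composite $U\hookrightarrow X\hookrightarrow\overline X$ is again a quasi-compact open immersion into the proper $L$-scheme $\overline X$, so $\overline X$ is also a compactification of $U$; moreover $\overline X-U$ is a closed, hence proper, subscheme of $\overline X$ in which $Z=X\cap(\overline X-U)$ is open, so $\overline X-U$ is a compactification of $Z$. Since $\hkc$ is independent of the chosen compactification (this is exactly the well-definedness recorded above, proved via the cofiltered category of compactifications together with $\eh$-descent for $\mathscr A_{\operatorname{HK}}$), and recalling that the bracket $[A\to B]$ denotes the fibre $\fib(A\to B)$, I may compute
\begin{align*}
	\hkc(X)&=[\hk(\overline X)\to\hk(\overline X-X)], & \hkc(U)&=[\hk(\overline X)\to\hk(\overline X-U)],
\end{align*}
and, using that $(\overline X-U)-Z=\overline X-X$,
$$\hkc(Z)=[\hk(\overline X-U)\to\hk(\overline X-X)].$$
Here the closed complements may be taken with their reduced structures, since $\hk$ is insensitive to nilpotents.

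Next, the inclusions of closed subschemes $\overline X-X\hookrightarrow\overline X-U\hookrightarrow\overline X$ (note $U\subseteq X$ forces $\overline X-X\subseteq\overline X-U$) induce by restriction a composable pair
$$\hk(\overline X)\xrightarrow{\alpha}\hk(\overline X-U)\xrightarrow{\beta}\hk(\overline X-X)$$
in the stable $\infty$-category $D(\Mod^{\solid}_{F^{\operatorname{nr}}})$. The octahedral axiom supplies a fibre sequence of fibres $\fib(\alpha)\to\fib(\beta\circ\alpha)\to\fib(\beta)$, and by the three identifications above these fibres are precisely $\hkc(U)$, $\hkc(X)$ and $\hkc(Z)$. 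This yields the fibre sequence $\hkc(U)\to\hkc(X)\to\hkc(Z)$, that is, $\hkc(U)\simeq[\hkc(X)\to\hkc(Z)]$, which is the assertion.

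The step that demands genuine care — and which I regard as the main, though mild, obstacle — is not the homological algebra but the bookkeeping surrounding well-definedness. I would need to check that the two maps produced by the octahedron coincide with the canonical functorial maps $\hkc(U)\to\hkc(X)$ and $\hkc(X)\to\hkc(Z)$, and that the resulting triangle is independent of the chosen $\overline X$. Both follow by running the cofiltered/abstract blow-up argument used for well-definedness one level up: given two compactifications, one dominates them by a third and compares the associated octahedra, using $\eh$-descent for $\mathscr A_{\operatorname{HK}}$ to identify the boundary terms $\hk(\overline X-U)$ and $\hk(\overline X-X)$ coherently across the tower. This promotes the equivalence to a natural one and completes the argument.
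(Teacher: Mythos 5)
Your proof is correct and takes essentially the same route as the paper: the paper likewise fixes one compactification $\overline X$ of $X$, observes that $\overline X$ also compactifies $U$ and that $\overline X-U$ compactifies $X-U$, and reads off the triangle from the three resulting defining fibre sequences. Your explicit appeal to the octahedral axiom and the naturality/well-definedness bookkeeping merely spell out what the paper's one-line conclusion leaves implicit.
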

	
	\begin{proof}
		Choose a compactification of $X$: $X\hookrightarrow \overline{X}$, then $\overline{X}$ is also a compactification of $U,$ and $\overline X-U$ is a compactification of $X-U.$ Then the proposition follows from the definition of compactly supported cohomology for $X,U$ and $X-U.$
	\end{proof}
	
	\begin{prop}
		The Hyodo-Kato Cohomology with compact support is contravariant for proper morphisms and covariant for open immersions.
	\end{prop}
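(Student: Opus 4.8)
The plan is to build both maps directly from the definition $\hkc(X)=[\hk(\overline{X})\to\hk(\overline{X}-X)]$, exploiting that the total complex (cone) is functorial in the arrow $\hk(\overline{X})\to\hk(\overline{X}-X)$ and that $\hk$ itself is contravariant for morphisms of proper varieties. In each case the entire content is to produce a single commuting square
$$\begin{tikzcd}
\hk(\overline{Y}) \arrow[r] \arrow[d] & \hk(\overline{Y}-Y) \arrow[d] \\
\hk(\overline{X}) \arrow[r] & \hk(\overline{X}-X)
\end{tikzcd}$$
of the appropriate shape and then pass to cones. The horizontal arrows are the structural boundary maps; the two vertical arrows are supplied by contravariance of $\hk$, and commutativity is automatic because all maps in sight are restrictions.

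\emph{Open immersions.} Let $j\colon U\hookrightarrow X$ be open and fix a compactification $X\hookrightarrow\overline{X}$ over $L$. Since $U\to X$ and $X\to\overline{X}$ are quasi-compact open immersions, so is $U\to\overline{X}$; hence $\overline{X}$ is simultaneously a compactification of $U$. As $U\subset X$, the boundary $\overline{X}-X$ is a closed subscheme of $\overline{X}-U$, so contravariance of $\hk$ along this closed immersion gives $\hk(\overline{X}-U)\to\hk(\overline{X}-X)$. Placing this over the identity of $\hk(\overline{X})$ produces a commuting square whose horizontal cones are $\hkc(U)$ and $\hkc(X)$, and the induced map on cones is the desired $\hkc(U)\to\hkc(X)$.

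\emph{Proper morphisms.} Let $f\colon X\to Y$ be proper and fix compactifications $Y\hookrightarrow\overline{Y}$ and $X\hookrightarrow\overline{X}_0$ over $L$. Following the scheme-theoretic image construction recalled before Definition \ref{hkcalgebraic}, let $\overline{X}$ be the scheme-theoretic image of $X\to\overline{X}_0\times_L\overline{Y}$, where the first component is the given open immersion and the second is the composite of $f$ with $Y\hookrightarrow\overline{Y}$. Then $\overline{X}$ is proper over $L$, the map $X\hookrightarrow\overline{X}$ is a compactification, and the second projection gives $g\colon\overline{X}\to\overline{Y}$ extending $f$. Contravariance of $\hk$ gives $g^{*}\colon\hk(\overline{Y})\to\hk(\overline{X})$, and the task is to promote this to a boundary map $\hk(\overline{Y}-Y)\to\hk(\overline{X}-X)$, for which it suffices that $g^{-1}(Y)=X$, i.e.\ that $g$ carries $\overline{X}-X$ into $\overline{Y}-Y$.

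This last point is where properness of $f$ is essential, and it is the main obstacle. I would argue that $g^{-1}(Y)=\overline{X}\cap(\overline{X}_0\times Y)$ is exactly the closure of $X$ in $\overline{X}_0\times Y$. Since $Y$ is separated, the map $X\to\overline{X}_0\times Y$ (the graph of $f$ composed with $X\hookrightarrow\overline{X}_0$) is an immersion; since $f$ is proper and $\overline{X}_0\times Y\to Y$ is separated, this immersion is proper over $Y$, hence a closed immersion. Therefore the closure of $X$ in $\overline{X}_0\times Y$ is $X$ itself, giving $g^{-1}(Y)=X$, and the restriction $\overline{X}-X\to\overline{Y}-Y$ supplies the right vertical map; passing to cones yields $\hkc(Y)\to\hkc(X)$. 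Finally, I would verify that both assignments are independent of the auxiliary compactifications and are compatible with composition and identities, exactly as in the proof that $\hkc$ is well defined: any two choices are dominated by a third through maps that are isomorphisms over the open variety, the resulting abstract blow-up squares are \'eh-coverings, and h-descent for $\mathscr{A}_{\operatorname{HK}}$ forces the induced maps on cones to coincide, with functoriality of composites following from the same uniqueness.
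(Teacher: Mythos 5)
Your proof is correct and takes essentially the same approach as the paper: the open-immersion map comes from restriction along the closed immersion $\overline{X}-X\hookrightarrow\overline{X}-U$ placed over the identity of $\hk(\overline{X})$, and the proper-morphism map comes from compactifying inside the product $\overline{X}_0\times\overline{Y}$ and proving the resulting square is Cartesian by the same properness-cancellation argument (your ``proper immersion, hence closed, hence equal to its closure'' is the same point as the paper's ``proper, open, dense immersion into the preimage is an isomorphism''). One minor remark: your direction $\hk(\overline{X}-U)\to\hk(\overline{X}-X)$ for the induced map is the correct one, whereas the paper's text writes this arrow in the opposite direction, evidently a typo.
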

	
	\begin{proof}
		If $U \hookrightarrow X$ is an open immersion, as above we choose a compactification $X\hookrightarrow \overline{X}$ of $X$. Then the map $\hk(\overline X-X) \to \hk(\overline X-U)$ induces $\hkc(U) \to \hkc(X)$.
		
		If $Y \to X$ is a proper morphism, then choose a compactification $X\hookrightarrow \overline{X}$ of $X$ and  a compactification $Y\hookrightarrow \overline{Y'}$ of $Y$. Denote by $\overline{Y}$ the closure of the image of the canonical morphism $Y \to \overline{Y'} \times \overline{X}$. We have a Cartesian square $$\begin{tikzcd}
			Y \arrow[r,hookrightarrow] \arrow[d] & \overline Y \arrow[d] \\
			{X} \arrow[r,hookrightarrow] & \overline{X},
		\end{tikzcd}$$
		because the map from $Y$ to the preimage of $X$ in $\overline{Y}$ is proper and open, so it is identity. Then $\hkc(X) \to \hkc(Y)$ is given by the following commutative square 
		$$\begin{tikzcd}
			\overline Y-Y \arrow[r,hookrightarrow] \arrow[d] & \overline Y \arrow[d] \\
			\overline{X}-X \arrow[r,hookrightarrow] & \overline{X}.
		\end{tikzcd}$$
	\end{proof} 
	
	\subsubsection{The Hyodo-Kato realization functor}
	
	The Hyodo-Kato cohomology with compact support is also defined in \cite{nizioldeglise2018syntomic}, by constructing the realization functor of Hyodo-Kato cohomology, and then applying the six functor formalism for Voevodsky's motive. In this part we will briefly summarize \cite{nizioldeglise2018syntomic}. This will give us the pairing and the Poincar\'e duality.
	
	Let $DM(\overline{K},\Q_p)$ be the triangulate category of Voevodsky’s mixed motives with rational coefficients, and $M_{gm}$ be the natural functor $$M_{gm}:\operatorname{Sch}/\overline{K} \to DM(\overline{K},\Q_p).$$ We refer the reader to \cite{voevodsky2000motive} for the definition and properties of Voevodsky’s motives. According to \cite{nizioldeglise2018syntomic}, there exists Hyodo-Kato realization functor $$\hk:DM(\overline{K},\Q_p) \to D(\Mod^{\solid}_{F^{\mathrm{nr}}}),$$ such that its composition with $M_{gm}$ gives the geometric Hyodo-Kato cohomology of Beilinson, i.e. for $X\in\operatorname{Sch}/\overline{K}, $ we have $\hk(M_{gm}(X))\simeq\hk(X).$ It is worth to mention that the realization functor also gives us the Gysin sequence for smooth pairs.
	
	\begin{prop}\label{gysinalg}
		If $X$ is a smooth variety over $\overline{K}$ and $Z \subset X$ is a smooth closed subvariety, denote by $U:=X-Z.$ We have $$\hk(Z)\{-1\}[-2]\simeq [\hk(X) \to \hk(U)].$$
	\end{prop}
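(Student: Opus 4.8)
The plan is to deduce the Gysin isomorphism by applying the Hyodo-Kato realization functor to the localization (Gysin) triangle in Voevodsky's category of motives. The substantive inputs are exactly the two facts recalled just above: the existence of the motivic Gysin triangle, and the exactness of the realization functor $\hk\colon DM(\overline K,\Q_p)\to D(\Mod^{\solid}_{F^{\mathrm{nr}}})$ together with the identification $\hk(M_{gm}(Y))\simeq\hk(Y)$. Everything else is a matter of tracking conventions.

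First I would recall the Gysin distinguished triangle attached to the smooth closed immersion $i\colon Z\hookrightarrow X$ of codimension $c$, with open complement $j\colon U\hookrightarrow X$ (see \cite{voevodsky2000motive}, and its use in \cite{nizioldeglise2018syntomic}):
$$M_{gm}(U)\xrightarrow{M_{gm}(j)}M_{gm}(X)\longrightarrow M_{gm}(Z)(c)[2c]\xrightarrow{+1}.$$
In the case of interest $Z$ is a divisor, so $c=1$ and the twist is $(1)[2]$.

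Next I would apply $\hk$. Since the realization is exact, identifies $\hk(M_{gm}(Y))$ with $\hk(Y)$, is compatible with shifts, and sends the motivic Tate twist $(n)[2n]$ to the Hyodo-Kato twist $\{-n\}[-2n]$, and since its cohomological functoriality turns $M_{gm}(j)$ into the natural restriction map, applying $\hk$ to the triangle above produces a distinguished triangle
$$\hk(Z)\{-1\}[-2]\longrightarrow\hk(X)\longrightarrow\hk(U)\xrightarrow{+1}$$
in $D(\Mod^{\solid}_{F^{\mathrm{nr}}})$. By the bracket notation this exhibits $[\hk(X)\to\hk(U)]=\fib(\hk(X)\to\hk(U))$ as $\hk(Z)\{-1\}[-2]$, which is the assertion. (For a subvariety of codimension $c$ the identical argument yields the twist $\{-c\}[-2c]$.)

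The step I expect to be the main obstacle is not the construction of the triangle but the verification of conventions, namely: that under the chosen normalization of the realization functor the realized arrow $\hk(M_{gm}(j))$ is genuinely the pullback $\hk(X)\to\hk(U)$ (so that one computes this fiber, and not its Verdier dual), and that the realization of the motivic twist $(1)[2]$ is precisely the Hyodo-Kato twist $\{-1\}[-2]$ with the correct Frobenius normalization. Both are formal consequences of the construction in \cite{nizioldeglise2018syntomic}, but they are exactly what pins down the direction of the map and the precise twist in the statement; once these are checked, the proposition follows with no further computation.
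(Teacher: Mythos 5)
Your proposal is correct and is exactly the paper's argument: the paper's proof consists of applying the Hyodo-Kato realization functor of \cite{nizioldeglise2018syntomic} to the motivic Gysin triangle of \cite[Proposition 3.5.4]{voevodsky2000motive}, which is precisely your plan, just stated more tersely. Your additional remarks on checking the twist normalization and the direction of the realized restriction map (and the observation that the stated twist $\{-1\}[-2]$ corresponds to the codimension-one case, with $\{-c\}[-2c]$ in general) are sensible bookkeeping that the paper leaves implicit.
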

	
	\begin{proof}
		This follows from the distinguished triangle of \cite[Proposition 3.5.4]{voevodsky2000motive}.
	\end{proof}
	
	There is also a functor $$M_{gm}^c:\operatorname{Sch}/\overline{K} \to DM(\overline{K},\Q_p)$$ from the category of schemes of finite type over $\overline{K}$ to $DM(\overline{K},\Q_p)$, which sends $X \in \operatorname{Sch}/\overline{K}$ to its associated motives with compact support. We define the Hyodo-Kato cohomology with compact support of $X$ by $$\hkc(X):=\hk(M_{gm}^c(X)).$$

	\begin{lem}
		The definition of Hyodo-Kato cohomology with compact support in \cite{nizioldeglise2018syntomic} agrees with the Definition \ref{hkcalgebraic}.
	\end{lem}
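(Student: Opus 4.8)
The plan is to deduce the comparison from Voevodsky's localization triangle for motives with compact support, which is the exact motivic counterpart of the fiber sequence defining $\hkc(X)$ in Definition \ref{hkcalgebraic}. First I would fix a compactification $j:X\hookrightarrow \overline X$ over $L$ and set $\partial:=\overline X-X$, a closed subscheme of $\overline X$; being closed in the proper scheme $\overline X$, it is itself proper over $L$. Recall that in $DM(\overline K,\Q_p)$ the compactly supported motive satisfies a localization distinguished triangle for the closed–open decomposition $\partial\hookrightarrow\overline X\hookleftarrow X$ (see \cite{voevodsky2000motive}):
\[
M_{gm}^c(\partial)\to M_{gm}^c(\overline X)\to M_{gm}^c(X)\xrightarrow{+1}.
\]
I would emphasize that, unlike the Gysin triangle used in Proposition \ref{gysinalg}, this triangle carries \emph{no} Tate twist. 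Since $\overline X$ and $\partial$ are proper, the canonical comparisons $M_{gm}^c(\overline X)\simeq M_{gm}(\overline X)$ and $M_{gm}^c(\partial)\simeq M_{gm}(\partial)$ are isomorphisms, so the triangle becomes $M_{gm}(\partial)\to M_{gm}(\overline X)\to M_{gm}^c(X)\xrightarrow{+1}$, the first map being the covariant functoriality of $M_{gm}$ along the closed immersion $i:\partial\hookrightarrow\overline X$.

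Next I would apply the Hyodo-Kato realization functor $\hk$. It is exact, and its variance is contravariant (it carries the covariant motive $M_{gm}(-)$ to the contravariant cohomology $\hk(-)$; this is the same variance forced by Proposition \ref{gysinalg}). Applying it therefore yields a distinguished triangle in $D(\Mod^{\solid}_{F^{\mathrm{nr}}})$
\[
\hk(M_{gm}^c(X))\to \hk(M_{gm}(\overline X))\to \hk(M_{gm}(\partial))\xrightarrow{+1}.
\]
Using $\hk\circ M_{gm}\simeq\hk$ on $\operatorname{Sch}/\overline K$ together with the definition $\hkc(X)=\hk(M_{gm}^c(X))$, this reads
\[
\hkc(X)\to \hk(\overline X)\to \hk(\overline X-X)\xrightarrow{+1},
\]
which exhibits $\hkc(X)$ as the fiber $[\hk(\overline X)\to \hk(\overline X-X)]$, precisely the object of Definition \ref{hkcalgebraic}.

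The step requiring the most care — and the main point to pin down — is that the middle map of the last triangle is literally the restriction map used in Definition \ref{hkcalgebraic}. Under the contravariant realization, the covariant arrow $M_{gm}(\partial)\to M_{gm}(\overline X)$ induced by $i$ becomes the pullback $i^*:\hk(\overline X)\to\hk(\overline X-X)$, i.e. exactly the restriction along the closed immersion $\overline X - X \hookrightarrow \overline X$. The remaining work is bookkeeping rather than substance: one checks that the shift and twist conventions of the realization are the ones normalized by Proposition \ref{gysinalg}, and one notes that the equivalence is independent of the chosen compactification. The latter is automatic here, since the motivic definition $\hk(M_{gm}^c(X))$ involves no choices while Definition \ref{hkcalgebraic} was already shown to be independent of the compactification; functoriality of the comparison in $X$ then follows from the naturality of the localization triangle and of the realization functor.
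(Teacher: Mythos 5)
Your proof is correct and takes essentially the same route as the paper: the paper's entire proof is a citation of the localization distinguished triangle $M_{gm}^c(\overline X - X)\to M_{gm}^c(\overline X)\to M_{gm}^c(X)$ of \cite[Proposition 4.1.5]{voevodsky2000motive}, combined implicitly with $M_{gm}^c\simeq M_{gm}$ for proper schemes and exactness of the realization functor, which is exactly your argument. You have merely made explicit the bookkeeping (variance of the realization, identification of the middle map with restriction, independence of the compactification) that the paper leaves to the reader.
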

	
	\begin{proof}
		This follows from the distinguished triangle of \cite[Proposition 4.1.5]{voevodsky2000motive}.
	\end{proof}
	
	Thanks to the six functor formalism for Voevodsky's mixed motives (see for example \cite{deglise2019motive}), one can construct a pairing for smooth varieties $X$: $$\hk(X)\otimes \hkc(X)\to \hkc(X),$$ and the Hyodo-Kato realization functor gives us the Poincar\'e duality for algebraic varieties as follows.
	
	\begin{theorem} \label{algpoincare}
		Let $X$ be a smooth algebraic varieties over $\overline{K}$ of dimension $d$, then:
		
		(i) There is a natural trace map $${\operatorname{tr_{HK}}}:\hkc(X) \to F^{\operatorname{nr}}\{-d\},$$ compatible with the Hyodo-Kato morphism.
		
		(ii) The pairing $$\hk(X) \otimes^L\hkc(X)[2d] \xrightarrow{} \hkc(X)[2d]\to F^{\mathrm{nr}}\{-d\}$$ is a perfect duality of solid $L_F$-vector spaces, i.e., we have induced isomorphisms in $D(\Mod^{\solid}_{F^{\mathrm{nr}}}):$
		$$H^i_{\operatorname{HK}}(X) \simeq \operatorname{Hom}_{F^{\operatorname{nr}}}(H^{2d-i}_{\operatorname{HK,c}}(X),F^{\operatorname{nr}}\{-d\}),$$
		$$H^i_{\operatorname{HK,c}}(X) \simeq \operatorname{Hom}_{F^{\operatorname{nr}}}(H^{2d-i}_{\operatorname{HK}}(X),F^{\operatorname{nr}}\{-d\}).$$
	\end{theorem}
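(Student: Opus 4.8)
The plan is to deduce Poincaré duality entirely from the corresponding statement in Voevodsky's category of motives, transported through the Hyodo-Kato realization functor of \cite{nizioldeglise2018syntomic}. The category $DM_{gm}(\overline{K},\Q_p)$ is a rigid symmetric monoidal triangulated category, so every geometric motive is strongly dualizable; in particular, for a smooth variety $X$ of dimension $d$ the six-functor formalism (purity $f^! \simeq f^*(d)[2d]$ for the structure morphism $f\colon X \to \Spec\overline{K}$, see \cite{deglise2019motive}) yields a canonical identification $M_{gm}(X)^\vee \simeq M_{gm}^c(X)(-d)[-2d]$ together with the evaluation morphism $M_{gm}(X) \otimes M_{gm}^c(X) \to \mathbf{1}(d)[2d]$ realizing the duality pairing \cite{voevodsky2000motive}.

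First I would apply the realization functor $\hk\colon DM(\overline{K},\Q_p) \to D(\Mod^{\solid}_{F^{\mathrm{nr}}})$ to this duality datum. Since $\hk$ is monoidal and compatible with the six operations, it carries strongly dualizable objects to strongly dualizable objects and preserves the evaluation and coevaluation maps. Under the normalization $\hk(\mathbf{1}(n)) \simeq F^{\operatorname{nr}}\{-n\}$, the realization of $\mathbf{1}(d)[2d]$ is $F^{\operatorname{nr}}\{-d\}[2d]$, so the realized evaluation morphism is precisely the pairing
$$\hk(X) \otimes^{L} \hkc(X)[2d] \to F^{\operatorname{nr}}\{-d\},$$
and the trace map $\operatorname{tr}_{\operatorname{HK}}\colon \hkc(X)[2d] \to F^{\operatorname{nr}}\{-d\}$ is the image under $\hk$ of the motivic trace $M_{gm}^c(X)(-d)[-2d] \to \mathbf{1}$. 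Strong dualizability then yields at once the two perfectness quasi-isomorphisms, and taking cohomology produces the stated isomorphisms of solid $F^{\operatorname{nr}}$-vector spaces in (ii).

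It remains to verify compatibility with the Hyodo-Kato morphism $\iota_{\operatorname{HK}}$, asserted in (i). The de Rham realization is likewise monoidal and compatible with the six operations, and carries the same motivic duality to the classical de Rham trace and Poincaré pairing; the Hyodo-Kato morphism is a natural transformation of monoidal realizations, hence it intertwines the two evaluation morphisms and, in particular, sends $\operatorname{tr}_{\operatorname{HK}}$ to the de Rham trace after the appropriate twist. This gives both the compatibility of the trace in (i) and the compatibility of the pairing in (ii).

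The hard part is not the formal deduction above but the input it relies on: that $\hk$ is a monoidal realization genuinely compatible with Grothendieck's six operations, so that motivic duality transports faithfully. This is exactly the content of \cite{nizioldeglise2018syntomic}, and granting it the only remaining care is the bookkeeping of the Tate-twist normalization $\hk(\mathbf{1}(n)) \simeq F^{\operatorname{nr}}\{-n\}$ and of the shifts, so that the trace lands in $F^{\operatorname{nr}}\{-d\}$ and the perfect pairing is normalized to match the analytic formulation of \cite[Theorem 5.30]{AGNcompact}.
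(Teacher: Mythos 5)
Your proposal follows essentially the same route as the paper: the paper states Theorem \ref{algpoincare} as a direct consequence of the Hyodo-Kato realization functor of \cite{nizioldeglise2018syntomic} applied to the duality $M_{gm}(X)^\vee \simeq M_{gm}^c(X)(-d)[-2d]$ coming from the six-functor formalism for Voevodsky's motives, which is exactly your argument (the paper in fact gives no more detail than a sentence invoking this, so your write-up, including the monoidality of the realization and the compatibility of $\iota_{\operatorname{HK}}$ as a transformation of realizations, is a faithful and somewhat fuller account of the intended proof).
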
	
	
	\subsection{Hyodo-Kato cohomology for analytic varieties}
	
	We review the definitions and properties for Hyodo-Kato Cohomology (with compact support) of analytic varieties.
	
	\subsubsection{Definition and properties} We briefly summarize the definition and basic properties of Hyodo-Kato Cohomology discussed in \cite{CNderham} and \cite{CN4.3}.
	
	(1) Arithmetic setup: We define here Hyodo-Kato cohomology of smooth rigid analytic varieties over $K$. Suppose $\mathscr X\in \mathscr M^{\operatorname{ss}}_K$, which is defined in Definition \ref{modelrigid}. Let $\mathscr A_{\operatorname{HK}}$ be the \'eh-sheafification of the presheaf $$\mathscr X\mapsto \hk(\mathscr X_0):=\rg_{\operatorname{cris}}(\mathscr X_0/\mathcal O^0_{F_L})_{\Q_p}$$ with value in $D(\Mod_{F^{\operatorname{nr}}}^{\operatorname{solid}})$. Here the condensed structure of $\rg_{\operatorname{cris}}(\mathscr X_0/\mathcal O^0_{F_L})$ comes from the inverse limit of the condensed abelian groups $\varprojlim \rg_{\operatorname{cris}}(\mathscr X_0/\mathcal O^0_{F_L,n}).$ The sheaf $\mathscr A_{\operatorname{HK}}$ is a \'eh-sheaf of dg solid $F$-algebra on $\mathscr M_K$ equipped with a Frobenius operator $\phi$ and a monodromy operator $N$ such that $N\phi=p\phi N$. For a rigid analytic variety $X \in \operatorname{Rig}_K,$ we define the Hyodo-Kato cohomology of $X$ by $\hk(X):=\rg(X,\mathscr A_{\operatorname{HK}}).$ By \cite{CNderham}, there exist a Hyodo-Kato morphism $$\iota_{\operatorname{HK}}:\hk(X)\to \dr(X).$$
	
	(2) Geometric setup: Suppose now $\mathscr X\in \mathscr M^{\operatorname{b,ss}}_{C}$ be a basic semistable formal model. By definition, $f:\mathscr X \to \Spf (O_C)^{\times}$ is the base change of a semistable pair $\theta_L:\mathscr X_L\to \Spf (\mathcal O_{L})^{\times}$ for a finite field extension $L/K.$ The data $(L,\theta_L,\mathscr X_L)$ form a filtered set $\Sigma$. Note that for a morphism $(L,\theta_L,\mathscr X_L) \to (L',\theta_{L'},\mathscr X'_{L'})$ in $\Sigma$, the base change theorem tells us $$\hk(\mathscr X'_{L',0})\otimes_{F_L'}F_L \simeq \hk(\mathscr X_{L,0}).$$
	
	Let $\mathscr A_{\operatorname{HK}}$ be the \'eh-sheafification of the presheaf $$\mathscr X\mapsto \hk(\mathscr X_0):=\varinjlim_L\hk(\mathscr X_{L,0}),$$ where the filtered limit is indexed by $\Sigma.$ This is a \'eh-sheaf of dg solid $F^{\operatorname{nr}}$-algebra on $\operatorname{Rig}_{C}$ equipped with a Frobenius operator $\phi$ and a monodromy operator $N$ such that $N\phi=p\phi N$. For a rigid analytic variety $X \in \operatorname{Rig}_{C},$ we define the Hyodo-Kato cohomology of $X$ by $\hk(X):=\rg(X,\mathscr A_{\operatorname{HK}}).$ %By \cite{CN4.3} we have a natural Hyodo-Kato isomorphism (by fixing a psudo-uniformizer): $$\iota_{\operatorname{HK}}\otimes_{F^{\operatorname{nr}}} C:\hk(X)\otimes_{F^{\operatorname{nr}}}C\xrightarrow{\simeq} \dr(X).$$
	
	We have the following local-global compatibility.
	
	\begin{prop} \cite[Proposition 4.11 and Proposition 4.23]{CNderham} \label{hkanalyticlocalglobal}
		(i) For a semistable formal scheme $\mathscr{X}$ over $\mathcal{O}_K,$ the natural morphism $$\hk(\mathscr{X}_1)\to \hk(\mathscr{X}_K)$$ in $D(\Mod_{F}^{\operatorname{solid}})$ is a strict quasi-isomorphism.
		
		(i) For $\mathscr{X} \in \mathscr{M}^{\operatorname{ss,b}}_C,$ the natural morphism $$\hk(\mathscr{X}_1)\to \hk(\mathscr{X}_C)$$ in $D(\Mod_{F^{\operatorname{nr}}}^{\operatorname{solid}})$ is a strict quasi-isomorphism.
	\end{prop}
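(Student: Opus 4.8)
\emph{Plan.} The strategy is to reduce the assertion to the analogous local--global compatibility for de Rham cohomology, which is furnished by the \'eh-descent Theorem \ref{ehdescentrigid}, and then to descend along the faithfully flat extension $F\to K$ (resp. $F^{\operatorname{nr}}\to C$). The morphism in question is the unit of the \'eh-sheafification evaluated on the semistable model $\mathscr X$, viewed as an object of the basis $\mathscr M^{\operatorname{ss}}_K$ of Proposition \ref{modelrigid}, where $\hk(\mathscr X_1)$ denotes the value $\hk(\mathscr X_0)=\rg_{\operatorname{cris}}(\mathscr X_0/\mathcal O^0_F)_{\Q_p}$ of the defining presheaf; by construction it is a map of solid $F$-modules compatible with $\phi$ and $N$. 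Since both sides have finite-dimensional cohomology, a map inducing isomorphisms on cohomology is automatically strict and automatically respects the extra structure, so it suffices to prove that the underlying morphism in $D(\Mod^{\operatorname{solid}}_F)$ is a quasi-isomorphism. As $K$ is a faithfully flat $F$-algebra and the cohomology groups are finite-dimensional, the solid base change $-\otimes_F K$ is exact and conservative on them, so it is enough to verify the quasi-isomorphism after applying $-\otimes_F K$.

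First I would treat the left-hand side. The generic fibre $\mathscr X_K$ of a semistable formal model is smooth over $K$, so the classical Hyodo--Kato isomorphism for the \emph{single} model $\mathscr X$, comparing the log-crystalline cohomology of $\mathscr X_0$ with the log-de Rham cohomology of $\mathscr X$, provides a natural identification $\hk(\mathscr X_1)\otimes_F K\simeq \dr(\mathscr X_K)$; here I invoke Theorem \ref{ehdescentrigid} to identify the \'eh-de Rham cohomology $\dr(\mathscr X_K)$ with the naive de Rham complex of the smooth variety $\mathscr X_K$. This input is local on $\mathscr X$ and, crucially, does not use the \'eh-sheafified Hyodo--Kato isomorphism, so no circularity arises.

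For the right-hand side I would observe that the local Hyodo--Kato morphism defines a morphism of presheaves on $\mathscr M^{\operatorname{ss}}_K$ from $\mathscr X\mapsto \hk(\mathscr X_0)\otimes_F K$ to the presheaf $\mathscr X\mapsto \dr(\mathscr X_K)$, which by the previous paragraph is a pointwise equivalence on the basis. Since $-\otimes_F K$ is flat it commutes with \'eh-sheafification, so the \'eh-sheafifications of these two presheaves are identified, i.e. $\mathscr A_{\operatorname{HK}}\otimes_F K\simeq \Omega^{\bullet}_{\operatorname{\acute{e}h}}$; evaluating global sections on $\mathscr X_K$ yields $\hk(\mathscr X_K)\otimes_F K\simeq \dr(\mathscr X_K)$. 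By naturality of the Hyodo--Kato morphism with respect to the sheafification unit, the base-changed comparison map is identified with the identity of $\dr(\mathscr X_K)$, hence is a quasi-isomorphism; descending along $F\to K$ proves part (i). Part (ii) follows the same template over the basis $\mathscr M^{\operatorname{ss},b}_C$ of Proposition \ref{modelrigid}(ii): one writes $\hk(\mathscr X_0)=\varinjlim_L\hk(\mathscr X_{L,0})$ as a filtered colimit over models defined over finite extensions $\mathcal O_E$, reduces each term to the arithmetic situation, and base-changes along $F^{\operatorname{nr}}\to C$ using Theorem \ref{ehdescentrigid} over $C$.

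The delicate part is not geometric but lies in the condensed linear algebra: one must confirm that the solid base change $-\otimes_F K$ is exact and conservative on the cohomology groups at hand, that flatness lets it commute with \'eh-sheafification and with $\rg(\mathscr X_K,-)$, and that the resulting comparison square commutes strictly, so that the map is genuinely identified with the identity. Finiteness of all cohomology groups is what removes the usual completion and strictness subtleties and makes these steps go through; in the geometric case one must in addition commute the filtered colimit over $L$ past sheafification and base change, which is again harmless by finiteness. The only substantive external ingredient is the Hyodo--Kato isomorphism for a fixed semistable model, which I would cite from the Hyodo--Kato and Beilinson theory rather than reprove.
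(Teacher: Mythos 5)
First, a structural remark: the paper itself gives no proof of this proposition --- it is imported verbatim from \cite[Proposition 4.11 and Proposition 4.23]{CNderham} --- so your attempt can only be measured against the argument in that source. Your skeleton (compare the crystalline presheaf with the de Rham presheaf via a Hyodo--Kato isomorphism for a fixed semistable model, invoke \'eh-descent for de Rham cohomology, Theorem \ref{ehdescentrigid}, then descend the coefficient field) is indeed the right kind of argument and is close in spirit to the cited proof. The execution, however, has a genuine gap.

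The gap is your blanket finiteness claim, on which every delicate step of your proof is made to rest. The proposition concerns arbitrary quasi-compact semistable formal schemes, not proper ones, and for these \emph{neither side has finite-dimensional cohomology}. Take $\mathscr{X}=\Spf(\mathcal{O}_K\langle t\rangle)$: its special fiber is $\mathbb{A}^1_k$ with log structure pulled back from the log point, and $H^1$ of $\rg_{\operatorname{cris}}(\mathscr{X}_0/\mathcal{O}^0_{F})_{\Q_p}$ is an infinite-dimensional $F$-vector space, since crystalline cohomology of affine varieties is not overconvergent; correspondingly $\mathscr{X}_K$ is the honest closed unit disk (a rigid space, not a dagger space), so $H^1_{\operatorname{dR}}(\mathscr{X}_K)=K\langle t\rangle dt/d(K\langle t\rangle)$ is likewise infinite-dimensional. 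The paper's finiteness result, Proposition \ref{finitenesshk}, applies to \emph{dagger} varieties only and is of no help here. This matters because you use finiteness to (a) reduce ``strict quasi-isomorphism'' to ``quasi-isomorphism'', (b) justify exactness/conservativity of base change, and (c) commute colimits and sheafification past $\rg$; your closing paragraph states explicitly that finiteness is what ``removes the usual completion and strictness subtleties''. For the infinite-dimensional Fr\'echet/LB-type cohomologies actually at hand, strictness is precisely the content of the word ``strict'' in the statement and the reason the solid formalism is invoked at all; declaring it automatic begs the question.

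A second, related problem concerns your ``only substantive external ingredient''. The Hyodo--Kato isomorphism $\hk(\mathscr{X}_1)\otimes_FK\simeq\dr(\mathscr{X}_K)$ for a single \emph{non-proper formal} model, as an equivalence in $D(\Mod_{F}^{\operatorname{solid}})$ (equivalently, a strict quasi-isomorphism of the underlying topological complexes), is not part of classical Hyodo--Kato or Beilinson theory, which treats proper or algebraic situations; it is itself a main theorem of \cite{CNderham}, proved there with exactly the topological care your sketch suppresses. Citing it is legitimate and avoids circularity (it precedes the local--global statement in that source), but its strictness must then be carried through your argument rather than absorbed by finiteness. Finally, in the geometric case (ii) the coefficient extension $F^{\operatorname{nr}}\to C$ is not finite, so conservativity and base change need real arguments: conservativity can be rescued by the solid splitting $C\simeq F^{\operatorname{nr}}\oplus W$ exactly as the paper does in Proposition \ref{mvmain}, but the commutation of $-\otimes^{\blacksquare}_{F^{\operatorname{nr}}}C$ with the limits implicit in $\rg(\mathscr{X}_C,-)$ and with \'eh-sheafification cannot be waved away, and none of it follows from a finiteness that fails.
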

	
	In the geometric setup, we have the Hyodo-Kato isomorphism. 
	
	The following Hyodo-Kato isomorphism follows from \cite[Theorem 4.6]{CN4.3} and \cite[Theorem 2.3.14]{bosco2023rational}.
	
	\begin{prop} \label{hkisomorphism}
		Suppose $X$ is a dagger variety over $C$, we have a natural isomorphism in $D(\operatorname{Mod}_{C}^{\mathrm{solid}}):$ $$\rg_{\operatorname{HK}}(X)\otimes_{{F}^{\mathrm{nr}}}^{\operatorname{L}_\blacksquare}C \xrightarrow{\simeq} \dr(X).$$
	\end{prop}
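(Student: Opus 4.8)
The plan is to deduce the dagger statement by descent from the rigid analytic Hyodo-Kato isomorphism, bridging the two categories through the overconvergence comparison. First I would note that the Hyodo-Kato morphism $\iota_{\operatorname{HK}}\colon \hk(X)\to \dr(X)$ is $F^{\mathrm{nr}}$-linear while $\dr(X)$ for $X$ over $C$ is a complex of $C$-modules, so base change along $F^{\mathrm{nr}}\to C$ yields a canonical candidate
$$\iota_{\operatorname{HK}}\otimes_{F^{\mathrm{nr}}}C\colon \hk(X)\otimes^{\operatorname{L}_\blacksquare}_{F^{\mathrm{nr}}}C\to \dr(X),$$
functorial in $X$. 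Since both $\hk$ and $\dr$ on $\operatorname{Rig}^{\dagger}_{C}$ are obtained by applying $l^{*\mathrm{hyp}}$ to éh-sheaves on $\operatorname{Rig}_{C}$ (the construction of Section 2.3), and $\iota_{\operatorname{HK}}$ is itself a morphism of such sheaves, the problem reduces to checking that the induced sheaf morphism $\mathscr A_{\operatorname{HK}}\otimes^{\operatorname{L}_\blacksquare}_{F^{\mathrm{nr}}}C\to \Omega^{\bullet}_{\eh}$ becomes an equivalence after éh-sheafification, and that $l^{*\mathrm{hyp}}$ together with taking global sections preserves this equivalence.

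Second, by Proposition \ref{modeldagger} the categories $\operatorname{Sm}^{\dagger}_{C}$ and $\mathscr M^{\operatorname{wss},b}_{C}$ form bases of $\operatorname{Rig}^{\dagger}_{C,\eh}$, and both sides satisfy éh-descent (for de Rham cohomology this is Theorem \ref{ehdescentrigid}, for Hyodo-Kato it is the defining sheaf property), so I would reduce the verification to these basis objects. On a weakly semistable model the completion $\widehat X$ is a genuine semistable formal model whose generic fibre is a quasi-compact rigid analytic variety; there the rigid analytic Hyodo-Kato isomorphism
$$\hk(\widehat X)\otimes^{\operatorname{L}_\blacksquare}_{F^{\mathrm{nr}}}C\xrightarrow{\simeq}\dr(\widehat X)$$
of \cite[Theorem 4.6]{CN4.3} applies, its local input being the comparison between log-crystalline and log-de Rham cohomology of the special and generic fibres after inverting $p$ and base-changing along the chosen uniformizer. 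It then remains to identify the dagger cohomologies with their rigid analytic counterparts, which is exactly the overconvergence comparison of \cite[Theorem 2.3.14]{bosco2023rational}; combining the two, compatibly with $\iota_{\operatorname{HK}}$, gives the equivalence on the basis and hence globally, with naturality inherited from the sheaf-level construction.

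The step I expect to be the main obstacle is justifying that the solid base change $-\otimes^{\operatorname{L}_\blacksquare}_{F^{\mathrm{nr}}}C$ commutes with the homotopy limits built into the construction: the $R\lim$ over truncations giving the condensed structure of $\rg_{\operatorname{cris}}$, the hypercover limits of éh-descent, and, for partially proper or non-quasi-compact $X$, the limit over an exhaustion by quasi-compact pieces. Because $C$ is not finite over $F^{\mathrm{nr}}$, neither exactness of this functor nor its commutation with such limits is formal. I would control it using the finiteness of the relevant cohomology — finite-dimensionality where available, and otherwise the nuclear/Fréchet structure of the groups in play — so that $-\otimes^{\operatorname{L}_\blacksquare}_{F^{\mathrm{nr}}}C$ is exact and commutes with the relevant countable limits. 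Granting this compatibility, the base-changed Hyodo-Kato morphism is a quasi-isomorphism on a basis, and therefore the desired natural isomorphism holds for every dagger variety $X$ over $C$.
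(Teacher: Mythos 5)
You are right that the paper's own justification consists of exactly the two citations you invoke --- \cite[Theorem 4.6]{CN4.3} and \cite[Theorem 2.3.14]{bosco2023rational} --- so your overall plan (base-change $\iota_{\operatorname{HK}}$, reduce by \'eh-descent to the basis of Proposition \ref{modeldagger}, verify the equivalence there) is in the right spirit. However, your verification on the basis objects contains a genuine error. You propose to apply the Hyodo--Kato isomorphism to the rigid completion $\widehat X$ of a quasi-compact weakly semistable basis object and then transfer it back to the dagger variety $X$ through an ``overconvergence comparison'' identifying $\hk(X)\simeq\hk(\widehat X)$ and $\dr(X)\simeq\dr(\widehat X)$. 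No such comparison exists for quasi-compact varieties: the identification of dagger cohomology with the rigid cohomology of the completion holds only for \emph{partially proper} varieties (this is \cite[Proposition 4.18]{AGNcompact}, quoted in Section 3 of the paper, and the hypothesis is essential). For the closed dagger disk, overconvergent $H^1_{\operatorname{dR}}$ vanishes, whereas $H^1_{\operatorname{dR}}$ of its completion $\operatorname{Sp}(C\langle T\rangle)$ is nonzero (in fact infinite dimensional), because a convergent power series on the closed disk need not admit a convergent antiderivative: for $f=\sum_k p^k T^{p^{2k}-1}$ the formal antiderivative $\sum_k p^{-k}T^{p^{2k}}$ has unbounded coefficients. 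For the same reason, \cite[Theorem 4.6]{CN4.3} is not a statement about quasi-compact rigid analytic varieties such as $\widehat X$; it is a theorem in the overconvergent (dagger) setting, which is precisely why the dagger category is used at all. So your reduction lands exactly on the objects where both of your inputs either fail or do not apply.

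The repair is to read the two citations the way the paper does: \cite[Theorem 4.6]{CN4.3} already gives the isomorphism directly for smooth dagger varieties over $C$ (locally on smooth dagger affinoids both sides are finite dimensional, which is what makes the base change along $F^{\mathrm{nr}}\to C$ harmless), and \cite[Theorem 2.3.14]{bosco2023rational} upgrades this to arbitrary, possibly singular, dagger varieties in the solid framework via \'eh-hyperdescent. The concern you raise at the end --- that $-\otimes^{\operatorname{L}_\blacksquare}_{F^{\mathrm{nr}}}C$ must commute with the $R\lim$'s occurring in hyperdescent and in exhaustions by quasi-compact pieces --- is a legitimate one, and it is exactly what the finiteness of cohomology on basis objects together with the nuclearity arguments of the cited works are designed to handle; but this point must be settled inside the overconvergent theory itself, not by passing to completions, which is never permissible at the quasi-compact level.
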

	
	%\textcolor{red}{a counterexample in the arithmetic setting?}
	
	\subsubsection{Compactly supported Hyodo-Kato cohomology for rigid analytic varieties} \label{compact}
	
	We review the definition of compactly supported Hyodo-Kato cohomology for rigid analytic varieties, which is defined in \cite[4.3]{AGNcompact}.
	
	\begin{defn}
		Let $X$ be a rigid analytic varieties over $L=K$ or $C$, the compactly supported Hyodo-Kato cohomology for $X$ is defined to be $$\hkc(X):=\varinjlim_{U\subset X}[\hk(X)\to\hk(X-U)]$$ in $D(\Mod_{L_F}^{\operatorname{solid}})$, where the filtered colimit is indexed by quasi-compact opens $U$ in $X$, and $L_F=F$ when $L=K$ and $L_F=F^{\operatorname{nr}}$ when $L=C$. %When $X$ is defined over $C$, the compactly supported complete Hyodo-Kato cohomology is defined similarly.
	\end{defn}
	
	We have similar definitions for compactly supported de Rham, $B_{\operatorname{dR}}^+$, \'etale, syntomic and pro-\'etale cohomology for rigid analytic varieties. All these cohomology groups with compact support are contravariant with proper morphisms and covariant with open immersions.
	
	\begin{rem}
		One should be careful with the terminology used. When $X$ is partially proper, the definition of compactly supported pro-\'etale cohomology coincidences with Huber's definition of compactly supported \'etale cohomology in \cite{huber2013etale} (and \cite{huber1998comparison} for $p-$adic cohomology), see \cite[Section 2]{AGNcompact} for more details.
	\end{rem}
	
	\subsubsection{Compactly supported Hyodo-Kato cohomology for dagger analytic varieties}
	
	Now suppose $X$ is a dagger variety over $L=K$ or $C$. We firstly assume that $X$ is dagger affinoid. According to \cite{vezzani2018monskywashnitzer} we can take a presentation $\{X_k\}_{i\geq 1}$ of $X$, where all $X_k$ are smooth affinoid rigid analytic varieties over $L$ such that $X_1 \Supset X_2 \Supset ...$ and there intersection is $X.$ Denote by $\mathscr A_{\operatorname{HK}}$ the analytic sheafification of the presheaf $$X \mapsto \rg_{\mathrm{HK,c}}^{\natural}(X):=\colim_k[\hk(X_k)\to \hk(X_k-X)].$$ For a general $X$, define the overconvergent compactly supported Hyodo-Kato cohomology by $$\hkc(X):=\rg(X,\mathscr A_{\operatorname{HK}}).$$ This definition satisfies local global compatibility.
	
	\begin{prop}\cite[Corollary 4.16]{AGNcompact}
		If $X$ is dagger affinoid, then we have a natural quasi-isomorphism $$\hkc(X)\xrightarrow{\simeq} \rg_{\mathrm{HK,c}}^{\natural}(X)$$
	\end{prop}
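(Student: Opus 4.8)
The plan is to prove that the presheaf $\mathscr F\colon X\mapsto \rg_{\mathrm{HK,c}}^{\natural}(X)$ already satisfies analytic descent on dagger affinoids, so that the canonical map $\rg_{\mathrm{HK,c}}^{\natural}(X)\to \rg(X,\mathscr A_{\operatorname{HK}})=\hkc(X)$ induced by the sheafification unit is an equivalence; the quasi-isomorphism asserted in the statement is then its inverse. Since a dagger affinoid is quasi-compact and its analytic topology is generated by finite covers by rational subdomains, it is enough to check \v{C}ech descent of $\mathscr F$ for a single finite affinoid cover $X=\bigcup_i Y^{(i)}$, that is, to identify $\mathscr F(X)$ with the totalization of the \v{C}ech diagram built from the $\mathscr F(Y^{(J)})$, where $Y^{(J)}=\bigcap_{i\in J}Y^{(i)}$.

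The conceptual step is to read each term $[\hk(X_k)\to\hk(X_k-X)]$ of the defining colimit as the Hyodo-Kato cohomology of $X_k$ with support in the relatively compact core $X\Subset X_k$; passing to the colimit over the presentation then gives overconvergent Hyodo-Kato cohomology with support in $X$. From this viewpoint the correct functoriality is transparent: for an open dagger subdomain $Y\subset X$ I would choose the presentation $\{Y_k\}$ so that $Y_k$ is a strict neighborhood of $Y$ in $X_k$ with $Y_k\cap X=Y$, whence $Y_k-Y=Y_k\setminus X\hookrightarrow X_k\setminus X$ is an open immersion, and restriction along $Y_k\hookrightarrow X_k$ together with restriction along this open immersion produces the map $\mathscr F(X)\to\mathscr F(Y)$. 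Support cohomology is computed by a complex of sheaves supported on $X$, so as a functor of the support it is a sheaf on $X$; concretely, its descent along the cover of $X$ follows from \'eh-descent of the ambient Hyodo-Kato sheaf $\mathscr A_{\operatorname{HK}}$, which holds by construction. The crucial gain is that this reduces descent of $\mathscr F$ to descent of the ordinary theory $\hk$, without having to control the open complements $X_k-X$ as a functor of the cover.

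With the cover fixed, I would choose the presentations of $X$, of the $Y^{(i)}$, and of all intersections $Y^{(J)}$ simultaneously, shrinking the $X_k$ if necessary so that each $Y^{(J)}_k$ is a strict neighborhood of $Y^{(J)}$ inside $X_k$ with $Y^{(J)}_k\cap X=Y^{(J)}$. For each fixed $k$, \'eh-descent for $\hk$ applied to $X_k$ and its induced cover, together with the support interpretation of the previous step, yields \v{C}ech descent for the $k$-th term $[\hk(X_k)\to\hk(X_k-X)]$. Because the cover is finite the \v{C}ech totalization is a finite limit, which commutes with the filtered colimit over $k$; descent therefore passes to $\mathscr F(X)=\colim_k[\hk(X_k)\to\hk(X_k-X)]$, and the standard sheafification criterion gives the asserted quasi-isomorphism.

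The hard part will be the overconvergent bookkeeping underlying the second and third steps, namely producing a system of presentations compatible with the entire \v{C}ech nerve of the cover. Unlike $\hk(X_k)$, the complement terms $\hk(X_k-X)$ are not manifestly functorial for the cover, and the naive complements do not commute with intersections, since $(Y^{(i)}_k-Y^{(i)})\cap(Y^{(j)}_k-Y^{(j)})$ removes $Y^{(i)}\cup Y^{(j)}$ rather than $Y^{(i)}\cap Y^{(j)}$. The support-cohomology reformulation is exactly what circumvents this mismatch, but to make it rigorous one must exhibit a cofinal system of strict neighborhoods $Y^{(J)}_k$ that can be chosen coherently for all $J$ at once and that compute the support cohomology after passing to the colimit; this is where the dagger (overconvergent) hypothesis and the presentation theory of \cite{vezzani2018monskywashnitzer} are indispensable.
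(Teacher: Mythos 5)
The paper does not prove this proposition at all --- it is quoted from \cite[Corollary 4.16]{AGNcompact} --- so your proposal can only be measured against what a correct argument must contain, and there it has a genuine gap, which is not the ``overconvergent bookkeeping'' you flag at the end: the restriction maps underlying your whole framework do not exist. You want to make $\mathscr F\colon X\mapsto \rg_{\mathrm{HK,c}}^{\natural}(X)$ contravariant for an open dagger subdomain $Y\subset X$ by choosing a presentation $\{Y_k\}$ consisting of strict neighborhoods of $Y$ in $X_k$ with $Y_k\cap X=Y$. These two requirements are incompatible: $Y\Subset Y_k$ forces $Y_k\supset\overline{Y}$ (closure in $X_k$), so $Y_k\cap X=Y$ would give $\overline{Y}\cap X=Y$, making $Y$ closed, hence clopen, in $X$, i.e.\ a union of connected components. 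Concretely, for the cover of the dagger closed unit disk $X=\{|T|\le 1\}$ by $Y=\{|T|\le 1/2\}$ and $\{1/2\le |T|\le 1\}$, any presentation of $Y$ consists cofinally of disks of radius greater than $1/2$, each of which meets $X\setminus Y$; the same happens for every member of any affinoid cover of a connected $X$. The failure is structural: compactly supported cohomology is covariant, not contravariant, for open immersions, so the limit-style (totalization) \v{C}ech descent you set out to verify is not the statement this proposition encodes. This is visible already in the direction of its arrow: $\hkc(X)\to\rg_{\mathrm{HK,c}}^{\natural}(X)$ is the counit of a cosheaf-type gluing --- the ``analytic sheafification'' in the definition has to be read in that dual sense precisely because presheaf restriction maps do not exist --- and not, as in your first paragraph, the inverse of a sheafification unit.

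What survives of your proposal is the support-cohomology idea, provided it is run covariantly. Write $\rg_{Z}(X_k):=[\hk(X_k)\to\hk(X_k\setminus Z)]$ for the cohomology of $X_k$ with support in a closed subset $Z$. Since $\hk$ satisfies analytic descent, excision holds, and (up to standard cofinality checks on presentations) $\rg_{\mathrm{HK,c}}^{\natural}(Y^{(J)})\simeq\colim_k\rg_{\overline{Y^{(J)}}}(X_k)$ for every member $Y^{(J)}$ of the \v{C}ech nerve; all supports then live in the single ambient space $X_k$, and the \v{C}ech maps become the covariant extension-of-support maps $\rg_{Z'}(X_k)\to\rg_{Z}(X_k)$ for $Z'\subset Z$. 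Codescent follows from the Mayer-Vietoris fiber sequence for supports, $\rg_{Z_1\cap Z_2}(X_k)\to\rg_{Z_1}(X_k)\oplus\rg_{Z_2}(X_k)\to\rg_{Z_1\cup Z_2}(X_k)$, itself a formal consequence of analytic descent for $\hk$ on $X_k$; and the matching of supports with intersections of the cover is automatic, because in an adic space the generalizations of a point form a chain and opens are stable under generalization, whence $\overline{U}\cap\overline{V}=\overline{U\cap V}$ for quasi-compact opens. Your final step --- commuting the finite \v{C}ech (co)limit with the filtered colimit over the presentation --- is correct and carries over unchanged. So the skeleton (supports, descent for $\hk$, finite limits against filtered colimits) is sound, but it proves covariant, cosheaf-type local-global compatibility; the contravariant descent you aimed for cannot even be formulated, and the first two steps of your plan must be reversed accordingly.
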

	
	We have similar definitions and properties for compactly supported de Rham, $B_{\operatorname{dR}}^+$, syntomic and pro-\'etale cohomology for dagger analytic varieties.
	
	When $X$ is partially proper, we can compare the two definitions as expected.
	
	\begin{prop}\cite[Proposition 4.18]{AGNcompact}
		Suppose $X$ is moreover partially proper, then we have a natural quasi-isomorphism $$\hkc(X)\xrightarrow{\simeq}\hkc(\widehat{X}).$$ The same proposition also holds for compactly supported de Rham, $B_{\operatorname{dR}}^+$,  \'etale and pro-\'etale cohomology.
	\end{prop}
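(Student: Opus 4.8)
The plan is to reduce both compactly supported theories to a filtered colimit of ``cohomology with support in a relatively compact open'', and to show that along this colimit the overconvergent and rigid computations agree term by term. Since $X$ is partially proper, I would first fix an exhaustion $X=\bigcup_{n}U_n$ by quasi-compact dagger opens with $U_n\Subset U_{n+1}$, so that $\widehat X=\bigcup_n\widehat{U_n}$ is an exhaustion of $\widehat X$ by quasi-compact opens with $\widehat{U_n}\Subset\widehat{U_{n+1}}$. As $\{\widehat{U_n}\}_n$ is cofinal among quasi-compact opens of $\widehat X$, the definition of compactly supported cohomology gives
$$\hkc(\widehat X)\simeq \varinjlim_n \fib\bigl(\hk(\widehat X)\to\hk(\widehat X-\widehat{U_n})\bigr),$$
and, by its construction via $\mathscr A_{\operatorname{HK}}$ and the local--global compatibility, $\hkc(X)$ is computed through the same exhaustion, with the dagger Hyodo--Kato cohomology in place of the rigid one.

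The first step is a localization via Mayer--Vietoris. For the open cover $\{X-U_n,\,U_{n+1}\}$ of $X$ (which covers $X$ because $U_n\subset U_{n+1}$, with intersection $U_{n+1}-U_n$), the Mayer--Vietoris property established in Section~4 identifies the support term with one computed in the neighborhood $U_{n+1}$:
$$\fib\bigl(\hk(X)\to\hk(X-U_n)\bigr)\simeq \fib\bigl(\hk(U_{n+1})\to\hk(U_{n+1}-U_n)\bigr),$$
and likewise on the rigid side with $\widehat{U_{n+1}}$ in place of $U_{n+1}$. This localizes the whole comparison to the single relatively compact inclusion $U_n\Subset U_{n+1}$.

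It remains to match the dagger and rigid support terms for each fixed $n$. Choosing a presentation $U_{n+1}=\bigcap_k U_{n+1}^{(k)}$ by shrinking rigid affinoid neighborhoods, exactness of filtered colimits gives
$$\fib\bigl(\hk(U_{n+1})\to\hk(U_{n+1}-U_n)\bigr)\simeq \colim_k \fib\bigl(\hk(U_{n+1}^{(k)})\to\hk(U_{n+1}^{(k)}-U_n)\bigr).$$
Here relative compactness enters decisively: $U_n\Subset U_{n+1}$ forces $U_n\subset\widehat{U_{n+1}}\subset U_{n+1}^{(k)}$ for every $k$, so $\{U_{n+1}^{(k)}-U_n,\ \widehat{U_{n+1}}\}$ is an open cover of $U_{n+1}^{(k)}$, and a second application of Mayer--Vietoris shows that each term is already computed in the fixed neighborhood $\widehat{U_{n+1}}$, namely $\fib(\hk(\widehat{U_{n+1}})\to\hk(\widehat{U_{n+1}}-\widehat{U_n}))$, independently of $k$. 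The colimit over $k$ is therefore constant, so the dagger support term coincides with the rigid one; combining this with the first step and passing to the colimit over $n$ yields the desired quasi-isomorphism $\hkc(X)\xrightarrow{\simeq}\hkc(\widehat X)$. The identical argument proves the statement for de Rham, $B_{\operatorname{dR}}^+$, \'etale and pro-\'etale cohomology, since each satisfies the same Mayer--Vietoris property and admits the same presentation by rigid neighborhoods (the \'etale case being even more direct, as dagger and rigid \'etale cohomology already agree).

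The main obstacle I anticipate is ensuring that the two excision steps genuinely fall within the scope of the Mayer--Vietoris results of Section~4: one must verify that the open covers $\{X-U_n,U_{n+1}\}$ and $\{U_{n+1}^{(k)}-U_n,\widehat{U_{n+1}}\}$ are handled by those results (phrased there for abstract blow-up squares), and that the presentations $\{U_{n+1}^{(k)}\}_k$ can be chosen compatibly so that the filtered-colimit manipulation is legitimate. The conceptual heart, however, is simple: compactly supported cohomology only sees relatively compact regions, and there the overconvergent structure is invisible.
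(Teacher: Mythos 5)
First, a point of orientation: the paper does not actually prove this statement; it is imported verbatim from \cite[Proposition 4.18]{AGNcompact}, so your argument can only be measured against that reference and on its own merits. Your guiding principle is the right one (compactly supported cohomology only sees relatively compact regions, where overconvergence is invisible, so one should localize the support terms by excision), and your two excision steps themselves are sound: the Mayer--Vietoris you need is the one for admissible open covers, which holds automatically because $\hk$, $\dr$, etc.\ are defined as sheaf cohomology on the analytic (or \'eh) site -- so your worry about the abstract blow-up squares of Section 4 is misplaced; one only has to check admissibility of the covers, which follows from $\overline{U_n}\subset U_{n+1}$. The genuine gaps are the two identifications feeding those excisions. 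First, your starting formula $\hkc(X)\simeq\colim_n\fib\bigl(\hk(X)\to\hk(X-U_n)\bigr)$, with \emph{dagger} cohomology in the fibers, is not the definition: for dagger varieties $\hkc$ is defined by sheafifying the presentation-based functor on dagger affinoids, and the local--global compatibility you invoke is stated only for dagger affinoids, which a partially proper $X$ never is. Deducing your exhaustion formula from that definition requires both a codescent statement $\hkc(X)\simeq\colim_n\hkc(U_n)$ and an identification of each presentation-based $\hkc(U_n)$ with a fiber term -- and the latter is morally the proposition itself, so this step begs the question. Second, the claim $\hk(U_{n+1}-U_n)\simeq\colim_k\hk\bigl(U_{n+1}^{(k)}-U_n\bigr)$ hidden in your ``exactness of filtered colimits'' is not formal: $U_{n+1}-U_n$ is not quasi-compact (it is a half-open tube), so the presentation formula for dagger affinoids does not apply, and computing its cohomology involves an infinite limit (over a cover or exhaustion) that cannot be exchanged with the colimit over $k$ by pure category theory. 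That this is a real difficulty, not bookkeeping, is exactly what the paper's own Lemma~\ref{problemlemma} shows: the analogous colimit statement there is proved only for smooth or strictly normal crossing $Z$, using Grosse-Kl\"onne's explicit description of overconvergent de Rham cohomology, and the paper explicitly remarks that non-quasi-compactness of such complements is the obstruction to anything more general.

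Both gaps can be closed simultaneously by never taking dagger cohomology of complements. Since $X$ is partially proper, each quasi-compact dagger open $U_n$ is relatively compact in $X$, so its dagger structure is realized by a presentation $\{U_n^{(k)}\}_k$ consisting of rigid opens of $\widehat X$ itself. Your second excision argument, run inside $\widehat X$, then shows that the presentation-based object $\colim_k\fib\bigl(\hk(U_n^{(k)})\to\hk(U_n^{(k)}-\widehat{U_n})\bigr)$ is a colimit of terms each canonically identified with the single rigid object $\fib\bigl(\hk(\widehat X)\to\hk(\widehat X-\widehat{U_n})\bigr)$; every variety appearing is rigid, so no dagger-versus-rigid comparison of non-quasi-compact spaces is ever needed. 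Combining this with codescent along the exhaustion and cofinality of $\{\widehat{U_n}\}$ among quasi-compact opens of $\widehat X$ yields the proposition. So your architecture is salvageable, but as written the two steps above are unproven, and the second is precisely the kind of statement this paper demonstrates to be delicate.
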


	\subsubsection{Galois descent}
	
	In the arithmetic setup, one can recover the Hyodo-Kato by using Galois descent.
	
	\begin{prop} \label{hkgaloisdescent}
		Suppose $X$ is a  dagger variety over $K$, and $G:=\Gal(\overline{K}/K)$. Then the natural projection $X_C \to X$ induces a natural isomorphism in $D(\operatorname{Mod}_{\Q_p}^{\operatorname{solid}}):$
		$$\hk(X)\xrightarrow{\simeq}\hk(X_C)^{G_K},\hkc(X)\xrightarrow{\simeq}\hkc(X_C)^{G_K}.$$
	\end{prop}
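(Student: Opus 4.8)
The plan is to reduce to a single semistable model, and there to combine flat base change for log-crystalline cohomology with a Galois-cohomology computation of the coefficient field.

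The projection $X_C\to X$ induces by functoriality a pullback $\hk(X)\to\hk(X_C)$ whose image is $G_K$-fixed, hence a natural map $\hk(X)\to\hk(X_C)^{G_K}$ (and likewise for $\hkc$); one must show it is an isomorphism. By Propositions \ref{modelrigid} and \ref{modeldagger}, the category $\mathscr M^{\operatorname{wss}}_K$ of (weakly) semistable formal $\mathcal O_K$-models forms an $\eh$-basis both for $\operatorname{Rig}^{\dagger}_{K,\eh}$ and for $\operatorname{Rig}^{\dagger}_{C,\eh}$; thus the same models $\mathscr Y\in\mathscr M^{\operatorname{wss}}_K$, with generic fibres taken over $K$ resp. over $C$, give compatible $\eh$-covers computing $\hk(X)$ and $\hk(X_C)$, the Galois action entering only through the base change $\mathcal O_K\to\mathcal O_C$. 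Since continuous $G_K$-invariants, being a limit-type functor, commute with the homotopy limits computing $\eh$-descent, it suffices to treat $X=\mathscr X_K$ with $\mathscr X\in\mathscr M^{\operatorname{ss}}_K$.

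For such $\mathscr X$, local-global compatibility (Proposition \ref{hkanalyticlocalglobal}) gives $\hk(\mathscr X_K)\simeq\rg_{\operatorname{cris}}(\mathscr X_0/\mathcal O_F^0)_{\Q_p}$ and $\hk(\mathscr X_C)\simeq\rg_{\operatorname{cris}}((\mathscr X_0)_{\bar k}/\mathcal O_{\breve F}^0)_{\Q_p}$. The key input is flat base change for log-crystalline cohomology along $\mathcal O_F^0\to\mathcal O_{\breve F}^0$ (i.e. $k\to\bar k$), which yields a $G_K$-equivariant identification, compatible with $\varphi$ and $N$,
$$\hk(\mathscr X_C)\simeq\hk(\mathscr X_K)\,\widehat{\otimes}_F\,\breve F,$$
where $G_K$ acts through its quotient $G_k=\Gal(\bar k/k)$ on $\breve F$ and trivially on $\hk(\mathscr X_K)$. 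Indeed the arithmetic Hyodo–Kato cohomology depends only on the $k$-log-scheme $\mathscr X_0$, and the inertia $I_K=\ker(G_K\to G_k)$ fixes $\bar k$, so it acts trivially on the right-hand side; hence $(-)^{G_K}$ coincides with $G_k$-invariants. Taking invariants and using the projection formula, the claim reduces to $R\Gamma(G_k,\breve F)=F$ concentrated in degree $0$: the group $G_k\cong\widehat{\Z}$ is topologically generated by the $q$-power Frobenius, whose fixed points on $W(\bar k)$ are $W(k)=\mathcal O_F$, while $H^1(G_k,\breve F)=0$ because $\varphi^f-1$ is surjective on $\breve F$ (Artin–Schreier–Witt, $\bar k$ being algebraically closed). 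This gives $\hk(\mathscr X_C)^{G_K}\simeq\hk(\mathscr X_K)$.

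For the compactly supported version I would use the definition $\hkc(X)=\varinjlim_{U}[\hk(X)\to\hk(X-U)]$ over quasi-compact opens $U$. Each cone satisfies Galois descent by the above, the functor $(-)^{G_K}$ sends a $G_K$-equivariant map to the cone of the induced map, and since $G_k\cong\widehat{\Z}$ has cohomological dimension one, continuous invariants commute with the filtered colimit; hence $\hkc$ inherits the isomorphism $\hkc(X)\simeq\hkc(X_C)^{G_K}$. The step I expect to be most delicate is the equivariance bookkeeping in the reduction: one must check that the $\eh$-sheafification, the colimit over finite extensions $L$ defining the geometric theory, and the crystalline base-change map are simultaneously $G_K$-equivariant in the solid framework, so that $(-)^{G_K}$ genuinely commutes with all the limits and colimits in play. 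Tracking the Frobenius twist through the colimit over $L$, and thereby pinning down the precise $G_k$-action on $\breve F$ in the base-change isomorphism, is the crux of the argument.
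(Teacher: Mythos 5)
Your skeleton---reduce to local semistable models, identify $\hk(X_C)$ as a base change of $\hk(X)$, then take $G_K$-invariants---is essentially the paper's approach: the paper reduces to dagger affinoids and cites \cite[Remark 5.21]{CNderham} together with finiteness of Hyodo--Kato cohomology, and the crystalline flat base change you prove by hand (Kato's base change along $\mathcal O_F^0\to\mathcal O_{F_L}^0$, plus local-global compatibility) is precisely the content of the cited remark, so that portion of your proposal is sound and even more self-contained than the paper's.

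The gap is in the invariants step. With the paper's definitions the geometric theory is the filtered colimit over finite extensions $L$, so base change gives $\hk(\mathscr X_C)\simeq \hk(\mathscr X_K)\otimes_F F^{\operatorname{nr}}$ with $F^{\operatorname{nr}}=\varinjlim_L F_L$ \emph{uncompleted}, not $\widehat{\otimes}_F\,\breve F$; these are different solid modules, already for $X=\Sp(K)$. This is not cosmetic: your key input $R\Gamma(G_k,\breve F)=F$ genuinely uses $p$-adic completeness (successive approximation in Artin--Schreier--Witt), and the corresponding derived statement for the actual coefficients is \emph{false}: already for $k$ finite, inflation-restriction gives $H^1_{\operatorname{cont}}(G_k,F_L)\cong\operatorname{Hom}_{\operatorname{cont}}(G_{k_L},F)\cong F$, and the transition maps in $\varinjlim_L H^1_{\operatorname{cont}}(G_k,F_L)$ are multiplication by the degrees $[k_{L'}:k_L]$, which are invertible in characteristic zero; hence $H^1_{\operatorname{cont}}(G_k,F^{\operatorname{nr}})\cong F\neq 0$ (the unramified class sending Frobenius to $1$ never dies in the colimit, whereas completion kills it). Likewise your inertia step fails in the derived sense: $R\Gamma(I_K,M)\neq M$ for a trivial representation, since $\operatorname{Hom}_{\operatorname{cont}}(I_K,\Q_p)\neq 0$. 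So, read with derived continuous/solid invariants, your chain computes the invariants of the wrong (completed) object, and it cannot be repaired by completing, because $\hk(X_C)$ really is the uncompleted one. What makes the proposition work---and what the paper's proof invokes but you never use---is \emph{finiteness}: each $H^i_{\operatorname{HK}}(X_C)$ is an (ind-)finite-dimensional $F^{\operatorname{nr}}$-vector space with \emph{smooth} $G_K$-action, so the plain fixed-point functor is exact (average over a finite quotient, characteristic zero) and $(V\otimes_F F^{\operatorname{nr}})^{G_K}=V$; it is this exactness, not a vanishing of derived Galois cohomology of the period field, that passes through the descent limits and the compactly supported colimits. A secondary point: the paper only assumes $k$ perfect, so you may not take $G_k\cong\widehat{\Z}$ generated by a $q$-power Frobenius; the robust input at finite level is $H^i(\Gal(k'/k),W_n(k'))=0$ via the normal basis theorem, not Artin--Schreier surjectivity.
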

	
	\begin{proof}
		We may assume $X$ is dagger affinoid, as solid $G$-cohomology commutes with limits and also filtered colimits since $G$ is profinite. For the Hyodo-Kato cohomology, this follows from \cite[Remark 5.21]{CNderham} and finiteness of Hyodo-Kato cohomology. The claim for Hyodo-Kato cohomology with compact support then follows.
	\end{proof}

	\section{Geometric Properties}
	
	As outlined in the introduction, this section is devoted to proving some geometric properties for analytic varieties, mainly for dagger varieties. This includes the Mayer-Vietoris property, the Poincar\'e duality, the open-closed exact sequence and the Gysin isomorphism.
	
	\subsection{Mayer-Vietoris property}
	
	Using the results from \cite{guo2019hodgetate}, we construct several Mayer-Vietoris exact sequences for rigid analytic and dagger varieties. We start with an abstract definition of the Mayer-Vietoris property.
	
	\begin{defn}
		Let $\mathcal{C}$ be a site, $F: \mathcal{C}^{\operatorname{op}} \to \mathcal{D}$ be a pre(co)sheaf valued in a stable $\infty$-category $\mathcal{D}$. For a commutative square
		$$\begin{tikzcd}
			Y' \arrow[r] \arrow[d] & X' \arrow[d] \\
			Y  \arrow[r] & X
		\end{tikzcd}$$
		we will say that $F$ satisfies the Mayer-Vietoris property for this square if $F$ sends it to a cartesian square. In this case, we will also call this square $F$-acyclic.
	\end{defn}
	
	\begin{ex}
		Let $\mathcal{C}=\operatorname{Sch}_{\tau}$ (or $\operatorname{Rig}_{\tau},\operatorname{Rig}_{\tau}^{\dag}$,...) with a Grothendieck topology $\tau$ (of suitably bounded cardinality if $\tau$ is large), $\mathcal{D}=\mathcal{D}(\operatorname{CondAb})$, $F=\operatorname{R\Gamma}(-,\f)$ for a condensed abelian sheaf $\f$ on $X_{\tau}$. Then the square
		$$\begin{tikzcd}
			U \cap V \arrow[r] \arrow[d] & V \arrow[d] \\
			U \arrow[r] & X
		\end{tikzcd}$$ is $F$-acyclic for any $U,V$ open in $X$ with $X=U \cup V$. This is the usual Mayer-Vietoris sequence. Similarly, for $\mathcal{C}=\operatorname{Sch}_{\tau}$ or $\operatorname{Rig}_{\tau}^{\dag}$, cosheaf $F=\operatorname{R\Gamma}_c(-,\f)$ is $F$-acyclic.
	\end{ex}
	
	\begin{ex}
		\cite{thomas2006eh} implies for a commutative ring $\Lambda$, $F=\operatorname{R\Gamma}_{\operatorname{\acute{e}t}}(-,\underline{\Lambda}):(\operatorname{Sch}/k)^{op} \to \mathcal{D}(\operatorname{Ab})$ has the Mayer-Vietoris property for abstract blow up squares. An abstract blow up square means $Y$ is a closed immersion of $X$, $f:X' \to X$ is proper surjective which induces an isomorphism of $X'-f^{-1}(Y) \to X -Y$, and $Y'=X' \times_X Y.$ We will prove similar results in the rigid analytic/dagger settings.
	\end{ex}
	
	\begin{ex}
		According to \cite{beilinson2012derham} and \cite{beilinson2013crys}, $F=\dr(-)$ or $F=\hk(-)$ has the Mayer-Vietoris property for proper surjective cartesian square, i.e. $Y'=X' \times_X Y$ and $X' \to X$ is proper surjective.
	\end{ex}
	
	Now we will state the main result of this section.
	
	\begin{prop} \label{mvmain}
		Suppose $\mathcal{C}=\operatorname{Rig}_{L}$ or $\operatorname{Rig}_{L}^{\dag}$, $L=K$ or $C$. Let $X \in \mathcal{C}, Y \subset X$ be a nowhere dense analytic closed subspace, $Y'=X' \times_X Y,$ Then any square 
		$$\begin{tikzcd}
			Y' \arrow[r] \arrow[d] & Y \arrow[d] \\
			X' \arrow[r] & X
		\end{tikzcd}$$
		satisfying one of the following two conditions
		\begin{enumerate}
			\item $X' \to X$ is a blowup along $Y$.
			\item $X$ is quasi-compact, $Y$ is an irreducible component of $X$, and $X'$ is the union of all the other irreducible components of $X$.
			%\item $X$ and $Y$ are smooth, $Y \xrightarrow{} X$ is proper, and $Y-Y' \xrightarrow{\simeq} X-X'$ is an isomorphism.
		\end{enumerate}   
		Then for the functors $F=\operatorname{R\Gamma}_{\rm pro\acute{e}t}(-,\Q_p)$ or $\operatorname{R\Gamma}_{\rm \acute{e}t}(-,\Q_p)$ or $\operatorname{R\Gamma}_ {\rm \acute{e}h}(-,\f)$ or $\operatorname{R\Gamma}_{\rm dR}(-)$ or $\operatorname{R\Gamma}(-/B^+_{\operatorname{dR}})$ or $\operatorname{R\Gamma}_{{\rm HK}}(-)$ (only for $\mathcal{C}=\operatorname{Rig}_{L}^{\dagger}$ in this case) from $\mathcal{C}^{\operatorname{op}} \to D(\Mod^{\solid}_{\Q_p})$, the above commutative square is $F$-acyclic. 	
	\end{prop}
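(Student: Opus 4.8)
The plan is to deduce all of the listed cases from a single dichotomy: the functors $\operatorname{R\Gamma}_{\eh}(-,\f)$, $\dr$, $\hk$ and $\operatorname{R\Gamma}(-/B^+_{\operatorname{dR}})$ are \'eh-sheaf cohomologies and will be handled by \'eh-descent, whereas $\operatorname{R\Gamma}_{\et}(-,\Q_p)$ and $\operatorname{R\Gamma}_{\proet}(-,\Q_p)$ are \emph{not} \'eh-sheaves and must be treated separately by proper base change. The common geometric input is that in both cases the morphism $W:=X'\sqcup Y\to X$ is an \'eh-cover and the square is an abstract blow-up square: $Y\hookrightarrow X$ is a closed immersion, $Y'=X'\times_X Y$, and $X'\setminus Y'\xrightarrow{\sim}X\setminus Y$. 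In case (1) this is literally the generating cover $\operatorname{Bl}_Y(X)\sqcup Y\to X$ of the \'eh-topology, the isomorphism over $X\setminus Y$ being where the nowhere-density of $Y$ enters. In case (2), $X=X'\cup Y$ is a closed cover whose pieces meet along the nowhere dense $Y'=X'\cap Y$, and blowing up $X$ along $Y'$ separates $Y$ from $X'$, so $W\to X$ is refined by a blow-up cover and is again an \'eh-cover.

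For the \'eh-sheaf theories the Mayer--Vietoris property is then formal. Indeed, $\operatorname{R\Gamma}_{\eh}(-,\f)$ satisfies \'eh-descent by construction, $\dr$ does by Theorem \ref{ehdescentrigid}, $\hk$ does by its definition as an \'eh-sheafification (in the dagger setting, which is exactly why this case is restricted to $\mathcal C=\operatorname{Rig}^\dagger_L$), and $\operatorname{R\Gamma}(-/B^+_{\operatorname{dR}})$ does by the analogous descent. By the cd-structure formalism underlying the \'eh-topology (cf. \cite{thomas2006eh}, \cite{guo2019hodgetate}), whose distinguished squares are precisely the abstract blow-up squares, any hypercomplete \'eh-sheaf valued in a stable $\infty$-category automatically sends distinguished squares to Cartesian squares; this is exactly the assertion that the square is $F$-acyclic.

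For $\operatorname{R\Gamma}_{\et}(-,\Q_p)$ and $\operatorname{R\Gamma}_{\proet}(-,\Q_p)$ one cannot argue this way, since these functors genuinely fail \'eh-descent --- for instance $H^1_{\et}(\A^{1,\an}_C,\Q_p)=0$ while $H^1_{\proet}(\A^{1,\an}_C,\Q_p)$ is large. Instead I would use proper base change for the proper surjective morphism $W\to X$. With $\Z/p^n$-coefficients, proper base change \cite{huber2013etale} identifies the cone of $F(X)\to F(X')$ with the cone of $F(Y)\to F(Y')$ (in case (2) this is simply the Mayer--Vietoris sequence of the closed cover $X=X'\cup Y$), which is precisely the Cartesian-square condition. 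Passing to the limit over $n$ and inverting $p$ yields the statement for $\operatorname{R\Gamma}_{\et}(-,\Q_p)$, and the pro\'etale case follows by the same computation carried out in $D(\Mod^{\solid}_{\Q_p})$, using that pro\'etale and \'etale cohomology agree with $\Z/p^n$-coefficients and that $\operatorname{R}\varprojlim_n$ is compatible with the fibre sequences in play.

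The main obstacle is this last, $p$-adic \'etale and pro\'etale, case. Unlike the \'eh-sheaf theories, where descent --- and hence $F$-acyclicity --- is automatic, these theories fail general \'eh-descent, so one must isolate the proper base change input that nonetheless applies to these proper distinguished squares, and then control the passage from $\Z/p^n$ to $\Q_p$ and to the twists $\Q_p(r)$ in the condensed/solid framework, where pro\'etale cohomology can be infinite-dimensional and one must verify that $\operatorname{R}\varprojlim_n$ commutes with the homotopy fibre sequences defining $F$-acyclicity. Checking that case (2) genuinely produces a distinguished (rather than degenerate) square, via the nowhere-density of $Y'=X'\cap Y$, is a further point that needs attention.
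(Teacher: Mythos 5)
Your treatment of the \'eh-sheaf theories and of torsion \'etale cohomology is essentially sound (proper base change is a legitimate substitute for the paper's observation that $\rg_{\et}(-,\Lambda)$ is a v-sheaf, hence an \'eh-sheaf, after which both arguments pass to $\Z_p$ by limits and to $\Q_p$ by inverting $p$), but the pro-\'etale case for dagger varieties --- which you yourself flag as the main obstacle --- has a genuine gap that your outline cannot repair. Your plan is to prove the statement with $\Z/p^n$-coefficients and then apply $R\varprojlim_n$ and invert $p$. That computes $\rg_{\proet}(-,\Q_p)$ only on quasi-compact rigid analytic varieties. For a dagger affinoid $X$ with presentation $\{X_h\}$, the overconvergent pro-\'etale cohomology is by definition $\rg_{\proet}(X,\Q_p)=\colim_h\rg_{\proet}(X_h,\Q_p)=\colim_h\bigl(\varprojlim_n\rg_{\et}(X_h,\Z/p^n)\bigr)[1/p]$, and the filtered colimit over $h$ does not commute with $R\varprojlim_n$: exchanging them would produce $\rg_{\proet}(\widehat X,\Q_p)$, a genuinely different theory (this convergent/overconvergent discrepancy is exactly what makes your own example of $\mathbb A^{1,\an}_C$ possible). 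In particular, with $\Z/p^n$-coefficients dagger pro-\'etale cohomology collapses to \'etale cohomology of the completion, so no limit argument over $n$ can recover the overconvergent $\Q_p$-theory. The paper's proof supplies precisely the missing step: it chooses the presentation so that $Y$ extends to closed subvarieties $Y_h\subset X_h$, sets $X'_h=\operatorname{Bl}_{Y_h}(X_h)$ and $Y'_h=X'_h\times_{X_h}Y_h$, proves via compatible finite dagger affinoid hypercoverings that $\rg_{\proet}(X',\Q_p)\simeq\colim_h\rg_{\proet}(X'_h,\Q_p)$ (and similarly for $Y'$), and only then applies the rigid-analytic Mayer--Vietoris property levelwise in $h$, passing to the colimit using that filtered colimits commute with finite limits.

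Second, your Hyodo--Kato argument diverges from the paper and is not solid as stated. You assert that $\hk$ satisfies \'eh-descent ``by its definition as an \'eh-sheafification (in the dagger setting, which is exactly why this case is restricted to $\mathcal C=\operatorname{Rig}^\dagger_L$)''. But $\hk$ is defined by \'eh-sheafification on $\operatorname{Rig}_L$ and on $\operatorname{Rig}^\dagger_L$ alike, so if your formal argument were complete it would prove the Hyodo--Kato case for $\mathcal C=\operatorname{Rig}_L$ as well, which the proposition deliberately excludes; the dagger restriction does not come from the definition, and the cd-structure descent theorem you invoke is not available off the shelf for unbounded sheaves valued in $D(\Mod^{\solid}_{F^{\operatorname{nr}}})$ (Guo's Proposition 5.1.4, which the paper cites for the \'eh/de Rham/$B^+_{\operatorname{dR}}$ cases, does not apply verbatim to $\mathscr A_{\operatorname{HK}}$). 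The paper instead argues as follows: over $C$ it invokes the Hyodo--Kato isomorphism $\hk(X)\otimes_F^{\blacksquare}C\simeq\dr(X)$ --- a statement available only for dagger varieties, which is the true source of the restriction --- writes $C\simeq F\oplus W$ as solid $F$-modules, and deduces the Mayer--Vietoris property for $\hk$ as a functorial retract of the already-established property for $\dr$; over $K$ it concludes by Galois descent together with finiteness of Hyodo--Kato cohomology. To salvage your route you would have to prove the cd-descent statement for solid-valued \'eh-sheaves; otherwise the retract argument is the correct fix.
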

	
	\begin{rem}
		After covering $X$ by open (dagger) affinoids, as limits and colimits of cartesian squares is still cartesian, we may assume $X$ is affinoid or dagger affinoid in each situation.
	\end{rem}
	
	\begin{proof}
		We assume first $X$ is affinoid.
		
		\begin{itemize}
			\item \'eh and de-Rham cohomology: This follows from \cite[Proposition 5.1.4]{guo2019hodgetate}.
			\item \'etale and pro-\'etale cohomology: The sheaf $X \to \operatorname{R\Gamma}(X_{\rm \acute{e}t},\Lambda)$ is a v-sheaf, hence an \'eh-sheaf over $\operatorname{Rig}_{\operatorname{\acute{e}h}}$. Therefore, $\operatorname{R\Gamma}(X_{\rm \acute{e}t},\Lambda)=\operatorname{R\Gamma}(X_{\rm \acute{e}h},\Lambda)$ for any commutative ring $\Lambda$. After taking limits we get the Mayer-Vietoris property for $\operatorname{R\Gamma}(X_{\rm \acute{e}t},\Z_p)$, tensoring $\Q_p$ we get the Mayer-Vietoris property for $\operatorname{R\Gamma}(X_{\rm \acute{e}t},\Q_p)$. Since $X$ is quasi-compact, $\operatorname{R\Gamma}(X_{\rm \acute{e}t},\Q_p)= \operatorname{R\Gamma}(X_{\rm pro\acute{e}t},\Q_p)$ and therefore the Mayer-Vietoris property also holds for $\operatorname{R\Gamma}(X_{\rm pro\acute{e}t},\Q_p)$.
		\end{itemize}
		
		Now we assume $X$ is dagger affinoid.
		
		\begin{itemize}
			\item \'eh and de-Rham cohomology: The proof is same as \cite[Proposition 5.1.4]{guo2019hodgetate} except that here we need to use overconvergent setup.
			\item \'etale cohomology: By \cite{vezzani2018monskywashnitzer}, $\operatorname{R\Gamma}(X_{\rm \acute{e}t},\Lambda) \simeq \operatorname{R\Gamma}(\widehat{X}_{\rm \acute{e}t},\Lambda)$ for any commutative ring $\Lambda$, and the Mayer-Vietoris property follows from the affinoid case.
			\item pro-\'etale cohomology: We use the presentation of an affinoid dagger variety. Take a presentation $\{X_h\}$ of $X$, and we may assume $X_1$ is small enough so that $Y$ can be extended to a closed subvariety $Y_1 \subset X_1$ (because $X$ is affinoid, $Y$ is defined by a finite number of formal power series which can be extended their definition a little bit outside $Y \subset X_1$). Therefore, we may assume $X$ has a presentation $\{X_h\}$ such that it can induce a presentation $\{Y_h\}$ of $Y$ such that $Y_h \subset X_h$ is closed immersions.
			
			We can then construct $X'_h=\operatorname{Bl}_{Y_h}(X_h)$, the blowup of $X_h$ along $Y_h$, and denote by $Y'_h=X'_h \times_{X_h} Y_h$. We note that $X'_h=X'_1 \times _{X_1} X_h$ for any $h$. According to the definition of pro-\'{e}tale cohomology of dagger varieties, $\rg (X_{\rm pro\acute{e}t},\Q_p)=\colim_h \rg (X_{h, \rm pro\acute{e}t},\Q_p), \rg (Y_{\rm pro\acute{e}t},\Q_p)\simeq\colim_h \rg (Y_{h, \rm pro\acute{e}t},\Q_p)$, we want to prove that $$\rg (X'_{\rm pro\acute{e}t},\Q_p)\simeq\colim_h \rg (X'_{h, \rm pro\acute{e}t},\Q_p), \rg (Y'_{\rm pro\acute{e}t},\Q_p)=\colim_h \rg (Y'_{h, \rm pro\acute{e}t},\Q_p).$$
			
			According to the construction of blowups, if we assume $X_h=\operatorname{Sp}(A_h)$ and $X=\operatorname{Sp}^{\dagger}(A)$, we can find a finite dagger affinoid hypercovering of $X'$ by $U^{\bullet}$ such that locally they are the disjoint union of $\Spec(A_{(a)})$ (relative Spec, see \cite{conard2006relative}), and we can construct hypercoverings of $X_h'$ by $U_h^{\bullet}$ in a compatible way (which means locally they are the disjoint union of $\Spec(A_{h,(a)})$). Therefore,
			$$\rg (X'_{\rm pro\acute{e}t},\Q_p)\simeq\rg (U^{\bullet}_{\rm pro\acute{e}t},\Q_p)\simeq\colim_h \rg (U^{\bullet}_{h, \rm pro\acute{e}t},\Q_p)\simeq\colim_h \rg (X'_{h, \rm pro\acute{e}t},\Q_p),$$ 
			where the middle isomorphism follows from the fact that colimits commute with finite limits. The same argument also shows that $\rg (Y'_{\rm pro\acute{e}t},\Q_p)\simeq\colim_h \rg (Y'_{h, \rm pro\acute{e}t},\Q_p)$.
			\item Hyodo-Kato cohomology: We prove the claim for $L=C.$ We know that $\operatorname{R\Gamma}_{\rm HK}(X) \otimes_F^{\blacksquare} C \simeq \operatorname{R\Gamma}_{\rm dR}(X).$ Write $C \simeq F \oplus W$ for some $F$-vector space $W$. Since $\operatorname{R\Gamma}_{\rm dR}(X)$ has the Mayer-Vietoris property, $\operatorname{R\Gamma}_{\rm HK}(X)$, as a direct summand of $\operatorname{R\Gamma}_{\rm dR}(X)$ in $D(\Mod^{\solid}_{\Q_p})$, must also have the Mayer-Vietoris property. 
			
			When $L=K,$ the claim follows from the case for $L=C,$ finiteness of Hyodo-Kato cohomology Proposition \ref{finitenesshk}, and Galois descent.
		\end{itemize}
	\end{proof}
	
	The above proof for dagger varieties also shows we have Mayer-Vietoris properties for cohomology with compact support for dagger varieties.
	
	\begin{prop} \label{mvc}
		The above proposition also holds for $\mathcal{C}=\operatorname{Rig}_{L}^{\dag}$, $L=K$ or $C$, and $F=\operatorname{R\Gamma}_{\rm pro\acute{e}t,\rm c}(-,\Q_p)$ or $\operatorname{R\Gamma}_{\mathrm{dR,c}}(-/B^+_{\operatorname{dR}})$ or $\operatorname{R\Gamma}_{{\rm HK,c}}(-)$.
	\end{prop}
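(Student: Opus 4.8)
The plan is to deduce the compactly supported statement from Proposition \ref{mvmain} by exhibiting each compactly supported theory as assembled from its non-compact counterpart through operations that preserve cartesian squares. Recall that for a dagger affinoid $X$ with presentation $\{X_h\}$ each of the relevant theories is of the form
$$F_c(X) \simeq \colim_h \fib\big[F(X_h) \to F(X_h - X)\big],$$
where $F$ runs over $\rg_{\proet}(-,\Q_p)$, $\rg(-/B^+_{\operatorname{dR}})$ and $\dr(-)$, and that $F_c$ is computed as $\rg(X,\mathscr A)$ for an analytic sheaf $\mathscr A$, hence satisfies analytic descent. Since blow-up commutes with open immersions and limits and colimits of cartesian squares are cartesian, I would first reduce, as in the remark following Proposition \ref{mvmain}, to the case where $X$ is dagger affinoid.

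Fix such an $X$ and, exactly as in the pro-\'etale part of the proof of Proposition \ref{mvmain}, choose $\{X_h\}$ small enough that $Y$ extends to closed immersions $Y_h \hookrightarrow X_h$; put $X'_h := \operatorname{Bl}_{Y_h}(X_h)$ and $Y'_h := X'_h \times_{X_h} Y_h$ in case (1) (resp. take the analogous decomposition into irreducible components in case (2)). An analogous hypercovering argument identifies $F_c$ of each of the four corners $X$, $X'$, $Y$, $Y'$ with the corresponding fibre-colimit over the presentations $\{X_h\}$, $\{X'_h\}$, $\{Y_h\}$, $\{Y'_h\}$, and produces the complementary families $X_h - X$, $X'_h - X'$, $Y_h - Y$, $Y'_h - Y'$. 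For each fixed $h$, Proposition \ref{mvmain} applies to two rigid-analytic squares: the blow-up (or component) square of the neighborhoods and the corresponding square of complements, giving, for $F = \rg_{\proet}(-,\Q_p)$, $\rg(-/B^+_{\operatorname{dR}})$ or $\dr(-)$, two cartesian squares linked by the restriction maps $F(X_h) \to F(X_h - X)$. Forming the pointwise vertical fibre of this morphism of squares, and using that the pointwise fibre of a map between two cartesian squares is again cartesian (limits commute with limits), I obtain a cartesian square; passing to the filtered colimit over $h$ preserves it. This gives the Mayer-Vietoris property for $\proetc(-,\Q_p)$ and $\drc(-/B^+_{\operatorname{dR}})$.

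For Hyodo-Kato cohomology the open functor $\hk$ is only known to satisfy Proposition \ref{mvmain} in the dagger case, so I would instead reproduce the Hyodo-Kato step of that proof. Tensoring the defining fibre-colimit formula for $\hkc$ by $C$ and applying the geometric Hyodo-Kato isomorphism \cite[Theorem 4.6]{CN4.3} termwise yields, over $L = C$,
$$\hkc(X) \otimes_{F^{\operatorname{nr}}}^{\operatorname{L}_\blacksquare} C \simeq \drc(X),$$
the $C$-linear compactly supported de Rham cohomology. Writing $C \simeq F^{\operatorname{nr}} \oplus W$ as $F^{\operatorname{nr}}$-modules exhibits $\hkc(X)$ as a direct summand of $\drc(X)$ in $D(\Mod^{\solid}_{\Q_p})$; since the latter has the Mayer-Vietoris property by the previous paragraph (with $F = \dr$), so does $\hkc$. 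Finally, for $L = K$ I would descend from the case $L = C$ using finiteness of Hyodo-Kato cohomology (Proposition \ref{finitenesshk}) together with Galois descent, precisely as in the corresponding step of Proposition \ref{mvmain}.

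The step I expect to be the main obstacle is the geometric bookkeeping for the complements: one must arrange the presentations so that, at every level $h$, the complement square $\{X_h - X,\, X'_h - X',\, Y_h - Y,\, Y'_h - Y'\}$ genuinely satisfies the hypotheses of Proposition \ref{mvmain}. In case (1) this means checking that $X'_h - X' \to X_h - X$ is again the blow-up along $Y_h - Y = Y_h \cap (X_h - X)$, i.e.\ that blow-up is compatible with removing the overconvergent germ $X$; in case (2) it means verifying that deleting $X$ respects the decomposition into irreducible components. Once this compatibility and the identification of $F_c$ with the fibre-colimits over the blow-up presentations are in place, the remainder is the formal manipulation of fibres and filtered colimits described above.
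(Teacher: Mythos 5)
Your proposal is correct and is essentially the paper's own argument written out in full: the paper's one-sentence proof likewise reduces to $X$ dagger affinoid, covers the blow-up $X'$ by dagger affinoids, and combines the cosheaf property (i.e., the fibre-colimit presentation of the compactly supported theories over the rigid neighborhoods $X_h$, $X_h-X$) with Proposition \ref{mvmain} applied level-wise to the corresponding non-compact theories. Your separate treatment of $\hkc$ --- tensoring the defining formula with $C$, invoking the Hyodo--Kato isomorphism to realize $\hkc$ as a direct summand of $\drc$, then Galois descent for $L=K$ --- is the same device the paper uses for $\hk$ inside the proof of Proposition \ref{mvmain}, here correctly transported to the compactly supported setting where the level-wise terms are rigid analytic rather than dagger.
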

	
	\begin{proof}
		By assuming $X$ is dagger affinoid and cover $X'$ with dagger affinoids as above, the proposition follows from the cosheaf condition and the above Mayer-Vietoris property for the corresponding cohomology (without compact support).
	\end{proof}

	\subsection{The open-closed exact sequence}
	
	A characteristic property of cohomology with compact support is the existence of open-closed long exact sequence. For example, for \'etale cohomology with compact support, let $X$ be a partially proper rigid analytic variety, $D \hookrightarrow X$ is a Zariski closed immersion, and $U:=X-D \hookrightarrow X$ is an open immersion. We have the following proposition, which is implied by \cite[Remark 5.1.1]{huber2013etale}.
	
	\begin{prop}
		We have $$\etc(U,\Q_p)\simeq[\etc(X,\Q_p)\to \etc(D,\Q_p)].$$
	\end{prop}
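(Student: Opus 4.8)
The plan is to deduce the triangle from the localization sequence of étale sheaves on $X$ together with Huber's description of how compactly supported cohomology behaves under open and closed immersions. Write $j\colon U \hookrightarrow X$ for the open immersion and $i\colon D \hookrightarrow X$ for the complementary closed immersion, and let $f\colon X \to \Spa(C,\mathcal O_C)$ be the structure morphism (the case of a base field $K$ is identical). Since $X$ is partially proper, $f$ is taut and partially proper, so Huber's exceptional pushforward $Rf_!$ is defined and $\etc(X,\Q_p)$ is computed from the global sections of $Rf_!\,\Q_p$; the same applies to $f\circ j$ and $f\circ i$, computing $\etc(U,\Q_p)$ and $\etc(D,\Q_p)$ respectively.

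First I would record, for each $n$, the short exact sequence of étale sheaves on $X$
\begin{equation*}
0 \to j_!\,\underline{\Z/p^n} \to \underline{\Z/p^n} \to i_*\,\underline{\Z/p^n} \to 0
\end{equation*}
arising from the open-closed decomposition of $X$ into $U$ and $D$. Applying $Rf_!$ and invoking the two compatibilities $Rf_! \circ j_! \simeq R(f\circ j)_!$ (extension by zero, giving $\etc(X, j_!\f) \simeq \etc(U,\f)$) and $Rf_! \circ i_* \simeq R(f\circ i)_!$ (the closed immersion $i$ being proper, so $i_! \simeq i_*$, giving $\etc(X, i_*\f) \simeq \etc(D,\f)$), one obtains at each finite level the identification
\begin{equation*}
\etc(U,\Z/p^n) \simeq [\etc(X,\Z/p^n) \to \etc(D,\Z/p^n)].
\end{equation*}
These compatibilities are exactly the content of \cite[Remark 5.1.1]{huber2013etale}. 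Passing to $R\lim_n$ and then inverting $p$ — operations that preserve fiber sequences in $D(\Mod^{\solid}_{\Q_p})$ — yields the asserted identity $\etc(U,\Q_p)\simeq[\etc(X,\Q_p)\to \etc(D,\Q_p)]$, that is, $\etc(U,\Q_p)\simeq\fib(\etc(X,\Q_p)\to\etc(D,\Q_p))$.

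I expect the only real obstacle to lie in verifying the two compatibilities inside Huber's compactification-based construction of $\rgc$, where $Rf_!$ is computed through a chosen compactification and one must check that $j_!$ and $i_*$ interact correctly with such compactifications; this is precisely where partial properness of $X$ is used, as it guarantees a well-behaved (universal) compactification and the independence of $\rgc$ from auxiliary choices. Granting \cite[Remark 5.1.1]{huber2013etale}, the remaining steps — forming the localization sequence, applying $Rf_!$, and the limit-and-rationalization — are formal.
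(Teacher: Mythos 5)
Your proposal is correct and takes essentially the same route as the paper, whose entire proof is the citation of Huber's Remark 5.1.1: the localization sequence $0 \to j_!\,\underline{\Z/p^n} \to \underline{\Z/p^n} \to i_*\,\underline{\Z/p^n} \to 0$, the compatibilities $Rf_!\circ j_!\simeq R(f\circ j)_!$ and $Rf_!\circ i_*\simeq R(f\circ i)_!$ (via $i_!=i_*$ for the proper map $i$), and the passage to $R\lim_n$ followed by inverting $p$ are precisely the content the paper delegates to that reference. The hypothesis you flag as essential — partial properness of $X$ (hence of the Zariski-open $U$ and the Zariski-closed $D$), which guarantees that the paper's colimit definition of $\etc$ agrees with Huber's $Rf_!$-definition — is exactly where the paper places it as well (the Remark in Section 3.2.2 citing \cite[Section 2]{AGNcompact}).
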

	
	In this section, we will prove our definition of cohomology with compact support for rigid analytic varieties have the open-closed exact sequence, when $X$ is proper smooth or smooth with strict normal crossings divisor $Z$. 
	
	Let $L=K$ or $C$, suppose $X$ is a smooth dagger variety over $L$. Let $i: Z \hookrightarrow X$ be a Zariski closed immersion, $U=X-Z$.
	
	\begin{prop}\label{open-closed}
		Assume moreover $Z$ is smooth, or $Z$ is a strict normal crossing divisor. Then we have
		$$\drc(U)\simeq  [\drc(X) \xrightarrow{} \drc(Z)],$$
		$$\hkc(U)\simeq  [\hkc(X) \xrightarrow{} \hkc(Z)].$$
	\end{prop}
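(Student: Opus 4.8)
The plan is to prove the $\drc$-statement first and then transfer it to $\hkc$, and within the de Rham case to reduce the strict normal crossings situation to the smooth divisor situation. Here I read $[A\to B]$ as $\fib(A\to B)$, so the two assertions say precisely that
$$\drc(U)\to\drc(X)\xrightarrow{i^{*}}\drc(Z)\xrightarrow{+1},\qquad \hkc(U)\to\hkc(X)\xrightarrow{i^{*}}\hkc(Z)\xrightarrow{+1}$$
are distinguished triangles, the first map being the covariant (extension-by-zero) map of the open immersion $U\hookrightarrow X$ and the second the contravariant map of the proper closed immersion $i\colon Z\hookrightarrow X$.

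\emph{From de Rham to Hyodo–Kato.} Over $C$, applying the Hyodo–Kato isomorphism of Proposition \ref{hkisomorphism} termwise to the colimits and fibers defining the compactly supported theories (both $-\otimes_{F^{\operatorname{nr}}}^{\operatorname{L}_\blacksquare}C$ and $\fib$ commute with filtered colimits) yields a natural equivalence $\hkc(-)\otimes_{F^{\operatorname{nr}}}^{\operatorname{L}_\blacksquare}C\simeq\drc(-)$ compatible with the structural maps attached to $U\hookrightarrow X\hookleftarrow Z$. Choosing a splitting $C\simeq F^{\operatorname{nr}}\oplus W$ of $F^{\operatorname{nr}}$-vector spaces exhibits $\hkc(-)$ as a natural direct summand of $\drc(-)$ in $D(\Mod^{\solid}_{\Q_p})$, exactly as in the proof of Proposition \ref{mvmain}; since a natural direct summand of a distinguished triangle is again distinguished, the de Rham triangle over $C$ forces the Hyodo–Kato one. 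Over $K$ I would then take $G_K$-invariants, using Galois descent (Proposition \ref{hkgaloisdescent}) together with the finiteness of Hyodo–Kato cohomology (Proposition \ref{finitenesshk}). The $\drc$-triangle itself is proved uniformly for $L=K$ or $C$ by Steps 2 and 3 below, as those arguments are purely geometric.

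\emph{From normal crossings to smooth.} Writing $Z=Z_1\cup\dots\cup Z_n$ with smooth components, all closed strata $Z_J=\bigcap_{j\in J}Z_j$ are smooth and every incidence $Z_J\hookrightarrow Z_{J'}$ with $J\supset J'$ is a smooth closed immersion. The compactly supported Mayer–Vietoris property for component decompositions and abstract blow-up squares (Proposition \ref{mvc}) lets me express $\drc(Z)$, $\drc(X)$ and the maps between them through the smooth strata by an inclusion–exclusion (\v{C}ech) argument, so the localization triangle for $(X,Z)$ is assembled from the localization triangles for the smooth closed immersions $Z_J\hookrightarrow Z_{J'}$ and $Z_1\hookrightarrow X$. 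Concretely I would induct on $n$, removing one component at a time and tracking intersections via Proposition \ref{mvc}; this step is combinatorial once the smooth case is available.

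\emph{The smooth case for de Rham (the heart), and the main obstacle.} It remains to show, for $Z$ smooth closed in smooth $X$, that $\drc(U)\simeq\fib(\drc(X)\to\drc(Z))$. Unwinding the colimit definitions over a cofinal system of quasi-compact opens $W\subset X$ adapted to $Z$, and setting $B=X-W$, $B_Z=B\cap Z$, $B_U=B\cap U$, the term $\fib(\drc(X)\to\drc(Z))$ becomes the colimit over $W$ of the total fiber of the square with rows $\dr(X)\to\dr(Z)$ and $\dr(B)\to\dr(B_Z)$, while $\drc(U)=\colim_W\fib(\dr(U)\to\dr(B_U))$. Matching these two requires the analytic purity/residue input: as $W$ exhausts $X$ the sets $B$ shrink to punctured tubular neighborhoods of $Z$, and via the dagger tubular-neighborhood identification of a strict neighborhood of $Z$ with a neighborhood of the zero section of the normal bundle $N_{Z/X}$, these become punctured (poly)disk bundles over $Z$. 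I would then compute the overconvergent de Rham cohomology of the punctured polydisk fiber by an explicit Koszul/residue calculation—the only new classes being the $dt_i/t_i$ of the defining coordinates—and assemble the result over $Z$ by a Leray/K\"unneth argument, thereby identifying the cofiber with $\drc(Z)$ and producing the connecting residue map. This last step is the hard part: in the absence of a six-functor formalism in rigid geometry one must control the colimit over shrinking tubular neighborhoods in the overconvergent setting and carry out the residue computation by hand, this being the analytic avatar of purity usually packaged in the Gysin triangle. Once it is in place, the normal crossings reduction and the passage to Hyodo–Kato are formal.
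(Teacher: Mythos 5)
Your outer architecture matches the paper's at two points: the passage from de Rham to Hyodo--Kato over $C$ via the Hyodo--Kato isomorphism (the direct-summand trick, exactly as in Proposition \ref{mvmain}) and the use of Galois descent plus finiteness over $K$ are precisely what the paper does; your Mayer--Vietoris induction on components is also a workable variant of the paper's normal-crossings reduction (the paper instead reduces to the smooth case by \'eh-descent inside its key lemma). The genuine gap is in your ``heart''. You unwind $\drc(X)$, $\drc(Z)$, $\drc(U)$ as $\colim_W\fib(\dr(-)\to\dr(-\,\setminus W))$ over quasi-compact opens $W$. That is the paper's definition of compactly supported cohomology for \emph{rigid analytic} varieties (Section \ref{compact}), not for \emph{dagger} varieties, and the proposition is about dagger varieties. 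For a dagger affinoid $X$ the definition is $\drc(X)\simeq\colim_k[\dr(X_k)\to\dr(X_k-X)]$, where $\{X_k\}$ is a presentation of $X$ by ambient rigid affinoids; the boundary terms $\dr(X_k-X)$ are essential, and the two definitions genuinely disagree outside the partially proper case. Concretely, for the closed dagger disk $X=\Sp(L\langle t\rangle^\dagger)$ the germ definition gives $\drc(X)\simeq L[-2]$ (as Poincar\'e duality demands), whereas your formula gives $\dr(X)\simeq L$; for the punctured closed dagger disk $U=X-\{0\}$ your formula gives $0$ (the restriction $\dr(U)\to\dr(\{0<|t|<\epsilon\})$ is an isomorphism in both degrees), while the correct answer is $L[-1]\oplus L[-2]$. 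So even if your residue computation were carried out, it would prove a statement about different objects than those in Proposition \ref{open-closed}, and it could not feed into the Poincar\'e duality (Theorem \ref{hkrigidpoincare}) and Gysin sequence (Proposition \ref{gysin}) for quasi-compact dagger varieties, which is what the proposition is for.

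The paper's proof of the smooth case never computes cohomology of punctured tubes. Working with the dagger presentation, it introduces outer thickenings $X_h\supset X$, $Z_h\supset Z$ and shrinking neighborhoods $V_{k,h}$ of $Z_h$ cut out by generators of the ideal of $Z$, and its key Lemma \ref{problemlemma} uses Grosse-Kl\"onne's overconvergent description of de Rham cohomology (Theorem \ref{drgk}) together with Kiehl's tubular neighborhood theorem to prove $\dr(Z_1)\simeq\colim_k\dr(V_{k,1})$ and, the subtle point, $\dr(Z_1-Z)\simeq\colim_k\dr(V_{k,1}-(V_{k,1}\cap X))$. This expresses $\drc(X)$, $\drc(Z)$ and $\drc(U)\simeq\colim_k\drc(X-V_k)$ all relative to the \emph{same} outer boundary $X_1-X$, after which the triangle follows from the excision square and fiber-sequence juggling --- no purity or residue input at all. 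Note also that the paper's logical order is the reverse of yours: the Gysin sequence (which is what your $dt_i/t_i$ computation would amount to) is \emph{deduced} from open-closed plus Poincar\'e duality in Proposition \ref{gysin}; your route would require establishing overconvergent purity by hand, and that step is exactly the part your sketch leaves unexecuted.
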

	
	\begin{proof}
		This problem is local, so we can assume $X=\Sp(A)$ is a smooth dagger affinoid variety. We prove the claim for de Rham cohomology first.
		
		Choose a representation $A=L\left\langle x_1,...,x_n\right\rangle^\dagger /I,$ and assume $Z=\Sp(L\left\langle x_1,...,x_n\right\rangle^\dagger /J)$. By definition we can find a $\delta>1$ such that $I \subset J \subset T_n(\delta)=L\left\langle \delta^{-1}x_1,...,\delta^{-1}x_n\right\rangle.$ By shrinking $\delta$ we can assume $I \subset J \subset W_n(\delta)=L\left\langle \delta^{-1}x_1,...,\delta^{-1}x_n\right\rangle^\dagger.$ Take a decreasing sequence $\delta_1=\delta, \delta_h \to 1$. Define $X_h=\Sp(W_n(\delta_h)/IW_n(\delta_h)), Z_h=\Sp(W_n(\delta_h)/JW_n(\delta_h))$. Moreover, if we take a generator of $J=(f_1,...,f_m),$ for any $\rho\geq 1$ write $$B_k(\rho)=L\left\langle \rho^{-1}x_1,...,\rho^{-1}x_n,\omega^{-k}f_1,...,\omega^{-k}f_m\right\rangle^\dagger,$$ we can define a decreasing sequence of neighborhood of $Z_h$ (resp. $Z$) by $V_{k,h}=\Sp(B_k(\delta_h)/JB_k(\delta_h))$ (resp. $V_{k}=\Sp(B_k(1)/JB_k(1))=V_{k,h}\cap X$).
		
		\begin{lem}\label{problemlemma}
			We have natural isomorphisms
			$$\dr(Z_1)\simeq \colim_k \dr (V_{k,1}),$$
			$$\dr(Z_1-Z)\simeq \colim_k \dr (V_{k,1}-(V_{k,1} \cap X)).$$
		\end{lem}
		
		\begin{proof}
			The claims are given by the definition of de Rham cohomology by Elmar Grosse-Kl\"onne, see Appendix \ref{appa}. For the first one, note that $V_{k,1}$ is a cofinal system of the set of admissible open neighborhood $U \subset X_1$ of $Z_1$, then by Theorem \ref{drgk} (denote by $j_k:V_{k,1} \hookrightarrow X_1$),
			$$\colim_k \dr (V_{k,1}) \simeq \colim_k \rg (X_1, Rj_{k,*}\Omega^{\bullet}|_{V_{k,1}}) \simeq \colim_k \rg (X_1, j_{k,*}\Omega^{\bullet}|_{V_{k,1}}) \simeq \dr(Z_1).$$
			
			The second argument is more subtle, but in the case $Z$ is smooth, or $Z$ is a strict normal crossing divisor, by \cite[Theorem 1.18]{kiehl1967derham}, we may further assume $$X \simeq S \times \mathbb{B}^r \simeq S \times \Sp(L\langle x_1,...,x_r \rangle),$$ $$X_h \simeq S_h \times \mathbb{B}(\delta_h)^r \simeq S_1 \times \Sp(L\langle \delta_h^{-1}x_1,...,\delta_h^{-1}x_r \rangle),$$ where $S,S_h$ are smooth affinoids, and$$Z_h \simeq S_h \simeq S_h \times \Sp(L\langle \delta_h^{-1}x_1,...,\delta_h^{-1}x_r \rangle/(x_1,...,x_r)) \hookrightarrow X$$ when $Z$ is smooth, or $$Z_h \simeq S_h \times \Sp(L\langle \delta_h^{-1}x_1,...,\delta_h^{-1}x_r \rangle/(x_1 x_2... x_r)) \hookrightarrow X_1$$ when $Z$ is a strict normal crossing divisor. Let $j_{k,h}:V_{k,h}-(V_{k,h} \cap X) \hookrightarrow X_h-X.$ When $Z$ is smooth, we can let $V_{k,h} \simeq S_h \times \mathbb{B}(\omega^k)^r$ and $j_{k,h}$ can be described as $(S_h-S) \times \mathbb{B}(\omega^k)^r.$ By Theorem \ref{drgk},
			\begin{align*}
				\dr (Z_1-Z) & \simeq \rg(X_1-X,\colim_k j_{k,1,*} \Omega_{V_{k,1}-V_K}^{\bullet}) \\ & \simeq R{\lim}_h \rg(X_1-X_h^{\circ},\colim_k j_{k,1,*}\Omega_{V_{k,1}-V_K}^{\bullet}|_{X_1-X_h^{\circ}}) \\ & \simeq R{\lim}_h\colim_k \dr(V_{k,1}-V_{k,h}^{\circ}) \\ & \simeq R{\lim}_h \dr(V_{k,1}-V_{k,h}^{\circ}) \simeq \dr (V_{k,1}-(V_{k,1} \cap X)).
			\end{align*}
			When $Z$ is a strict normal crossings divisor, one can use the \'eh-descent to reduce to the smooth case.	
			%\colorbox{red}{add more details? should works for any finite union of smooth}
		\end{proof}
		
		By definition, we have
		$$\drc (X) \simeq \colim_k [\dr(X_1) \xrightarrow{} \dr((X_1-X))].$$
		We also have 
		\begin{align*}
			\drc (U) & \simeq \colim_k\drc(X-V_k)\\ & \simeq \colim_k [\dr(X_1) \xrightarrow{} \dr((X_1-X)\cup V_{k,1})]
		\end{align*}
		
		    Using the lemma above, we have
			\begin{align*}
				\drc (Z) & \simeq [\dr(Z_1) \xrightarrow{} \dr((Z_1-Z))] \\ & \simeq \colim_k [\dr (V_{k,1}) \xrightarrow{} \dr (V_{k,1}-(V_{k,1} \cap X))] \\ & \simeq \colim_k [\dr((X_1-X)\cup V_{k,1}) \xrightarrow{} \dr((X_1-X))].
			\end{align*}	    
			The last isomorphism follows from the excision square:    
			$$\begin{tikzcd}
				V_{k,1}-(V_{k,1} \cap X) \arrow[r] \arrow[d] & V_{k,1} \arrow[d] \\
				X_1-X \arrow[r] & (X_1-X)\cup V_{k,1}.
			\end{tikzcd}$$
			By Combining the above three isomorphisms we get the desired proposition for de Rham cohomology.
			
			If $X$ is defined over $L=C$, the proof for Hyodo-Kato cohomology follows from the Hyodo-Kato isomorphism. Then for $L=K$ we use Galois descent. This concludes the proof.		
		\end{proof}
		
		%\begin{rem}
		%	The above proposition does not hold for rigid analytic varieties, counterexample???
		%\end{rem}
		
		\begin{rem}
			If $Z$ is not Zariski closed, it seems one could not even give a reasonable way to define $\drc(X)\to\drc(Z).$
		\end{rem}
		
		\begin{rem}
			One could begin by assuming the existence of a tubular neighborhood and proving the proposition through direct computation. Initially, we aimed to establish the proposition for any \( Z \). According to the proof, the primary issue for a general \( Z \) is that \( Z_1 - Z \) is not quasi-compact, and we do not have a straightforward characterization of the de Rham cohomology as described by Grosse-Klönne. It remains uncertain whether the requirement for \( Z \) can be relaxed. However, if \( X \) is proper, the definition of cohomology with compact support used in rigid analytic varieties allows us to remove the requirement for \( Z \) through a different approach, as presented below.
		\end{rem}
		
		\begin{prop} \label{properopenclosed}
			If $X$ is proper, we have
			$$\drc(U)\simeq  [\dr(X) \xrightarrow{} \dr(Z)],$$
			$$\hkc(U)\simeq  [\hk(X) \xrightarrow{} \hk(Z)].$$
		\end{prop}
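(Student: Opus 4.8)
The plan is to reduce everything to de Rham cohomology and then exploit the properness of $X$ together with the overconvergent (Grosse-Klönne) description of $\dr$; notably, the smoothness of $Z$ never enters, which is what lets us drop that hypothesis. As in Proposition \ref{open-closed}, I would first settle the de Rham statement: the Hyodo-Kato statement over $C$ then follows from the Hyodo-Kato isomorphism (Proposition \ref{hkisomorphism}), realizing $\hk$ as a direct summand of $\dr$ after writing $C\simeq F^{\operatorname{nr}}\oplus W$, and the case $L=K$ follows by Galois descent (Proposition \ref{hkgaloisdescent}). Next I would observe that properness collapses the colimits defining compact support: since $X$ is proper it is quasi-compact, so $V=X$ is terminal among the quasi-compact opens of $X$ and the colimit defining $\drc(X)$ stabilizes, giving $\drc(X)\simeq\fib(\dr(X)\to\dr(\emptyset))\simeq\dr(X)$; as $Z$ is closed in a proper space it is itself proper, so likewise $\drc(Z)\simeq\dr(Z)$. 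Thus the assertion becomes the open–closed fibre sequence $\drc(U)\to\dr(X)\to\dr(Z)$, now for an arbitrary Zariski closed $Z$.

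To produce it, I would compute $\drc(U)=\colim_{V}\fib(\dr(U)\to\dr(U-V))$ directly, the colimit running over quasi-compact opens $V$ of $U$; by tautness I may restrict to the cofinal system of $V$ with $V\Subset U$. For each such $V$ I form the commutative square of restriction maps
$$\begin{tikzcd}
\dr(X) \arrow[r] \arrow[d] & \dr(X-V) \arrow[d] \\
\dr(U) \arrow[r] & \dr(U-V),
\end{tikzcd}$$
where $U-V$ is open in the quasi-compact closed tube $X-V\supset Z$ with closed complement $(X-V)-(U-V)=Z$. The two vertical fibres are the de Rham cohomologies with support along $Z$ of $X$ and of $X-V$. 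Since $V\Subset U$, the closure $\overline V$ is disjoint from $Z$, so $W:=X-\overline V$ is an open neighbourhood of $Z$ contained in $X-V$, and excision for cohomology with support identifies both vertical fibres with the support cohomology computed on $W$. Hence the square is homotopy cartesian, the horizontal fibres agree, and $\fib(\dr(U)\to\dr(U-V))\simeq\fib(\dr(X)\to\dr(X-V))$ naturally in $V$.

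Passing to the filtered (hence exact) colimit then gives
$$\drc(U)\simeq\colim_{V}\fib(\dr(X)\to\dr(X-V))\simeq\fib\Big(\dr(X)\to\colim_{V}\dr(X-V)\Big),$$
and it remains to identify $\colim_{V}\dr(X-V)$ with $\dr(Z)$. As $V\Subset U$ exhausts $U$, the closed tubes $X-V$ shrink to $Z$: using properness, each open neighbourhood $W'\supset Z$ has quasi-compact complement $X-W'$ contained in some $V\Subset U$ (so $X-V\subset W'$), while each $X-V$ contains the open neighbourhood $X-\overline V\supset Z$. Thus $\{X-V\}_{V\Subset U}$ and the open dagger neighbourhoods of $Z$ are mutually cofinal, and the overconvergent description of de Rham cohomology (Grosse-Klönne; cf. Lemma \ref{problemlemma} and Appendix \ref{appa}, which applies to the possibly singular $Z$) yields $\colim_{V}\dr(X-V)\simeq\dr(Z)$. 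This gives $\drc(U)\simeq[\dr(X)\to\dr(Z)]$, and, with the reductions above, the Hyodo-Kato case.

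I expect the main obstacle to be this last identification $\colim_{V}\dr(X-V)\simeq\dr(Z)$ together with the excision invoked for the support cohomologies: both rest on the overconvergent (Grosse-Klönne) theory and on the point-set behaviour of proper, taut adic spaces—precisely the disjointness $\overline V\cap Z=\emptyset$ for $V\Subset U$ and the quasi-compactness of the complements $X-W'$. This is exactly where the properness of $X$ substitutes for the smoothness and tubular-neighbourhood input used in Proposition \ref{open-closed}.
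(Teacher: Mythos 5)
Your proposal follows the same two-step skeleton as the paper's proof: first use excision to replace $[\dr(U)\to\dr(U-V)]$ by $[\dr(X)\to\dr(X-V)]$ for each quasi-compact open $V\subset U$, then identify $\colim_V\dr(X-V)$ with $\dr(Z)$ by viewing the closed complements $X-V$ as tubes shrinking onto $Z$, using properness of $X$ and the Grosse-Kl\"onne overconvergent description; the Hyodo--Kato statement is then deduced from the Hyodo--Kato isomorphism and Galois descent, exactly as in the paper. Your point-set cofinality arguments (quasi-compactness of $X-W'$ inside $U$, disjointness $\overline V\cap Z=\emptyset$) are sound and play the role that \cite[Lemma 2.3]{kisin1999localconstancy} plays in the paper.

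The genuine gap is the step you yourself flag as the main obstacle: the identification $\colim_{V}\dr(X-V)\simeq\dr(Z)$ does not follow formally from cofinality plus ``the overconvergent description''. Grosse-Kl\"onne's $\dr(Z)$ is, by definition, $\rg\bigl(X,\colim_{W'}j_{W',*}\Omega^{\bullet}_{W'}\bigr)$, the cohomology of the colimit of \emph{underived} pushforwards over open neighbourhoods $W'\supset Z$. To equate this with $\colim_{W'}\dr(W')$ one needs two non-formal inputs: (a) commuting $\rg(X,-)$ with the filtered colimit, which properness (quasi-compactness) of $X$ does provide; and (b) a \emph{cofinal} family of admissible neighbourhoods $W'$ of $Z$ on which the higher direct images $R^{q}j_{W',*}$ of coherent sheaves vanish, so that $\rg(X,j_{W',*}\Omega^{\bullet})\simeq\dr(W')$. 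Point (b) is false for arbitrary admissible opens, and for a possibly singular $Z$ it is the technical core of the paper's proof: the paper covers $X$ by finitely many dagger affinoids, builds explicit Weierstrass-type neighbourhoods $V(\rho)$ from generators of the ideal of $Z$, and proves their admissibility (via \cite[9.1.4, Corollary 4]{bosch1984nonarchimedean}), the acyclicity $R^{q}j(\rho)_{*}=0$ for $q>0$, and their cofinality among all admissible neighbourhoods of $Z$ (via Kisin's lemma). Your citation of Lemma \ref{problemlemma} does not fill this in: that lemma is stated and proved in the local setting of an affinoid ambient space, where the analogous system $V_{k,1}$ has already been constructed from a presentation, and it is precisely its globalization to the proper $X$ with singular $Z$ that constitutes the lemma inside the paper's proof of Proposition \ref{properopenclosed} (moreover, the second claim of Lemma \ref{problemlemma}, the only one whose proof is written out in detail, genuinely uses the smoothness/SNC hypothesis through Kiehl's tubular neighbourhood theorem). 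So your outline is the right one, but the statement it reduces to --- existence of an admissible, acyclic, cofinal system of tubes around an arbitrary Zariski-closed $Z$ in a proper $X$ --- is exactly what remains to be proven, and is where all the work of the paper's argument lies.
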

		
		\begin{proof}
			By excision,
			\begin{align*}
				\drc(U) & \simeq \colim_{W \in \Phi(U)}[\dr(U) \xrightarrow{} \dr(U-W)] \\ & \simeq \colim_{W \in \Phi(U)}[\dr(X) \xrightarrow{} \dr(X-W)] .
			\end{align*}
			It is suffices to show
			$$\colim_{W \in \Phi(U)} \dr(X-W) \xrightarrow{\simeq} \dr(Z).$$
			Cover $X$ by finite affinoid dagger varieties $X_1,...,X_m,$ and let $Z_i=X_i \cap Z$. Write 
			$$X_i=\Sp(A_i)=\Sp(L\left\langle x_1,...,x_n\right\rangle^\dagger /I_i),$$ 
			$$X_i^{\circ}=\bigcup_{\delta \to 1}\Sp(L\left\langle \delta^{-1}x_1,...,\delta^{-1}x_n \right\rangle^\dagger /I_i),$$
			and assume $Z_i$ is cut out by an ideal $I_i=(f_{i1},...,f_{il}) \subset A_i$ (we can fix a large $l$ and a large $n$ for all $i$). For any small $\rho>0$, define 
			%$$V_i(\rho)=\{|f_{ij}(x)|\leq \rho, j=1,2,...,l\}=\Sp\left(A\left\langle \rho^{-1}f_{i1},...,\rho^{-1}f_{il}\right\rangle^\dagger\right),$$
			$$V_i(\rho)=\{|f_{ij}(x)|\leq\rho, j=1,2,...,l\}=\Sp\left(A\left\langle \rho^{-1} f_{i1},...,\rho^{-1} f_{il}\right\rangle^\dagger \right).$$
			Denote by $V(\rho)=V_1(\rho)\cup...\cup V_m(\rho)$.
			
			The proposition follows from the next lemma. 
			
			\begin{lem}
				$V(\rho)$ is an admissible open subset of $X$. Let $j(\rho):V(\rho) \hookrightarrow X$, then for any sheaf $\f$ on $V(\rho)$ locally free of finite rank, $R^ij(\rho)_*\f=0$ for $i>0$. Moreover, $\{V(\rho)\}_{\rho}$ is a cofinal system of the set of admissible open neighborhood $U \subset X$ of $Z$, and 
				$$\colim_{W \in \Phi(U)} \dr(X-W) \simeq \colim_{\rho \to 0} \dr(X-V(\rho)).$$
				%$\{X-V(\rho)\}_{\rho}$ is a cofinal system of $\Phi(U)$.
			\end{lem}
			
			\begin{proof}
				To prove $V(\rho)$ is admissible, it suffices to show $V(\rho) \cap X_i$ is admissible for each $i$. For this we use \cite[9.1.4, Corolly 4]{bosch1984nonarchimedean}. The second claim is clear. \cite[Lemma 2.3]{kisin1999localconstancy} implies $\{V(\rho)\}_{\rho}$ is a cofinal system of the set of admissible open neighborhood $U \subset X$ of $Z$. For the last statement, for any small $\rho$ define $$W_i(\rho)=\bigcup_j \Sp\left(A\left\langle \rho f_{ij}^{-1}\right\rangle^\dagger \right) \in X_i$$ and $W(\rho)=W_1(\rho)\cup...\cup W_m(\rho)$. Use $loc.$ $cit.$ $\{W(\rho)\}_{\rho}$ is a cofinal system of $\Phi(U)$. It is suffices to show for any $\rho$, there exist $\rho''\leq\rho'\leq\rho,$ such that $$X-V(\rho) \subset W(\rho') \subset X-V(\rho'').$$
				For the left side, we can just take $\rho=\rho'$ by the construction of $W(\rho)$. For the left side, $loc.$ $cit.$ ensures the existence of $\rho''$.
			\end{proof} 
			
			To complete the proof, since $X$ is proper,
			\begin{align*}
				\colim_{W \in \Phi(U)} \dr(X-W) & \simeq \colim_{\rho \to 0} \dr(X-V(\rho)) \\ & \simeq  \colim_{\rho \to 0} \rg(X,j(\rho)_*\Omega_{X-V(\rho)}^{\bullet}) \\ & \simeq \rg(X,\colim_{\rho \to 0}j(\rho)_*\Omega_{X-V(\rho)}^{\bullet}) \\ & \simeq \dr(Z).
			\end{align*}	
			The last isomorphism follows from the above lemma and Theorem \ref{drgk} from the Appendix \ref{appa}. Finally, the claim for Hyodo-Kato cohomology follows from the Hyodo-Kato isomorphism (for $L=C$, then for $L=K$ we use Galois descent).
		\end{proof}

		\subsection{The Poincar\'e duality}
		
		In this section, following \cite{AGNcompact}, we will review the Poincar\'e duality for de Rham cohomology and Hyodo-Kato cohomology of smooth dagger varieties. We will compare the Poincar\'e duality for de Rham cohomology with the one constructed in \cite{llz2023derham}. We will then deduce the Gysin sequence from the open-closed long exact sequence and Poincar\'e duality.
		
		The Poincar\'e duality for de Rham cohomology is formulated as follows.
		
		\begin{thm}\cite[Theorem 5.29]{AGNcompact} \label{logpoincaredr}
			Let $Y$ be a partially proper smooth rigid analytic variety or a quasi-compact smooth dagger variety over $L=K$ or $C$ of dimension $d$. Then there is a natural trace map $${\operatorname{tr_{dR}}}:\drc(Y)[2d] \to L,$$ that induce a perfect pairing $$\dr(Y) \otimes^{L_\blacksquare}_L\drc(Y)[2d] \xrightarrow{} \drc(Y)[2d]\to L.$$ Moreover, the trace map ${\operatorname{tr_{dR}}}$ is compatible with restrictions to open dagger subvarieties.
		\end{thm}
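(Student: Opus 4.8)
The plan is to construct the trace and pairing first, and then to establish perfectness by d\'evissage, using a good compactification together with the open--closed exact sequence to induct on the dimension $d$, and Mayer--Vietoris to patch and to pass between the various cases. Throughout I will treat the quasi-compact smooth dagger case as primary; the partially proper smooth rigid case then follows by comparing with the dagger completion (as in \cite[Proposition 4.18]{AGNcompact}) and by exchanging the defining colimit of $\drc$ with the limit of $\dr$ under $R\operatorname{Hom}_L(-,L)$. The cases $L=K$ and $L=C$ are uniform, the coefficient field entering only through linear algebra.

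For the pairing, the wedge product of forms makes $\drc(Y)$ a module over the commutative algebra $\dr(Y)$, which gives the map $\dr(Y)\otimes^{L_\blacksquare}_L\drc(Y)\to\drc(Y)$; this is manifestly compatible with restriction to open dagger subvarieties. For the trace I would first establish that, for connected smooth $Y$ of dimension $d$, the compactly supported de Rham cohomology is concentrated in degree $2d$ with $H^{2d}_{\mathrm{dR},c}(Y)\cong L$. This is extracted from the Grosse-Kl\"onne description of Theorem \ref{drgk} together with the open--closed sequence, ultimately reducing to the residue on a ball; defining $\operatorname{tr_{dR}}$ as this canonical identification automatically makes it natural in open immersions. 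Composing the module structure with $\operatorname{tr_{dR}}[2d]$ produces the pairing of the statement, and I would pin down the normalization by comparing with the Serre-type duality trace of \cite{llz2023derham}.

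Perfectness---that the adjoint map $\dr(Y)\to R\operatorname{Hom}_L(\drc(Y),L)[-2d]$ is an isomorphism in $D(\Mod^{\solid}_L)$---is the substance, and I would prove it by induction on $d$. The case $d=0$ is linear algebra. For the inductive step, resolution of singularities in characteristic $0$ furnishes a compactification $Y\hookrightarrow\overline Y$ with $\overline Y$ proper smooth and $D:=\overline Y-Y$ a strict normal crossings divisor; for proper smooth $\overline Y$ one has $\drc(\overline Y)=\dr(\overline Y)$ and the (classical) Poincar\'e duality. Proposition \ref{properopenclosed} then identifies $\drc(Y)\simeq[\dr(\overline Y)\to\dr(D)]$, which is dual to the open--closed triangle computing $\dr(Y)$. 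Using \'eh-descent and Mayer--Vietoris (Proposition \ref{mvmain}) over the smooth strata of $D$, the duality for the normal crossings divisor $D$ reduces to that of its strata, which are smooth proper of dimension $<d$ and hence covered by the inductive hypothesis. A five lemma applied to the residue exact sequence for $\drc(Y)$ and to the dual of the open--closed triangle for $\dr(Y)$---which are compatible because the pairing is compatible with both Mayer--Vietoris (Propositions \ref{mvmain}, \ref{mvc}) and the open--closed sequences---then yields the duality for $Y$.

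The hard part will be the trace, in two respects. First, the base case: I must compute $\drc$ of the ball from Grosse-Kl\"onne's theory and check that the pairing of $H^0_{\mathrm{dR}}=L$ against $H^{2d}_{\mathrm{dR},c}=L$ is perfect, which is exactly where the residue is used. Second, and more delicate, I must verify that the locally constructed trace glues to a global natural map and, crucially, that it is compatible under the residue map with the trace on the boundary divisor $D$; without this compatibility of traces along the open--closed sequences the five-lemma step in the d\'evissage does not close. A subsidiary but necessary point is the finiteness of de Rham cohomology for quasi-compact dagger varieties (again from Theorem \ref{drgk}), which is what makes $R\operatorname{Hom}_L(-,L)$ behave as a genuine duality in $D(\Mod^{\solid}_L)$.
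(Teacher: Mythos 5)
There is a genuine gap, and it sits exactly at the heart of your d\'evissage: the inductive step assumes that a quasi-compact smooth dagger variety $Y$ (your declared primary case) admits a compactification $Y\hookrightarrow\overline{Y}$ with $\overline{Y}$ proper smooth and $D=\overline{Y}-Y$ a strict normal crossings divisor. No such compactification exists in rigid or dagger geometry: the complement of a Zariski-closed subset of a proper rigid variety is always partially proper (``almost proper''), whereas a dagger affinoid such as the closed unit polydisk is quasi-compact and not partially proper, so it is never of the form $\overline{Y}-D$. The same failure occurs for general partially proper $Y$ (e.g.\ an arbitrary Stein space need not be algebraizable or embeddable with Zariski-closed boundary). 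Nagata plus Hironaka is an algebraic-geometry statement; its absence in the analytic category is precisely the difficulty this paper emphasizes in its introduction when explaining why GAGA for Hyodo--Kato cohomology is nontrivial. Consequently Proposition \ref{properopenclosed} (which requires $Z$ Zariski-closed in a proper $X$) cannot be invoked for such $Y$, and your induction never gets off the ground except for the strictly smaller class of almost proper varieties. A secondary inaccuracy: your base-case claim that $\drc(Y)$ of a connected smooth $Y$ is concentrated in degree $2d$ is false in general (take $Y$ proper, where $\drc(Y)=\dr(Y)$ has $H^0\neq 0$, or $Y=\G_m^{\an}$); what one can hope to prove is only that the top cohomology $H^{2d}_{\operatorname{dR,c}}(Y)$ of a connected $Y$ is one-dimensional, and even that is usually extracted from duality rather than used to prove it.

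For comparison, the paper does not prove this theorem at all: it is imported from \cite[Theorem 5.29]{AGNcompact}, and the route taken there (as recalled later in this paper, in the comparison with \cite{llz2023derham}) is essentially opposite to yours. One first constructs the coherent trace map for smooth Stein spaces via van der Put's Serre duality \cite{van1992serreduality}, deduces the de Rham trace and duality there, and then propagates to quasi-compact smooth dagger varieties using the cosheaf property of $\drc$ and the presentation of a dagger structure as a limit of rigid neighborhoods --- no compactification is ever needed. Your compactification-plus-Gysin d\'evissage is the natural argument in the algebraic category, and a version of it is indeed how this paper later exploits duality (e.g.\ in Theorem \ref{tsujialg} and the GAGA theorems), but as a proof of the duality theorem itself in the analytic category it would have to be replaced by the Stein-space/Serre-duality argument.
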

		
		In \cite{llz2023derham}, they also constructed the Poincar\'e duality for \'etale cohomolgy of an almost proper smooth rigid analytic variety $Y$ over $K$, i.e., $Y$ can be written as $X-Z$, where $X$ is a proper smooth rigid analytic variety defined over $K$ and $Z$ is Zariski closed in $X$. The main goal of this section is to compare these two construction, which will be used when proving GAGA in the next section.
		
		\begin{rem}
			In \cite{llz2023derham}, they also constructed the Poincar\'e duality for \'etale cohomolgy of almost proper smooth rigid analytic varieties. We refer the reader to \cite[Theorem 4.4.1]{llz2023derham} for the statements.
		\end{rem}
		
		We begin with compactly supported de Rham cohomology defined in \cite{llz2023derham}. Let $Y=X-Z,$ where $X$ is a proper smooth rigid analytic variety defined over $L=K$ or $C$ and $Z$ is strictly normal crossing divisor in $X$. We endow $X$ with the log structure induced by $Z$. Denote by $\mathcal I$ be the invertible ideal sheaf of $\mathcal O_X$ associated to the closed immersion $Z \hookrightarrow X.$
		
		\begin{defn} \cite[Definition 3.1.1]{llz2023derham} \label{drcllz}
			The compactly supported de Rham cohomology of $Y$ (by Lan-Liu-Zhu) is defined to be $$\rg_{\operatorname{dr,LLZ,c}}(Y):=\rg(X,\mathcal I \otimes_{O_X} \Omega_{X/L}^{\bullet,\log})$$ in $D(\Mod(L^{\solid})).$
		\end{defn}
		
		\begin{rem}
			This definition also applies to algebraic varieties.
		\end{rem}
		
		\begin{lem}
			We have a natural filtered quasi-isomorphism $$\rg_{\operatorname{dr,LLZ,c}}(Y) \xrightarrow{\simeq} \drc(Y).$$
		\end{lem}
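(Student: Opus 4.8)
The plan is to realise all three objects as hypercohomology of complexes of sheaves on $X$ and to compare them through a single short exact sequence of such complexes. Concretely, I would construct an exact sequence
$$0 \to \mathcal{I}\otimes_{\mathcal{O}_X}\Omega^{\bullet,\log}_{X/L} \to \Omega^{\bullet}_{X/L} \to i_*\Omega^{\bullet}_{Z/L} \to 0$$
of complexes on $X$, in which the first arrow is an inclusion of subcomplexes and the quotient is supported on $Z$. Applying $\rg(X,-)$ then produces a distinguished triangle $\rg_{\operatorname{dr,LLZ,c}}(Y) \to \dr(X) \to \dr(Z)$, hence a canonical identification $\rg_{\operatorname{dr,LLZ,c}}(Y) \simeq \fib(\dr(X)\to\dr(Z)) = [\dr(X)\to\dr(Z)]$. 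Since $X$ is proper, Proposition \ref{properopenclosed} gives $\drc(Y)\simeq [\dr(X)\to\dr(Z)]$, and composing these identifications yields the natural map $\rg_{\operatorname{dr,LLZ,c}}(Y)\xrightarrow{\simeq}\drc(Y)$ together with the fact that it is a quasi-isomorphism.

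To build the inclusion I would argue locally. Multiplication by a local equation $t$ of $Z$ gives an isomorphism $\mathcal{O}_X\xrightarrow{\sim}\mathcal{I}$, and on a normal-crossing chart where $Z=\{x_1\cdots x_r=0\}$ one checks that $t\cdot\frac{dx_i}{x_i}=x_1\cdots\widehat{x_i}\cdots x_r\,dx_i$ is a genuine (non-logarithmic) form; thus multiplication by $t$ embeds $\mathcal{I}\otimes\Omega^{p,\log}_{X/L}$ into $\Omega^{p}_{X/L}$. As $\mathcal{I}$ is invertible, tensoring is exact, so this is a degreewise injection of complexes, and I would then compute the cokernel $\mathcal{Q}^{\bullet}$, which is supported on $Z$. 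In the smooth case $r=1$ a direct computation identifies $\mathcal{Q}^p=\Omega^p_{X/L}/(dt\wedge\Omega^{p-1}_{X/L}+t\,\Omega^p_{X/L})$ with $i_*\Omega^p_{Z/L}$, giving the sequence above with $\dr(Z)$ the de Rham cohomology of the smooth $Z$.

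The main obstacle is the normal-crossing case, where the cokernel $\mathcal{Q}^{\bullet}$ is only the naive ``log-de Rham complex of the singular divisor $Z$'' and one must show $\rg(X,\mathcal{Q}^{\bullet})\simeq\dr(Z)$ compatibly. I would handle this by \'eh-descent: stratifying $Z$ by the multi-intersections $Z_J=\bigcap_{j\in J}Z_j$ of its smooth components, the complex $\mathcal{Q}^{\bullet}$ carries a Čech/weight filtration whose graded pieces are built from the $\Omega^{\bullet}_{Z_J/L}$, and Theorem \ref{ehdescentrigid} together with the \'eh-descent property for $\dr$ from Proposition \ref{mvmain} identifies the associated total complex with $\dr(Z)$. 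Equivalently, one may apply \'eh-descent directly to $Z$ so as to reduce the identification $\rg(X,\mathcal{Q}^{\bullet})\simeq\dr(Z)$ to the smooth case already settled above. This reduction of the singular divisor to its smooth strata is the technical heart of the argument.

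Finally I would check that the comparison is \emph{filtered}. Both sides carry the Hodge (brutal) filtration: on the left $\fil^p\rg_{\operatorname{dr,LLZ,c}}(Y)=\rg(X,\mathcal{I}\otimes\Omega^{\geq p,\log}_{X/L})$, and on the right $\fil^p\drc(Y)$ is induced from the brutal filtrations of $\dr(X)$ and $\dr(Z)$. Since the short exact sequence constructed above is, in each cohomological degree, a short exact sequence of $\mathcal{O}_X$-modules $0\to\mathcal{I}\otimes\Omega^{p,\log}_{X/L}\to\Omega^p_{X/L}\to i_*\Omega^p_{Z/L}\to 0$, it is strictly compatible with the brutal filtration; passing to $\gr^p$ therefore exhibits the comparison map as strict and an isomorphism on graded pieces, which shows it is a filtered quasi-isomorphism and completes the proof.
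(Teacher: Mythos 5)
Your argument is correct, and its skeleton is the same as the paper's: both proofs identify $\drc(Y)$ with the fiber $[\dr(X)\to\dr(Z)]$ via Proposition \ref{properopenclosed}, and then match this fiber with the Lan--Liu--Zhu complex. The difference is in how that second identification is obtained. The paper simply cites \cite[Proposition 4.3.4]{llz2023derham}, which presents $\rg(X,\mathcal I\otimes_{\mathcal O_X}\Omega^{\bullet,\log}_{X/L})$ as the fiber of $\dr(X)\to\dr(Z^{\bullet})$ over the \v{C}ech nerve $Z^{\bullet}$ of $\coprod_i Z_i\to Z$, and rewrites $\dr(Z^{\bullet})\simeq\dr(Z)$ by closed Mayer--Vietoris/\'eh-descent (Proposition \ref{mvmain}). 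You instead re-prove this input: the short exact sequence $0\to\mathcal I\otimes\Omega^{\bullet,\log}_{X/L}\to\Omega^{\bullet}_{X/L}\to\mathcal Q^{\bullet}\to 0$, together with the resolution of the cokernel $\mathcal Q^{\bullet}$ by the \v{C}ech total complex of the pushforwards of $\Omega^{\bullet}_{Z_J/L}$ for the smooth strata $Z_J=\bigcap_{j\in J}Z_j$, is exactly the mathematics hidden in that citation, and your local verifications (multiplication by $x_1\cdots x_r$ clears the log poles; in the smooth case the cokernel is $i_*\Omega^{\bullet}_{Z/L}$) are sound. Two points to tighten: first, your ``equivalently, apply \'eh-descent directly to $Z$'' shortcut does not stand alone, since $\rg(X,\mathcal Q^{\bullet})$ is not a priori an \'eh-cohomology of $Z$; you genuinely need the stratified resolution of $\mathcal Q^{\bullet}$ (local exactness of sequences such as $0\to\mathcal O/(xy)\to\mathcal O/(x)\oplus\mathcal O/(y)\to\mathcal O/(x,y)\to 0$) before descent can be invoked. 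Second, you should record that the composite $\dr(X)\to\rg(X,\mathcal Q^{\bullet})\simeq\dr(Z)$ agrees with the restriction map appearing in Proposition \ref{properopenclosed}; this small compatibility is what lets you splice the two fiber presentations together. What your route buys is self-containedness and an explicit, degreewise-strict treatment of the Hodge filtration, which the paper leaves implicit in the cited result; what the paper's route buys is brevity, and a formulation over the \v{C}ech nerve that matches the \'eh-descent presentation of $\dr(Z)$ with no extra work.
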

		
		\begin{proof}
			This follows from Proposition \ref{properopenclosed} and \cite[Proposition 4.3.4]{llz2023derham}: write $Z$ as the union of irreducible components $Z=Z_1 \cup ... \cup Z_m$ where $Z_1,...,Z_m$ are smooth. Denote by $Z^{\bullet}$ the \v{C}ech nerve of the map $\coprod Z_i \to Z$, then both definitions can be written as $$\rg_{\operatorname{dR,*,c}}(U)\simeq [\rg_{\operatorname{dR}}(X)\to \rg_{\operatorname{dR}}(Z^{\bullet})],$$ where $*=\varnothing$ or LLZ.
		\end{proof}

		Now suppose $X$ is defined over $L=K$. Denote by $$\operatorname{tr_{dR,LLZ}}:\rg_{\operatorname{dr,LLZ,c}}(Y)[2d] \to K$$ the trace map defined in \cite[Theorem 4.2.1]{llz2023derham}. We want to compare $\operatorname{tr_{dR,LLZ}}$ with $\operatorname{tr_{dR}}$. Both constructions rely on the Serre duality for coherent sheaves on rigid analytic varieties, which was studied in \cite{beyer1997serreduality}. We begin with the definition of algebraic local cohomology for local rings. Let $(R,\mathfrak m)$ be a noetherian local ring, $M$ be a finitely generated $R$-module, and $\widetilde M$ be its associated coherent $\Spec (R)$-module. Denote by $X:=\Spec (R)$ and $U:=X-\{\mathfrak m\}.$ Consider the functor $$\Gamma_{\mathfrak m}(M):=\{a\in M|\exists t \in \N \text{ such that } \mathfrak m^t a=0\}.$$ Then the local cohomology of $M$ with support in $\mathfrak m$ is defined to be $$H^i_{\mathfrak m}(M):=H^i(R\Gamma_{\mathfrak m}(M)).$$
		
		Now we define the canonical residue map. Let $R=L[[x_1,...,x_d]]$ be the ring of formal power series over a $p$-adic field $L$, and $\mathfrak m=(x_1,...,x_d).$ Consider $\Omega_{R/L}^{\bullet}$ the algebra of differential forms of $R/L$, then $\Omega_{R/L}^d \simeq L[[x_1,...,x_d]]dx_1\wedge dx_2 \wedge ... \wedge dx_d.$ By \cite{hartshorne1966residue}, we have $$H^d_{\mathfrak m}(\Omega_{R/L}^d)\simeq \left\{\sum_{\alpha}a_{\alpha}x^{\alpha}dx_1\wedge dx_2 \wedge ... \wedge dx_d, \alpha=(\alpha_1,...,\alpha_d)\in \Z_{< 0}^d,a_{\alpha}\in L\right\},$$ where the sum is finite. The residue map $$\operatorname{res}_{(x)}:H^d_{\mathfrak m}(\Omega_{R/L}^d) \to L$$ is then given by $$\sum_{\alpha}a_{\alpha}x^{\alpha}dx_1\wedge dx_2 \wedge ... \wedge dx_d \mapsto a_{(-1,-1,...,-1)}.$$ By \cite[Proposition 3.2.1]{beyer1997serreduality}, the residue map is independent of the choice of coordinate $(x).$ 
		
		In general, if $R$ is a complete regular local ring over $L$ of dimension $d$, with maximal ideal $\mathfrak m,$ such that $L'=R/\mathfrak m$ is a finite field extension of $L$. By choosing a regular system of parameters $(x)$ of $R,$ we have $R\simeq L'[[x_1,...,x_d]].$ We can define the residue map $$\operatorname{res}_{(x)}:H^d_{\mathfrak m}(\Omega_{R/L}^d) \to L$$ by composing the residue map above with the canonical trace map of finite field extension $L'/L.$ This is also independent of the choice of coordinate $(x).$ Therefore, we will denote it by $\operatorname{res}_{\mathfrak m}.$
		
		Now let $Y$ be a rigid analytic variety over $L$, $\f$ a coherent sheaf on $Y$. Let $y\in Y$ be a classical point, i.e. defined by a finite field extension of $L$, and $\mathfrak m_y$ be the maximal ideal of the stalk $\mathcal O_{Y,y}$. Denote by $(R,\mathfrak m, M)$ be the $\mathfrak m_y$-adic completion of $(\mathcal O_{Y,y},\mathfrak m_y,\f_y).$ According to \cite[Lemma 4.1.1]{llz2023derham}, there exists a canonical map 
		\begin{equation} \label{pretrace}
			H^d_{\mathfrak m}(\Omega_{R/L}^d) \to H^d(Y,\f).
		\end{equation}
		If moreover $Y$ is proper smooth, then there exists a coherent trace map $$\operatorname{tr_{coh}}:H^d(Y,\f)\to L,$$ whose pre-composition with (\ref{pretrace}), is the canonical residue map for any classical point $y$. $\operatorname{tr_{coh}}$ is an isomorphism if $Y$ is geometrically connected. The coherent trace map allows us to deduce the Serre duality \cite{beyer1997serreduality} for coherent complex on proper smooth rigid analytic varieties. 
		
		${\operatorname{tr_{dR,LLZ}}}$ is then defined by the composition $$H^{2d}_{\operatorname{dR,LLZ,c}}(Y)\simeq H^{d,d}_{\operatorname{Hodge,c}}(Y,\mathcal O_Y):=H^d(X,\mathcal I \otimes_{O_X}\Omega^{d,\log}_X) \simeq H^d(X,\Omega^d_X)\to L,$$ where the first isomorphism follows from the degeneration of the Hodge–de Rham spectral sequence $$E_1^{a,i-a}:=H^{a,i-a}_{\operatorname{Hodge,c}}(Y,\mathcal O_Y)\Longrightarrow H^a_{\operatorname{dr,LLZ,c}}(Y)$$ on the $E_1$ page by \cite[Theorem 3.1.10]{llz2023derham}. ${\operatorname{tr_{dR,LLZ}}}$ can be extended on derived level.
		
		In \cite{AGNcompact}, ${\operatorname{tr_{dR}}}$ is defined for $Y$ by the cosheaf condition: they construct ${\operatorname{tr_{dR}}}$ for Stein spaces first, by using the coherent trace map for Stein spaces constructed in \cite{van1992serreduality}. Note that the coherent trace map for Stein spaces in \cite{van1992serreduality} is compatible with the one in \cite{beyer1997serreduality}: it suffices to check it for open disc, which follows from \cite[Theorem 3.7]{van1992serreduality} and a direct computation. In particular, we have

		\begin{lem}
			Via the natural quasi-isomorphism $\rg_{\operatorname{dr,LLZ,c}}(Y) \xrightarrow{\simeq} \drc(Y),$ we have $$\operatorname{tr_{dR,LLZ}}=\operatorname{tr_{dR}},$$ which is also Galois equivariant.
		\end{lem}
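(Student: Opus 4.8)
The plan is to reduce both trace maps to the coherent trace maps out of which they are built, and then to match Beyer's coherent trace on the proper smooth $X$ with van der Put's coherent trace on a Stein exhaustion of $Y$ through their common characterization by the residue map $\operatorname{res}_{\mathfrak m}$ at classical points.

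First I would record the two factorizations through the top Hodge piece. By Definition \ref{drcllz} together with the $E_1$-degeneration of the compactly supported Hodge--de Rham spectral sequence, $\operatorname{tr_{dR,LLZ}}$ factors as
$$H^{2d}_{\operatorname{dr,LLZ,c}}(Y) \simeq H^d(X, \mathcal I \otimes_{\mathcal O_X}\Omega^{d,\log}_X) \simeq H^d(X, \Omega^d_X) \xrightarrow{\operatorname{tr_{coh}}} K,$$
with $\operatorname{tr_{coh}}$ the Beyer trace of the proper smooth $X$. The construction of $\operatorname{tr_{dR}}$ in \cite{AGNcompact} likewise proceeds through the top graded piece of the compactly supported Hodge filtration and is assembled from van der Put's coherent trace by the cosheaf property; crucially, $\operatorname{tr_{dR}}$ is compatible with restriction to open dagger subvarieties (Theorem \ref{logpoincaredr}). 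The filtered quasi-isomorphism $\rg_{\operatorname{dr,LLZ,c}}(Y) \simeq \drc(Y)$ identifies the two top Hodge pieces, so after this identification both de Rham traces become $K$-linear functionals on $H^d(X, \Omega^d_X)$, and it suffices to prove that the induced coherent trace maps agree.

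The heart of the argument is the residue characterization. For a classical point $y \in Y$ with completed local data $(R,\mathfrak m)$, a top local cohomology class yields via (\ref{pretrace}) a class in $H^d(X,\Omega^d_X)$, and, being supported at $y$, also a compactly supported class on $Y$; the two are compatible under $H^d_c(Y,\Omega^d_Y)\simeq H^d(X,\Omega^d_X)$. Beyer's trace precomposed with (\ref{pretrace}) equals $\operatorname{res}_{\mathfrak m}$, by \cite[Lemma 4.1.1]{llz2023derham} and the defining property of $\operatorname{tr_{coh}}$. For $\operatorname{tr_{dR}}$, the restriction-compatibility lets me evaluate the same local class on a small polydisc neighbourhood of $y$, where van der Put's trace is computed by the open-disc normalization of \cite[Theorem 3.7]{van1992serreduality}, which is again $\operatorname{res}_{\mathfrak m}$. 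Since $H^d(X,\Omega^d_X)$ is one-dimensional on each connected component and (\ref{pretrace}) is surjective there (the residue being surjective onto $K$), two functionals agreeing after precomposition with (\ref{pretrace}) must coincide; hence $\operatorname{tr_{dR,LLZ}}=\operatorname{tr_{dR}}$. Galois equivariance is then automatic: $\operatorname{res}_{\mathfrak m}$ is intrinsic and built from the trace of finite field extensions (\cite[Proposition 3.2.1]{beyer1997serreduality}), hence $G_K$-equivariant after base change to $C$, and the equality, holding already over $K$, is preserved.

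The main obstacle is precisely this comparison of two a priori different coherent trace maps --- Beyer's Serre-duality trace on the proper $X$ versus van der Put's on the Stein exhaustion of $Y$. The technical payoff comes from the fact that both are pinned down by the residue at classical points; once the open-disc normalization is in hand, the restriction-compatibility of $\operatorname{tr_{dR}}$ localizes the comparison to that normalization, and coordinate-independence of $\operatorname{res}_{\mathfrak m}$ closes the argument.
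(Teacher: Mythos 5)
Your argument is correct and shares its two essential inputs with the paper's proof: the restriction compatibility of the trace maps, and the identification of van der Put's and Beyer's coherent traces through the residue normalization on discs (which the paper records in the paragraph just before the lemma, citing \cite[Theorem 3.7]{van1992serreduality}). But the assembly is genuinely different. The paper invokes restriction compatibility for \emph{both} traces --- Theorem \ref{logpoincaredr} for $\operatorname{tr_{dR}}$ and \cite[Lemma 4.2.12]{llz2023derham} for $\operatorname{tr_{dR,LLZ}}$ --- to reduce at once to $Y$ proper, where both maps collapse to the coherent trace $\operatorname{tr_{coh}}$ on $H^d(Y,\Omega^d_{Y/K})$ and the comparison is immediate; residues and local cohomology enter the paper's proof only to establish Galois equivariance. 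You never pass to the proper case: you localize only $\operatorname{tr_{dR}}$ at a classical point $y\in Y$, show that both functionals restrict to $\operatorname{res}_{\mathfrak m}$ along (\ref{pretrace}), and conclude by surjectivity plus one-dimensionality --- in effect you promote the paper's Galois-equivariance argument to the main comparison, and in exchange you do not need \cite[Lemma 4.2.12]{llz2023derham} at all. The trade-off is two verifications your sketch leaves implicit: (i) one-dimensionality of $H^d(X,\Omega^d_X)$ holds over $H^0(X,\mathcal O_X)$, which may be a nontrivial extension of $K$ on components that are not geometrically connected, so you should either base change to $C$ first or observe that the image of (\ref{pretrace}) is an $H^0(X,\mathcal O_X)$-submodule; (ii) the asserted compatibility of the local-cohomology class with the identification $H^{2d}_{\operatorname{dR,c}}(Y)\simeq H^d(X,\Omega^d_X)$ of the preceding lemma is a genuine diagram chase through the open-closed/\v{C}ech construction --- it plays the role of the simpler fact, implicit in the paper's reduction, that this identification commutes with the covariance maps $\drc(Y)\to\drc(X)$ attached to the open immersion $Y\hookrightarrow X$. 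Neither point is an obstruction, so your route works, but the paper's reduction to the proper case gets the equality with less bookkeeping.
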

		
		\begin{proof}
			Since $\operatorname{tr_{dR}}$ is compatible with open immersions, and $\operatorname{tr_{dR,LLZ}}$ has similar compatibility by \cite[Lemma 4.2.12]{llz2023derham}, we may assume $Y$ is proper. In this case, both are just the coherent trace map $\operatorname{tr_{coh}}:H^d(Y,\Omega^d_{Y/K})\to K$ via the canonical isomorphism $H^d(Y,\Omega^d_{Y/K}) \xrightarrow{\simeq} H^d_{\operatorname{dR}}(Y).$
			
			To see that the de Rham trace map is Galois equivariant, we need to show the square
			$$\begin{tikzcd}
				H^{d} (Y_C,\Omega^d_{Y_C}) \arrow[r,"\operatorname{tr_{coh}}"] \arrow[d] & C \arrow[d,"\sigma"] \\
				H^{d} (Y_C^{\sigma},\Omega^d_{Y_C^{\sigma}}) \arrow[r,"\operatorname{tr_{coh}}"] & C.
			\end{tikzcd}$$
			is commutative, where $\sigma \in \Gal(C/K)$. In fact, according to \cite{beyer1997serreduality}, we can take a point $y \in Y$ and the trace map $\operatorname{tr}:H^d_y(\Omega^d_{Y_C}) \to C$ is surjective which can be factorized as $$H^d_y(\Omega^d_{Y_C})  \to H^{d} (Y_C,\Omega^d_{Y_C}) \xrightarrow{\operatorname{tr_{coh}}} C.$$ Since $\operatorname{tr_{coh}}$ is an isomorphism, we are reduced to showing that the square
			$$\begin{tikzcd}
				H^d_y(\Omega^d_{Y_C}) \arrow[r,"\operatorname{tr}"] \arrow[d] & L \arrow[d,"\sigma"] \\
				H^{d}_{y^{\sigma}} (\Omega^d_{Y_C^{\sigma}}) \arrow[r,"\operatorname{tr}"] & L
			\end{tikzcd}$$
			is commutative, where $y^{\sigma}$ is the preimage of $y$ of $Y_C^{\sigma} \to Y_C$. This can be deduced from the construction of the residue map for local cohomology.
		\end{proof}
		
		In particular, we may safely use all propositions in \cite{llz2023derham} when $Y$ is an almost proper rigid analytic variety over $K$.

		The Poincar\'e duality for Hyodo-Kato cohomology of open rigid analytic varieties is formulated as follows, which can be deduced from the Poincar\'e duality for de Rham cohomolgy.
		
		\begin{theorem} \cite[Theorem 5.34]{AGNcompact} \label{hkrigidpoincare}
			Let $Y$ be 
			
			(1) a partially proper smooth rigid analytic variety or a quasi-compact smooth dagger variety over $L=C$ of dimension $d$, and denote $L_F=F^{\operatorname{nr}}$; or
			
			(2) a proper smooth rigid analytic variety that is defined over $L=K$ and admits a proper semistable formal model over $\Spf(\mathcal O_K)$, and denote $L_F=F$. 
			
			Then:
			
			(i) There is a natural trace map in $(\phi,N)$ solid $L_F$-vector spaces: $${\operatorname{tr_{HK}}}:\hkc(Y)[2d] \to L_F\{-d\},$$ compatible with the Hyodo-Kato morphism.
			
			(ii) The pairing $$\hk(Y) \otimes^{L_{\blacksquare}}\hkc(Y)[2d] \xrightarrow{} \hkc(Y)[2d]\to L_F\{-d\}$$ is a perfect duality of $(\phi,N)$ solid $L_F$-vector spaces, i.e., we have induced isomorphisms in $D(\Mod_{L_F}^{\operatorname{solid}}):$
			$$H^i_{\operatorname{HK}}(Y) \simeq \operatorname{Hom}_{L_F}(H^{2d-i}_{\operatorname{HK,c}}(Y),L_F\{-d\}),$$
			$$H^i_{\operatorname{HK,c}}(Y) \simeq \operatorname{Hom}_{L_F}(H^{2d-i}_{\operatorname{HK}}(Y),L_F\{-d\}).$$
		\end{theorem}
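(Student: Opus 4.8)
\emph{Strategy.} The plan is to deduce the Hyodo--Kato duality from the de Rham Poincaré duality of Theorem \ref{logpoincaredr} by transporting the trace and the pairing along the Hyodo--Kato isomorphism. I would first treat case (1), over $L=C$, and then recover case (2) by Galois descent. Over $C$ the cup-product pairing $\hk(Y)\otimes^{L_{\blacksquare}}\hkc(Y)[2d]\to\hkc(Y)[2d]$ is the module action and is already defined over $F^{\operatorname{nr}}$, so the essential content is the construction of a trace map $\operatorname{tr_{HK}}\colon\hkc(Y)[2d]\to F^{\operatorname{nr}}\{-d\}$ of $(\phi,N)$-modules whose base change to $C$ recovers $\operatorname{tr_{dR}}$ under the identification $F^{\operatorname{nr}}\{-d\}\otimes_{F^{\operatorname{nr}}}C\simeq C$ furnished by the chosen period.

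\emph{Construction of the trace.} Base changing the compactly supported Hyodo--Kato isomorphism $\hkc(Y)\otimes^{L_{\blacksquare}}_{F^{\operatorname{nr}}}C\simeq\drc(Y)$ (which follows from Proposition \ref{hkisomorphism} by naturality, together with Proposition \ref{open-closed}) and composing with $\operatorname{tr_{dR}}$ gives a map $\hkc(Y)[2d]\otimes_{F^{\operatorname{nr}}}C\to C$. I may assume $Y$ connected of pure dimension $d$; then de Rham Poincaré duality forces $H^{2d}_{\operatorname{dR,c}}(Y)\simeq C$ to be one-dimensional with $H^{>2d}_{\operatorname{dR,c}}(Y)=0$, whence, using faithful flatness of $C$ over $F^{\operatorname{nr}}$ and finiteness of Hyodo--Kato cohomology (Proposition \ref{finitenesshk}), the space $H^{2d}_{\operatorname{HK,c}}(Y)$ is one-dimensional over $F^{\operatorname{nr}}$ and $H^{>2d}_{\operatorname{HK,c}}(Y)=0$. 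Thus $\hkc(Y)[2d]$ sits in non-positive degrees, and the truncation $\hkc(Y)[2d]\to\tau^{\geq0}\hkc(Y)[2d]=H^{2d}_{\operatorname{HK,c}}(Y)$ supplies the trace once $H^{2d}_{\operatorname{HK,c}}(Y)$ is identified with $F^{\operatorname{nr}}\{-d\}$.

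\emph{The $(\phi,N)$-structure (main obstacle).} The crux is to show $H^{2d}_{\operatorname{HK,c}}(Y)\simeq F^{\operatorname{nr}}\{-d\}$ as a $(\phi,N)$-module, i.e. that $N=0$ and $\phi$ acts by $p^d$. Nilpotence of $N$ on a one-dimensional space gives $N=0$ at once, but the identity $\phi=p^d$ is the hard point, and I would establish it by reducing to a situation where it is already known. Since $\operatorname{tr_{dR}}$ is compatible with restriction to open dagger subvarieties, the induced trace inherits this compatibility, so the Frobenius eigenvalue may be computed after restricting $Y$ to a small smooth piece, or via the open--closed exact sequence (Proposition \ref{open-closed}) reducing to the proper smooth case admitting a (log-)semistable model. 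There $H^{2d}_{\operatorname{HK,c}}=H^{2d}_{\operatorname{HK}}$ agrees with top log-crystalline cohomology, and the fundamental-class computation of Tsuji \cite{tsuji1999poincare} yields $\phi=p^d$; this pins down the identification with $F^{\operatorname{nr}}\{-d\}$ and makes the trace compatible with the Hyodo--Kato morphism by construction.

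\emph{Perfectness and descent.} Having defined $\operatorname{tr_{HK}}$ over $F^{\operatorname{nr}}$, I would deduce perfectness of the pairing by faithfully flat descent. The duality map $\hk(Y)\to R\operatorname{Hom}_{F^{\operatorname{nr}}}(\hkc(Y),F^{\operatorname{nr}}\{-d\})$ is $F^{\operatorname{nr}}$-linear, and its base change to $C$ is, by construction, the de Rham duality isomorphism of Theorem \ref{logpoincaredr}; since Hyodo--Kato cohomology is finite (so the complexes are perfect and $R\operatorname{Hom}$ commutes with $\otimes_{F^{\operatorname{nr}}}C$) and $C$ is faithfully flat over $F^{\operatorname{nr}}$, the map is already an isomorphism. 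Finally, case (2) over $K$ follows from the $C$-case by Galois descent (Proposition \ref{hkgaloisdescent}): the trace and pairing over $C$ are Galois-, $\phi$- and $N$-equivariant, so taking $G_K$-invariants recovers the $F$-linear duality, the semistable-model hypothesis ensuring that the fundamental class and its Frobenius are already defined over $F$.
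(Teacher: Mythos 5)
Your overall strategy --- constructing $\operatorname{tr_{HK}}$ and the pairing by transporting the de Rham trace and pairing of Theorem \ref{logpoincaredr} along the Hyodo--Kato isomorphism --- is the route the paper itself indicates (its proof is essentially a citation of \cite[Theorem 5.34]{AGNcompact}, with the remark that case (2) follows by the same argument). But your perfectness step has a genuine gap. You argue that ``Hyodo--Kato cohomology is finite, so the complexes are perfect and $R\operatorname{Hom}$ commutes with $\otimes_{F^{\operatorname{nr}}}C$'', and you also invoke finiteness (Proposition \ref{finitenesshk}) to get one-dimensionality of $H^{2d}_{\operatorname{HK,c}}(Y)$. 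Finiteness holds for quasi-compact smooth dagger varieties, but case (1) of the theorem also covers \emph{partially proper} smooth rigid analytic varieties --- e.g.\ Stein spaces such as the Drinfeld half-plane or $\mathbb A^{d,\operatorname{an}}_C$ --- whose Hyodo--Kato and de Rham cohomologies are infinite-dimensional solid vector spaces (this is exactly why the statement is formulated in $D(\Mod^{\operatorname{solid}}_{L_F})$, and why the paper's introduction contrasts the finite-dimensional almost-proper case with the Stein case). For such $Y$ the complexes are not perfect, $R\operatorname{Hom}_{F^{\operatorname{nr}}}(-,F^{\operatorname{nr}}\{-d\})$ does not commute with $-\otimes^{L_\blacksquare}_{F^{\operatorname{nr}}}C$, and so the identification of the base change of your Hyodo--Kato duality map with the de Rham duality isomorphism --- the crux of your ``faithfully flat descent'' --- is unjustified. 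This is precisely where the actual proof in \cite{AGNcompact} must use the finer topological structure of the cohomology (Fr\'echet-type versus compact-type solid spaces) rather than a perfectness-plus-flatness argument. As written, your proof only establishes the quasi-compact dagger half of case (1).

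Two further points. First, your identification $\varphi=p^d$ on the top compactly supported class is only sketched: the reduction ``restrict to a small piece or use the open--closed sequence to reach a proper semistable situation'' is the right idea, but one must exhibit a concrete open (or a concrete proper compactification) on which the Frobenius of the fundamental class is actually computable, and check that the comparison maps used are isomorphisms on $H^{2d}_{\operatorname{HK,c}}$; none of this is carried out. Second, for case (2) you proceed by Galois descent from $C$ (Proposition \ref{hkgaloisdescent}), whereas the paper instead reruns the case-(1) argument directly over $K$, using the arithmetic Hyodo--Kato isomorphism that the proper semistable formal model provides. Your route is viable here (properness restores finiteness, and descent for finite-dimensional semilinear $F^{\operatorname{nr}}$-representations applies), and it would in fact prove a statement not requiring the semistable-model hypothesis; but it is a different argument from the paper's, and its correctness still hinges on repairing the case-(1) gap above, since you use the geometric statement over $C$ as input.
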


		\begin{rem} \label{remtrace}
			Similar to the Poincar\'e duality for de Rham cohomology, the trace map ${\operatorname{tr_{HK}}}$ is compatible with restrictions to open dagger subvarieties.
		\end{rem}
		
		\begin{proof}
			See \cite[Theorem 5.34]{AGNcompact}. The proof for case (2) is the same, since we have the Hyodo-Kato isomorphism in this case. %The compatibility of ${\operatorname{tr_{HK}}}$ follows from the compatibility of ${\operatorname{tr_{dR}}}$, see \cite[Lemma 4.2.12]{llz2023derham}.
		\end{proof}
	
		The open-closed exact sequence, combined with the Poincar\'e duality, gives us the exact Gysin sequence.
		
		\begin{prop} \label{gysin}
			If $X$ is a smooth dagger variety over $K$ or $C$, and $Z \subset X$ is a smooth closed subvariety of pure codimension $i$ which is nowhere dense, denote by $U:=X-Z.$ Then we have $$\dr(Z)[-2i]\simeq [\dr(X) \to \dr(U)],$$$$\hk(Z)\{-i\}[-2i]\simeq [\hk(X) \to \hk(U)].$$
		\end{prop}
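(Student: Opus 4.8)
The plan is to derive both Gysin triangles by applying Poincar\'e duality to the open-closed exact sequence, exactly as sketched in the introduction. I would first work over $L=C$, treating de Rham and Hyodo-Kato cohomology in parallel by means of Theorem \ref{logpoincaredr} and Theorem \ref{hkrigidpoincare}, and deduce the case $L=K$ afterwards. Since $Z$ is smooth of pure codimension $i$, it is in particular a smooth closed subvariety, so Proposition \ref{open-closed} applies to the pair $(X,Z)$ and gives the distinguished triangle
$$\hkc(U) \to \hkc(X) \to \hkc(Z) \xrightarrow{+1}$$
together with its de Rham analogue. Writing $d=\dim X=\dim U$ and noting that $\dim Z=d-i$, the next step is to apply the contravariant functor $R\operatorname{Hom}_{F^{\operatorname{nr}}}(-,F^{\operatorname{nr}}\{-d\})$ to this triangle.

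The identification of the dualized objects rests on a dimension-and-twist computation. For $Y\in\{X,U\}$, both of dimension $d$, Poincar\'e duality gives $R\operatorname{Hom}_{F^{\operatorname{nr}}}(\hkc(Y),F^{\operatorname{nr}}\{-d\})\simeq \hk(Y)[2d]$. For $Z$, which has dimension $d-i$, duality is taken with respect to $F^{\operatorname{nr}}\{-(d-i)\}$, so that $R\operatorname{Hom}_{F^{\operatorname{nr}}}(\hkc(Z),F^{\operatorname{nr}}\{-(d-i)\})\simeq \hk(Z)[2(d-i)]$; factoring the twist as $F^{\operatorname{nr}}\{-d\}\simeq F^{\operatorname{nr}}\{-(d-i)\}\{-i\}$ then yields $R\operatorname{Hom}_{F^{\operatorname{nr}}}(\hkc(Z),F^{\operatorname{nr}}\{-d\})\simeq \hk(Z)[2(d-i)]\{-i\}$. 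Dualizing the open-closed triangle and shifting by $[-2d]$ (so that $2(d-i)-2d=-2i$) produces
$$\hk(Z)\{-i\}[-2i] \to \hk(X) \to \hk(U) \xrightarrow{+1},$$
which is the asserted Gysin triangle. The de Rham case is identical except that the trace lands in $L$ rather than in a twist of $L_F$, so the twist $\{-i\}$ disappears and one obtains $\dr(Z)[-2i]\simeq[\dr(X)\to\dr(U)]$.

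The delicate point, which I expect to be the main obstacle, is to check that this dualization returns the \emph{correct} maps: the map $\hk(X)\to\hk(U)$ appearing in the dualized triangle must be the canonical restriction. Equivalently, I must verify that the restriction $\hk(X)\to\hk(U)$ is Poincar\'e dual to the covariant extension map $\hkc(U)\to\hkc(X)$. This is precisely where the compatibility of the trace maps with open immersions (Remark \ref{remtrace} for Hyodo-Kato, and the corresponding assertion in Theorem \ref{logpoincaredr} for de Rham) is essential, since it forces the pairings on $X$ and on $U$ to agree after restriction to $U$. A further subtlety is that $U=X-Z$ is in general not quasi-compact, so the Poincar\'e duality used for $U$ is not literally covered by the cited statements; I would handle this by exhausting $U$ by quasi-compact open dagger subvarieties, applying duality to each, and passing to the (co)limit, again relying on the compatibility of the trace maps with open immersions so that the limiting pairing remains perfect.

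Finally, for $L=K$ the Hyodo-Kato statement is deduced from the case $L=C$ by Galois descent. By Proposition \ref{hkgaloisdescent}, $\hk(-)$ and $\hkc(-)$ of a variety over $K$ are recovered as the continuous $G_K$-invariants of the corresponding cohomology of its base change to $C$; since this operation is exact, sending distinguished triangles to distinguished triangles, and the triangle constructed over $C$ is $G_K$-equivariant, it descends to the desired Gysin triangle over $K$. The de Rham statement over $K$ is proved by the same dualization argument, as Theorem \ref{logpoincaredr} already applies over $L=K$.
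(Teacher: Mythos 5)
Your proposal is correct and follows essentially the same route as the paper's proof: dualize the open-closed exact sequence of Proposition \ref{open-closed} via the Poincar\'e duality theorems, use the compatibility of the trace maps with restriction to open subvarieties (Remark \ref{remtrace}) to identify the dualized map with the canonical restriction $\hk(X)\to\hk(U)$, and conclude the case $L=K$ by Galois descent. If anything, your treatment is slightly more careful than the paper's: the paper reduces to $X$ quasi-compact and then invokes Theorem \ref{hkrigidpoincare} without further comment, even though $U=X-Z$ is still not quasi-compact, whereas you explicitly flag this issue and supply the exhaustion-by-quasi-compact-opens argument needed to apply duality to $U$.
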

		
		\begin{proof}
			We prove the Gysin sequence for Hyodo-Kato cohomology, the proof for de Rham cohomology is the same (also proved in \cite[Proposition 2.5]{grosse2004derham}). We assume $X$ is defined over $C$ first. We may assume that $X$ is quasi-compact. Applying the Poincar\'e duality (Theorem \ref{hkrigidpoincare}) to the open-closed exact sequence: $$\hkc(U)\simeq  [\hk(X) \xrightarrow{} \hk(Z)],$$ we get $$\hk(Z)\{-i\}[-2i]\simeq [\hk(X) \to \hk(U)].$$ Remark \ref{remtrace} ensures the induced map $\hk(X) \to \hk(U)$ is exactly the natural map induced by the open immersion $U \hookrightarrow X$. 
			
			The case that $X$ is defined over $K$ then follows by Galois descent.
		\end{proof}
		
		\begin{rem}
			The Gysin sequence is also true for \'etale cohomology, which is implied by cohomological purity \cite[3.9]{huber2013etale}.
		\end{rem}

		%\subsection{Kunneth formula}
		
		\section{Applications}
		
		In this section, we prove the comparison between algebraic and analytic Hyodo-Kato cohomology and the semistable conjecture for \'etale cohomology of almost proper rigid analytic varieties. We will also show that Tsuji’s compactly supported log-crystalline cohomology (after inverting $p$) agrees with our definition of compactly supported Hyodo-Kato cohomology.
		
		\subsection{Algebraic and analytic Hyodo-Kato cohomology}
		
		Recall that for an algebraic variety, we can compare the algebraic and analytic de Rham cohomology. This follows from \cite[Theorem 2.3]{grosse2004derham} and Theorem \ref{drgk}.
		
		\begin{thm}
			Let $X$ be an algebraic variety over $L$, and $X^{\operatorname{an}}$ be its analytification. Then there exist a natural quasi-isomorphism $$\dr (X) \xrightarrow{\simeq} \dr(X^{\an}).$$
		\end{thm}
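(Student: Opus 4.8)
The plan is to construct the comparison morphism, reduce the statement to the smooth case via $h$- and $\eh$-descent, and there invoke the classical GAGA theorem of Grosse-Kl\"onne.

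First I would set up the natural map. Both the algebraic de Rham cohomology (defined via the $h$-topology, following \cite{beilinson2012derham}) and the analytic de Rham cohomology (defined via the $\eh$-topology) arise by sheafifying the naive de Rham complex $\Omega^{\bullet}$. The analytification functor $X\mapsto X^{\an}$ preserves \'etale maps, blow-ups, and disjoint unions of components, hence carries $h$-coverings of algebraic varieties to $\eh$-coverings of their analytifications; combined with the pullback of differential forms along the canonical map $X^{\an}\to X$ of locally ringed spaces, this furnishes a natural morphism $\dr(X)\to\dr(X^{\an})$. By Proposition \ref{modelalgebraic} and Proposition \ref{modelrigid}, smooth varieties form a basis for both topologies, so using descent I would reduce to the case where $X$ is smooth, arranging the smooth (or semistable) hypercovering on the algebraic side so that each term analytifies to the analytification of a smooth variety, and checking that the comparison morphism is compatible with the simplicial structure term by term.

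In the smooth case, Theorem \ref{ehdescentrigid} (together with its algebraic counterpart) identifies the $\eh$-de Rham cohomology with the cohomology of the honest complex $\Omega^{\bullet}_{X/L}$, so the claim becomes the comparison between algebraic and analytic de Rham cohomology of a smooth variety. Since $X$ need not be proper, the analytic side relevant here is the \emph{overconvergent} (dagger) de Rham cohomology, and the required comparison is exactly \cite[Theorem 2.3]{grosse2004derham}. To transport Grosse-Kl\"onne's statement into the present $\eh$-sheafified framework, I would apply Theorem \ref{drgk} of the appendix, which identifies the overconvergent de Rham cohomology of \cite{bosco2023rational} with that of \cite{grosse2004derham}; applying this identification on both the algebraic and analytic sides then yields the desired quasi-isomorphism.

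The main obstacle is the bookkeeping that makes the descent argument rigorous: one must verify that the comparison morphism is genuinely natural and independent of the chosen smooth hypercovering, and that analytification commutes with the formation of these coverings. The second, more technical, difficulty is reconciling Grosse-Kl\"onne's overconvergent model with the $\eh$-definition used throughout, since his construction is a priori sensitive to the choice of dagger/overconvergent structure whereas the $\eh$-sheafification is not; this is precisely the role played by Theorem \ref{drgk}, and it is what allows the passage from the proper case (where GAGA is classical) to arbitrary smooth, and then to arbitrary, varieties. Once these compatibilities are in place, the quasi-isomorphism follows formally.
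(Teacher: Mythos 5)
Your proposal reuses the paper's two key citations (Grosse-Kl\"onne's comparison theorem and Theorem \ref{drgk}), but the descent step you build around them contains a genuine gap. The claim that analytification carries h-coverings to \'eh-coverings does not follow from the facts you list and is false in the relevant generality. The h-topology is generated by universal topological epimorphisms, so it contains all finite surjective and, more generally, proper surjective morphisms; these are exactly what Proposition \ref{modelalgebraic} needs, since the semistable-pair basis is produced by de Jong's alterations, which are generically finite proper surjections, not compositions of \'etale maps and blow-ups. By contrast, the \'eh-topology on $\operatorname{Rig}_L$ is generated only by \'etale coverings, universal homeomorphisms, and blow-up squares $\operatorname{Bl}_Z(X)\sqcup Z \to X$. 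Already the analytified Kummer cover $y \mapsto y^2 \colon \mathbb{A}^{1,\an} \to \mathbb{A}^{1,\an}$ is not an \'eh-covering: on a smooth curve every blow-up square contains an isomorphism as a component, universal homeomorphisms onto a normal space in characteristic $0$ are isomorphisms, so any \'eh-covering sieve of $\mathbb{A}^{1,\an}$ contains an \'etale covering family, and no \'etale cover factors through $y\mapsto y^2$ near the branch point (no section exists over the local ring at $0$, by the valuation argument on $x=y^2$). Consequently, the analytification of an h-hypercover of $X$ by semistable pairs is not known to be (and in general is not) an \'eh-hypercover of $X^{\an}$, so you cannot conclude $\dr(X^{\an})\simeq \dr(U^{\an,\bullet})$ and the reduction to the smooth case collapses. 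The paper is explicit about this obstruction: in the proof of Theorem \ref{hkgaga} it states that ``in general we don't know if we have h-descent for rigid analytic varieties,'' and it is precisely to avoid this that the Hyodo--Kato GAGA there is bootstrapped from de Rham GAGA rather than proved by hypercover descent.

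The paper's own proof of this statement has no descent step at all: \cite[Theorem 2.3]{grosse2004derham} already compares algebraic de Rham cohomology with Grosse-Kl\"onne's analytic de Rham cohomology for arbitrary, possibly singular, varieties (his argument proceeds via embeddings, tubular neighborhoods and resolution of singularities, not via the h-topology), and Theorem \ref{drgk} then identifies Grosse-Kl\"onne's theory with the \'eh-theory $\dr(-)$ used here; the remaining identification of the h-sheafified $\dr(X)$ with the classical algebraic de Rham cohomology is standard in characteristic $0$. So your reduction to the smooth case is both unavailable by the route you propose and unnecessary. If you insist on a descent-style argument, the correct device is not h-hypercovers but abstract blow-up squares: blow-ups and their centers do analytify compatibly, both sides satisfy Mayer--Vietoris for such squares (Proposition \ref{mvmain}), and induction on dimension via Hironaka's resolution reduces to the smooth case; this is exactly the strategy the paper uses for compactly supported de Rham GAGA in Theorem \ref{gagacdr}.
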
	
		
		In this section we will prove the same result for Hyodo-Kato cohomology. Note that for de Rham cohomology, we can construct the comparison map between algebraic and analytic de Rham cohomology immediately from the projection $X_{\an} \to X_{\operatorname{zar}},$ and prove it is an isomorphism by resolution of singularities and the existence of tubular neighborhood after desingularization. For Hyodo-Kato cohomology it is unclear how to define the comparison map. On the other hand, once we are able to construct a map $\hk (X) \xrightarrow{\simeq} \hk(X^{\an}),$ which is compatibility via the Hyodo-Kato isomorphism with its de Rham version, the naturality of this map can be easily deduced from the naturality of its de Rham version. If $X$ is defined over $K$, we can base change it to $C$, and then we use the Galois descent.
		
		\begin{thm} \label{hkgaga}
			Let $X$ be an algebraic variety over $K$, and $X^{\operatorname{an}}$ be its analytification. Then there exist a natural quasi-isomorphism
			$$\hk (X) \xrightarrow{\simeq} \hk(X^{\an}),$$
			which is compatible with Frobenius, monodromy, and the GAGA morphism for de Rham cohomology, i.e. we have the following commutative square:
			$$\begin{tikzcd}
				\hk(X) \arrow[r] \arrow[d,"\simeq"] & \dr(X) \arrow[d,"\simeq"] \\ 
				\hk(X^{\an}) \arrow[r] & \dr(X^{\an}),
			\end{tikzcd}$$
			where the horizontal maps are Hyodo-Kato morphisms.		
		\end{thm}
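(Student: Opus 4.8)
The plan is to avoid writing down the comparison map directly — which, as the $\mathbb A^1$ example in the introduction shows, seems hopeless — and instead to manufacture it out of the proper case by the Gysin sequence together with $\eh$- and h-descent. I would first treat the geometric situation (over $\overline K$, so with coefficient field $F^{\operatorname{nr}}$) and deduce the arithmetic statement over $K$ only at the very end, by taking $\Gal(\overline K/K)$-invariants via Proposition \ref{hkgaloisdescent}. The anchor is the proper case: for a strictly semistable pair $(U,\overline U)$ (Proposition \ref{modelalgebraic}) with proper smooth generic fibre $\overline U_K$ and horizontal boundary $D=\overline U_K-U$, the local–global compatibilities of Propositions \ref{hklocalglobal} and \ref{hkanalyticlocalglobal} identify both $\hk(\overline U_K)$ and $\hk(\overline U_K^{\an})$ with $\cris((\overline U)^0_k/\mathcal O_K^0)_{\Q_p}$, yielding a canonical isomorphism $c_{\overline U_K}$ that manifestly respects $\varphi$, $N$, and $\iota_{\operatorname{HK}}$.

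I would then run an induction on $\dim U$. By Lemma \ref{nclemma} every stratum $D_J=\bigcap_{j\in J}D_j$ of the normal crossings divisor $D$ is again proper smooth with semistable reduction and of strictly smaller dimension, so writing $\hk(D)$ and $\hk(D^{\an})$ as limits over the \v{C}ech nerve of the smooth components yields, by the inductive hypothesis, a compatible isomorphism $c_D$. The analytic Gysin sequence (Proposition \ref{gysin}) and the algebraic one (Proposition \ref{gysinalg}) then present
$$\hk(U)=\cofib\bigl(\hk(D)\{-1\}[-2]\to\hk(\overline U_K)\bigr),\qquad \hk(U^{\an})=\cofib\bigl(\hk(D^{\an})\{-1\}[-2]\to\hk(\overline U_K^{\an})\bigr),$$
so, provided $c_{\overline U_K}$ and $c_D$ intertwine the two Gysin maps, I obtain an induced isomorphism $c_U$ on cofibres. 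Since the semistable pairs form a basis of the h- and $\eh$-topologies (Propositions \ref{modelalgebraic}, \ref{modelrigid}) and both theories satisfy descent, the maps $c_U$ glue to the desired $c_X\colon\hk(X)\xrightarrow{\simeq}\hk(X^{\an})$, whose naturality in $X$ reduces to that of $c_{\overline U_K}$.

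The hard part will be exactly this intertwining: the commutativity of the square whose horizontal arrows are the algebraic and analytic Gysin morphisms and whose verticals are $c_D\{-1\}[-2]$ and $c_{\overline U_K}$. This is the precise place where the compactifying (horizontal) log structure, present algebraically but absent analytically, must be traded for the geometry of the proper boundary $D$, and it is the reason one cannot produce $c_X$ by hand. I would reduce it to the de Rham case: the de Rham GAGA isomorphism recalled before the theorem is compatible with the de Rham Gysin sequence (the analogue of Proposition \ref{gysin} for $\dr$, accessible directly via tubular neighbourhoods), so the corresponding square for $\dr$ commutes; the Gysin morphisms and the trace maps building them are compatible with $\iota_{\operatorname{HK}}$ (Theorem \ref{hkrigidpoincare}(i)); and over $C$ the Hyodo-Kato isomorphism $\hk(\,\cdot\,)\otimes^{L_{\blacksquare}}_{F^{\operatorname{nr}}}C\xrightarrow{\simeq}\dr(\,\cdot\,)$ of Proposition \ref{hkisomorphism} identifies the base change $-\otimes_{F^{\operatorname{nr}}}C$ of the Hyodo-Kato square with the de Rham square. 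As $F^{\operatorname{nr}}\to C$ is faithfully flat, commutativity after $-\otimes_{F^{\operatorname{nr}}}C$ forces commutativity of the Hyodo-Kato square itself.

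Granting this Gysin compatibility, the induction produces the natural isomorphism $c_X$ over $\overline K$, compatible with $\varphi$ and $N$ and, through the commuting $\iota_{\operatorname{HK}}$-squares at each inductive step, with the de Rham GAGA morphism — this is the content of part (2). For part (1), given $X$ over $K$ I would apply part (2) to $X_{\overline K}$ and pass to $\Gal(\overline K/K)$-invariants: Proposition \ref{hkgaloisdescent} gives $\hk(X)\simeq\hk(X_{\overline K})^{G_K}$ and $\hk(X^{\an})\simeq\hk(X_C^{\an})^{G_K}$, so the Galois-equivariant isomorphism $c_{X_{\overline K}}$ descends to the required $\hk(X)\xrightarrow{\simeq}\hk(X^{\an})$ with all the stated compatibilities.
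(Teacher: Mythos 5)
Your skeleton matches the paper's (anchor at the proper case via Propositions \ref{hklocalglobal} and \ref{hkanalyticlocalglobal}, treat the boundary, globalize, then Galois descent via Proposition \ref{hkgaloisdescent}), and your trick of verifying commutativity after $\otimes_{F^{\operatorname{nr}}}C$ is legitimate: $F^{\operatorname{nr}}$ is a direct summand of $C$, an argument the paper itself uses in Proposition \ref{mvmain}. But your central step fails as written. The presentation $\hk(U)=\cofib\bigl(\hk(D)\{-1\}[-2]\to\hk(\overline U_K)\bigr)$, with $\hk(D)$ computed from the \v{C}ech nerve of the smooth components, is \emph{false} whenever the normal crossings divisor $D$ has more than one component: Propositions \ref{gysinalg} and \ref{gysin} require a \emph{smooth} closed center. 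Concretely, take $\overline U_K=\mathbb{P}^2$ and $D$ the three coordinate lines, so $U\simeq \mathbb{G}_m^2$ and $H^1_{\operatorname{HK}}(U)$ has rank $2$; but in your cofibre $H^1$ injects into $H^0(D)\{-1\}$, which has rank $1$ since $D$ is connected. The missing piece is the codimension-two stratum entering with twist $\{-2\}[-4]$, invisible to a two-term cofibre. The correct procedure (carried out in step (3) of the paper's proof of Theorem \ref{ss}) removes one smooth component at a time, and the intermediate varieties $X-D_{m-1}$ and $D_m-D_m\cap D_{m-1}$ are no longer proper, so your induction on dimension anchored at the proper case does not close; one needs an additional induction on the number of components, and the Gysin compatibility would then have to be established for these open varieties as well. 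The paper avoids all of this: the open-closed presentations $\hkc(U)\simeq[\hk(\overline U_K)\to\hk(Z)]$ (Definition \ref{hkcalgebraic} algebraically, Proposition \ref{properopenclosed} analytically) require no smoothness of $Z$, GAGA for the proper varieties $\overline U_K$ and $Z$ (the latter by descent along the closed cover by components) gives GAGA for $\hkc(U)$, and Poincar\'e duality (Theorems \ref{algpoincare} and \ref{hkrigidpoincare}) then transfers this to $\hk(U)$; the only compatibility to check is that of the two trace maps with the de Rham trace under $\iota_{\operatorname{HK}}$, which is far weaker than commutativity of a Gysin square.

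There is a second gap at the globalization step: you assert that the maps $c_U$ glue because ``both theories satisfy descent'' for the h-hypercover $(U^{\bullet},\overline U^{\bullet})\to X$. That is exactly what is not known on the analytic side --- the paper states explicitly that h-descent for analytic Hyodo--Kato cohomology (along analytified algebraic h-covers) is open. The paper's global step instead proves that $\psi:\hk(X^{\an})\to\hk(U^{\an,\bullet})$ is a quasi-isomorphism by base changing to $C$, identifying $\psi\otimes_{F^{\operatorname{nr}}}C$ with its de Rham counterpart via Proposition \ref{hkisomorphism}, invoking de Rham GAGA and h-descent for algebraic de Rham cohomology, and concluding by conservativity of $\otimes_{F^{\operatorname{nr}}}C$ --- the very trick you use for the Gysin square, applied where it is genuinely needed. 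Finally, a smaller point: your reduction of the Gysin-square commutativity to de Rham invokes Theorem \ref{hkrigidpoincare}(i), which is an \emph{analytic} statement; on the algebraic side the Gysin map of Proposition \ref{gysinalg} comes from the motivic realization functor, and its compatibility with $\iota_{\operatorname{HK}}$ and the de Rham realization is an additional input you would need to justify.
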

		
		In geometric case, one needs to moreover take base change over $ \overline{K}\to C $.
		
		\begin{thm}
			Let $X$ be an algebraic variety over $\overline K$, and $X_C^{\operatorname{an}}$ be the analytification of $X_C:=X \times_{\overline K}C$. Then there exist a natural quasi-isomorphism
			$$\hk (X) \xrightarrow{\simeq} \hk(X_C^{\an}),$$
			which is compatible with Galois action (when $X$ is the base change of some $X_0$ over $K$), Frobenius, monodromy, and the GAGA morphism for de Rham cohomology.
		\end{thm}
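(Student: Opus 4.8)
The plan is to run the same reduction-to-the-proper-case strategy as in Theorem \ref{hkgaga}; the only genuinely new features here are that the de Rham coefficients are enlarged from $\overline{K}$ to $C$ and that Galois equivariance must be tracked. The decisive structural point is that the Hyodo-Kato side remains an $F^{\operatorname{nr}}$-theory on both the algebraic and the analytic variety, so the comparison can be built as an $F^{\operatorname{nr}}$-linear map, with the de Rham GAGA morphism serving only to pin down naturality and the extra structures.

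First I would construct the comparison isomorphism in the proper case. For a proper smooth variety $Y$ over $\overline{K}$ admitting a projective semistable model $\mathcal{Y}/\mathcal{O}_L$ for some finite $L/K$, the algebraic local-global compatibility (Proposition \ref{hklocalglobal}) and the analytic one (Proposition \ref{hkanalyticlocalglobal}) both identify the Hyodo-Kato cohomology with the log-crystalline cohomology $\cris(\mathcal{Y}_k^0/\mathcal{O}_{F_L}^0)_{\Q_p}$ of the common special fiber. Passing to the colimit over $L/K$ yields a canonical $F^{\operatorname{nr}}$-linear isomorphism $\hk(Y)\simeq\hk(Y_C^{\an})$ commuting with $\varphi$ and $N$; its compatibility with the de Rham GAGA morphism is checked directly on this shared log-crystalline description, using the Hyodo-Kato morphisms (Proposition \ref{hkisomorphism} on the analytic side) on both sides.

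Next I would reduce the general case to this one. By h-descent for algebraic Hyodo-Kato cohomology together with the fact that strictly semistable pairs form a basis of $\operatorname{Var}_{\overline{K},\operatorname{h}}$ (Proposition \ref{modelalgebraic}(ii)), and the matching \'eh-descent on the analytic side (Proposition \ref{modelrigid}), it suffices to treat $X=U$ for a strictly semistable pair $(U,\overline{U})$. Using Lemma \ref{nclemma} and induction on the number of irreducible components of $D:=\overline{U}_{\overline{K}}-U$, I would further reduce to the case where $D$ is smooth, so that $\overline{U}_{\overline{K}}$ and $D$ are both proper smooth. The algebraic Gysin sequence (Proposition \ref{gysinalg}) and the analytic one (Proposition \ref{gysin}) then fit the comparison into the two triangles
$$\hk(D)\{-1\}[-2]\simeq[\hk(\overline{U}_{\overline{K}})\to\hk(U)],\qquad \hk(D_C^{\an})\{-1\}[-2]\simeq[\hk(\overline{U}_C^{\an})\to\hk(U_C^{\an})],$$
and since the comparison is already an isomorphism on the proper pieces $\overline{U}_{\overline{K}}$ and $D$, it becomes an isomorphism for $U$ as soon as it is compatible with these two sequences.

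The verification of this last compatibility is where I expect the real difficulty to lie. Both Gysin maps are obtained from Poincar\'e duality (Theorems \ref{algpoincare} and \ref{hkrigidpoincare}) together with the open-closed exact sequence (Proposition \ref{open-closed}), so the compatibility reduces to showing that the comparison respects the Hyodo-Kato trace maps and cup-product pairings. Since those traces and pairings are pinned down, through the Hyodo-Kato morphism, by the corresponding de Rham data, and de Rham GAGA is already known to respect traces and pairings, the required compatibility descends from the de Rham case; orchestrating the comparison isomorphism, Poincar\'e duality, and de Rham GAGA so that all the relevant squares commute is the technical core. Finally, Galois equivariance when $X=X_0\times_K\overline{K}$ descends to $K$ is automatic from the construction: the log-crystalline cohomology of the special fiber, the colimit over finite extensions $L/K$, and the Gysin sequences, traces and pairings are all $\mathscr{G}$-natural, so the resulting isomorphism $\hk(X)\simeq\hk(X_C^{\an})$ is $\mathscr{G}$-equivariant, as required.
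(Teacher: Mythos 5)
Your treatment of the semistable-pair case is essentially the paper's own argument: GAGA for the proper pieces via the common semistable formal model (local-global compatibility on both sides), then Poincar\'e duality with traces and pairings controlled through the Hyodo--Kato morphism and de Rham GAGA. The cosmetic difference is that you phrase the gluing through Gysin triangles rather than through the open-closed sequence for $\hkc$ followed by dualization. Two remarks on that choice: since there is no direct map $\hk(U)\to\hk(U_C^{\an})$, the comparison on $U$ must actually be \emph{defined} as the composite of Poincar\'e duality with the dual of the compactly supported comparison (the paper's dashed arrow), not merely ``checked'' to be compatible with the triangles --- a map filled into a fiber sequence is not canonical; and the route through $\hkc$ has the advantage that Proposition \ref{properopenclosed} works for arbitrary, possibly non-smooth, boundary $Z$ inside the proper $\overline U_K$, so no induction on irreducible components is needed at this stage at all.

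The genuine gap is your reduction of the general case to semistable pairs. You appeal to ``h-descent for algebraic Hyodo--Kato cohomology'' and ``the matching \'eh-descent on the analytic side'', but these do not match: if $(U^{\bullet},\overline U^{\bullet})\to X$ is an h-hypercover by strictly semistable pairs (which typically involves alterations and finite surjective maps), its analytification $U_C^{\an,\bullet}\to X_C^{\an}$ is not known to be an \'eh-hypercover --- the analytic \'eh-topology is generated by \'etale covers, universal homeomorphisms and blow-up squares, and does not obviously contain such proper surjections --- so \'eh-descent for analytic $\hk$ says nothing about descent along this simplicial object. This is exactly the point the paper flags (``In general we don't know if we have h-descent for rigid analytic varieties'') and then repairs: the map $\psi:\hk(X_C^{\an})\to \hk(U_C^{\an,\bullet})$ becomes, after applying $\otimes_{F^{\operatorname{nr}}}C$ and the Hyodo--Kato isomorphism (Proposition \ref{hkisomorphism}), the map $\dr(X_C^{\an})\to\dr(U_C^{\an,\bullet})$, which is a quasi-isomorphism because algebraic de Rham cohomology does satisfy h-descent and de Rham GAGA identifies the algebraic and analytic terms; since $C\simeq F^{\operatorname{nr}}\oplus W$ as solid $F^{\operatorname{nr}}$-modules, $\hk$ is a direct summand of $\dr$, so $\psi$ is itself a quasi-isomorphism, and only then can the local comparisons be transported to $X$. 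Without this (or an equivalent) argument, your global reduction step does not go through.
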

		
		\subsubsection{The local case}
		
		We prove the case for Beilinson basis first, i.e. we are going to show the following proposition.
		
		\begin{prop}
			Suppose $(U, \overline{U})$ is a strict semistable pair over $K$, as in Definition \ref{modelalgebraic}. Then we have natural (with respect to all strict semistable pairs) quasi-isomorphisms $$\hk (U) \xrightarrow{\simeq} \hk(U^{\an}),$$$$\hk (U_{\overline{K}}) \xrightarrow{\simeq} \hk(U_C^{\an}).$$ Moreover, both are compatible with Galois action, Frobenius, monodromy, and the GAGA morphism for de Rham cohomology.
		\end{prop}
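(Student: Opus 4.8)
The plan is to realise both $\hk(U_{\overline K})$ and $\hk(U_C^{\an})$ as the log-crystalline cohomology of a single log special fibre, and to reduce a general strict semistable pair to the proper case by peeling off the horizontal boundary with the Gysin sequence. Write $\overline U_{\overline K}$ for the (proper smooth) generic fibre and $D=\overline U_{\overline K}-U_{\overline K}=D_1\cup\dots\cup D_r$ for the horizontal divisor; by strict semistability $D$ is a normal crossings divisor with smooth components. I argue by induction on the pair $(\dim\overline U_{\overline K},r)$, ordered lexicographically. I treat the geometric comparison $\hk(U_{\overline K})\xrightarrow{\simeq}\hk(U_C^{\an})$ first, since there the Hyodo-Kato isomorphism of Proposition \ref{hkisomorphism} is directly available, and recover the arithmetic statement from it by Galois descent at the end.

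For the base case $r=0$ we have $U_{\overline K}=\overline U_{\overline K}$, and the pair $(\overline U_{\overline K},\overline U_{\mathcal O_{\overline K}})$ is a basic semistable pair with empty horizontal boundary, so its compactifying log structure on the special fibre is the canonical vertical one. The geometric local-global compatibilities on the two sides (Propositions \ref{hklocalglobal} and \ref{hkanalyticlocalglobal}) then identify both $\hk(\overline U_{\overline K})$ and $\hk((\overline U_{\overline K})_C^{\an})$ with the (geometric) log-crystalline cohomology of this common log special fibre, the analytic side using that the $p$-adic completion of $\overline U$ is a proper semistable formal model with generic fibre $(\overline U_{\overline K})_C^{\an}$. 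Composing gives the comparison quasi-isomorphism in the proper case; it preserves $\varphi$ and $N$ because both identifications are maps of crystalline cohomology, and it is compatible with de Rham GAGA since the geometric Hyodo-Kato isomorphisms recover de Rham cohomology after $-\otimes_{F^{\operatorname{nr}}}C$.

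For the inductive step put $D'=D_1\cup\dots\cup D_{r-1}$, $U'=\overline U_{\overline K}-D'$ and $Z=D_r\cap U'=D_r-(D_r\cap D')$, so that $Z$ is a smooth, nowhere dense, codimension-one closed subvariety of $U'$ with $U_{\overline K}=U'-Z$. By Lemma \ref{nclemma}, applied componentwise, $Z$ underlies a strict semistable pair of dimension $\dim\overline U_{\overline K}-1$, and $U'$ is the complement of the $(r-1)$-component divisor $D'$; both are thus covered by the inductive hypothesis. The algebraic and analytic Gysin sequences (Propositions \ref{gysinalg} and \ref{gysin}) yield the distinguished triangles
$$\hk(Z)\{-1\}[-2]\xrightarrow{g}\hk(U')\to\hk(U_{\overline K}),\qquad \hk(Z_C^{\an})\{-1\}[-2]\xrightarrow{g^{\an}}\hk(U'^{\an}_C)\to\hk(U_C^{\an}).$$
Once the left-hand square with $g$ and $g^{\an}$ commutes, functoriality of the cofibre produces $\hk(U_{\overline K})\to\hk(U_C^{\an})$ and makes it an equivalence, because the comparison maps on $\hk(Z)$ and on $\hk(U')$ already are.

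The commutativity of this Gysin square is the main obstacle, since $g$ and $g^{\an}$ are constructed by entirely different routes — the former from the six-functor formalism for Voevodsky motives, the latter from Poincaré duality in rigid geometry. The bridge is de Rham cohomology: both trace maps are compatible with the Hyodo-Kato morphism (Theorems \ref{algpoincare} and \ref{hkrigidpoincare}), so each Gysin map intertwines with the corresponding de Rham Gysin (residue) map, and the algebraic and analytic de Rham Gysin maps agree under the natural de Rham GAGA quasi-isomorphism. Concretely, applying $-\otimes_{F^{\operatorname{nr}}}C$ transforms the whole square into the de Rham GAGA square for the residue map, which commutes; as the terms are complexes of finite-dimensional $F^{\operatorname{nr}}$-vector spaces and $F^{\operatorname{nr}}\to C$ is faithfully flat, commutativity descends to the original $F^{\operatorname{nr}}$-linear square. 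This closes the induction and simultaneously records compatibility with $\varphi$, $N$ and de Rham GAGA. Finally the arithmetic comparison $\hk(U)\xrightarrow{\simeq}\hk(U^{\an})$ and its Galois equivariance follow from the geometric case by Galois descent (Proposition \ref{hkgaloisdescent}), while naturality with respect to all strict semistable pairs follows from the canonicity of every step together with the same faithful-flatness argument.
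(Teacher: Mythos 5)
Your route is genuinely different from the paper's: you build the comparison by induction on the number of horizontal boundary components, gluing along Gysin triangles, whereas the paper never has to compare two wrong-way maps during the construction. The paper works with the two open-closed presentations of \emph{compactly supported} cohomology, $\hkc(U)\simeq[\hk(\overline U_K)\to\hk(Z)]$ (Definition \ref{hkcalgebraic}, with $Z$ the full, possibly non-smooth, horizontal boundary) and $\hkc(U^{\an})\simeq[\hk(\overline U^{\an}_K)\to\hk(Z^{\an})]$ (Proposition \ref{properopenclosed}); the internal maps there are plain restrictions, so the comparison square commutes by the compatibilities built into the proper-case Lemma \ref{gagaprop}, and one gets GAGA for $\hkc$ directly. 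Poincar\'e duality is then applied once, \emph{defining} $\hk(U_{\overline K})\to\hk(U_C^{\an})$ as the unique map making the duality square commute; the only wrong-way compatibility ever needed is that of the trace maps, which is citable (\cite[Lemma 5.4]{llz2023derham} together with Theorems \ref{algpoincare} and \ref{hkrigidpoincare}).

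The genuine gap in your proposal sits exactly at its crux: the commutativity of the square comparing $g$ (the motivic Gysin map of Proposition \ref{gysinalg}) with $g^{\an}$ (the duality-theoretic Gysin map of Proposition \ref{gysin}). Your argument bottoms out in the assertion that ``the algebraic and analytic de Rham Gysin maps agree under the de Rham GAGA quasi-isomorphism,'' which is neither proved nor cited --- and it is structurally the same wrong-way-map GAGA compatibility that the whole proposition is about, merely transported to de Rham coefficients. To close it you would need (i) that the Hyodo-Kato/de Rham realization of the motivic Gysin triangle is dual to restriction under the realized Poincar\'e duality, so that the algebraic Gysin map acquires a duality characterization in the first place, and (ii) that the algebraic and analytic de Rham pairings and traces match under GAGA; only then does the trace compatibility you invoke do any work. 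There are also secondary defects. A homotopy-commutative square determines a map on cofibres only after a choice of homotopy, so your induced map is not canonical as stated; this undercuts both the claimed naturality in the pair and the $(\varphi,N)$-equivariance, neither of which can be checked after $\otimes_{F^{\operatorname{nr}}}C$, since de Rham cohomology sees neither $\varphi$ nor $N$ (the paper avoids this because a map defined by an equivariant duality pairing is unique and automatically equivariant). Lemma \ref{nclemma} does not apply ``componentwise'': as stated it produces, from a \emph{smooth total} horizontal divisor, a pair with purely vertical boundary, whereas your $(Z,\overline{D_r})$ retains the horizontal components $D_r\cap D_j$, so you need a (true, but unstated and unproved) strengthening. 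Finally, descending commutativity along $F^{\operatorname{nr}}\to C$ is not a faithful-flatness statement; the correct mechanism is the splitting $C\simeq F^{\operatorname{nr}}\oplus W$ of solid $F^{\operatorname{nr}}$-modules, which is how the paper itself argues in the proof of Proposition \ref{mvmain}, and even then it only descends commutativity as a property, not the coherent homotopy your cofibre construction needs.
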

		
		\begin{proof}
			Consider a strict ss-pair $(U, \overline{U})$. Let $Z=\overline{U}_K-U$. The local-global compatibility for algebraic varieties (Proposition \ref{hklocalglobal}) and rigid analytic varieties (Proposition \ref{hkanalyticlocalglobal}) imply 
			
			\begin{lem} \label{gagaprop}
				We have natural quasi-isomorphisms $$\hk(\overline{U}_K) \xrightarrow{\simeq}  \hk(\overline{U}^{\an}_K),  \quad \hk(Z) \xrightarrow{\simeq}  \hk(Z^{\an}),$$
				which are compatible with Galois action, Frobenius, monodromy, and the GAGA morphism for de Rham cohomology.
			\end{lem}
			
			\begin{proof}
				Since the $p$-adic completion of $\overline{U}$ is a semistable formal model of $\overline{U}_K^{\an}$, the first quasi-isomorphism can be defined as the identity. 
				
				For the second isomorphism, according to Lemma \ref{nclemma}, the $p$-adic completion of $\overline{Z}$ is a semistable formal model of $Z^{\an}$ if $Z$ is smooth, therefore we have the natural quasi-isomorphism. 
				
				In general, write $Z=D_1 \cup ... \cup D_n$, where $D_i$ are irreducible components of $Z$. The h-descent for algebraic varieties and \'eh-descent for rigid analytic varieties allow us to reduce to the case that $Z$ is smooth.
				
				To show the compatibility with the GAGA morphism for de Rham cohomology, note that for $\overline{U}$ we have the following diagram
				$$\begin{tikzcd}
					\hk (\overline{U}_{1}^0) \arrow[r, "\simeq"] \arrow[d, equal] & \hk (\overline{U}_K) \arrow[d,"\simeq"] \arrow[r] & \dr(\overline{U}_K) \arrow[d, "\simeq"] \\
					\hk (\overline{U}_{1}^0) \arrow[r, "\simeq"] & \hk (\overline{U}^{\an}_K) \arrow[r] & \dr(\overline{U}^{\an}_K),
				\end{tikzcd}$$
				where the left square is commutative. To show the right square is commutative, it suffices to prove that the big square is commutative. The horizonal maps are given by
				$$\begin{tikzcd}
					\hk (\overline{U}_{1}^0) \arrow[r] \arrow[d, equal] & \hk (\overline{U}_{1}^0) \otimes_FK \arrow[d, equal] \arrow[r, "\iota_{\mathrm{HK}}"] & \cris(\overline{U}_{1}/O_K^{\times})_{\Q_p} \arrow[d, equal] \arrow[r,"\simeq"] & \dr(\overline{U}_K) \arrow[d, "\simeq"] \\
					\hk (\overline{U}_{1}^0) \arrow[r] & \hk (\overline{U}_{1}^0)\otimes_FK \arrow[r, "\iota_{\mathrm{HK}}"] & \cris(\overline{U}_{1}/O_K^{\times})_{\Q_p} \arrow[r,"\simeq"] &  \dr(\overline{U}^{\an}_K),
				\end{tikzcd}$$
				and we are reduced to showing the rightmost square is commutative. In fact, we can prove the integral version of the commutativity of the square, i.e., that the diagram 
				$$\begin{tikzcd}[row sep=tiny]
					& \dr(\overline{U}/O_K^{\times}) \arrow[dd] \\
					\cris(\overline{U}_{1}/O_K^{\times})=\lim_n \cris(\overline{U}_{1}/O_{K,n}^{\times}) \simeq \lim_n \dr(\overline{U}_{1}/O_{K,n}^{\times}) \arrow[ur] \arrow[dr] & \\
					& \dr(\overline{\mathcal{U}}/O_K^{\times})
				\end{tikzcd}$$
				is commutative, where $\overline{\mathcal{U}}$ is the $p$-adic completion of $\overline{U}$. Here we use the fact that the rigid generic fiber of $\overline{\mathcal{U}}$ coincidence with $\overline{U}^{\an}_K$. But this is exactly the construction for formal functions theorem, as described in \cite[section 4]{EGA3I}. %\textcolor{red}{condensed??? add more detail???}
				
				By reducing to the smooth case, the same argument also shows the compatibility for $Z$.
			\end{proof}
			
			By the definition of compactly supported Hyodo-Kato cohomology defined in Definition \ref{hkcalgebraic}, we have 
			$$\hkc(U)\simeq  [\hk(\overline{U}_K) \xrightarrow{} \hk(Z)].$$
			By Proposition \ref{properopenclosed}, we also have
			$$\hkc(U^{\an})\simeq  [\hk(\overline{U}^{\an}_K) \xrightarrow{} \hk(Z^{\an})].$$
			
			Therefore, the above lemma shows we have the natural quasi-isomorphism
			$$\hkc(U) \xrightarrow{\simeq} \hkc(U^{\an}),$$
			the same argument also shows $\hkc(U_{\overline{K}}) \xrightarrow{\simeq} \hkc(U^{\an}_C)$, which is compatible with the Hyodo-Kato morphism for compactly supported cohomology. Since $U$ is smooth, we can construct the quasi-isomorphism $\hk(U_{\overline{K}}) \xrightarrow{\simeq} \hk(U^{\an}_C)$ as the dotted arrow as follows, by using the Poincar\'e duality:
			$$\begin{tikzcd}
				\hk(U_{\overline{K}}) \arrow[r,"\simeq"] \arrow[d, dashed] & \hkc(U_{\overline{K}})^\vee  \\
				\hk(U^{\an}_C) \arrow[r,"\simeq"]& \hkc(U^{\an}_C)^\vee \arrow[u,"\simeq"].
			\end{tikzcd}$$		
			Here the right vertical map makes sense as both the algebraic and analytic Hyodo-Kato trace map are compatible with the de Rham trace map (under the Hyodo-Kato morphism), and we know that the algebraic and analytic de Rham trace maps are compatible with GAGA morphism (\cite[Lemma 5.4]{llz2023derham}). 
			
			This also implies the isomorphism $\hk(U_{\overline{K}}) \xrightarrow{\simeq} \hk(U^{\an}_C)$ is compatible with the Hyodo-Kato morphism, as we have the follow commutative diagram:
			$$\begin{tikzcd}
				\hk(U_{\overline{K}})\otimes_{F^{\mathrm{nr}}}C \arrow[r,"\simeq"] \arrow[d, "\simeq"] & \hkc(U_{\overline{K}})^\vee\otimes_{F^{\mathrm{nr}}}C & \drc(U_C)^\vee  \arrow[l,"\simeq"] & \dr(U_C) \arrow[d, "\simeq"] \arrow[l,"\simeq"]\\
				\hk(U^{\an}_C)\otimes_{F^{\mathrm{nr}}}C \arrow[r,"\simeq"]& \hkc(U^{\an}_C)^\vee \otimes_{F^{\mathrm{nr}}}C \arrow[u,"\simeq"] &\drc(U^{\an}_C)^\vee \arrow[l,"\simeq"]\arrow[u,"\simeq"] & \dr(U^{\an}_C) \arrow[l,"\simeq"].
			\end{tikzcd}$$	
			
			Since the Hyodo-Kato cohomology satisfies Galois
			descent, after taking Galois invariants, we have
			$$\hk(U) \simeq \hk(U^{\an}),$$
			which is also compatible with the Hyodo-Kato morphism.
			
			Finally, our construction is functorial, i.e., for a morphism of semistable pairs $(U', \overline{U'}) \to (U, \overline{U})$, where $(U, \overline{U})$ is a semistable pair over $K$ and $(U', \overline{U'})$ is a semistable pair over $K'$, we have a commutative square
			$$\begin{tikzcd}
				\hk(U) \arrow[r,"\simeq"] \arrow[d] & \hk(U^{\an}) \arrow[d] \\
				\hk(U') \arrow[r,"\simeq"]& \hk(U'^{\an}),
			\end{tikzcd}$$
			and a similar commutative square over $\overline{K}.$ To see this, we may assume $K'$ is a finite extension of $K$ with residue field $k'$, and $F'$ is the fraction field of $W(k').$ Then after tensoring $K'$ over $F$, by using Hyodo-Kato isomorphism it suffices to check that the square 
			$$\begin{tikzcd}
				\dr(U_{K'}) \arrow[r,"\simeq"] \arrow[d] & \dr(U^{\an}_{K'}) \arrow[d] \\
				\dr(U') \otimes_{F}F' \arrow[r,"\simeq"]& \dr(U'^{\an}) \otimes_{F}F',
			\end{tikzcd}$$
			commutes, which is clear as the de Rham GAGA morphism is functorial.
		\end{proof}	
		
		\subsubsection{The global case}
		
		For a general algebraic variety $X$, take a h-cover $(U^{\bullet},\overline{U}^{\bullet}) \to X.$ Since we have proved the GAGA for a local model, we have 
		$$\begin{tikzcd}
			\hk(X) \arrow[r,"\simeq"] \arrow[d, dashed] & \hk(U^{\bullet}) \arrow[d,"\simeq"] \\
			\hk(X^{\an}) \arrow[r,"\psi"]& \hk(U^{\an,\bullet}).
		\end{tikzcd}$$
		In order to constructed the dotted arrow, it suffices to show that the bottom arrow is a quasi-isomorphism. In general we don't know if we have h-descent for rigid analytic varieties. But thanks to the Hyodo-Kato morphism and GAGA for de Rham cohomology, this is not a problem for us: if $X$ is defined over $\overline K$, after applying $\otimes_{F^{\operatorname{nr}}}C$ to the above square, we get a commutative diagram 
		$$\begin{tikzcd}
			\dr(X_C) \arrow[r,"\simeq"] \arrow[d, "\simeq"] & \dr(U_C^{\bullet}) \arrow[d,"\simeq"] \\
			\dr(X_C^{\an}) \arrow[r,"\phi"]& \dr(U_C^{\an,\bullet}),
		\end{tikzcd}$$
		where the vertical maps are given by the GAGA for algebraic and analytic de Rham cohomology. Therefore $\phi$ is a quasi-isomorphism, since $\psi\otimes_{F^{\operatorname{nr}}}C$ is also a quasi-isomorphism. Therefore we can construct a quasi-isomorphism $\hk (X) \xrightarrow{\simeq} \hk(X_C^{\an}).$ This quasi-isomorphism is compatible with Galois action, Frobenius, monodromy, and the GAGA morphism for de Rham cohomology by our construction and Proposition \ref{gagaprop}.
		
		When $X$ is defined over $K$, we can use the same construction as above: with the same notations, note that $\psi$ is still a quasi-isomorphism by Galois descent.
		
		%\begin{rem}
		%	The GAGA morphism is also compatible with Zhou's arithmetic Hyodo Kato morphism: just use Galois descent????
		%\end{rem}
		
		\subsubsection{Algebraic and analytic cohomology with compact support}
		
		We prove the GAGA for de Rham and Hyodo-Kato cohomology with compact support.
		
		\begin{thm} \label{gagacdr}
			Let $X$ be an algebraic variety over $C$, and $X^{\operatorname{an}}$ be its analytification. Then we have a natural quasi-isomorphism
			$$\drc (X) \xrightarrow{\simeq} \drc(X^{\an}),$$
			which is compatible with Galois action. Moreover, the morphism makes the following square commutes:
			\begin{equation} \label{gagadrc}
				\begin{tikzcd}
					\drc(X) \arrow[r,"\simeq"] \arrow[d] & \drc(X^{\an}) \arrow[d] \\
					\dr(X) \arrow[r,"\simeq"]& \dr(X^{\an}).
				\end{tikzcd}
			\end{equation}			 
		\end{thm}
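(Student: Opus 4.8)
The plan is to globalize the local comparison that is already produced in the proof of the local case of Theorem \ref{hkgaga}, where the quasi-isomorphism $\drc(U) \xrightarrow{\simeq} \drc(U^{\an})$ was constructed for every strict semistable pair $(U, \overline{U})$. The mechanism is the same as for Hyodo-Kato cohomology, but simpler, since GAGA for de Rham cohomology of proper varieties is directly available (the comparison theorem recalled just before Theorem \ref{hkgaga}).

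For a strict semistable pair the argument runs as follows. Writing $Z = \overline{U}_K - U$, the de Rham analogue of Definition \ref{hkcalgebraic} gives the algebraic presentation
$$\drc(U) \simeq [\dr(\overline{U}_K) \to \dr(Z)],$$
while on the analytic side $\overline{U}_K^{\an}$ is proper and smooth, so Proposition \ref{properopenclosed} yields
$$\drc(U^{\an}) \simeq [\dr(\overline{U}_K^{\an}) \to \dr(Z^{\an})].$$
Both cofibers are then compared through the de Rham GAGA quasi-isomorphisms $\dr(\overline{U}_K) \xrightarrow{\simeq} \dr(\overline{U}_K^{\an})$ and $\dr(Z) \xrightarrow{\simeq} \dr(Z^{\an})$ for the proper varieties $\overline{U}_K$ and $Z$; when $Z$ fails to be smooth one first reduces to the smooth case by h-descent on the algebraic side and \'eh-descent on the analytic side, using Lemma \ref{nclemma}. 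Independence of the chosen compactification follows exactly as in the discussion after Definition \ref{hkcalgebraic}, since the relevant abstract blow-up squares are \'eh-coverings and de Rham cohomology satisfies \'eh-descent.

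For a general algebraic variety $X$ over $C$, I would take an h-cover $(U^{\bullet}, \overline{U}^{\bullet}) \to X$ by strict semistable pairs (Proposition \ref{modelalgebraic}) and descend. Algebraically $\drc(X)$ is recovered from the $\drc(U^{\bullet})$ by codescent, and on the analytic side the Mayer-Vietoris property for compactly supported de Rham cohomology of abstract blow-up squares (Proposition \ref{mvc}), together with \'eh-descent, recovers $\drc(X^{\an})$ from the $\drc(U^{\bullet,\an})$. Combining with the termwise local comparison gives the desired quasi-isomorphism $\drc(X) \xrightarrow{\simeq} \drc(X^{\an})$. I expect the main obstacle to be precisely this descent step: a general compactification $\overline{X}$ need not be smooth, so Proposition \ref{properopenclosed} cannot be applied directly to $\overline{X}^{\an}$, and one must instead realize both sides as totalizations of the simplicial strict semistable data and check that the local comparisons are compatible with the face maps, which follows from the functoriality of the de Rham comparison.

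Finally, the commutativity of the square (\ref{gagadrc}) and Galois equivariance I would obtain from naturality. The vertical forget-support maps $\drc(-) \to \dr(-)$ are compatible with the open-closed presentations and with the structural maps of the h-cover, and the de Rham GAGA quasi-isomorphism is functorial and Galois equivariant; hence the induced comparison on compactly supported cohomology inherits both the compatibility with (\ref{gagadrc}) and the Galois action.
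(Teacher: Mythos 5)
Your local argument is sound, and it in fact mirrors what the paper itself does for $\hkc$ in the local step of Theorem \ref{hkgaga}: presenting both sides as $[\dr(\overline{U}_K)\to\dr(Z)]$ (Definition \ref{hkcalgebraic} on the algebraic side, Proposition \ref{properopenclosed} on the analytic side) and comparing the proper pieces via de Rham GAGA is a valid construction of $\drc(U)\xrightarrow{\simeq}\drc(U^{\an})$, and it makes the commutativity of (\ref{gagadrc}) essentially automatic, whereas the paper — which constructs the map by Poincar\'e duality instead — has to invoke coherent GAGA and the Lan--Liu--Zhu presentation to check that square. (One cosmetic repair: strict semistable pairs live over discretely valued bases, so for $X$ over $C$ you should work with smooth compactifications with snc boundary over $C$, which exist by Hironaka; the argument is unchanged.)

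The genuine gap is the globalization step, and your own suspicion about it is right, but your proposed fix does not repair it. Compactly supported de Rham cohomology is only covariant for open immersions and contravariant for proper maps; it is not functorial for arbitrary morphisms. The face maps of a general h-hypercover $(U^{\bullet},\overline{U}^{\bullet})\to X$ are neither open immersions nor proper, so ``$\drc(U^{\bullet})$'' is not a (co)simplicial object at all: there is nothing to totalize, and no h- or \'eh-(co)descent statement for $\drc$ exists in the paper (Proposition \ref{mvc} only gives Mayer--Vietoris for abstract blow-up squares, in which every map is proper or a closed immersion). Checking compatibility with face maps cannot rescue this, because the relevant diagrams are not even defined. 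The paper avoids the problem entirely: after the smooth case, it handles singular $X$ by induction on dimension, resolving $X$ by a chain of smooth blowups $X_n\to\cdots\to X_0=X$ and using the blow-up Mayer--Vietoris $\drc(X_{i-1})\simeq[\drc(Y_{i-1})\oplus\drc(X_i)\to\drc(Y_i)]$ from Proposition \ref{mvc}, where $\drc$ \emph{is} functorial for all maps involved. Replacing your h-descent step by this blow-up induction, while keeping your compactification construction for the smooth case, would close the gap.
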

		
		\begin{proof}
			When $X$ is smooth, the theorem follows from GAGA for usual cohomology and Poincaré duality, i.e., $\drc (X) \xrightarrow{\simeq} \drc(X^{\an})$ is defined as the dotted arrow of the following commutative square:
			$$\begin{tikzcd}
				\drc(X) \arrow[r,"\simeq"] \arrow[d, dashed] & \dr(X)^\vee  \\
				\drc(X^{\an}) \arrow[r,"\simeq"]& \dr(X^{\an})^\vee \arrow[u,"\simeq"].
			\end{tikzcd}$$
			To see the square (\ref{gagadrc}) is commutative, by Hironaka's resolution of singularities (\cite{hironaka1964resolution}), we may assume $X$ admits a smooth compactification $\overline{X}$, such that $\overline{X}-X$ is a strictly normal crossing divisor. Then the commutativity of (\ref{gagadrc}) follows directly from the GAGA for coherent sheaves (\cite{kopf1974gaga}), and another interpretation of compactly supported de Rham cohomology as presented in \cite[Definition 3.1.1]{llz2023derham}.
			
			In general, we use induction on the dimension of $X$: when the dimension of $X$ is 0, the theorem is clear. In general, by Hironaka's resolution of singularities (\cite{hironaka1964resolution}), there exist a composition of finitely many smooth blowups $X_n \to X_{n-1} \to \cdots \to X_1 \to X_0=X,$ such that $X_n$ is smooth. We now show the theorem is true for $X_{n-1}$. Let $Y_{n-1} \hookrightarrow X_{n-1}$ be the blowup center, and $Y_n \hookrightarrow X_n$ be the inverse image. Then by Proposition \ref{mvc} we have $$\drc(X_{n-1})\simeq [\drc(Y_{n-1})\oplus \drc(X_{n})\to \drc(Y_n)].$$ By the induction hypothesis and the fact that $X_n$ is smooth, the theorem holds for $Y_n,Y_{n-1}$ and $X_n$, therefore an easy diagram-chasing shows the theorem also holds for $X_{n-1}$. Then the same argument shows that all $X_i$ are smooth successively, which conclude the proof.
		\end{proof}
		
		\begin{thm} \label{gagachk}
			Let $X$ be an algebraic variety over $\overline{K}$, and $X^{\operatorname{an}}$ be its analytification. Then we have a natural quasi-isomorphism
			$$\hkc (X) \xrightarrow{\simeq} \hkc(X_C^{\an}),$$
			which is compatible with Galois action, Frobenius, monodromy, and the GAGA morphism for de Rham cohomology with compact support. Moreover, the morphism makes the following square commutes:
			\begin{equation} \label{gagac}
				\begin{tikzcd}
					\hkc(X) \arrow[r,"\simeq"] \arrow[d] & \hkc(X_C^{\an}) \arrow[d] \\
					\hk(X) \arrow[r,"\simeq"]& \hk(X_C^{\an}).
				\end{tikzcd}
			\end{equation}			 
		\end{thm}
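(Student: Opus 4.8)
The plan is to transcribe the proof of Theorem \ref{gagacdr} into the Hyodo-Kato setting, substituting the Hyodo-Kato Poincar\'e duality (Theorem \ref{hkrigidpoincare} on the analytic side, Theorem \ref{algpoincare} on the algebraic side) for the de Rham one, and feeding in the already-established GAGA isomorphism for ordinary Hyodo-Kato cohomology. As in the de Rham case, I would first dispose of the smooth case by dualizing, and then reduce the general case to it by induction on dimension, using resolution of singularities together with the Mayer-Vietoris property of Proposition \ref{mvc}.

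\emph{Smooth case.} When $X$ is smooth of dimension $d$, I would define the comparison map as the dashed arrow in
$$\begin{tikzcd}
\hkc(X)[2d] \arrow[r,"\simeq"] \arrow[d,dashed] & R\operatorname{Hom}_{F^{\operatorname{nr}}}(\hk(X),F^{\operatorname{nr}}\{-d\}) \\
\hkc(X_C^{\an})[2d] \arrow[r,"\simeq"] & R\operatorname{Hom}_{F^{\operatorname{nr}}}(\hk(X_C^{\an}),F^{\operatorname{nr}}\{-d\}) \arrow[u,"\simeq"]
\end{tikzcd}$$
where the horizontal isomorphisms are the algebraic and analytic Hyodo-Kato Poincar\'e dualities and the right-hand vertical map is the $F^{\operatorname{nr}}$-linear dual of the geometric Hyodo-Kato GAGA isomorphism $\hk(X)\simeq\hk(X_C^{\an})$ (the geometric analogue of Theorem \ref{hkgaga}). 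This dualization is legitimate precisely because the algebraic and analytic Hyodo-Kato trace maps are both compatible, through the Hyodo-Kato morphism, with the de Rham trace map, and the de Rham trace maps match under GAGA --- a compatibility already exploited in the local case. Since the duality is one of $(\phi,N)$-modules and the ordinary GAGA isomorphism commutes with $\phi$ and $N$, the resulting map is automatically Frobenius- and monodromy-equivariant. To check the commutativity of the square (\ref{gagac}), I would apply $-\otimes_{F^{\operatorname{nr}}}C$ and invoke the Hyodo-Kato isomorphism (Proposition \ref{hkisomorphism}) to identify the whole square with the de Rham square (\ref{gagadrc}) of Theorem \ref{gagacdr}, which commutes; as $F^{\operatorname{nr}}\hookrightarrow C$ is faithfully flat and the cohomologies are finite-dimensional, commutativity descends.

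\emph{General case.} I would run an induction on $\dim X$, the zero-dimensional case being clear. By Hironaka's resolution of singularities (\cite{hironaka1964resolution}) there is a tower of smooth blow-ups $X_n \to \cdots \to X_0 = X$ with $X_n$ smooth. For the step from $X_n$ down to $X_{n-1}$, let $Y_{n-1}\hookrightarrow X_{n-1}$ be the center and $Y_n\hookrightarrow X_n$ its preimage; Proposition \ref{mvc} furnishes the abstract blow-up identification
$$\hkc(X_{n-1})\simeq[\hkc(Y_{n-1})\oplus\hkc(X_n)\to\hkc(Y_n)].$$
The centers have strictly smaller dimension, so by the inductive hypothesis and the smooth case the comparison holds for $Y_{n-1}$, $Y_n$ and $X_n$; a diagram chase with the two blow-up triangles then yields it for $X_{n-1}$, and iterating down the tower gives it for $X=X_0$. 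Galois-equivariance in the arithmetic situation follows by the usual Galois descent (Proposition \ref{hkgaloisdescent}), and compatibility with the de Rham compact-support GAGA morphism again follows from the de Rham reduction.

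The step requiring the most care --- and the one I expect to be the main obstacle --- is the legitimacy of the inductive diagram chase: the smooth-case comparison map is defined contravariantly by duality, so I must verify that it is compatible with the proper-pushforward and open-restriction maps occurring in the blow-up squares, i.e.\ that the duality-defined map genuinely carries one blow-up triangle to the other. This is exactly where the naturality of the trace map under restriction to opens (Remark \ref{remtrace}) becomes essential, and confirming that the duality intertwines these functorialities is the technical heart of the argument. As in the smooth case, the cleanest route to these compatibilities is to test everything after $-\otimes_{F^{\operatorname{nr}}}C$, where they reduce to the de Rham statements already proved.
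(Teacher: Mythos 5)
Your proposal takes essentially the same approach as the paper: its proof of Theorem \ref{gagachk} is literally to repeat the proof of Theorem \ref{gagacdr} in the Hyodo-Kato setting (smooth case by dualizing the ordinary Hyodo-Kato GAGA isomorphism through the algebraic and analytic Poincar\'e dualities, general case by induction on dimension using the abstract blow-up Mayer-Vietoris of Proposition \ref{mvc}), with the commutativity of (\ref{gagac}) checked exactly as you do, by reducing to the de Rham square after tensoring with $C$ over $F^{\mathrm{nr}}$. The functoriality compatibilities you flag at the end are handled only implicitly in the paper (as an ``easy diagram-chasing''), so your discussion is, if anything, more careful than the original.
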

		
		\begin{proof}
			The same proof as the proof of compactly supported de Rham GAGA. The square \ref{gagac} is commutative, as it is commutative after tensoring $C$ over $F^{\mathrm{nr}}$.
		\end{proof}
		
		\begin{rem}
			Note that our constructions are automatically compatible with the Poincar\'e duality, i.e. when $X$ is smooth, they fit into a commutative diagram
			$$\begin{tikzcd}
				\hk(X) \arrow[d, "\simeq"] \arrow[r,phantom,"\otimes^{L_\blacksquare}_{L_F}" description]  & \hkc(X)[2d]  \arrow[r] \arrow[d, "\simeq"] & \hkc(X)[2d] \arrow[r,"\operatorname{tr}"] \arrow[d,"\simeq"] & L_F \arrow[d, equal] \\
				\hk(X^{\an}) \arrow[r,phantom,"\otimes^{L_\blacksquare}_{L_F}" description] & \hkc(X^{\an})[2d] \arrow[r]  & \hkc(X)[2d] \arrow[r,"\operatorname{tr}"]  & L_F.
			\end{tikzcd}$$
		\end{rem}
		
		\subsection{Semistable conjecture for open varieties}
		
		In this section we will prove the following theorem, which extends the semistable conjecture from proper smooth rigid analytic varieties to almost proper smooth rigid analytic varieties.
		
		\begin{theorem} \label{ss}
			Suppose $X$ is a proper smooth rigid analytic variety over $C$, $Z \subset X$ is a strictly normal crossing divisor, and $U=X-Z$.
			
			(1) We have a $B_{\mathrm{st}}$-linear functorial isomorphism commuting with $\varphi$ and $N$
			$$\alpha^i_{\operatorname{st}}(U):H^i_{\operatorname{\acute{e}t}}(U,\Q_p)\otimes_{\Q_p}{B_{{\rm st}}}\simeq H^i_{\operatorname{HK}}(U)\otimes_{F^{\operatorname{nr}}}{B_{{\rm st}}},$$ that induces a $B_{\mathrm{dR}}$-linear filtered isomorphism $$H^i_{\et}(U,\Q_p) \otimes_{\Q_p}B_{\operatorname{dR}} \simeq H^i_{B_{\operatorname{dR}}^+}(X) \otimes_{B_{\operatorname{dR}}^+}B_{\operatorname{dR}}.$$ Here, the filtration on $H^i_{B_{\operatorname{dR}}^+}(X)$ is defined by $$\fil^{\star}H^i_{B_{\operatorname{dR}}^+}(X):=\operatorname{Im}(H^i(\fil^{\star}\rg_{B_{\operatorname{dR}}^+}(X))\to H^i_{B_{\operatorname{dR}}^+}(X)).$$
			
			(2) Let $i \leq r$. Then we have an exact sequence
			$$0 \xrightarrow{}H^i_{\et}(U,\Q_p(r))\xrightarrow{}(H_{\mathrm{HK}}^i(U) {\otimes}_{F^{\operatorname{nr}}} {B^+_{{\rm st}}})^{N=0, \varphi=p^r}\xrightarrow{}H^i_{B_{\operatorname{dR}}^+}(X)/F^r\xrightarrow{}0.$$ %which can be extend to the category of Vector Space.    	
			
			Moreover, when $X$ descends to a rigid analytic variety over $K$, statements in (1) and (2) are Galois equivariant.
		\end{theorem}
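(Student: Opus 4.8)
The plan is to deduce the statement from the already-known proper smooth case of \cite{CN5} by d\'evissage along the Gysin sequence for Hyodo-Kato cohomology (Proposition~\ref{gysin}) and its \'etale analogue (the remark following Proposition~\ref{gysin}). Since $U=X-Z$ is partially proper, I would first construct the period morphism $\alpha_{\operatorname{st}}(U)$ as follows. The syntomic comparison with pro-\'etale cohomology of Colmez--Nizio\l{} \cite{CN4.3} gives a quasi-isomorphism $\tau^{\leq r}\rg_{\mathrm{syn}}(U,\Q_p(r))\xrightarrow{\simeq}\tau^{\leq r}\rg_{\proet}(U,\Q_p(r))$, where $\rg_{\mathrm{syn}}(U,\Q_p(r))$ is the complex (\ref{syn}). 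Choosing $r>2d$, composing the inverse of this comparison with the projection onto the Hyodo-Kato factor and the scaling $p^{-r}$, precomposing with the natural map $\rg_{\et}(U,\Q_p)\to\rg_{\proet}(U,\Q_p)$, and finally imposing $B_{\mathrm{st}}$-linearity after the appropriate twist, produces a functorial $B_{\mathrm{st}}$-linear map
$$\alpha_{\operatorname{st}}(U):\rg_{\et}(U,\Q_p)\otimes_{\Q_p}B_{\mathrm{st}}\to\hk(U)\otimes_{F^{\operatorname{nr}}}B_{\mathrm{st}}$$
commuting with $\varphi$ and $N$. The decisive feature is that this construction is natural in $U$, hence compatible with the open immersion $U\hookrightarrow X$.

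To prove $\alpha^i_{\operatorname{st}}(U)$ is an isomorphism, I would first reduce to the case where $Z$ is smooth: writing $Z$ as a union of its smooth components and peeling them off one at a time via the Mayer--Vietoris property (Proposition~\ref{mvmain}) for both \'etale and Hyodo-Kato cohomology, an induction on the number of components reduces everything to a single smooth (hence proper smooth) centre. For $Z$ smooth, the point is to check that $\alpha_{\operatorname{st}}$ intertwines the \'etale and Hyodo-Kato Gysin morphisms $g_{\et}$ and $g_{\operatorname{HK}}$. As both Gysin maps are manufactured from Poincar\'e duality (Theorem~\ref{hkrigidpoincare}) and the open-closed sequence (Proposition~\ref{properopenclosed}), this reduces to the compatibility of $\alpha_{\operatorname{st}}$ with the respective pairings and trace maps; and this holds essentially by construction, since the pro-\'etale pairing and trace are themselves induced from the Hyodo-Kato ones through the triangle (\ref{syn}). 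Granting this, the long exact sequences of the two Gysin triangles and the five lemma, combined with the fact that $\alpha^i_{\operatorname{st}}(X)$ and $\alpha^i_{\operatorname{st}}(Z)$ are isomorphisms in the proper smooth case \cite{CN5}, yield that $\alpha^i_{\operatorname{st}}(U)$ is an isomorphism.

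For part (2) and the filtered isomorphism in (1), once $\alpha^i_{\operatorname{st}}(U)$ is an isomorphism I would verify that the pair $(D,D^+_{\operatorname{dR}}):=(H^i_{\operatorname{HK}}(U),H^i_{\et}(U,\Q_p)\otimes_{\Q_p}B^+_{\operatorname{dR}})$ is a weakly admissible filtered $(\varphi,N)$-module over $C$ with $\varphi$-slopes in $[0,r]$, in the sense of \cite{CN5}. The essential input is the chain of identifications
$$(D\otimes_{F^{\operatorname{nr}}}B^+_{\operatorname{dR}})/t^rD^+_{\operatorname{dR}}\simeq H^i_{\operatorname{dR}}(U/B^+_{\operatorname{dR}})/t^rD^+_{\operatorname{dR}}\simeq H^i_{B_{\operatorname{dR}}^+}(X)/F^r,$$
where the first map is the geometric Hyodo-Kato isomorphism \cite{CN4.3} and the second comes from \cite{xsperiod}, which also supplies the filtered comparison $H^i_{\ket}(X,\Q_p)\otimes_{\Q_p}B_{\operatorname{dR}}\simeq H^i_{B_{\operatorname{dR}}^+}(X)\otimes_{B_{\operatorname{dR}}^+}B_{\operatorname{dR}}$ of Theorem~\ref{logcdr}. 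Feeding this into the extension of the admissible-filtered-module formalism in \cite{CN5} produces the short exact sequence (\ref{1.1}) and, via Theorem~\ref{logcdr}, the filtered de Rham isomorphism of part (1).

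The remaining point is Galois equivariance when $X$ descends to $K$. Here the difficulty is that the preceding argument weaves together several different period morphisms, so one must check that $\alpha_{\operatorname{st}}(U)\otimes_{B_{\mathrm{st}}}B_{\operatorname{dR}}$ coincides with the de Rham period morphism of \cite{xsperiod}. Reducing once more to $Z$ smooth via the Gysin sequence, this comes down to matching $\alpha_{\operatorname{st}}(X)\otimes B_{\operatorname{dR}}$ and $\alpha_{\operatorname{st}}(Z)\otimes B_{\operatorname{dR}}$ with Scholze's de Rham period morphism \cite{scholze2013p} in the proper smooth case, a compatibility established by Gilles \cite{gilles2023period}. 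I expect the main obstacle to be precisely the Gysin-compatibility step of the second paragraph: everything else is either a formal d\'evissage or an appeal to an external comparison, whereas making $\alpha_{\operatorname{st}}$ compatible with the Gysin morphisms is the mechanism that drives the whole reduction, and it works only because the construction of Poincar\'e duality for pro-\'etale cohomology is engineered to descend from the Hyodo-Kato duality through (\ref{syn}).
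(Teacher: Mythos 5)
Your construction of $\alpha_{\operatorname{st}}(U)$, the mechanism you identify for Gysin compatibility (matching the pairings and trace maps, with the pro-\'etale trace descending from the Hyodo--Kato one through the syntomic triangle (\ref{syn})), the weakly admissible $(\varphi,N)$-module argument for the exact sequence, and the appeal to Gilles' comparison for Galois equivariance all coincide with the paper's proof. The gap is in the d\'evissage from a strictly normal crossing $Z$ to a smooth $Z$, which you dispatch in one sentence by ``peeling off components via the Mayer--Vietoris property (Proposition~\ref{mvmain})'' and the claim that everything reduces to ``a single smooth (hence proper smooth) centre.'' Proposition~\ref{mvmain} concerns abstract blow-up squares and decompositions into irreducible components; it does not relate $\rg_{\et}(X-\bigcup_i Z_i,\Q_p)$ to the cohomology of complements of fewer components. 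If instead you peel off divisor components by Gysin triangles --- which is what the paper actually does: setting $D_j=\bigcup_{i\le j}Z_i$, one uses the Gysin triangle for the smooth divisor $Z_m-Z_m\cap D_{m-1}$ inside $X-D_{m-1}$ --- then the ambient variety and the centre are \emph{no longer proper}, and your duality argument for Gysin compatibility breaks down there: $PD_{\et}$ is an isomorphism because $H^*_{\et}\otimes_{\Q_p}B_{\operatorname{st}}$ is finite free over $B_{\operatorname{st}}$ and one has \'etale self-duality, which uses properness (via \cite{zavyalov2024modppoincaredualitypadic}); for an open $U$ the \'etale dual involves compactly supported cohomology instead. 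The paper closes exactly this hole by making the Gysin-compatibility square for non-proper pairs part of the inductive statement, reducing it to configurations with fewer components via functoriality of Gysin morphisms; your proposal never addresses Gysin compatibility away from the proper case, even though you correctly single it out as the crux.

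Your Mayer--Vietoris idea can in fact be salvaged, but with the ordinary open-cover Mayer--Vietoris sequence (analytic descent), not Proposition~\ref{mvmain}, and at the cost of enlarging the induction. Writing $U=(X-W')\cap(X-Z_m)$ with $W'=Z_1\cup\dots\cup Z_{m-1}$, the union term in the MV sequence is $\rg(X-(W'\cap Z_m))$, and $W'\cap Z_m$ is a union of smooth \emph{proper} subvarieties of codimension $2$ crossing normally; so the induction cannot stay within complements of divisors but must run over complements of normal-crossing configurations of smooth proper centres of arbitrary codimension, using the Gysin sequence (Proposition~\ref{gysin}, which does allow arbitrary pure codimension) for the single-centre case and naturality of $\alpha_{\operatorname{st}}$ for the MV restriction maps. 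Done this way, every Gysin triangle that appears has proper smooth ambient variety and proper smooth centre, so your duality argument suffices everywhere --- a genuinely different, and in this one respect cleaner, route than the paper's, since it avoids Gysin compatibility for non-proper pairs altogether. But as written --- with the wrong Mayer--Vietoris statement, no account of the higher-codimension centres this forces, and no treatment of non-proper pairs on the alternative route --- the reduction step is a genuine gap.
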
			
		
		\begin{proof}
			
			We will prove the above theorem in following steps. %By resolution of singularity, we may assume $Z$ is a strictly normal crossing divisor.
			
			(1) Construction of the comparison map. We can define the period morphism $$\alpha_{\operatorname{st}}(U):\rg_{\operatorname{\acute{e}t}}(U,\Q_p)\otimes_{\Q_p}{B_{{\rm st}}}\to \hk(U)\otimes_{F^{\operatorname{nr}}}{B_{{\rm st}}},$$ %and $$\alpha_{\operatorname{st,c}}:H^i_{\operatorname{\acute{e}t,c}}(U,\Q_p)\otimes_{\Q_p}{B_{{\rm st}}}\to H^i_{\operatorname{HK,c}}(U)\otimes_{F^{\operatorname{nr}}}{B_{{\rm st}}}$$ 
			as follows. For $r >2d$, consider the following composition
			\begin{align*}
				\rg_{\operatorname{\acute{e}t}}(U,\Q_p(r)) & \to \tau^{\leq 2d}\rg_{\proet}(U,\Q_p(r)) \xrightarrow{\simeq} \tau^{\leq 2d}\rg_{\operatorname{syn}}(U,r)\\ & \to [\hk(U) {\otimes}_{F^{\operatorname{nr}}} {B_{{\rm st}}}]^{N=0, \varphi=p^r} \\ & \xrightarrow{p^{-r}} \hk(U) {\otimes}_{F^{\operatorname{nr}}} {B_{{\rm st}}}.
			\end{align*}
			We set $$\alpha_{\operatorname{st}}(U):=t^{-r}\alpha_{\operatorname{st}}(r)\varepsilon^r,$$ where $\varepsilon$ is the generator of $\Z_p(1)$ corresponding to $t$.
			
			Note that $\alpha_{\operatorname{st}}$ is natural with respect to any morphism $V \to U$, as the same claims hold for each maps of the construction of $\alpha_{\operatorname{st}}$.
			
			%(ii) $\alpha_{\operatorname{st,c}}$ is natural with respect to open immersions and proper morphisms.
			
			(2) The compatibility with Gysin isomorphism: suppose $Z$ is smooth, we check that the diagram
			\begin{equation}\label{diagram} 
				\begin{tikzcd}
					\rg_{\et} (Z,\Q_p(-1))[-2 ]\otimes_{\Q_p}B_{\operatorname{st}} \arrow[r,"g_{\et}"] \arrow[d,"\alpha_{\operatorname{st}}(Z)(-1){[-2]}"] & \rg_{\et}(X,\Q_p)\otimes_{\Q_p}B_{\operatorname{st}} \arrow[r] \arrow[d,"\alpha_{\operatorname{st}}(X)"] & \rg_{\et}(U,\Q_p)\otimes_{\Q_p}B_{\operatorname{st}} \arrow[d,"\alpha_{\operatorname{st}}(U)"] \\
					\hk (Z)\{-1\}[-2] \otimes_{F^{\operatorname{nr}}}B_{\operatorname{st}} \arrow[r,"g_{\operatorname{HK}}"] & \hk (X) \otimes_{F^{\operatorname{nr}}}B_{\operatorname{st}} \arrow[r] & \hk (U) \otimes_{F^{\operatorname{nr}}}B_{\operatorname{st}},
				\end{tikzcd}
			\end{equation}
				commutes. The right square is commutative, as $\alpha_{\operatorname{st}}$ is natural with respect to the morphism $X \to U$. For the left square, by Theorem \ref{ehcomparison}, we know that $\alpha_{\operatorname{st}}(Z)$ and $\alpha_{\operatorname{st}}(X)$ are quasi-isomorphisms. 
				
				Denote by $PD_{\et}(X)$ and $PD_{\operatorname{HK}}(X)$ the Poincar\'e duality morphisms $$PD_{\et}(X):\rg_{\et}(X,\Q_p)\otimes_{\Q_p}B_{\operatorname{st}} \to R\operatorname{Hom}_{B_{\operatorname{st}}}(\rg_{\et}(X,\Q_p(d))[2d]\otimes_{\Q_p}B_{\operatorname{st}},B_{\operatorname{st}}),$$ $$PD_{\operatorname{HK}}(X):\hk(X)\otimes_{F^{\operatorname{nr}}}B_{\operatorname{st}} \to R\operatorname{Hom}_{B_{\operatorname{st}}}(\hk(X)\{d\}[2d]\otimes_{F^{\operatorname{nr}}}B_{\operatorname{st}},B_{\operatorname{st}}).$$ Here the trace map for \'etale cohomology is just the trace map for pro-\'etale cohomology: this indeed gives a perfect pairing, for example, by the main theorem of \cite{zavyalov2024modppoincaredualitypadic}. $PD_{\et}(X)$ is a quasi-isomorphism, as for any $i$, $$H^i(\rg_{\et}(X,\Q_p(d))[2d]\otimes_{\Q_p}B_{\operatorname{st}})\simeq H^{i-2d}_{\et}(X,\Q_p(d))\otimes_{\Q_p}B_{\operatorname{st}}$$ is a finite free $B_{\operatorname{st}}$-module, therefore for any $j >0,$ $$\operatorname{Ext}^j_{B_{\operatorname{st}}}(H^{i-2d}_{\et}(X,\Q_p(d))\otimes_{\Q_p}B_{\operatorname{st}},B_{\operatorname{st}})=0.$$ The same argument also shows $PD_{\operatorname{HK}}(X)$ is a quasi-isomorphism. Define $PD_{\et}(Z)$ and $PD_{\operatorname{HK}}(Z)$ similarly. Let $i_{\et}$ and $i_{\operatorname{HK}}$ be the canonical maps
				$$i_{\et}:\rg_{\et}(X,\Q_p)\otimes_{\Q_p}B_{\operatorname{st}} \to \rg_{\et}(Z,\Q_p)\otimes_{\Q_p}B_{\operatorname{st}},$$ $$i_{\operatorname{HK}}:\hk(X)\otimes_{F^{\operatorname{nr}}}B_{\operatorname{st}} \to \hk(Z)\otimes_{F^{\operatorname{nr}}}B_{\operatorname{st}}.$$ Then for $*=$HK or \'et, the Gysin morphism $g_*$ is $$g_*:=PD_*(X)^{-1}\circ R\operatorname{Hom}_{B_{\operatorname{st}}}(i_*[2d],B_{\operatorname{st}}) \circ PD_*(Z)[-2].$$ We need to show that $$\alpha_{\operatorname{st}}(X)\circ g_{\et}=g_{\operatorname{HK}} \circ\alpha_{\operatorname{st}}(Z)(-1)[-2].$$ We have the following lemma. 
				
				\begin{lem}
					(1) $\alpha_{\operatorname{st}}(Z)\circ i_{\et}=i_{\operatorname{HK}}\circ \alpha_{\operatorname{st}}(Z).$
					
					(2) $R\operatorname{Hom}_{B_{\operatorname{st}}}(\alpha_{\operatorname{st}}(X)(d)[2d],B_{\operatorname{st}})\circ PD_{\operatorname{HK}}(X) \circ\alpha_{\operatorname{st}}(X)= PD_{\et}(X)$
					
					(3)
					$R\operatorname{Hom}_{B_{\operatorname{st}}}(\alpha_{\operatorname{st}}(Z)(d-1)[2d-2],B_{\operatorname{st}})\circ PD_{\operatorname{HK}}(Z) \circ\alpha_{\operatorname{st}}(Z)= PD_{\et}(Z)$
				\end{lem}
				
				\begin{proof}
					The first lemma follows from the fact that $\alpha_{\operatorname{st}}$ is natural with respect to the morphism $Z \to X$.

					We prove the claim (2) for $X$. The proof of the claim (3) for $Z$ is the same. Recalling the construction of $\alpha_{\operatorname{st}}$, after choosing $r,r'>2d,$ denote $s:=r+r'-d,$ after twisting we need to show the following diagram
					
					$$\begin{tikzcd}
						\rg_{\et}(X,\Q_p(r)) \arrow[d] \arrow[r,phantom,"\otimes^{L_\blacksquare}_{\Q_p}" description]  & \rg_{\et}(X,\Q_p(r'))[2d]  \arrow[r] \arrow[d] & \rg_{\et}(X,\Q_p(r+r'))[2d] \arrow[r,"\operatorname{tr}_{\et}"] \arrow[d] &  \Q_p(s) \arrow[d, equal] \\
						\rg_{\proet}(X,\Q_p(r)) \arrow[r,phantom,"\otimes_{\Q_p}^{L_\blacksquare}" description] & \rg_{\proet}(X,\Q_p(r'))[2d] \arrow[r]  & \rg_{\proet}(X,\Q_p(r+r'))[2d]  \arrow[r,"\operatorname{tr}_{\proet}"]  &  \Q_p(s) \arrow[d,equal] \\ 
						\syn(X,r) \arrow[u,"\alpha_{r}"]\arrow[d]\arrow[r,phantom,"\otimes_{\Q_p}^{L_\blacksquare}" description] & \syn(X,r')[2d] \arrow[u,"\alpha_{r'}"]\arrow[d]\arrow[r]  & \syn(X,r+r')[2d] \arrow[u,"\alpha_{r'+r}"]\arrow[d]\arrow[r,"\operatorname{tr_{syn}}"] &  \Q_p(s)\arrow[d] \\
						\hk(X)\{r\}\otimes B_{\operatorname{st}} \arrow[r,phantom,"\otimes_{B_{\mathrm{st}}}^{L_\blacksquare}" description] & \hk(X)\{r'\}\otimes B_{\operatorname{st}}[2d] \arrow[r]  & \hk(X)\{r+r'\}\otimes B_{\operatorname{st}}[2d] \arrow[r,"\operatorname{tr_{HK}}\otimes{B_{\mathrm{st}}}"] &  B_{\operatorname{st}}\{s\}.
					\end{tikzcd}$$
					is commutative, after taking truncation $\tau^{\leq 2d}$. Here the $\alpha$'s are period morphisms from \cite[6.9]{CN4.3}.
					
					We  only need to consider the pairings. %Since the pairings come from the mapping fiber, we are reduced to considering the pairings without compact support. 
					The above two rows are naturally compatible. The bottom two rows are commutative because the product of syntomic cohomology is defined by the mapping fiber. The middle two rows are commutes because the classical comparison maps are compatible with products. 
				\end{proof}
				
				Now we can calculate
				\begin{align*}
					\alpha_{\operatorname{st}}(X)\circ g_{\et} &= \alpha_{\operatorname{st}}(X)\circ PD_{\et}(X)^{-1}\circ R\operatorname{Hom}_{B_{\operatorname{st}}}(i_{\et}[2d],B_{\operatorname{st}}) \circ PD_{\et}(Z)[-2] \\
					&= PD^{-1}_{\operatorname{HK}}(X) \circ R\operatorname{Hom}_{B_{\operatorname{st}}}(\alpha_{\operatorname{st}}^{-1}(X)(d)[2d],B_{\operatorname{st}})\circ  R\operatorname{Hom}_{B_{\operatorname{st}}}(i_{\et}[2d],B_{\operatorname{st}}) \circ PD_{\et}(Z)[-2] \\
					&=PD^{-1}_{\operatorname{HK}}(X) \circ R\operatorname{Hom}_{B_{\operatorname{st}}}(i_{\et}[2d] \circ \alpha_{\operatorname{st}}^{-1}(X)(d)[2d],B_{\operatorname{st}}) \circ PD_{\et}(Z)[-2] \\
					&=PD^{-1}_{\operatorname{HK}}(X) \circ R\operatorname{Hom}_{B_{\operatorname{st}}}( \alpha_{\operatorname{st}}^{-1}(Z)(d)[2d]\circ i_{\operatorname{HK}}[2d] ,B_{\operatorname{st}}) \circ PD_{\et}(Z)[-2] \\
					&=PD^{-1}_{\operatorname{HK}}(X) \circ R\operatorname{Hom}_{B_{\operatorname{st}}}(i_{\operatorname{HK}}[2d],B_{\operatorname{st}}) \circ R\operatorname{Hom}_{B_{\operatorname{st}}}(\alpha_{\operatorname{st}}^{-1}(Z)(d)[2d],B_{\operatorname{st}})\circ    PD_{\et}(Z)[-2] \\
					&= PD^{-1}_{\operatorname{HK}}(X) \circ R\operatorname{Hom}_{B_{\operatorname{st}}}(i_{\operatorname{HK}}[2d],B_{\operatorname{st}}) \circ PD_{\operatorname{HK}}(Z)[-2] \circ\alpha_{\operatorname{st}}(Z)(-1)[-2] \\
					&=g_{\operatorname{HK}} \circ\alpha_{\operatorname{st}}(Z)(-1)[-2].
				\end{align*}
				This shows the compatibility with Gysin isomorphism.	
				
				(3) $\alpha_{\operatorname{st}}$ is a natural quasi-isomorphism compatible with Gysin morphisms. We assume first that $Z$ is smooth. From (2) we know that $\alpha_{\operatorname{st}}(Z)$ and $\alpha_{\operatorname{st}}(X)$ are isomorphisms compatible with Gysin morphisms.
				Therefore the commutative diagram in step (2) shows that we have a natural quasi-isomorphism:
				$$\alpha_{\mathrm{st}}(U):\rg_{\et} (U,\Q_p)\otimes_{\Q_p}B_{\operatorname{st}} \xrightarrow{\simeq} \hk (U) \otimes_{F^{\operatorname{nr}}}B_{\operatorname{st}}.$$
				
				In general, we can write $Z$ as the union of irreducible components $Z=Z_1 \cup ... \cup Z_m$ where $Z_1,...,Z_m$ are smooth. Write $D_j:=\bigcup_{i \leq j}Z_j.$ We have a map of distinguished triangles:	
				$$\begin{tikzcd}
					\rg_{\et} (Z_m-Z_m\cap D_{m-1},\Q_p(-1))[-2]\otimes_{\Q_p}B_{\operatorname{st}} \arrow[r] \arrow[d, "\alpha_{\operatorname{st}}(Z-Z\cap D_{m-1})(-1){[-2]}"] & \rg_{\et}(X-D_{m-1},\Q_p)\otimes_{\Q_p}B_{\operatorname{st}} \arrow[r] \arrow[d, "\alpha_{\operatorname{st}}(X-D_{m-1})"] & \rg_{\et}(U,\Q_p)\otimes_{\Q_p}B_{\operatorname{st}} \arrow[d,"\alpha_{\operatorname{st}}(U)"] \\
					\hk (Z_m-Z_m\cap D_{m-1})\{-1\}[-2] \otimes_{F^{\operatorname{nr}}}B_{\operatorname{st}} \arrow[r] & \hk (X-D_{m-1}) \otimes_{F^{\operatorname{nr}}}B_{\operatorname{st}} \arrow[r] & \hk (U) \otimes_{F^{\operatorname{nr}}}B_{\operatorname{st}},
				\end{tikzcd}$$
				We need to check that the left square is commutative. We use induction on the number of irreducible divisors $m$. Suppose that the the left square is commutative for any proper smooth rigid analytic varieties over $C$ with at most $m-1$ irreducible divisors. Since the Gysin map is functorial, we have the commutative diagram
				$$\begin{tikzcd}
					\rg_{\et} (Z_{m}\cap Z_{m-1}-Z_{m}\cap Z_{m-1}\cap D_{m-2},\Q_p(-2))[-4] \arrow[r] \arrow[d] & \rg_{\et}(Z_{m-1}-Z_{m-1}\cap D_{m-2},\Q_p(-1))[-2]  \arrow[d ]  \\
					\rg_{\et}(Z_{m}-Z_{m-1}\cap D_{m-2},\Q_p(-1))[-2]  \arrow[r] \arrow[d] & \rg_{\et}(X-D_{m-2},\Q_p) \arrow[d]  \\
					\rg_{\et} (Z_m-Z_m\cap D_{m-1},\Q_p(-1))[-2] \arrow[r] &\rg_{\et}(X-D_{m-1},\Q_p)
					,
				\end{tikzcd}$$
				where the horizontal rows (and the first vertical map) are the Gysin maps. We also have the similar commutative diagram for Hyodo-Kato cohomology. Therefore it suffices to show that the diagrams
				$$\begin{tikzcd}
					\rg_{\et} (Z_{m-1}-Z_{m-1}\cap D_{m-2},\Q_p(-1))[-2]\otimes_{\Q_p}B_{\operatorname{st}} \arrow[r] \arrow[d] & \rg_{\et}(X-D_{m-2},\Q_p)\otimes_{\Q_p}B_{\operatorname{st}}  \arrow[d]  \\
					\hk (Z_{m-1}-Z_{m-1}\cap D_{m-2})\{-1\}[-2] \otimes_{F^{\operatorname{nr}}}B_{\operatorname{st}} \arrow[r] & \hk (X-D_{m-2}) \otimes_{F^{\operatorname{nr}}}B_{\operatorname{st}} 
				\end{tikzcd}$$
				and
				$$\begin{tikzcd}
					\rg_{\et} (Z_{m}\cap Z_{m-1}-Z_{m}\cap Z_{m-1}\cap D_{m-2},\Q_p(-1))[-2]\otimes_{\Q_p}B_{\operatorname{st}} \arrow[r] \arrow[d] & \rg_{\et}(Z_{m-1}-Z_{m-1}\cap D_{m-2},\Q_p)\otimes_{\Q_p}B_{\operatorname{st}} \arrow[d] \\
					\hk (Z_{m}\cap Z_{m-1}-Z_{m}\cap Z_{m-1}\cap D_{m-2})\{-1\}[-2] \otimes_{F^{\operatorname{nr}}}B_{\operatorname{st}} \arrow[r] & \hk (Z_{m-1}-Z_{m-1}\cap D_{m-2}) \otimes_{F^{\operatorname{nr}}}B_{\operatorname{st}}
				\end{tikzcd}$$
				are commutative. In both cases we reduce the number of irreducible divisors to $m-1$, then the induction hypothesis applies. Therefore we have a natural quasi-isomorphism:
				$$\alpha_{\mathrm{st}}(U):\rg_{\et} (U,\Q_p)\otimes_{\Q_p}B_{\operatorname{st}} \xrightarrow{\simeq} \hk (U) \otimes_{F^{\operatorname{nr}}}B_{\operatorname{st}}.$$	
				
				(4) The short exact sequence. The following theorem (see \cite[Remark 5.16]{CN5}) is an extension of the standard results for admissible filtered $(\varphi,N)$-modules, e.g., \cite[Proposition 5.3]{colmezfontaine2000ss} or \cite[Chapter 10]{fargues2018courbe}.
				
				\begin{thm}
					Suppose that $(D,D^+_{\operatorname{dR}})$ is an acyclic filtered $(\varphi,N)$-module over $C$ (see \cite[Definition 5.3.1]{CN5} for the definition) with $\varphi$-slopes in $[0,r]$, and $D\otimes t^rB^+_{\operatorname{dR}} \subset t^rD^+_{\operatorname{dR}} \subset D\otimes B^+_{\operatorname{dR}}.$ Then we have the following short exact sequence: $$0 \to t^rV_{\operatorname{st}}(D,D^+_{\operatorname{dR}}) \to (D \otimes_{F^{\operatorname{nr}}}B^+_{\operatorname{st}})^{N=0,\varphi=p^r} \to (D \otimes_{F^{\operatorname{nr}}}B^+_{\operatorname{dR}})/t^rD^+_{\operatorname{dR}} \to 0,$$
					where $$V_{\operatorname{st}}(D,D^+_{\operatorname{dR}}):=\operatorname{Ker}((D \otimes_{F^{\operatorname{nr}}}B_{\operatorname{st}})^{N=0,\varphi=1} \to (D \otimes_{F^{\operatorname{nr}}}B_{\operatorname{dR}})/D^+_{\operatorname{dR}}).$$
				\end{thm}
				
				Here we take $(D,D^+_{\operatorname{dR}}):=(H^i_{\operatorname{HK}}(U),H^i_{\et}(U,\Q_p)\otimes_{\Q_p}B_{\operatorname{dR}}^+)$. Then for $i \leq r$ we have $$D\otimes_{F^{\operatorname{nr}}}B_{\operatorname{dR}}^+\simeq H^i_{\operatorname{dR}}(U/B^+_{\operatorname{dR}}), \quad t^rD^+_{\operatorname{dR}}=\fil^r(H^i_{\et}(U,\Q_p)\otimes_{\Q_p}B_{\operatorname{dR}}) \simeq \fil^r H^i_{B_{\operatorname{dR}}^+}(X),$$ where the last isomorphism follows from Theorem \ref{logcdr}, and $$(D \otimes_{F^{\operatorname{nr}}}B^+_{\operatorname{dR}})/t^rD^+_{\operatorname{dR}} \simeq H^i_{\operatorname{dR}}(U/B^+_{\operatorname{dR}})/t^rD^+_{\operatorname{dR}}\simeq H^i_{B_{\operatorname{dR}}^+}(X)/F^r,$$ where the last isomorphism follows from \cite[Proposition 5.17]{xsperiod}. By using the semistable and geometric log de Rham comparison (Theorem \ref{logcdr}), and the fundamental exact sequence, we have $$V_{\operatorname{st}}(D,D^+_{\operatorname{dR}})\simeq H^i_{\et}(U,\Q_p),$$ and $(D,D^+_{\operatorname{dR}})$ is a weakly admissible filtered $(\varphi,N)$-module over $C$ (see \cite[Remark 5.14]{CN5}). The $\varphi$-slopes of $H^i_{\operatorname{HK}}(U)$ are in $[0,i]$, since this is true for quasi-compact dagger varieties, and we have the Gysin sequence. Then for $i \leq r$ the above theorem shows that we have the following short exact sequence: $$0 \xrightarrow{}H^i_{\et}(U,\Q_p(r))\xrightarrow{}(H_{\mathrm{HK}}^i(U) {\otimes}_{F^{\operatorname{nr}}} {B^+_{{\rm st}}})^{N=0, \varphi=p^r}\xrightarrow{}H^i_{B_{\operatorname{dR}}^+}(X)/F^r\xrightarrow{}0.$$
				
				(5) The short exact sequence is Galois equivariant. Suppose that $X,D,U$ descend to $X_0,Z_0,U_0$ over $K$ respectively. We need to show that the morphism $\alpha_{\operatorname{st}}(U)\otimes_{B_{\operatorname{st}}}B_{\operatorname{dR}}$ is the same as the period morphism in Theorem \ref{logcdr}. Suppose first that $Z_0$ is smooth. After tensoring $B_{\operatorname{dR}}$ to the commutative diagram \ref{diagram}, we have the commutative diagram 
				$$\begin{tikzcd}
					\rg_{\et} (Z,\Q_p(-1))[-2]\otimes_{\Q_p}B_{\operatorname{dR}} \arrow[r] \arrow[d,"\alpha_{\operatorname{st}}(Z)(-1){[-2]}\otimes_{B_{\operatorname{st}}}B_{\operatorname{dR}}"] & \rg_{\et}(X,\Q_p)\otimes_{\Q_p}B_{\operatorname{dR}} \arrow[r] \arrow[d, "\alpha_{\operatorname{st}}(X)\otimes_{B_{\operatorname{st}}}B_{\operatorname{dR}}"] & \rg_{\et}(U,\Q_p)\otimes_{\Q_p}B_{\operatorname{dR}} \arrow[d, "\alpha_{\operatorname{st}}(U)\otimes_{B_{\operatorname{st}}}B_{\operatorname{dR}}"] \\
					\rg_{\operatorname{dR}}(Z_0){[-2]} \otimes_KB_{\operatorname{dR}} \arrow[r] & \rg_{\operatorname{dR}}(X_0) \otimes_KB_{\operatorname{dR}} \arrow[r] & \rg_{\operatorname{dR}}(U_0) \otimes_KB_{\operatorname{dR}}.
				\end{tikzcd}$$
				Since the period morphism in Theorem \ref{logcdr} also satisfies the above diagram by \cite[Proposition 4.3.17]{llz2023derham}, it suffices to check for $X$ (and $Z$) proper, $\alpha_{\operatorname{st}}(X)\otimes_{B_{\operatorname{st}}}B_{\operatorname{dR}}$ coincidences with the period morphism constructed in \cite{scholze2013p}, which follows from the local computation by Sally Gilles in \cite{gilles2023period}. In general, the same argument as in step (3) allows us to reduce the number of irreducible divisors in $X$.						
			\end{proof}
			
			\begin{rem}
				The proof was originally obtained while the author was working on the GAGA problem for Hyodo-Kato cohomology, and realized that it is also possible to deduce the semistable conjecture for $U$ from the proper case. However, the proof intertwines different constructions of period morphisms and depends on Poincaré duality, which itself presents significant challenges. As mentioned in the introduction, a forthcoming article \cite{xslogsyn} will provide a more conceptual approach by introducing logarithmic syntomic cohomology and extending cohomology groups to the category of vector spaces. This perspective aligns with the methodologies developed in \cite{CNsytntomic} and \cite{CN5}.
			\end{rem}

			\subsection{Comparison with Tsuji’s compactly supported log-crystalline cohomology}
			
			In this section we prove that the compactly supported Hyodo-Kato cohomology defined above agrees with the one previously defined by Tsuji in \cite{tsuji1999poincare}. We first start by recalling the construction of Tsuji.
			
			\subsubsection{Definitions}
			
			We start with the geometry of monoids, which follows from \cite{kato1994toric} and \cite{tsuji1999poincare}.
			
			Let $P$ be a monoid. An ideal $I$ of $P$ is a subset of $P$ satisfying $PI \in I.$ A prime ideal $\mathfrak{p}$ of $P$ is an ideal of $P$ such that $P-\mathfrak{p}$ is a submonoid of $P$. Let $\Spec(P)$ be the set of all primes of $P$. The dimension $\dim(P)$ of $P$ is the maximal length $r$ (if it exists) of a sequence $\mathfrak{p}_0\supsetneq\mathfrak{p}_1\supsetneq\cdot\cdot\mathfrak{p}_r$ (if such a sequence does not exist, we define $\dim(P)=\infty$). The height $\operatorname{ht}(\mathfrak{p})$ of a prime ideal $\mathfrak{p}$ is the maximal length $r$ (if it exists) of a sequence $\mathfrak{p}=\mathfrak{p}_0\supsetneq\mathfrak{p}_1\supsetneq\cdot\cdot\mathfrak{p}_r$ (if such a sequence does not exist, we define $\operatorname{ht}(\mathfrak{p})=\infty$).
			
			For $h:Q\to P$ a morphism of monoids, we say a prime $\mathfrak{p}$ of $P$ is horizontal with respect to $h$ if $h(Q) \subset P-\mathfrak{p}$. For a morphism $f:(X,\mathcal{M})\to (S,\mathcal{N})$ of log schemes and $x \in X$, we say a prime $\mathfrak{p}$ of $\mathcal{M}_{\overline{x}}$ is horizontal with respect to $f$ if it is horizontal with respect to $f^*_{\overline{x}}:\mathcal{N}_{\overline{f(x)}} \to \mathcal{M}_{\overline{x}}.$
			
			Let $f:(X,\mathcal{M}) \to (S,\mathcal{N})$ be a smooth morphism of fs log schemes. Define the sheaf of ideals $\mathcal{I}_f$ of the sheaf of monoids $M$ by
			\begin{equation*}
				\begin{split}
					\Gamma(U,\mathcal{I}_f):=\{a\in\Gamma(U,\mathcal M)|a\in \mathfrak{p} \text{ for all } \mathfrak{p} \in \Spec(\mathcal M_{\overline{x}}) \text{ of height } 1 \\ \text{horizontal with respect to } f \text{ and all } y\in U \}.
				\end{split}
			\end{equation*} 
			The ideal sheaf $\mathcal{I}_f\mathcal{O}_X$ of $\mathcal{O}_X$ is quasi-coherent. If $f:(X,\mathcal{M}) \to (S,\mathcal{N})$ is a log-smooth morphism of fs log schemes with $(S,N)$ log-regular, then we also have $$\Gamma(U,\mathcal{I}_f)=\{a\in\Gamma(U,\mathcal{M})|a\notin \mathcal{O}^*_{X,\overline{x}}\text{ for all } x\in S_U\},$$ where $S_U:=\{x \text{ of codimension 1 such that } \mathcal{M}_{\overline{x}}\neq \mathcal{O}^*_{X,\overline{x}} \text{ and } \mathcal{N}_{\overline{f(x)}}=\mathcal{O}^*_{S,\overline{f(x)}}\}.$ In particular, we have the following description:
			
			\begin{prop}
				Let $\mathcal{I}$ be the ideal sheaf of $\mathcal{O}_X$ associated to the closed subset $\bigcup_{y\in S_X}\overline{\{x\}}$ of $X$. Then $\mathcal{I}\mathcal{O}_X=\mathcal{I}_f.$
			\end{prop}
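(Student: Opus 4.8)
The plan is to prove the asserted equality, which in the notation fixed just before the statement reads $\mathcal{I}_f\mathcal{O}_X = \mathcal{I}$, by a local computation that exploits the explicit description of $\Gamma(U,\mathcal{I}_f)$ available under the log-regularity hypothesis on $(S,\mathcal{N})$, together with Kato's local structure theory for log-regular schemes. Both $\mathcal{I}_f\mathcal{O}_X$ and $\mathcal{I}$ are quasi-coherent ideal sheaves of $\mathcal{O}_X$, so the equality is \'etale-local and may be checked on stalks. Moreover, since log-smoothness of $f$ over the log-regular $(S,\mathcal{N})$ forces $(X,\mathcal{M})$ to be log-regular, and hence $X$ to be normal, it suffices to verify that the two ideals are divisorial and agree at the points $x\in S_X$, i.e.\ at the generic points of the components of $\bigcup_{x\in S_X}\overline{\{x\}}$.

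First I would set up the toric local model. By log-smoothness of $f$ and log-regularity of $(S,\mathcal{N})$, \'etale-locally around a point there exist fs charts $Q\to\mathcal{N}$ and $P\to\mathcal{M}$ fitting into a morphism $h\colon Q\to P$ with $h^{\mathrm{gp}}$ injective and $\operatorname{coker}(h^{\mathrm{gp}})$ torsion-free, such that $X\to S\times_{\Spec\Z[Q]}\Spec\Z[P]$ is \'etale. In this model the height-$1$ primes of $\mathcal{M}_{\overline x}$ correspond to the facets of the cone of $P$, that is, to the torus-invariant prime divisors; such a prime $\mathfrak{p}$ is horizontal precisely when the face $P\setminus\mathfrak{p}$ contains $h(Q)$, which is exactly the condition that the divisor is not pulled back from $S$. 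Thus the horizontal height-$1$ primes biject with the generic points of $\bigcup_{x\in S_X}\overline{\{x\}}$.

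Next I would carry out the combinatorial step. By the displayed description of $\Gamma(U,\mathcal{I}_f)$ valid in the log-regular case, $\mathcal{I}_f$ is the intersection of all horizontal height-$1$ primes of $P$, so $\mathcal{I}_f\mathcal{O}_X$ is generated by the monomials $\alpha(a)$ (under the structure map $\alpha\colon\mathcal{M}\to\mathcal{O}_X$) for $a$ lying in every horizontal facet-ideal. A direct check in the monoid algebra (already transparent in the cases $P=\N$ and $P=\N^2$ with $Q=\N$) identifies this with the intersection $\bigcap_\mathfrak{p}(\text{ideal of }D_\mathfrak{p})$ over horizontal facets, hence with the radical ideal of the union of the horizontal toric divisors, that is, with $\mathcal{I}$. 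The inclusion $\mathcal{I}_f\mathcal{O}_X\subseteq\mathcal{I}$ is in fact immediate from the description: any $a\in\Gamma(U,\mathcal{I}_f)$ is a non-unit at every $x\in S_U$, so $\alpha(a)$ vanishes along the horizontal divisor.

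The hard part will be the reverse inclusion, namely showing that $\mathcal{I}_f\mathcal{O}_X$ is already the full reduced divisorial ideal of the horizontal divisor, rather than a smaller ideal exhibiting excess vanishing at the higher-codimension strata where several boundary components meet. This is exactly where log-regularity is essential: Kato's structure theorem ensures that the monomials attached to the facets form part of a regular system of parameters, so the toric divisors are reduced and their defining monomials generate the divisorial ideal without multiplicity. Concretely, at each codimension-$1$ point $x\in S_X$ one has $\overline{\mathcal{M}}_{\overline x}\cong\N$, with generator mapping to a uniformizer of the DVR $\mathcal{O}_{X,\overline x}$, whence $\mathcal{I}_f\mathcal{O}_{X,\overline x}=\mathfrak{m}_x=\mathcal{I}_x$; normality of $X$ then upgrades this agreement of two divisorial ideals in codimension $1$ to the global equality $\mathcal{I}_f\mathcal{O}_X=\mathcal{I}$.
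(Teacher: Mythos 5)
Your overall skeleton is reasonable, and several of its ingredients are correct: the \'etale-local reduction to a toric chart, the easy inclusion $\mathcal{I}_f\mathcal{O}_X\subseteq\mathcal{I}$, the identification of the horizontal height-one primes with the generic points of $\bigcup_{x\in S_X}\overline{\{x\}}$, and the codimension-one computation (at $x\in S_X$ log-regularity does force $\overline{\mathcal{M}}_{\overline{x}}\cong\N$ with generator mapping to a uniformizer). Note, for comparison, that the paper itself gives no proof at all: the proposition is recalled from Tsuji \cite{tsuji1999poincare}, introduced with ``in particular'' after Tsuji's log-regular description of $\Gamma(U,\mathcal{I}_f)$, so you are being compared against Tsuji's argument rather than one in the text. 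And there your proof of the hard inclusion $\mathcal{I}\subseteq\mathcal{I}_f\mathcal{O}_X$ has a genuine gap. You justify the absence of excess vanishing by asserting that ``Kato's structure theorem ensures that the monomials attached to the facets form part of a regular system of parameters.'' That is false in the stated generality: log-smoothness over a log-regular base yields only log-regularity of $(X,\mathcal{M})$, not regularity of $X$, and $X$ may have toric singularities. For example, $X=\Spec k[P]$ with $P\subset\Z^2$ generated by $(1,0),(1,1),(1,2)$, i.e. $k[u,v,w]/(uw-v^2)$, is log-smooth over $\Spec k$ with trivial log structure; its local ring at the origin is not regular, so there is no regular system of parameters at all, and $\mathcal{I}_f\mathcal{O}_X=(v)$ is radical but not prime. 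Your ``direct check in the monoid algebra'' is performed only for $P=\N$ and $P=\N^2$, which are exactly the regular cases; the entire content of the proposition lies in the singular ones.

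The same gap undermines your closing step: passing from agreement in codimension one to global equality requires both ideals to be reflexive (divisorial), and reflexivity of $\mathcal{I}_f\mathcal{O}_X$ --- equivalently, that $\mathcal{O}_X/\mathcal{I}_f\mathcal{O}_X$ has no embedded components along the deeper strata where several branches of the horizontal divisor meet --- is precisely what has not been proved; reflexivity of $\mathcal{I}$ alone does not help. What is actually needed is Kato's theory of log-regular rings, in two steps: (a) for each horizontal height-one prime $\mathfrak{p}\subset\mathcal{M}_{\overline{x}}$, the quotient $\mathcal{O}_{X,\overline{x}}/\mathfrak{p}\mathcal{O}_{X,\overline{x}}$ is again log-regular, hence an integral domain, so that $\mathfrak{p}\mathcal{O}_{X,\overline{x}}$ is the prime ideal of the corresponding divisor; and (b) using Kato's structure theorem (the completed local ring is a quotient of a formal monoid algebra $C[[P\oplus\N^r]]$ by a suitably transversal element, so monoid ideals extend to ``monomially supported'' ideals), extension of monoid ideals commutes with the finite intersection $\bigcap_{\mathfrak{p}}\mathfrak{p}$. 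Granting (a) and (b), one gets $\mathcal{I}_f\mathcal{O}_{X,\overline{x}}=\bigl(\bigcap_{\mathfrak{p}}\mathfrak{p}\bigr)\mathcal{O}_{X,\overline{x}}=\bigcap_{\mathfrak{p}}\bigl(\mathfrak{p}\mathcal{O}_{X,\overline{x}}\bigr)$, which is the radical ideal of the union of the horizontal divisors, i.e. $\mathcal{I}_{\overline{x}}$; this is essentially Tsuji's proof. With (a) and (b) supplied your outline closes; without them it does not.
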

			
			Let $f:(X,\mathcal{M}) \to (S,\mathcal{N})$ be a log-smooth morphism of fs log-schemes. Let $\Omega_{X/S}^{\bullet}$ be the associated complex of log-differentials. Then the compactly supported (log-)de Rham cohomology of $Y$ is defined by the complex $$\rg_{\operatorname{dR,Tsu,c}}(X/S):=\rg(X,\mathcal{I}_f\mathcal{O}_X\otimes_{\mathcal{O}_X}\Omega_{X/S}^{\bullet}).$$ The compactly supported (log-)de Rham cohomology satisfies Poincar\'e duality.
			
			\begin{thm}\cite[Theorem 3.4]{tsuji1999poincare} \label{tsujidrpoincare}
				Suppose moreover that $X$ is proper and $S$ is  the log scheme $\Spec(\mathcal O_{F,n})$ equipped with the log structure induced by $\N \to \mathcal O_F,1\mapsto a$ for some $a \in \mathcal O_{F,n}$. Then 
				
				(i) There is a natural trace map $${\operatorname{tr_{dR}}}:\rg_{\operatorname{dR,Tsu,c}}(X/S) \to \mathcal O_{F,n}.$$
				
				(ii) The pairing $$H^i_{\operatorname{logdR}}(X/S) \otimes H^{2d-i}_{\operatorname{dR,Tsu,c}}(X/S) \xrightarrow{} H^{2d}_{\operatorname{dR,Tsu,c}}(X/S) \to \mathcal O_{F,n}$$ is perfect.
			\end{thm}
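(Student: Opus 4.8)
The plan is to deduce both assertions from coherent Grothendieck--Serre duality for the proper morphism $f\colon (X,\mathcal M)\to (S,\mathcal N)$, using the log de Rham complex together with its Hodge (stupid) filtration. Since $f$ is log-smooth of relative dimension $d$, each $\Omega^p_{X/S}$ is locally free, $\Omega^d_{X/S}$ is a line bundle, and (by the preceding proposition) $\mathcal I_f\mathcal O_X$ is the invertible ideal sheaf cutting out the horizontal boundary. The wedge product gives a pairing of complexes $\Omega^p_{X/S}\otimes(\mathcal I_f\otimes\Omega^q_{X/S})\to \mathcal I_f\otimes\Omega^{p+q}_{X/S}$, and the hypercohomology spectral sequence shows that the only contribution to the top degree $2d$ of $\rg_{\operatorname{dR,Tsu,c}}(X/S)$ is $H^d(X,\mathcal I_f\otimes\Omega^d_{X/S})$. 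Cup product therefore produces a pairing
$$H^i_{\operatorname{logdR}}(X/S)\otimes H^{2d-i}_{\operatorname{dR,Tsu,c}}(X/S)\to H^{2d}_{\operatorname{dR,Tsu,c}}(X/S)\simeq H^d(X,\mathcal I_f\otimes\Omega^d_{X/S}).$$

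First I would construct the trace and prove (i). The key geometric input is the identification $\mathcal I_f\otimes\Omega^d_{X/S}\simeq\omega_{X/S}$ of the twisted log-canonical bundle with the relative dualizing sheaf of $f$; granting it, one defines $\operatorname{tr_{dR}}$ by composing the cup product above with the Grothendieck--Serre trace $H^d(X,\omega_{X/S})\to\mathcal O_{F,n}$. To establish the identification I would argue locally on log-smooth (toric) charts: on a standard chart the invertible ideal $\mathcal I_f$ exactly cancels the poles of the logarithmic forms $d\log$, so $\mathcal I_f\otimes\Omega^d_{X/S}$ recovers the ordinary relative dualizing sheaf, and these local isomorphisms glue because the residue does not depend on the chosen coordinates.

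For (ii) I would filter $R\Gamma(X,\Omega^\bullet_{X/S})$ and $\rg_{\operatorname{dR,Tsu,c}}(X/S)$ by the stupid filtration. The wedge pairing is compatible with these filtrations, and on graded pieces it becomes the coherent pairing
$$R\Gamma(X,\Omega^p_{X/S})\otimes^{L}_{\mathcal O_{F,n}}R\Gamma(X,\mathcal I_f\otimes\Omega^{d-p}_{X/S})\to R\Gamma(X,\omega_{X/S})\to\mathcal O_{F,n}[-d].$$
Because $\Omega^p_{X/S}$ is locally free and $\mathcal I_f$ is invertible, one has $\mathcal H om(\Omega^p_{X/S},\omega_{X/S})\simeq \mathcal I_f\otimes\Omega^{d-p}_{X/S}$, so this graded pairing is precisely the Serre duality pairing and is perfect. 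I would then assemble these into a single duality morphism
$$\rg_{\operatorname{dR,Tsu,c}}(X/S)\to R\operatorname{Hom}_{\mathcal O_{F,n}}\big(R\Gamma(X,\Omega^\bullet_{X/S}),\mathcal O_{F,n}\big)[-2d],$$
a filtered map that is a quasi-isomorphism on $\gr$, hence a quasi-isomorphism; here I use that $\mathcal O_{F,n}$ is Gorenstein Artinian local (a quotient of the discrete valuation ring $\mathcal O_F$ by a power of $p$), so $R\operatorname{Hom}_{\mathcal O_{F,n}}(-,\mathcal O_{F,n})$ is a well-behaved duality on perfect complexes. Passing to cohomology — controlling the possible $\operatorname{Ext}$-contributions over the non-reduced base by the freeness of the relevant cohomology, or by reduction from a flat lift — then yields the perfect pairings on individual $H^i$.

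The step I expect to be the main obstacle is the identification $\mathcal I_f\otimes\Omega^d_{X/S}\simeq\omega_{X/S}$ together with the correct normalization of the trace. Since $X$ is only log-smooth, its underlying scheme need not be smooth over the Artinian base, so one cannot simply invoke $\omega_{X/S}=\Omega^d_{X/S}$; the logarithmic residue computation on toric charts and its compatibility with the Grothendieck trace is where the genuine work lies. A secondary difficulty is descending the derived duality to a degreewise-perfect pairing over the non-reduced ring $\mathcal O_{F,n}$, which requires controlling the non-freeness of the cohomology modules.
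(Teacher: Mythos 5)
The paper itself contains no proof of this statement: it is imported wholesale from Tsuji (\cite[Theorem 3.4]{tsuji1999poincare}), so the only meaningful comparison is with Tsuji's original argument. Your outline does reconstruct the architecture of that argument — identify the twisted top log-differentials with the relative dualizing sheaf, then run Grothendieck–Serre duality for the proper morphism $f$ through the stupid filtration — but two steps carrying real content are missing or wrong. The first is your claim that $H^{2d}_{\operatorname{dR,Tsu,c}}(X/S)\simeq H^d(X,\mathcal I_f\mathcal O_X\otimes\Omega^d_{X/S})$. The hypercohomology spectral sequence only gives a canonical surjection $H^d(X,\mathcal I_f\mathcal O_X\otimes\Omega^d_{X/S})\twoheadrightarrow H^{2d}_{\operatorname{dR,Tsu,c}}(X/S)$, whose kernel is the image of $H^d(X,\mathcal I_f\mathcal O_X\otimes\Omega^{d-1}_{X/S})$ under the de Rham differential. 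Over the Artinian base $\mathcal O_{F,n}$ you cannot invoke Hodge–de Rham degeneration to make this kernel vanish, so to define $\operatorname{tr_{dR}}$ you must prove that the coherent trace annihilates the image of $d$, equivalently that $\operatorname{tr}(d\alpha\wedge\beta)=\mp\operatorname{tr}(\alpha\wedge d\beta)$. This is not formal: the de Rham differential is not $\mathcal O_X$-linear, so functoriality of the Grothendieck–Serre trace says nothing about it. Moreover this same compatibility is exactly what you silently use when you claim the graded Serre-duality isomorphisms ``assemble'' into a filtered duality morphism; without it there is no map $\rg_{\operatorname{dR,Tsu,c}}(X/S)\to\mathcal O_{F,n}[-2d]$ to assemble with. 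This lemma is an unavoidable ingredient of Tsuji's proof and your proposal never addresses it.

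Second, the identification $\mathcal I_f\mathcal O_X\otimes\Omega^d_{X/S}\simeq\omega_{X/S}$, which you rightly single out as the crux, is substantially harder than your sketch suggests. Since $f$ is only log smooth, the underlying scheme of $X$ is in general singular over $\mathcal O_{F,n}$ (toric-type singularities), so it is not even clear a priori that $f^!\mathcal O_{F,n}$ is a shifted line bundle: one must first show the underlying morphism is Gorenstein, and this is where saturatedness and the monoid-theoretic analysis of the singularities enter — it is the content of the hardest part of Tsuji's paper (his Theorem 2.21), not a gluing of chart computations. ``Pole cancellation on toric charts'' only addresses the locus where the underlying scheme is smooth, which is precisely where nothing needs proving; likewise the invertibility of $\mathcal I_f\mathcal O_X$ is itself a claim requiring proof (the paper you are reading only records that it is quasi-coherent). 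By contrast, your ``secondary difficulty'' is not one: $\mathcal O_{F,n}=\mathcal O_F/p^n$ is a zero-dimensional Gorenstein local ring, hence self-injective, so $\operatorname{Hom}_{\mathcal O_{F,n}}(-,\mathcal O_{F,n})$ is exact on finitely generated modules and a derived-level duality passes to perfect pairings on each $H^i$ with no freeness hypothesis and no flat lifting.
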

			
			We now review the definition of compactly supported Hyodo-Kato (log-crystalline) cohomology of Tsuji. Let $(S,\mathcal{N})$ (respectively $(S_n,\mathcal{N}_n)$) be the log scheme $\Spec(\mathcal O_F)$ (respectively $\Spec(\mathcal O_{F,n})$) equipped with the log structure induced by $\N \to \mathcal O_F,1\mapsto 0.$ Let $f:(X,\mathcal{M}) \to (S_0,\mathcal{N}_0)$ be a log-smooth and universally saturated morphism of fs log-schemes of relative dimension $d$. Denote by $\gamma$ the canonical PD-structure on the ideal $p\mathcal O_S.$ We will freely use the notations and properties of log-crystalline sites in \cite{kato1989log}.
			
			We write $\mathcal M_{X/S}$ the sheaf on the log-crystalline site $(X/S)_{\operatorname{cris}}$ given by $$\Gamma((U,(T,\mathcal M_T)),\mathcal M_{X/S})=\Gamma(T,\mathcal M_T).$$ Denote by $u:(X/S)_{\operatorname{cris}}\to X_{\et}$ the canonical projection. Define the sheaf of ideals $\mathcal{I}_{X/S}$ of the sheaf of monoids $M$ by
			\begin{equation*}
				\begin{split}
					\Gamma((U,(T,\mathcal M_T)),\mathcal{I}_{X/S}):=\{a\in\Gamma((U,T),\mathcal M)|a\in \mathfrak{p} \text{ for all } x \in T \text{ and all } \mathfrak{p} \in \Spec(\mathcal M_{T,\overline{x}}/O^*_{T,\overline{x}}) \\ \text{ of height } 1  \text{ horizontal with respect to }\mathcal N_{\overline{f(x)}}/\mathcal O^*_{S,\overline{f(x)}}\to \mathcal M_{T,\overline{x}}/O^*_{T,\overline{x}} \}.
				\end{split}
			\end{equation*} 
			We write $\mathcal K_{X/S}$ the ideal $\mathcal{I}_{X/S}\mathcal O_{X/S}$ of $\mathcal O_{X/S}$, it is a crystal by \cite[Lemma 5.3]{tsuji1999poincare}. In particular, if $(X,\mathcal M) \to (Z, \mathcal M_Z)$ is a closed immersion to a smooth scheme over $(S_0,\mathcal{N}_0),$ and $(X^{\operatorname{PD}},\mathcal M^{\operatorname{PD}})$ is the PD-envelop of $(X,\mathcal M)$ in $(Z, \mathcal M_Z)$, then there is a quasi-isomorphism $$Ru_*\mathcal K_{X/S}\simeq \mathcal{I}_{f^{\operatorname{PD}}}\mathcal O_{X^{\operatorname{PD}}} \otimes_{\mathcal O_{X^{\operatorname{PD}}}} \Omega^{\bullet}_{X^{\operatorname{PD}}/S} ,$$ where $f^{\operatorname{PD}}$ is the map $(X^{\operatorname{PD}},\mathcal M^{\operatorname{PD}}) \to (S_0,\mathcal{N}_0).$
			
			The compactly supported crystalline cohomology is defined by
			$$\rg_{\operatorname{cris,Tsu,c}}(X/S):=\varinjlim_n\rg((X/S)_{\operatorname{cris}},\mathcal K_{X/S_n})=\varinjlim_n\rg_{\et}(X,Ru_*\mathcal K_{X/S_n}),$$
			and the compactly supported Hyodo-Kato cohomology is then defined by
			$$\rg_{\operatorname{HK,Tsu,c}}(X/S):=\rg_{\operatorname{cris,Tsu,c}}(X/S)\otimes_{\mathcal O_F}F.$$
			If $(X,\mathcal M) \to (Z, \mathcal M_Z)$ is a closed immersion to a smooth scheme over $(S_0,\mathcal{N}_0),$ and $(X^{\operatorname{PD}},\mathcal M^{\operatorname{PD}})$ is the PD-envelop of $(X,\mathcal M)$ in $(Z, \mathcal M_Z)$, then we have a quasi-isomorphism $$\rg_{\operatorname{cris,Tsu,c}}(X/S)\simeq \rg_{\operatorname{dR,Tsu,c}}(X^{\operatorname{PD}}/S).$$
			
			We have the log-crystalline Poincar\'e duality by Tsuji \cite{tsuji1999poincare}.
			
			\begin{thm}\cite[Proposition 5.4 and Theorem 5.5]{tsuji1999poincare} \label{tsujipoincare}
				Suppose moreover that $X$ is proper. Then 
				
				(i) There is a natural trace map $${\operatorname{tr_{HK}}}:\rg_{\operatorname{HK,Tsu,c}}(X/S) \to F.$$
				
				(ii) The pairing $$H^i_{\operatorname{HK}}(X) \otimes H^{2d-i}_{\operatorname{HK,Tsu,c}}(X/S) \xrightarrow{} H^{2d}_{\operatorname{HK,Tsu,c}}(X/S) \to F$$ is perfect.
				\end{thm}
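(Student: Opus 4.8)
The plan is to deduce this crystalline duality from the de Rham duality of Theorem \ref{tsujidrpoincare} by working modulo $p^n$, where crystalline cohomology is computed by the de Rham complex of a PD-envelope, and then passing to the limit and inverting $p$. First I would construct the pairing and the trace. Since $\mathcal K_{X/S}=\mathcal I_{X/S}\mathcal O_{X/S}$ is an ideal of $\mathcal O_{X/S}$, multiplication furnishes a morphism of crystals $\mathcal O_{X/S}\otimes_{\mathcal O_{X/S}}\mathcal K_{X/S}\to\mathcal K_{X/S}$; applying $Ru_*$ and taking cohomology yields a cup product $H^i_{\operatorname{cris}}(X/S_n)\otimes H^{j}_{\operatorname{cris,Tsu,c}}(X/S_n)\to H^{i+j}_{\operatorname{cris,Tsu,c}}(X/S_n)$, compatible in $n$. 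For the trace I would use that étale-locally $X$ embeds as a closed subscheme of a log-smooth $(Z,\mathcal M_Z)$ over $(S_0,\mathcal N_0)$, so that $Ru_*\mathcal K_{X/S_n}\simeq\mathcal I_{f^{\operatorname{PD}}}\mathcal O_{X^{\operatorname{PD}}}\otimes_{\mathcal O_{X^{\operatorname{PD}}}}\Omega^{\bullet}_{X^{\operatorname{PD}}/S_n}$ and hence $\rg_{\operatorname{cris,Tsu,c}}(X/S_n)\simeq\rg_{\operatorname{dR,Tsu,c}}(X^{\operatorname{PD}}/S_n)$; the de Rham trace $\operatorname{tr_{dR}}$ of Theorem \ref{tsujidrpoincare} then supplies $H^{2d}_{\operatorname{cris,Tsu,c}}(X/S_n)\to\mathcal O_{F,n}$, and $\operatorname{tr_{HK}}$ is obtained by taking $\varinjlim_n$ and tensoring with $F$.

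Next I would establish perfectness at each finite level $n$. Under the identifications above, the crystalline cup product corresponds to the multiplication pairing $\Omega^{\bullet}_{X^{\operatorname{PD}}/S_n}\otimes(\mathcal I_{f^{\operatorname{PD}}}\mathcal O_{X^{\operatorname{PD}}}\otimes\Omega^{\bullet}_{X^{\operatorname{PD}}/S_n})\to\mathcal I_{f^{\operatorname{PD}}}\mathcal O_{X^{\operatorname{PD}}}\otimes\Omega^{\bullet}_{X^{\operatorname{PD}}/S_n}$ computing de Rham versus compactly supported de Rham cohomology, and the trace corresponds to $\operatorname{tr_{dR}}$; so Theorem \ref{tsujidrpoincare}(ii) gives that the level-$n$ pairing $H^i_{\operatorname{logdR}}(X^{\operatorname{PD}}/S_n)\otimes H^{2d-i}_{\operatorname{dR,Tsu,c}}(X^{\operatorname{PD}}/S_n)\to\mathcal O_{F,n}$ is perfect. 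The remaining step is to descend this finite-level duality to the $F$-coefficient statement: since the groups involved are finitely generated $\mathcal O_{F,n}$-modules, the relevant $\varprojlim_n$ (for ordinary $H^i_{\operatorname{HK}}$) and $\varinjlim_n$ (for $H^{2d-i}_{\operatorname{HK,Tsu,c}}$) interchange correctly with $R\operatorname{Hom}(-,\mathcal O_{F,n})$ after inverting $p$, the derived inverse-limit terms and transition $\operatorname{Ext}$'s vanishing in the limit by finiteness; perfectness of the $F$-linear pairing then follows from perfectness at each level together with this Mittag-Leffler/finiteness bookkeeping.

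The main obstacle I anticipate is \emph{globalization}: the PD-envelope comparison and the de Rham duality of Theorem \ref{tsujidrpoincare} are available only after a closed embedding into a \emph{proper} log-smooth ambient over $S_n$, which in general exists only locally, so I would need to organize a simplicial (or \v{C}ech) system of local embeddings computing $\rg_{\operatorname{cris,Tsu,c}}(X/S_n)$ and verify that the cup product, and especially the trace map, are independent of the chosen embedding and glue consistently under cohomological descent; reconciling the local de Rham dualities into a single global perfect pairing, while keeping track of the asymmetric roles of the ordinary and compactly supported theories, is the delicate point, whereas the construction of the pairing itself and the final $F$-level limiting argument are comparatively routine.

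<br>

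(This is precisely the content of \cite[Proposition 5.4 and Theorem 5.5]{tsuji1999poincare}, to which we refer for the full details.)
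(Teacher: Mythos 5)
The paper itself offers no proof of this statement: it is imported verbatim from Tsuji, as the bracketed citation in the theorem header indicates, and your closing deferral to \cite[Proposition 5.4 and Theorem 5.5]{tsuji1999poincare} is exactly what the paper does. So as a matter of attribution your proposal is consistent with the paper; the issue is with the sketch you offer as a reconstruction of that reference.

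Taken as an argument, your sketch has a genuine gap at its central step. Both your trace construction and your finite-level perfectness claim proceed by identifying $\rg_{\operatorname{cris,Tsu,c}}(X/S_n)$ with $\rg_{\operatorname{dR,Tsu,c}}(X^{\operatorname{PD}}/S_n)$ and then invoking Theorem \ref{tsujidrpoincare} for $X^{\operatorname{PD}}$. But Theorem \ref{tsujidrpoincare} is stated --- and proved, via coherent Serre duality with dualizing sheaf $\Omega^d_{X/S}$ --- only for a \emph{proper log-smooth} scheme over $\Spec(\mathcal O_{F,n})$. The PD envelope $X^{\operatorname{PD}}$ is neither log-smooth nor of finite type over $S_n$ (its structure sheaf is locally a divided-power polynomial algebra, and $\Omega^{\bullet}_{X^{\operatorname{PD}}/S}$ is not the de Rham complex of a log-smooth morphism but the restriction of the one of the ambient embedding), so the hypotheses of that theorem fail and neither the trace nor the duality can be imported this way: the comparison $Ru_*\mathcal K_{X/S}\simeq \mathcal I_{f^{\operatorname{PD}}}\mathcal O_{X^{\operatorname{PD}}}\otimes_{\mathcal O_{X^{\operatorname{PD}}}}\Omega^{\bullet}_{X^{\operatorname{PD}}/S}$ computes cohomology, it does not transport duality. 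The route that actually works at finite level --- and is the shape of Tsuji's argument --- is a d\'evissage in $n$: at level one of the tower, $X$ is itself proper and log-smooth over the log point $\Spec(k)$ (the case $n=1$, $a=0$ of Theorem \ref{tsujidrpoincare}), the PD envelope of the identity embedding $X\hookrightarrow X$ is $X$ itself, so crystalline cohomology of $\mathcal O_{X/S}$ and of $\mathcal K_{X/S}$ literally equals $\rg(X,\Omega^{\bullet}_{X/\Spec(k)})$ and $\rg(X,\mathcal I_f\mathcal O_X\otimes\Omega^{\bullet}_{X/\Spec(k)})$, and the de Rham duality applies to $X$ directly; one then climbs the tower using base change for the crystals $\mathcal O_{X/S}$, $\mathcal K_{X/S}$ together with a five-lemma (or perfect-complex/derived Nakayama) argument over $\mathcal O_{F,n}$, and finally takes limits and inverts $p$ as you describe. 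Note that this also dissolves the ``globalization'' obstacle you single out as the delicate point: no proper log-smooth ambient space over $S_n$, global or local, is ever required, because the only geometric duality input occurs at level one, where $X$ is available globally.
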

				
				We have a similar description of Tsuji's compactly supported cohomology as last subsection.
				
				\begin{lem}
					Suppose $(U, \overline{U})\in \operatorname{Var}^{\operatorname{ss}}_K$ is a strict semistable pair, which is defined in Definition \ref{modelalgebraic}. Moreover assume that $D:=\overline{U}_{\eta}-U$ is a smooth irreducible divisor, denote by $\mathscr D:=\overline{D}.$ Endow $(U, \overline{U})$ with the log structure defined by the compacting log structure of the open immersion $U\hookrightarrow \overline U.$ Then we have $$\rg_{\operatorname{HK,Tsu,c}}(\overline{U}_0) \simeq [\rg_{\operatorname{HK}}(\overline{U}_0) \to \rg_{\operatorname{HK}}(\mathscr D_0) ].$$
				\end{lem}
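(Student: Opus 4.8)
The plan is to realize Tsuji's compactly supported complex as the global sections of the ideal-sheaf kernel $\mathcal{K}_{X/S}$ on the log-crystalline site of $X:=\overline{U}_0$, and to produce a short exact sequence of log-crystalline sheaves
$$0 \to \mathcal{K}_{X/S} \to \mathcal{O}_{X/S} \to i_{\operatorname{cris},*}\mathcal{O}_{\mathscr{D}/S} \to 0,$$
where $i:\mathscr{D}_0 \hookrightarrow X$ is the closed immersion of the special fibre of the horizontal divisor, endowed with the log structure \emph{induced} (i.e.\ restricted) from $\overline{U}_0$. Here it is essential to read $\rg_{\operatorname{HK}}(\overline{U}_0)$ as the log-crystalline cohomology for the \emph{full} compactifying log structure (so that the middle term is the Hyodo-Kato cohomology of the open $U$), and $\rg_{\operatorname{HK}}(\mathscr{D}_0)$ with the restricted log structure, which still remembers the conormal direction of $\mathscr{D}$ as a hollow log direction. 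Taking $Ru_*$ and passing to Hyodo-Kato cohomology then yields the distinguished triangle $\rg_{\operatorname{HK,Tsu,c}}(\overline{U}_0) \to \rg_{\operatorname{HK}}(\overline{U}_0) \to \rg_{\operatorname{HK}}(\mathscr{D}_0)$, which is exactly the asserted fibre sequence.

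First I would fix the local geometry. Since $(U,\overline{U})$ is a strict semistable pair and $D=\overline{U}_\eta-U$ is a single smooth irreducible divisor, étale-locally $\overline{U}_0$ admits coordinates in which the vertical (special-fibre) components are cut out by $x_1,\dots,x_r$ with $x_1\cdots x_r$ pulled back from the uniformizer, while the horizontal divisor $\mathscr{D}$ is cut out by one further coordinate $y_1$. By the Proposition identifying $\mathcal{I}_f$ with the ideal sheaf of the horizontal locus $\bigcup_{x\in S_X}\overline{\{x\}}$, together with the smoothness and irreducibility of $D$, the sheaf $\mathcal{I}_{X/S}$ is locally generated by $y_1$; hence $\mathcal{K}_{X/S}=\mathcal{I}_{X/S}\mathcal{O}_{X/S}$ restricts on each PD-thickening $T$ to the ideal of $\mathscr{D}$ inside $T$.

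The heart of the argument, and the step I expect to be the main obstacle, is the identification of the cokernel of $\mathcal{K}_{X/S}\hookrightarrow\mathcal{O}_{X/S}$ with $i_{\operatorname{cris},*}\mathcal{O}_{\mathscr{D}/S}$. I would verify this on PD-thickenings: any PD-thickening $(U',T)$ of $X$ restricts along $\mathscr{D}\hookrightarrow X$ to a PD-thickening of $\mathscr{D}_0$, using that $y_1$ is part of the log coordinates so that $\mathcal{I}_{X/S}\mathcal{O}_T$ is a sub-PD-ideal and the quotient $\mathcal{O}_T/\mathcal{I}_{X/S}\mathcal{O}_T$ is a PD-thickening of $\mathscr{D}$ with its induced log structure, which remains log-smooth and universally saturated over $S_0$. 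The crystal property of $\mathcal{K}_{X/S}$ (\cite[Lemma 5.3]{tsuji1999poincare}) is what makes the sequence exact on the crystalline site and not merely on a fixed embedding. Concretely, under the de Rham realization via a PD-embedding, and using the quasi-isomorphism $Ru_*\mathcal{K}_{X/S}\simeq \mathcal{I}_{f^{\operatorname{PD}}}\mathcal{O}_{X^{\operatorname{PD}}}\otimes_{\mathcal{O}_{X^{\operatorname{PD}}}}\Omega^{\bullet}_{X^{\operatorname{PD}}/S}$ recalled above, the sequence becomes the short exact sequence of complexes
$$0 \to \mathcal{I}_{f^{\operatorname{PD}}}\mathcal{O}_{X^{\operatorname{PD}}}\otimes_{\mathcal{O}_{X^{\operatorname{PD}}}}\Omega^{\bullet}_{X^{\operatorname{PD}}/S} \to \Omega^{\bullet}_{X^{\operatorname{PD}}/S} \to \Omega^{\bullet}_{\mathscr{D}^{\operatorname{PD}}/S} \to 0,$$
the right-hand term computing $\rg_{\operatorname{HK}}(\mathscr{D}_0)$ for the restricted log structure. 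One checks that $y_1\,\Omega^{\bullet}_{X^{\operatorname{PD}}/S}$ is indeed a subcomplex (since $dy_1=y_1\,d\log y_1$), and that on the quotient $d\log y_1$ survives and carries a Frobenius twist; this is precisely the mechanism by which the fibre sequence acquires the correct $(\varphi,N)$-structure, as one can already check on $\overline U=\mathbb{P}^1_{\mathcal O_K}$, $D=\{\infty\}$, where both sides give $F\{-1\}[-2]$.

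Finally I would assemble the pieces. Applying $Ru_*$ to the crystalline short exact sequence at each truncation $S_n$ gives a compatible system of distinguished triangles whose middle and right terms compute $\rg_{\operatorname{cris}}(\overline{U}_0/S_n)$ and $\rg_{\operatorname{cris}}(\mathscr{D}_0/S_n)$ and whose left term is $\rg((X/S)_{\operatorname{cris}},\mathcal{K}_{X/S_n})$. Passing to the limit in $n$ and inverting $p$ — legitimate because all three Hyodo-Kato cohomologies are finite-dimensional, so the relevant limit and colimit procedures agree after $\otimes_{\mathcal O_F}F$ — yields the distinguished triangle $\rg_{\operatorname{HK,Tsu,c}}(\overline{U}_0)\to\rg_{\operatorname{HK}}(\overline{U}_0)\to\rg_{\operatorname{HK}}(\mathscr{D}_0)$, hence the claimed quasi-isomorphism $\rg_{\operatorname{HK,Tsu,c}}(\overline{U}_0)\simeq\fib\bigl(\rg_{\operatorname{HK}}(\overline{U}_0)\to\rg_{\operatorname{HK}}(\mathscr{D}_0)\bigr)=[\rg_{\operatorname{HK}}(\overline{U}_0)\to\rg_{\operatorname{HK}}(\mathscr{D}_0)]$. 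Compatibility with Frobenius and monodromy is automatic, since every map in the short exact sequence is a map of $(\varphi,N)$-modules.
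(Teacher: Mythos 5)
Your proposal takes essentially the same route as the paper's proof: both exhibit Tsuji's complex via the crystalline ideal sheaf $\mathcal{K}_{\overline{U}_0/S}$, reduce to the fibre sequence $Ru_*\mathcal{K}_{\overline{U}_0/S} \to Ru_*\mathcal{O}_{\overline{U}_0/S} \to Ru_*(i_*\mathcal{O}_{\mathscr{D}_0/S})$, and identify it through log PD-envelopes with $\mathcal{I}_{f^{\operatorname{PD}}}\mathcal{O}_{\overline{U}_0^{\operatorname{PD}}}\otimes\Omega^{\bullet}_{\overline{U}_0^{\operatorname{PD}}/S} \to \Omega^{\bullet}_{\overline{U}_0^{\operatorname{PD}}/S} \to i_*\Omega^{\bullet}_{\mathscr{D}_0^{\operatorname{PD}}/S}$, the decisive point in both being that the horizontal part of the log structure is induced by $\mathscr{D}_0$, so that $\mathcal{I}_{f^{\operatorname{PD}}}\mathcal{O}$ is precisely the ideal of $\mathscr{D}_0\hookrightarrow\overline{U}_0$. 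Your extra verifications (local coordinates $y_1$, the sub-PD-ideal check on thickenings, the $\mathbb{P}^1$ sanity check) merely make explicit what the paper's terser argument leaves implicit, so the two proofs coincide in substance.
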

				
				\begin{proof}
					Write $i$ for the closed immersion $\mathscr D_0 \to \overline U_0.$ It suffices to show that we have a distinguished triangle $$Ru_*\mathcal K_{\overline U_0/S}\to Ru_*\mathcal O_{\overline U_0/S} \to Ru_*(i_*\mathcal O_{\mathscr D_0/S}).$$
					
					Denote by $(\overline U_0^{\operatorname{PD}},\mathcal M^{\operatorname{PD}})$ be the log PD-envelop of $(U, \overline{U})_0$, and let $(\mathscr D_0^{\operatorname{PD}},\mathcal M^{\operatorname{PD}}_{\mathscr D_0})$ be the log PD-envelop of the closed immersion $\mathscr D_0 \hookrightarrow \overline U_0$. Since $(U,\overline U_0)$ is log-smooth over $(S,\mathcal N),$ the above triangle equals $$\mathcal{I}_{f^{\operatorname{PD}}}\mathcal O_{\overline U_0^{\operatorname{PD}}} \otimes_{\mathcal O_{\overline{U}^{\operatorname{PD}}}} \Omega^{\bullet}_{\overline U_0^{\operatorname{PD}}/S}\to \Omega^{\bullet}_{\overline U_0^{\operatorname{PD}}/S} \to i_*\Omega^{\bullet}_{{\mathscr D}_0^{\operatorname{PD}}/S}.$$ Since the log structure of $\overline U_0^{\operatorname{PD}}$ is induced by $\mathscr D_0^{\operatorname{PD}}$, the sheaf $\mathcal{I}_{f^{\operatorname{PD}}}\mathcal O_{\overline U_0^{\operatorname{PD}}}$ corresponds to the closed immersion of $\mathscr D_0 \hookrightarrow \overline U_0$. This concludes the proof.
				\end{proof}
								
				\subsubsection{Comparison with compactly supported algebraic Hyodo-Kato cohomology}
				
				\begin{theorem} \label{tsujialg}
					Suppose that $(U, \overline{U})\in \operatorname{Var}^{\operatorname{ss}}_K$ is a strict semistable pair, which is defined in Definition \ref{modelalgebraic}. Then we have a quasi-isomorphism:
					$$\rg_{\operatorname{HK,Tsu,c}}(\overline{U}_0) \xrightarrow{\simeq} \hkc (U),$$ which is compatible with Frobenius, monodromy, and Hyodo-Kato isomorphism.
				\end{theorem}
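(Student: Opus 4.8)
The plan is to reduce everything to the case, already handled by the lemma preceding this theorem, in which the horizontal divisor $D := \overline{U}_K - U$ is smooth and irreducible. In that case the preceding lemma gives $\rg_{\operatorname{HK,Tsu,c}}(\overline{U}_0) \simeq [\rg_{\operatorname{HK}}(\overline{U}_0) \to \rg_{\operatorname{HK}}(\mathscr{D}_0)]$, while our definition (Definition \ref{hkcalgebraic}) reads $\hkc(U) \simeq [\hk(\overline{U}_K) \to \hk(D)]$. These two cones agree once we invoke the local-global compatibility of Proposition \ref{hklocalglobal}: since $\overline{U}_K$ is proper, $(\overline{U}_K,\overline{U})$ is a semistable pair with empty horizontal divisor, so $\rg_{\operatorname{HK}}(\overline{U}_0) = \hk((\overline{U}_K,\overline{U})_0) \simeq \hk(\overline{U}_K)$; and by Lemma \ref{nclemma} the pair $(D,\overline{D})$ is again a strict semistable pair, whence $\rg_{\operatorname{HK}}(\mathscr{D}_0) \simeq \hk(D)$. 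Thus in the smooth irreducible case the two cones are canonically identified, which gives the desired quasi-isomorphism.

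For the general case I would write $D = D_1 \cup \cdots \cup D_m$ as its decomposition into smooth irreducible components and let $D^\bullet$ denote the \v{C}ech nerve of $\coprod_i D_i \to D$; the associated nerve of closures $\mathscr{D}^\bullet$ inside $\overline{U}$ has terms $\mathscr{D}_J = \bigcap_{j \in J}\mathscr{D}_j$, each of which is the closure of a stratum that, by repeated application of Lemma \ref{nclemma}, is itself the horizontal divisor of a strict semistable pair with smooth horizontal divisor. On our side, since Hyodo-Kato cohomology is an h-sheaf, h-descent along the proper cover $\coprod_i D_i \to D$ yields $\hk(D) \simeq \operatorname{Tot}(\hk(D^\bullet))$, so that $\hkc(U) \simeq [\hk(\overline{U}_K) \to \operatorname{Tot}(\hk(D^\bullet))]$. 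On Tsuji's side I would produce the matching decomposition by resolving the crystal $\mathcal{K}_{\overline{U}_0/S}$ through the inclusion--exclusion (\v{C}ech) complex attached to the closed immersions $\mathscr{D}_j \hookrightarrow \overline{U}$: the ideal of sections vanishing along all of $D$ sits in the exact complex $\mathcal{O} \to \bigoplus_j \mathcal{O}_{\mathscr{D}_j} \to \bigoplus_{j<j'}\mathcal{O}_{\mathscr{D}_{jj'}} \to \cdots$, which after passage to log-crystalline cohomology realizes $\rg_{\operatorname{HK,Tsu,c}}(\overline{U}_0)$ as $[\rg_{\operatorname{HK}}(\overline{U}_0) \to \operatorname{Tot}(\rg_{\operatorname{HK}}(\mathscr{D}^\bullet_0))]$. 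Applying the smooth irreducible case termwise along the two nerves, and using compatibility with the common augmentation $\hk(\overline{U}_K)$, then identifies the two cones, exactly as in the de Rham computation following Definition \ref{drcllz}.

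Finally, the comparison is compatible with Frobenius, monodromy and the Hyodo-Kato isomorphism because every map entering the construction --- the cone structure maps, the local-global identifications of Proposition \ref{hklocalglobal}, and the \v{C}ech resolution --- is induced at the level of log-crystalline (respectively log-de Rham) cohomology and hence automatically commutes with $\varphi$ and $N$; the compatibility with $\iota_{\operatorname{HK}}$ then follows by the same termwise reduction, granting it for a single smooth stratum, where it is part of the preceding lemma and of Tsuji's duality (Theorem \ref{tsujipoincare}). The main obstacle I anticipate is the step on Tsuji's side: establishing that the crystal $\mathcal{K}_{\overline{U}_0/S}$ admits the inclusion--exclusion resolution by the strata $\mathscr{D}_J$ in a way compatible with the $(\varphi,N)$-structure and with the termwise identifications, since the strata carry induced log structures and one must verify that Tsuji's construction glues across the \v{C}ech nerve so as to mirror exactly the h-descent used on our side.
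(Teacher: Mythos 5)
Your reduction to the smooth irreducible case breaks down at the very first identification. In the lemma preceding the theorem, $\overline{U}_0$ denotes the special fiber endowed with the log structure coming from the compactifying log structure of $U \hookrightarrow \overline{U}$; this log structure contains the closure $\mathscr{D}$ of the horizontal divisor, not only the vertical (special fiber) part. Consequently Proposition \ref{hklocalglobal} identifies $\rg_{\operatorname{HK}}(\overline{U}_0)$ with $\hk(U)$, \emph{not} with $\hk(\overline{U}_K)$ as you claim; the pair $(\overline{U}_K,\overline{U})$ with empty horizontal divisor is a different log scheme, and Tsuji's compactly supported theory cannot even be formulated for it, because the ideal $\mathcal{K}_{\overline{U}_0/S}$ is cut out precisely by the horizontal primes of the log structure (with no horizontal part one gets $\mathcal{K}=\mathcal{O}$, hence $\rg_{\operatorname{HK,Tsu,c}}=\rg_{\operatorname{HK}}$). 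For the same reason, $\rg_{\operatorname{HK}}(\mathscr{D}_0)$ in that lemma carries the log structure induced from $\overline{U}_0$, which contains the conormal direction of $\mathscr{D}$; it is not $\hk(D)$ but rather, via the residue sequence, an extension involving $\hk(D)$ and $\hk(D)\{-1\}[-1]$. So the two cones you propose to identify, $[\hk(U)\to\rg_{\operatorname{HK}}(\mathscr{D}_0)]$ and $[\hk(\overline{U}_K)\to\hk(D)]$, do not have isomorphic terms, and matching them requires a genuine further argument (the residue decomposition combined with the Gysin triangle $\hk(D)\{-1\}[-2]\to\hk(\overline{U}_K)\to\hk(U)$ of Proposition \ref{gysinalg}), which you do not supply. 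The \v{C}ech/inclusion--exclusion step on Tsuji's side in the general case, which you yourself flag as the main obstacle, compounds the problem: every stratum $\mathscr{D}_J$ again carries an induced log structure rather than the ``pair'' log structure that your termwise identification assumes.

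The paper's own proof avoids all of this and never d\'evisses along the divisor: it composes the two Poincar\'e dualities. Tsuji's duality (Theorem \ref{tsujipoincare}) identifies $\rg_{\operatorname{HK,Tsu,c}}(\overline{U}_0)$ with the dual of $\rg_{\operatorname{HK}}(\overline{U}_0)\simeq\hk(U)$, while the duality coming from the Hyodo--Kato realization functor (Theorem \ref{algpoincare}) identifies $\hkc(U)$ with the dual of $\hk(U)$ as well; the comparison map is then \emph{defined} as the isomorphism making the square of dualities commute, and compatibility with $\varphi$, $N$ and the Hyodo--Kato morphism is inherited from the corresponding compatibilities of the two trace maps and pairings. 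If you wish to salvage your cone-by-cone approach, you must first repair the base case as indicated (residue plus Gysin, with the correct log structures) and then actually construct the crystalline \v{C}ech resolution with its $(\varphi,N)$-structure; the duality route is both shorter and already has every input established in the paper.
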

				
				\begin{proof}
					By using the Poincar\'e duality (Theorem \ref{tsujipoincare} and \ref{algpoincare}), the quasi-isomorphism is constructed by the following dotted arrow:
					$$\begin{tikzcd}
						\rg_{\operatorname{HK,Tsu,c}}(\overline{U}_0) \arrow[r,"\simeq"] \arrow[d, dashed] & \hk(U)^\vee \arrow[d,"\simeq"] \\
						\hkc (U) \arrow[r,"\simeq"]& \hk(U)^\vee.
					\end{tikzcd}$$	
				\end{proof}
				
				\subsubsection{Comparison with compactly supported analytic Hyodo-Kato cohomology}
				
				We keep most of notations as above, but now let $\mathscr X$ be a proper semistable formal scheme over $\operatorname{Spf}(\mathcal O_K)$. Locally $\mathscr{X}$ can be written as $\operatorname{Spf}(R)$ with $R$ the completion of an \'etale algebra over $$\mathcal O_K\langle x_1,x_2,...,x_d,\dfrac{1}{x_1...x_a},\dfrac{1}{x_{a+1}...x_{a+b}}\rangle,$$ and we equip $\mathscr{X}$ with the log-structure induced by the divisors $\mathscr{D}:=(x_{a+1}...x_{d}=0)$. Let $Y$ be the special fiber $\mathscr X_0$ of $\mathscr X$. Denote by $\mathcal{M}_Y$ the log-structure on $Y$ induced by $\mathscr M.$ Let $X$ be the rigid analytic variety over $K$ associated to $\mathscr X$ and denote by $X_{\operatorname{tr}}$ its trivial locus. Locally $X_{\operatorname{tr}}$ is equal to $$\Sp(R\left[\dfrac{1}{p}\right])\setminus(x_{a+1}...x_{d}=0).$$
				
				Since we also have the Poincar\'e duality for almost proper rigid analytic varieties (Theorem \ref{hkrigidpoincare}), the same argument as in the last subsection gives us the comparison with compactly supported analytic Hyodo-Kato cohomology.
				
				\begin{theorem} \label{tsujian}
					We have a quasi-isomorphism:
					$$\rg_{\operatorname{HK,Tsu,c}}(\mathscr{X}_0) \xrightarrow{\simeq} \hkc (X_{\operatorname{tr}}),$$  which is compatible with Frobenius, monodromy, and Hyodo-Kato isomorphism.
				\end{theorem}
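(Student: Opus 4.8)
The plan is to mirror the proof of Theorem \ref{tsujialg}, replacing the algebraic Poincaré duality (Theorem \ref{algpoincare}) by its rigid analytic counterpart (Theorem \ref{hkrigidpoincare}). Concretely, I would construct the desired quasi-isomorphism as the dashed arrow making the square
$$\begin{tikzcd}
\rg_{\operatorname{HK,Tsu,c}}(\mathscr X_0) \arrow[r,"\simeq"] \arrow[d, dashed] & \hk(\mathscr X_0)^{\vee} \arrow[d,"\simeq"] \\
\hkc(X_{\operatorname{tr}}) \arrow[r,"\simeq"] & \hk(X_{\operatorname{tr}})^{\vee}
\end{tikzcd}$$
commute, where $(-)^{\vee}$ denotes the Poincaré-dual complex $R\operatorname{Hom}(-,F\{-d\})[-2d]$ in the relevant normalization. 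The top horizontal arrow is Tsuji's log-crystalline Poincaré duality (Theorem \ref{tsujipoincare}), identifying $\rg_{\operatorname{HK,Tsu,c}}(\mathscr X_0)$ with the dual of the log Hyodo-Kato cohomology $\hk(\mathscr X_0)$ of the special fiber; the bottom horizontal arrow is the analytic Poincaré duality of Theorem \ref{hkrigidpoincare}. Once the right vertical identification is established and shown to be compatible with the trace maps, the dashed arrow is forced and is automatically a quasi-isomorphism since the other three arrows are.

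The one genuinely new ingredient (compared with Theorem \ref{tsujialg}) is the right vertical identification of the two \emph{ordinary} Hyodo-Kato cohomologies, namely $\hk(\mathscr X_0) \simeq \hk(X_{\operatorname{tr}})$. For this I would observe that $\mathscr X$, equipped with the log structure induced by the special fiber together with the horizontal divisor $\mathscr D$, is precisely a semistable formal model whose trivial generic fiber is $X_{\operatorname{tr}}$; the very definition of analytic Hyodo-Kato cohomology via semistable models, together with the local-global compatibility (Proposition \ref{hkanalyticlocalglobal}), then yields a natural strict quasi-isomorphism $\hk(\mathscr X_0)\xrightarrow{\simeq}\hk(X_{\operatorname{tr}})$ compatible with $\varphi$ and $N$. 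Since $X_{\operatorname{tr}}=X-D$ is the complement of a divisor in a proper smooth variety (like $\mathbb A^{1,\an}$ inside $\mathbb P^{1,\an}$), it is partially proper and smooth, so after base change to $C$ the hypotheses of Theorem \ref{hkrigidpoincare}(1) are met; there one works over $F^{\operatorname{nr}}$ and descends back to $F$ by Galois descent (Proposition \ref{hkgaloisdescent}), exactly as in the surrounding results.

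Compatibility with Frobenius and monodromy is built into each edge: the identification $\hk(\mathscr X_0)\simeq\hk(X_{\operatorname{tr}})$ and both trace maps respect $\varphi$ and $N$, the trace landing in $F\{-d\}$. Compatibility with the Hyodo-Kato isomorphism then follows by base changing along the Hyodo-Kato isomorphism (Proposition \ref{hkisomorphism}) and reducing to the de Rham picture, where the trace maps are the coherent ones. Finally, functoriality and naturality of all constructions let one assemble the local statement into the global comparison.

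I expect the main obstacle to be precisely the compatibility of Tsuji's log-crystalline trace map (Theorem \ref{tsujipoincare}) with the analytic Hyodo-Kato trace map $\operatorname{tr_{HK}}$ of Theorem \ref{hkrigidpoincare}, which is what makes the square above commute. My strategy is to reduce this, through the Hyodo-Kato isomorphism, to the analogous statement for the de Rham trace maps — Tsuji's de Rham trace $\operatorname{tr_{dR}}$ (Theorem \ref{tsujidrpoincare}) against the analytic de Rham trace $\operatorname{tr_{dR}}$ (Theorem \ref{logpoincaredr}). Both of these are ultimately defined via the coherent (Serre duality) residue map, which is canonical and independent of the chosen coordinates, so the comparison reduces to a local residue computation of exactly the type already carried out in the comparison of $\operatorname{tr_{dR,LLZ}}$ with $\operatorname{tr_{dR}}$ earlier in this section; this pins down the dashed arrow uniquely and concludes the proof.
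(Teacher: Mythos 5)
Your construction coincides with the paper's: the proof of Theorem \ref{tsujian} given there is exactly the dashed-arrow square of Theorem \ref{tsujialg}, with the algebraic duality (Theorem \ref{algpoincare}) replaced by Tsuji's duality (Theorem \ref{tsujipoincare}) and the analytic duality (Theorem \ref{hkrigidpoincare}); your reduction of the trace compatibility to coherent residues is also in the same spirit as the paper's comparison of $\operatorname{tr_{dR,LLZ}}$ with $\operatorname{tr_{dR}}$.

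However, the step you single out as the ``one genuinely new ingredient'' --- the identification $\hk(\mathscr X_0)\simeq\hk(X_{\operatorname{tr}})$ giving the right vertical arrow --- has a genuine gap: it does \emph{not} follow from ``the very definition'' of the analytic theory together with Proposition \ref{hkanalyticlocalglobal}. The semistable formal models from which analytic Hyodo-Kato cohomology is built carry no horizontal divisors, and Proposition \ref{hkanalyticlocalglobal} identifies $\hk(\mathscr Y_1)$ with $\hk(\mathscr Y_K)$, the cohomology of the \emph{full} generic fiber, for such a model $\mathscr Y$. Here, by contrast, $\mathscr X_0$ carries the log structure $\mathcal M_Y$ containing the horizontal divisor $\mathscr D_0$, and $X_{\operatorname{tr}}=X-\mathscr D_K$ is a strict open subset of $\mathscr X_K=X$ which is not quasi-compact and is not the generic fiber of any formal model; $\hk(X_{\operatorname{tr}})$ is computed by \'eh-descent from semistable models of quasi-compact opens. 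The assertion $\rg_{\operatorname{cris}}((\mathscr X_0,\mathcal M_Y)/\mathcal O_F^0)_{\Q_p}\simeq\hk(X_{\operatorname{tr}})$ is exactly the kind of nontrivial comparison this paper is about: for $X=\mathbb P^{1,\an}_K$ with $\mathscr D_K=\{\infty\}$, the left side is $\cris((\mathbb P^1_k,\mathcal M')^0/\mathcal O_K^0)_{\Q_p}$ and the right side is $\hk(\mathbb A^{1,\an}_K)$, and the introduction stresses that there is not even an evident map between these two objects. (In Theorem \ref{tsujialg} the analogous identification \emph{is} essentially definitional, because Beilinson's algebraic theory is built from semistable \emph{pairs}, with horizontal divisors allowed, so Proposition \ref{hklocalglobal} applies; this is precisely where the algebraic and analytic situations differ, and why GAGA is a theorem rather than an observation.)

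The gap can be closed with results already in the paper, by working on the compactly supported side first: the Lemma preceding Theorem \ref{tsujialg} (extended from a smooth irreducible horizontal divisor to strict normal crossings by descent on components) gives $\rg_{\operatorname{HK,Tsu,c}}(\mathscr X_0)\simeq[\hk(\mathscr X_0^{\operatorname{vert}})\to\hk(\mathscr D_0)]$, where $\mathscr X_0^{\operatorname{vert}}$ denotes the special fiber with only the vertical log structure; Proposition \ref{hkanalyticlocalglobal}, plus \'eh-descent for the components of $\mathscr D$ as in Lemma \ref{gagaprop}, identifies this fiber with $[\hk(X)\to\hk(\mathscr D_K)]$; and Proposition \ref{properopenclosed} identifies the latter with $\hkc(X_{\operatorname{tr}})$. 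This chain already yields the asserted quasi-isomorphism of compactly supported theories, compatibly with $\varphi$ and $N$; the two Poincar\'e dualities and your residue comparison of trace maps are then what one needs in order to see that it agrees with your dashed arrow, i.e.\ to transport the identification to the theories without support and to check compatibility with the Hyodo-Kato isomorphism.
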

				
				\begin{proof}
					This follows from Theorem \ref{hkrigidpoincare}, Theorem \ref{tsujipoincare} and the same construction as Theorem \ref{tsujialg}.
				\end{proof}

				\appendix	
				
				\section{de Rham cohomology of Elmar Grosse-Kl\"onne} \label{appa}
				
				In this appendix we review the definition of de Rham cohomology defined by Elmar Grosse-Kl\"onne in \cite{grosse2004derham}. Elmar Grosse-Kl\"onne introduced a definition of de Rham cohomology for rigid analytic variety $X$, by locally embedding $X$ into a rigid analytic variety which has a dagger structure. In this appendix we will only study the dagger variety. We will show that this definition agrees with the overconvergent de Rham cohomology defined in \cite{CNderham} and \cite{bosco2023rational}. We need this definition to prove the open-closed long exact sequence for cohomology with compact support in Proposition \ref{open-closed}.
				
				Let $X$ be a dagger variety over $L$, $Y$ be a smooth dagger over $L$ and let $\phi:X \hookrightarrow Y$ be a closed immersion. Those $X$ form a site $\operatorname{EmbRig}_{L,\an}^{\dagger}$ equipped with the analytic topology, due to the following lemma. In fact, the following lemma implies we can equip $\operatorname{EmbRig}_{L}^{\dagger}$ with any topology coming from $\operatorname{Rig}_{L}^{\dagger}$.
				
				\begin{lem}
					Suppose $X,X',X'' \in \operatorname{EmbRig}_{L}^{\dagger}$ and $f:X' \xrightarrow{} X, g:X'' \xrightarrow{} X$ be two morphisms. Then $X' \times_X X'' \in \operatorname{EmbRig}_{L}^{\dagger}.$ 
				\end{lem}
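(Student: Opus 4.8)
The plan is to produce a closed immersion of the fibre product into a smooth dagger variety directly from the embedding data of $X'$ and $X''$, using only that $X$ is separated. Write the given closed immersions as $X' \hookrightarrow Y'$ and $X'' \hookrightarrow Y''$ with $Y', Y''$ smooth dagger varieties over $L$; note that the embedding $X \hookrightarrow Y$ will play no role here beyond guaranteeing that $X$ is a (separated) dagger variety. First I would recall that fibre products exist in $\operatorname{Rig}_L^{\dagger}$: for dagger affinoids they are represented by the overconvergent completed tensor product, and the general case follows by gluing. Thus $X' \times_X X''$ is a well-defined dagger variety, and it remains only to embed it into a smooth one.

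The key observation is that, since every dagger variety in sight is separated by our standing conventions, the diagonal $\Delta_X \colon X \to X \times_L X$ is a closed immersion. The fibre product $X' \times_X X''$ is obtained as the base change of $\Delta_X$ along the morphism $f \times g \colon X' \times_L X'' \to X \times_L X$, so the canonical map
$$ X' \times_X X'' \longrightarrow X' \times_L X'' $$
is a closed immersion, being the pullback of one. On the other hand, the product of the two given closed immersions is a closed immersion $X' \times_L X'' \hookrightarrow Y' \times_L Y''$, and $Y' \times_L Y''$ is smooth over $L$ because $Y'$ and $Y''$ are. Composing, I obtain a closed immersion
$$ X' \times_X X'' \hookrightarrow Y' \times_L Y'' $$
into a smooth dagger variety, which exhibits $X' \times_X X''$ as an object of $\operatorname{EmbRig}_L^{\dagger}$.

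The only genuinely non-formal inputs are the foundational facts about the overconvergent category: that fibre products of dagger varieties exist and compute as expected on affinoids, that a product of closed immersions is again a closed immersion, and that a product of smooth dagger varieties is smooth. I expect the verification of these standard compatibilities—rather than the categorical manipulation via the diagonal—to be the only point requiring any care; all of them are available in the foundational references on dagger varieties (e.g. Grosse-Klönne and Vezzani) already cited. Once they are in place, the argument is purely formal and yields the claim.
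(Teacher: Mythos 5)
Your proof is correct, and it differs from the paper's in a small but genuine way. The paper uses all three embeddings: it fixes closed immersions $X \hookrightarrow Z$, $X' \hookrightarrow Z'$, $X'' \hookrightarrow Z''$ into smooth dagger varieties, forms the graph embeddings $X' \hookrightarrow Z \times Z'$ and $X'' \hookrightarrow Z \times Z''$, identifies $X' \times_X X'' = X' \times_Z X''$ (legitimate because $X \hookrightarrow Z$ is a monomorphism), and then embeds this fibre product into $(Z \times Z') \times_Z (Z \times Z'') = Z \times Z' \times Z''$. You instead discard the embedding of $X$ entirely and replace it by the standing separatedness convention: the diagonal $\Delta_X \colon X \to X \times_L X$ is a closed immersion, the map $X' \times_X X'' \to X' \times_L X''$ is its base change along $f \times g$, and composing with the product embedding $X' \times_L X'' \hookrightarrow Y' \times_L Y''$ finishes the argument. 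The two arguments are cousins --- the paper's graph trick also secretly invokes separatedness, of the ambient smooth variety $Z$ rather than of $X$ --- but yours is slightly leaner: it needs one fewer embedding, lands in the smaller ambient variety $Y' \times_L Y''$, and in fact proves the marginally stronger statement that $\operatorname{EmbRig}_L^{\dagger}$ is closed under fibre products over an arbitrary separated dagger base, embeddable or not. The paper's version, by embedding everything over $Z$, keeps the construction visibly compatible with the structure maps to $X$, which is harmless either way since the lemma is only used to equip $\operatorname{EmbRig}_{L}^{\dagger}$ with a topology. Your listed foundational inputs (existence of fibre products, stability of closed immersions under base change and products, smoothness of products) are exactly the ones needed and are available in the cited references.
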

				
				\begin{proof}
					Take closed immersions $X \hookrightarrow Z, X' \hookrightarrow Z', X'' \hookrightarrow Z''$ such that $Z,Z',Z''$ are smooth. We can construct closed immersions by taking the diagonal embedding (also known as the graph) $X' \hookrightarrow Z \times Z', X'' \hookrightarrow Z \times Z''$. Therefore we have a close immersion $X' \times_X X''=X' \times_Z X'' \hookrightarrow (Z \times Z')\times_Z (Z \times Z'')=Z\times Z'\times Z''.$
				\end{proof}
				
				It is clear $\operatorname{EmbRig}_{L}^{\dagger}$ form a basis of $\operatorname{Rig}_{L,\an}^{\dagger}$. We define a sheaf $$\mathscr{A}_{\operatorname{dR,GK}}:\operatorname{Rig}_{L,\an}^{\dagger} \xrightarrow{} \mathcal{D}(\Mod^{\operatorname{cond}}_L)$$
				by the following way: For $X \in \operatorname{EmbRig}_{L}^{\dagger}$, let $\phi:X \hookrightarrow Y$ be a closed immersion into a smooth dagger variety $Y$. We denote by $\Phi(\phi,T)$ the set of admissible open neighborhood $U \subset Y$ of $X$. Denote by $j_U$ the open immersion $U \to Y$. For a abelian sheaf $\f$ on $Y$, we define $$\f^{\phi}=\colim_{U \in \Phi(\phi,T)}j_{U,*}\f|_U.$$ We define $\mathscr{A}_{\operatorname{dR,GK}}$ to be the sheaf associated to
				by
				$$X \mapsto \rg(X,\phi^{-1}(\Omega_{Y/L}^{\bullet}))=\rg(Y,(\Omega_{Y/L}^{\bullet})^{\phi}).$$
				The following proposition guarantees the above definition makes sense.
				
				\begin{prop}[\cite{grosse2004derham}, Proposition 1.6] $\rg(X,\phi^{-1}(\Omega_{Y/L}^{\bullet}))$ is independent of $Y$ and $\phi$, it depends only on the reduced structure of $X$. Moreover,  $\rg(X,\phi^{-1}(\Omega_{Y/L}^{\bullet}))$ is a contravariant functor in $X$.
				\end{prop}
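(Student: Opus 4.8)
The plan is to follow the classical strategy of Monsky--Washnitzer and dagger cohomology: isolate a single homotopy-invariance (Poincaré lemma) statement for strict neighborhoods, use it to prove independence of the embedding, and then deduce the reduced-structure dependence and the functoriality essentially formally.

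\textbf{Independence of $(Y,\phi)$.} Given two closed immersions $\phi_1\colon X\hookrightarrow Y_1$ and $\phi_2\colon X\hookrightarrow Y_2$ into smooth dagger varieties, I would form the diagonal closed immersion $\psi=(\phi_1,\phi_2)\colon X\hookrightarrow Y_1\times Y_2$, which satisfies $\mathrm{pr}_i\circ\psi=\phi_i$. By symmetry it suffices to show that $\mathrm{pr}_1\colon Y_1\times Y_2\to Y_1$ induces a quasi-isomorphism $\rg(Y_1,(\Omega^{\bullet}_{Y_1/L})^{\phi_1})\xrightarrow{\simeq}\rg(Y_1\times Y_2,(\Omega^{\bullet}_{(Y_1\times Y_2)/L})^{\psi})$; the two resulting identifications then glue all three complexes together, and since this already realizes $\phi_1$ and $\phi_2$ as equivalent to the common $\psi$, the cohomology is independent of the chosen embedding. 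The claim is local on $Y_1$ and $Y_2$, so using that $Y_2$ is smooth I may assume $Y_2$ is an open in a polydisk and that $\mathrm{pr}_1$ is projection off a polydisk factor. The key observation is that an admissible open neighborhood $U\in\Phi(\psi)$ of $X$ in $Y_1\times Y_2$ meets each $\mathrm{pr}_1$-fiber in a strict neighborhood of the section cut out by $\phi_2$, that is, in arbitrarily small polydisks.

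\textbf{The Poincaré lemma (main obstacle).} The heart of the matter is the overconvergent Poincaré lemma: after forming the colimit $(-)^{\psi}=\colim_{U}j_{U,*}((-)|_U)$ over shrinking neighborhoods, the relative de Rham complex $(\Omega^{\bullet}_{(Y_1\times Y_2)/Y_1})^{\psi}$ is a resolution of the sheaf pulled back from $Y_1$, so that $\mathrm{pr}_1$ becomes a quasi-isomorphism. This is exactly the step where the dagger (overconvergent) structure is indispensable: over the ordinary affinoid world the analogue is false, whereas here the term-by-term integration operator along the disk factor converges precisely because one works on a strict neighborhood. I expect the genuine work to lie in checking that this integration homotopy is compatible with the pushforwards $j_{U,*}$ and survives the filtered colimit over $U\in\Phi(\psi)$, so that it descends to $(\Omega^{\bullet}_{(Y_1\times Y_2)/Y_1})^{\psi}$; this is the one place where I do not expect a purely formal argument.

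\textbf{Reduced structure and functoriality.} The index category $\Phi(\phi)$ of admissible open neighborhoods of $X$ in $Y$ depends only on the support of $X$, since an admissible open of $Y$ contains $X$ if and only if it contains $X_{\mathrm{red}}$; hence $(\Omega^{\bullet}_{Y/L})^{\phi}$, and therefore $\rg(X,\phi^{-1}(\Omega^{\bullet}_{Y/L}))$, is insensitive to nilpotents, which gives dependence only on $X_{\mathrm{red}}$. For contravariant functoriality, given $f\colon X'\to X$ and embeddings $\phi\colon X\hookrightarrow Y$ and $\phi'\colon X'\hookrightarrow Y'$, I would use the graph embedding $\Gamma_f=(\phi\circ f,\phi')\colon X'\hookrightarrow Y\times Y'$; since $\mathrm{pr}_Y\circ\Gamma_f=\phi\circ f$, the projection $\mathrm{pr}_Y$ carries neighborhoods of $X'$ into neighborhoods of $X$ and thus induces $f^{*}\colon\rg(X,\phi^{-1}(\Omega^{\bullet}))\to\rg(X',\Gamma_f^{-1}(\Omega^{\bullet}))\simeq\rg(X',\phi'^{-1}(\Omega^{\bullet}))$, the last identification being the independence just established. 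Compatibility with composition then follows by iterating graph embeddings, well-definedness being guaranteed by independence of the embedding.
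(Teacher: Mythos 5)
First, a point of order: the paper does not prove this proposition at all — it is quoted verbatim from Grosse-Kl\"onne \cite{grosse2004derham} (his Proposition 1.6), so there is no internal proof to compare against. The relevant benchmark is Grosse-Kl\"onne's own argument, and your outline does follow his (and the classical Monsky--Washnitzer) strategy: compare two embeddings through the product embedding, reduce the projection to a polydisk factor, prove an overconvergent Poincar\'e lemma, and then handle nilpotent-invariance and functoriality formally. Your treatment of the two formal parts is essentially correct: the neighborhood systems $\Phi(\phi)$ of $X$ and $X_{\mathrm{red}}$ in $Y$ coincide and the underlying sites agree, so reduced-structure invariance is immediate; and the graph-embedding construction of $f^*$, with well-definedness and composition deduced from embedding-independence, is the standard mechanism (note only that it is the \emph{preimages} $\mathrm{pr}_Y^{-1}(U)$ of neighborhoods of $\phi(X)$ that furnish neighborhoods of $\Gamma_f(X')$, not images of neighborhoods of $X'$, and that one should check $\Gamma_f$ is indeed a closed immersion).

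As a standalone proof, however, the proposal has a genuine gap, which you yourself flag: the overconvergent relative Poincar\'e lemma is asserted rather than proved, and it is the entire mathematical content of the statement — everything else is bookkeeping. Moreover, the reduction to ``projection off a polydisk factor'' is dispatched too quickly. It is not enough that $Y_2$ is locally an open in a polydisk, nor that admissible neighborhoods of $\psi(X)$ meet the fibers of $\mathrm{pr}_1$ in small polydisks: to run the integration homotopy one needs a local isomorphism of \emph{pairs}, identifying a neighborhood of $\psi(X)$ in $Y_1\times Y_2$, as a space over $Y_1$, with a neighborhood of $\phi_1(X)\times\{0\}$ in $Y_1\times \mathbb{B}^d$. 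This rests on the local structure theory of smooth morphisms along a section (\'etale-locally a smooth map factors through a relative polydisk) together with the fact that \'etale maps induce equivalences of the relevant systems of neighborhoods — the dagger analogue of Berthelot's fibration theorem. Both of these, plus the verification that the term-by-term integration operator (which converges on a strictly smaller radius precisely because of overconvergence, as you correctly observe) is compatible with the pushforwards $j_{U,*}$ and passes to the colimit over $\Phi(\psi)$, are the load-bearing lemmas of \cite{grosse2004derham}; until they are supplied, the argument is a correct skeleton of the cited proof rather than a proof.
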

				
				For $X \in \operatorname{Rig}_{L}^{\dagger},$ we define the de Rham cohomology (of Elmar Grosse-Kl\"onne) of $X$ by
				$$\operatorname{R\Gamma_{dR}^{GK}}(X):=\operatorname{R\Gamma}(X_{\an},\mathscr{A}_{\operatorname{dR,GK}}).$$
				We have local-global compatibility: 
				
				\begin{prop}
					If $X \in \operatorname{EmbRig}_{L}^{\dagger}$ and $\phi:X \hookrightarrow Y$ is a closed immersion into a smooth dagger variety $Y$, then
					$$\operatorname{R\Gamma_{dR}^{GK}}(X)=R\Gamma(X,\phi^{-1}(\Omega_{Y/L}^{\bullet})).$$
				\end{prop}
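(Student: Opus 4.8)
The plan is to show that the presheaf $P\colon X'\mapsto \rg(X',(\phi')^{-1}(\Omega^\bullet_{Y'/L}))$, once restricted to the localized site $X_{\an}$ of admissible opens of $X$, is already a (hypercomplete) sheaf, so that sheafification cannot alter its value on the terminal object $X$. First I would record that, since $\phi\colon X\hookrightarrow Y$ is a closed immersion, the complex $\phi^{-1}(\Omega^\bullet_{Y/L})$ — obtained by restricting the neighbourhood-colimit complex $(\Omega^\bullet_{Y/L})^{\phi}$ along $\phi$ — is a genuine complex of abelian sheaves on the space $|X|$. Thus $P(X)=\rg(X,\phi^{-1}(\Omega^\bullet_{Y/L}))$ is nothing but the hypercohomology of one fixed sheaf-complex on $X$.

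The key step is to identify the restriction of $P$ to $X_{\an}$ with the sections of this single sheaf-complex. For an admissible open $V\subseteq X$, choose an admissible open $W\subseteq Y$ with $W\cap X=V$; then $\phi_V\colon V\hookrightarrow W$ is a closed immersion into the smooth dagger variety $W$, so $V\in \operatorname{EmbRig}_{L}^{\dagger}$ and $P(V)$ is defined. Restricting the defining colimit to $W$ and using a cofinality argument between the admissible neighbourhoods of $V$ in $W$ and the traces on $W$ of the admissible neighbourhoods of $X$ in $Y$, I would establish $(\Omega^\bullet_{Y/L})^{\phi}|_{W}=(\Omega^\bullet_{W/L})^{\phi_V}$, whence $\phi^{-1}(\Omega^\bullet_{Y/L})|_V=\phi_V^{-1}(\Omega^\bullet_{W/L})$ as sheaf-complexes on $|V|$. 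Combined with Grosse-Kl\"onne's independence of the embedding (Proposition 1.6, cited above), this yields a natural identification $P(V)\simeq \rg(V,\phi^{-1}(\Omega^\bullet_{Y/L})|_V)$, functorial in $V$.

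With this identification in hand, $P|_{X_{\an}}$ is exactly the assignment $V\mapsto \rg(V,\mathcal G|_V)$ for the fixed complex $\mathcal G:=\phi^{-1}(\Omega^\bullet_{Y/L})$ of sheaves on $X$. Such an assignment is the value functor of the object $\mathcal G\in\Shv^{\mathrm{hyp}}(X_{\an},\mathcal D(\Mod_L^{\mathrm{cond}}))$, hence satisfies analytic (hyper)descent; that is, $P|_{X_{\an}}$ is already a sheaf. Since sheafification commutes with restriction to the localized site, this gives $\mathscr A_{\operatorname{dR,GK}}|_{X_{\an}}=a\!\left(P|_{X_{\an}}\right)=P|_{X_{\an}}$, and evaluating derived global sections over $X_{\an}$, whose terminal object is $X$, yields
$$\operatorname{R\Gamma_{dR}^{GK}}(X)=\rg(X_{\an},\mathscr A_{\operatorname{dR,GK}})=\bigl(P|_{X_{\an}}\bigr)(X)=P(X)=\rg(X,\phi^{-1}(\Omega^\bullet_{Y/L})),$$
as claimed. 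The hard part will be the key step: verifying that the neighbourhood-colimit defining $\mathscr A_{\operatorname{dR,GK}}$ is compatible with restriction to admissible opens of $X$, i.e. the cofinality of the neighbourhood systems and the resulting identity $\phi^{-1}(\Omega^\bullet_{Y/L})|_V=\phi_V^{-1}(\Omega^\bullet_{W/L})$. Everything else is formal once $P|_{X_{\an}}$ is recognized as the sections of a single sheaf-complex on $X$.
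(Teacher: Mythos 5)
Your strategy --- identifying the Grosse-Kl\"onne presheaf, restricted to the small analytic site $X_{\an}$, with the derived-sections functor of the single complex $\phi^{-1}(\Omega^{\bullet}_{Y/L})$ of sheaves on $|X|$, so that sheafification changes nothing --- is genuinely different from the paper's proof, which follows Hartshorne: it forms the \v{C}ech complex attached to an embedded open covering, invokes cohomological descent, and proves independence of the covering by a refinement-plus-homotopy (index-shifting) argument. However, the step you yourself flag as the hard part contains a genuine gap: the cofinality claim is false. Take $Y=\Sp(L\langle z,w\rangle^{\dagger})$ the closed unit bidisc, $X=V(w)$, and $W=\bigcup_n W_n$ the open unit bidisc, where $W_n=\{|z|\le p^{-1/n},\,|w|\le p^{-1/n}\}$, so that $V=X\cap W$. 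Let $U'=\bigcup_n\{|z|\le p^{-1/n},\,|w|\le p^{-n}\}$; one checks $U'\cap W_m$ is a finite union of affinoid subdomains for every $m$, so $U'$ is an admissible open neighbourhood of $V$ in $W$, and it pinches down to $V$ as $|z|$ approaches $1$. Now any admissible open neighbourhood $U$ of $X$ in $Y$ contains a tube $\{|w|\le p^{-N}\}$ for some $N$ (this is exactly \cite[Lemma 2.3]{kisin1999localconstancy}, used elsewhere in the paper), and for any $n>N$ a point with $|z|=p^{-1/n}$, $|w|=p^{-N}$ lies in $(U\cap W)\setminus U'$. Hence no neighbourhood $U$ of $X$ in $Y$ satisfies $U\cap W\subseteq U'$: the traces $U\cap W$ are \emph{not} cofinal among admissible neighbourhoods of $V$ in $W$, and your proposed proof of $(\Omega^{\bullet}_{Y/L})^{\phi}|_{W}=(\Omega^{\bullet}_{W/L})^{\phi_V}$ collapses at this point.

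This does not mean the identification you are after is false: two non-cofinal filtered systems can still induce isomorphic colimit sheaves, and one can hope to verify the equality stalkwise (in the adic setting, using that all specializations of analytic points are vertical, a point of $W$ that cannot be separated from $X$ can be forced into $V$), or to sidestep the issue by checking the sheaf condition only against affinoid covers of $X$, where quasi-compactness makes tube arguments available. But that is an argument of a different nature from the one you propose, and it is where the real content of the statement lies; two further points would also need care, namely that your identification $P(V)\simeq \rg(V,\phi^{-1}(\Omega^{\bullet}_{Y/L})|_V)$ is functorial in $V$ (i.e.\ compatible, under restriction, with Grosse-Kl\"onne's independence-of-embedding isomorphisms), and that for non-quasi-compact $V$ the neighbourhood-colimit description of the presheaf need not agree with genuine topological-pullback cohomology, since cohomology does not commute with filtered colimits there. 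The paper's \v{C}ech route avoids all of these issues: it never compares neighbourhood systems across different ambient spaces, only refines embedded coverings of the same $X$.
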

				
				\begin{proof}
					The proof is similar to \cite[2.1]{hartshorne1975derham}. For any embedded open covering $\mathcal{U}:=\{X_i,Y_i\}_{i\in I}$, where $$X^1:=\coprod_{i \in I}X_i \to X, X_i \hookrightarrow Y_i,$$ where $X_i \hookrightarrow Y_i$ are closed immersions into smooth dagger varieties $Y_i$. Denote by $$X^n:=(X^1)^{\times n}=\coprod_{(i) \in I^n}X_{(i)},X_{(i)} \hookrightarrow Y_{(i)},$$ where for $(i)=(i_1,...,i_n) \in I^n,$ $X_{(i)}=X_{i_1}\times_{X}...\times_{X} X_{i_n}$, and $\phi_{(i)}:X_{(i)} \hookrightarrow Y_{(i)} :=Y_{i_1}\times...\times Y_{i_n}$ are given by the diagonal embedding. Therefore we can defined simplicial objects $X^{\bullet}$ and $Y^{\bullet}$. For any $(i) \in I^n$ define the sheaf $C_{(i)}$ on $X$ by $C_{(i)}:=j_*\phi_{(i)}^{-1}\Omega_{Y_{(i)}/L}^{\bullet}.$ We can define the \v{C}ech complex $\mathcal{C}(\mathcal{U})$ associated to $\{X_i,Y_i\}_{i\in I}$ by $\mathcal{C}^n(\mathcal{U})=\prod_{(i) \in I^n}C_{(i)}.$ The boundary maps are given naturally from the projection of $Y_{(i)}.$
					
					We need to prove that
					$$R\Gamma(X,\phi^{-1}(\Omega_{Y/L}^{\bullet}))\simeq R\Gamma(X^{\bullet},\phi^{-1}(\Omega_{Y^{\bullet}/L}^{\bullet}))=R\Gamma(X,\mathcal{C}(\mathcal{U})),$$
					where the last equality comes from the cohomological descent. It suffices to show that $R\Gamma(X,\mathcal{C}(\mathcal{U}))$ is independent of the embedded covering $\mathcal{U}:=\{X_i,Y_i\}_{i\in I}.$ For two embedded covering $\mathcal{U}:=\{X_i,Y_i\}_{i\in I}$ and $\mathcal{U}':=\{X'_i,Y'_i\}_{j\in J},$ we call $\mathcal{U}'$ is a refinement of $\mathcal{U}$ if each $X_i$ is a open subset of $X'_{\lambda(i)}$ together with a smooth morphisms $Y_i \to Y'_{\lambda(i)}$ compatible with the inclusion. For $\mathcal{U}'$ a refinement of $\mathcal{U}$, we have the natural map
					$$u:\mathcal{C}(\mathcal{U})\to \mathcal{C}(\mathcal{U}').$$ Since any two systems of local embedding clearly have a common refinement, we are reduced to showing that $\rg(X,{u})$ is an isomorphism.
					
					The problem is local, so we may assume $X_1=X'_1=X.$ Denote by $\mathcal{V}$ and $\mathcal{V}'$ be the embedded open covering $\{X_1,Y_1\}$ and $\{X'_1,Y'_1\}$ respectively. By the above proposition, we have $$R\Gamma(X,\mathcal{C}(\mathcal{V})) \simeq R\Gamma(X,\mathcal{C}(\mathcal{V}')).$$ Since we have the commutative diagram $$\begin{tikzcd}
						\mathcal{C}(\mathcal{V}) \arrow[r] \arrow[d] &\mathcal{C}(\mathcal{V}') \arrow[d] \\
						\mathcal{C}(\mathcal{U}) \arrow[r] & \mathcal{C}(\mathcal{U}'),
					\end{tikzcd}$$ it suffices to show the map $\mathcal{C}(\mathcal{V})\to \mathcal{C}(\mathcal{U})$ is a quasi-isomorphism.
					
					Write $$\mathcal{C}(\mathcal{U})=\mathcal{C}'(\mathcal{U})+\mathcal{C}''(\mathcal{U}),$$ where for each $n$, $$\mathcal{C}'^n(\mathcal{U}):=\prod_{(i) \in I^n, i_1 \neq 1}C_{(i)},\mathcal{C}''^n(\mathcal{U}):=\prod_{(i) \in I^n, i_1 = 1}C_{(i)}.$$ Now for each $(i)=(i_1,...,i_n) \in I^n,$ the boundary map $\delta:C_{(i)} \to C_{(1,i_1,...,i_n)}$ is an isomorphism, since $X_{(i)}$ and $X_{(1,i_1,...,i_n)}$ are equal. Hence the map $\delta$  gives a quasi-isomorphism of $\mathcal{C}'(\mathcal{U})$ onto its image in $\mathcal{C}''(\mathcal{U})$, which is everything except $\mathcal{C}(\mathcal{V})$. Thus the map $\mathcal{C}(\mathcal{V})\to \mathcal{C}(\mathcal{U})$ is a quasi-isomorphism. This concludes the proof.
				\end{proof}
				
				\begin{rem}
					We only consider the overconvergent setup. It is clear that the definition of $\operatorname{R\Gamma_{dR}^{GK}}(X)$ in this article agrees with the definition of $\operatorname{R\Gamma_{dR}}(\widehat{X})$ in \cite{grosse2004derham}, and therefore all propositions in \cite{grosse2004derham} apply here.
				\end{rem}
				
				The main result of this appendix is the follow.
				
				\begin{thm}\label{drgk}
					For $X \in \operatorname{Rig}_{L}^{\dagger},$ there is a natural isomorphism in $D(\Mod^{\operatorname{cond}}_L)$:
					$$\operatorname{R\Gamma_{dR}^{GK}}(X) \xrightarrow{\simeq} \dr(X).$$
				\end{thm}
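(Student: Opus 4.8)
The plan is to show that both functors $\operatorname{R\Gamma_{dR}^{GK}}(-)$ and $\dr(-)=\rg(-,\Omega^\bullet_{\eh})$ are determined by their common restriction to the full subcategory $\operatorname{Sm}_L^\dagger$ of smooth dagger varieties, and then to invoke that the latter forms a basis of $\operatorname{Rig}^\dagger_{L,\eh}$ (Proposition \ref{modeldagger}) together with the Beilinson basis theorem. First I would check that the two theories literally coincide on smooth objects: if $X\in\operatorname{Sm}_L^\dagger$, then Theorem \ref{ehdescentrigid} gives $R\pi_{X,*}\Omega^i_{\eh}=\Omega^i_{X/L}$, whence $\dr(X)=\rg(X_{\an},\Omega^\bullet_{X/L})$; on the other hand, the local--global compatibility proposition for $\operatorname{R\Gamma_{dR}^{GK}}$, applied to the identity embedding $\mathrm{id}\colon X\hookrightarrow X$ (legitimate since $X$ is smooth), yields $\operatorname{R\Gamma_{dR}^{GK}}(X)=\rg(X,\Omega^\bullet_{X/L})$ as well. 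Both identifications are manifestly functorial in $X$, so they assemble into a natural isomorphism of the two restricted functors on $\operatorname{Sm}_L^\dagger$.

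The second step is to promote this to a statement about all dagger varieties by descent. The functor $\dr$ is by construction the cohomology of an éh-sheaf, hence satisfies éh-hyperdescent; in particular Proposition \ref{mvmain} shows it is $F$-acyclic for abstract blow-up squares. I would therefore fix an arbitrary $X\in\operatorname{Rig}_L^\dagger$ and, using resolution of singularities and the fact that $\operatorname{Sm}_L^\dagger$ is a basis, choose an éh-hypercover $X_\bullet\to X$ with each $X_n$ smooth; éh-descent then computes $\dr(X)\simeq R\lim_\bullet\dr(X_\bullet)$ as the totalization of the naive de Rham complexes $\rg(X_n,\Omega^\bullet_{X_n/L})$.

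The crux --- and the main obstacle --- is to establish the corresponding descent property for $\operatorname{R\Gamma_{dR}^{GK}}$, that is, that it is $F$-acyclic for the generators of the éh-topology. Invariance under étale coverings is built into the analytic sheafification defining $\mathscr{A}_{\operatorname{dR,GK}}$, and invariance under universal homeomorphisms follows from the cited fact that $\operatorname{R\Gamma_{dR}^{GK}}(X)$ depends only on the reduced structure of $X$. The genuinely delicate case is the blow-up square $\operatorname{Bl}_Z(X)\sqcup Z\to X$: here I would invoke Grosse-Kl\"onne's excision and proper/blow-up descent results for his de Rham cohomology (\cite{grosse2004derham}), which apply verbatim because, as recorded in the remark above, $\operatorname{R\Gamma_{dR}^{GK}}(X)$ agrees with the de Rham cohomology of $\widehat X$ in the sense of loc. cit. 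This would give $\operatorname{R\Gamma_{dR}^{GK}}(X)\simeq R\lim_\bullet\operatorname{R\Gamma_{dR}^{GK}}(X_\bullet)$ for the same smooth hypercover. I expect this verification of blow-up acyclicity to be the technical heart of the argument, since it is precisely the point where the two a priori different gluing procedures --- éh-sheafification versus Grosse-Kl\"onne's tubular-neighborhood construction --- must be reconciled.

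Finally I would combine the three ingredients. Along $X_\bullet$ both theories are computed as the identical totalization $R\lim_\bullet\rg(X_n,\Omega^\bullet_{X_n/L})$, and the natural isomorphism on the smooth basis is compatible with the structure maps of the hypercover; hence it descends to a natural quasi-isomorphism $\operatorname{R\Gamma_{dR}^{GK}}(X)\xrightarrow{\simeq}\dr(X)$. Equivalently, under the equivalence of topoi $\operatorname{Sh}(\operatorname{Sm}_L^\dagger)\simeq\operatorname{Sh}(\operatorname{Rig}^\dagger_{L,\eh})$ both éh-sheaves correspond to the complex $\Omega^\bullet_{-/L}$, so they agree. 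Independence of the chosen hypercover is automatic from the éh-sheaf property of each side.
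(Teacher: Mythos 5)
Your proposal diverges from the paper's argument in a way that creates a genuine gap. The paper never needs $\operatorname{R\Gamma_{dR}^{GK}}$ to be an \'eh-sheaf: it first constructs an explicit, globally defined comparison map
$$\operatorname{R\Gamma_{dR}^{GK}}(X)=\rg(X,\phi^{-1}\Omega^{\bullet}_{Y/L})\to \rg(X,\Omega^{\bullet}_{X/L})\to \dr(X),$$
the second arrow coming from the counit $\Omega^{\bullet}_{/L}\to R\pi_*\Omega^{\bullet}_{\eh}$, and then proves this fixed map is an isomorphism by induction on dimension, using only three inputs available on \emph{both} sides: Mayer--Vietoris for closed covers by irreducible components (\cite[Proposition 2.1]{grosse2004derham} and Proposition \ref{mvmain}), Mayer--Vietoris for the blow-up squares produced by resolution of singularities (\cite[Proposition 2.2]{grosse2004derham} and Proposition \ref{mvmain}), and agreement with naive de Rham cohomology on smooth varieties. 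Naturality is free because the map is built from canonical maps of complexes of sheaves. Your route instead identifies the two functors through the equivalence of topoi attached to the basis $\operatorname{Sm}^{\dagger}_L\subset\operatorname{Rig}^{\dagger}_{L,\eh}$, and for that you must know that $\operatorname{R\Gamma_{dR}^{GK}}$ is itself an \'eh-(hyper)sheaf, i.e.\ satisfies descent for \emph{all} the generators of the \'eh-topology, not only for blow-up squares.

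This is where two of your justifications fail. First, \'etale descent is \emph{not} ``built into the analytic sheafification'': $\mathscr{A}_{\operatorname{dR,GK}}$ is sheafified only for the analytic topology (open covers), which is strictly coarser than the \'etale topology, and descent of Grosse-Kl\"onne's cohomology of a \emph{singular} dagger variety along, say, a finite \'etale cover is a nontrivial statement proved neither in \cite{grosse2004derham} nor in this paper. Second, invariance under universal homeomorphisms does not follow from dependence on the reduced structure: a universal homeomorphism such as the normalization of a cuspidal curve has reduced source and target and is not an isomorphism, so ``reducedness-invariance'' says nothing about it. (Even granting both, you would still need a cd-structure/completeness argument to pass from acyclicity on generating squares to hyperdescent along your chosen smooth hypercover, and to get independence of the hypercover.) The force of the paper's construct-the-map-first strategy is precisely that it makes all of these descent questions for the Grosse-Kl\"onne side unnecessary; your proposal, as written, stands or falls on them, and they are unproven. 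The remaining ingredients of your proposal (identification on smooth objects via the identity embedding and Theorem \ref{ehdescentrigid}, and blow-up acyclicity from \cite{grosse2004derham}) are correct and coincide with what the paper uses.
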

				
				\begin{proof}
					The question is local, so we can assume $X \in \operatorname{EmbRig}_{L}^{\dagger}$ and $\phi:X \hookrightarrow Y$ be a closed immersion into a quasi compact smooth dagger variety $Y$.
					
					The map can be constructed as the composition of
					$$\operatorname{R\Gamma_{dR}^{GK}}(X)=\rg(X,\phi^{-1}(\Omega_{Y/L}^{\bullet})) \xrightarrow{} \rg(X,\Omega_{X/L}^{\bullet}) \xrightarrow{} \dr(X).$$
					The first map is canonical. For the second one, denote by $\pi: \operatorname{Rig}_{L,\eh}^\dagger \xrightarrow{} \operatorname{Rig}_{L,\an}^\dagger$ the natural map between big sites. Recall that the sheaf $\Omega^i_{\eh}$ is the $\eh$-sheafification of the continuous differential, i.e. $\Omega^i_{\eh}=\pi^{-1}\Omega^i_{/K}.$ We can construct the second map by the counit:
					$$\Omega^{\bullet}_{/K} \xrightarrow{} R\pi_*\pi^{-1}\Omega^{\bullet}_{/K}=R\pi_*\Omega^{\bullet}_{\eh}.$$
					
					We proceed by induction on the dimension of $X$. By \cite[Proposition 2.1]{grosse2004derham} and Proposition \ref{mvmain}, both sides satisfy Mayer-Vietoris properties for a closed cover. Hence by induction on the number of irreducible components we can reduce to the case that $X$ is irreducible. If $X$ is smooth, by \cite[Proposition 1.8]{grosse2004derham} and Theorem \ref{ehdescentrigid} both sides agree with the usual de Rham cohomology defined by continuous differentials, and the map defined above is clearly an isomorphism. In general, we perform a resolution of singularities: use \cite[Proposition 2.2]{grosse2004derham} and Proposition \ref{mvmain} to reduce to the smooth case.
				\end{proof}
				
				The proof of the above theorem relies on the embedded desingularization for dagger varieties, which is an easy consequence of \cite{temkin2018embeddedresolution}.
				
				\begin{thm}[Embedded desingularization]
					Let $X$ be a quasi-compact smooth dagger variety over $L$ and $Y \hookrightarrow X$ be a closed immersion. Then there exist a finite sequence of blowups $X'=X_n \xrightarrow{} X_{n-1} \xrightarrow{} \cdots \xrightarrow{} X_0=X$ such that the strict transform of $Y$ is smooth, and each center of blowups is smooth and contained in the preimage of $Y$. 
				\end{thm}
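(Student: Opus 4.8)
The plan is to deduce this from Temkin's functorial embedded resolution \cite{temkin2018embeddedresolution}, whose essential feature is functoriality with respect to regular morphisms of rigid analytic spaces over a characteristic-zero field. A quasi-compact dagger variety is built from an inverse system of rigid analytic varieties, and functoriality is exactly what lets us apply Temkin's resolution levelwise and then pass to the overconvergent limit, so that the resolution respects the dagger structure.

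First, since the statement is local and $X$ is quasi-compact, I would reduce to the case where $X$ is a smooth dagger affinoid and choose a presentation $\{X_h\}_{h\geq 1}$ of $X$ as in \cite{vezzani2018monskywashnitzer}, with $X_1 \Supset X_2 \Supset \cdots$ smooth rigid analytic affinoids and $\bigcap_h X_h = X$. Exactly as in the proof of Proposition \ref{mvmain}, after shrinking $X_1$ the closed immersion $Y \hookrightarrow X$ extends to a compatible system of closed immersions $Y_h \hookrightarrow X_h$ satisfying $Y_h = Y_1 \times_{X_1} X_h$ for all $h$.

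Next, I would apply Temkin's functorial embedded resolution to the pair $(X_1, Y_1)$, producing a finite sequence of blowups $X_{1,n} \to \cdots \to X_{1,0} = X_1$ with smooth centers contained in the preimages of $Y_1$, after which the strict transform of $Y_1$ is smooth. The open immersions $X_{h+1} \hookrightarrow X_h$ are regular (indeed \'etale), so functoriality of the resolution yields levelwise identifications $X_{h,\bullet} = X_{1,\bullet} \times_{X_1} X_h$: the centers and the blowups over $X_h$ are the restrictions of those over $X_1$. Passing to the dagger germ $X = \bigcap_h X_h$, these assemble into a sequence of blowups $X' = X_n \to \cdots \to X_0 = X$ of smooth dagger varieties with smooth centers contained in the preimages of $Y$ and with smooth strict transform of $Y$, as required.

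The main obstacle is verifying that the levelwise resolutions are genuinely compatible with the overconvergent structure, namely that the coherent ideals defining the centers descend through the presentation to overconvergent ideals on $X$ and that forming the blowup commutes with the restrictions $X_{h+1} \hookrightarrow X_h$. Both points follow from the functoriality of Temkin's construction for regular morphisms together with the compatibility of blowing up with flat base change; this compatibility is precisely what makes the deduction routine rather than automatic, and is the only thing that needs to be checked carefully.
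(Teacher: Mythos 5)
Your proposal is essentially workable, but it takes a genuinely different route from the paper. The paper never passes through the completed rigid presentation: its proof consists of showing that dagger (Washnitzer) algebras are \emph{excellent} rings---using regularity and equidimensionality of $W_n$ from \cite[Proposition 1.5]{gross2000overconvergent} together with the criterion of \cite[Theorem 102]{matsumura1970commutative}---and then running the general scheme-to-analytic transfer argument of \cite[Sec.~5]{temkin2012resolution} directly in the dagger category to obtain the analogue of \cite[Theorem 1.1.9]{temkin2018embeddedresolution}. You instead black-box the rigid-analytic case of Temkin's embedded resolution and transfer it to dagger varieties through a presentation $\{X_h\}$ as in \cite{vezzani2018monskywashnitzer}, using functoriality for the inclusions $X_{h+1}\hookrightarrow X_h$ and compatibility of blowups with flat base change; this is consistent with how the paper itself manipulates dagger blowups in the proof of Proposition \ref{mvmain}, and it buys you independence from excellence of dagger algebras (you only need excellence of ordinary affinoid algebras, which the rigid-analytic statement already presupposes). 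What it costs is a globalization step that you elide: your opening reduction, ``the statement is local, so assume $X$ is dagger affinoid,'' is not automatic for an existence statement about a global tower of blowups. A quasi-compact dagger variety is glued from finitely many dagger affinoid charts, and the chart-by-chart towers you construct live over rigid thickenings $U^{(\alpha)}_1$ that do not sit inside a common ambient rigid space; to glue them you must know that your transferred resolution is functorial for \emph{dagger} open immersions, which requires realizing those as germs of rigid open immersions between cofinal members of the presentations and then invoking Temkin's functoriality once more. This is the same kind of argument as the levelwise compatibility you do flag, so it is a patchable omission rather than a fatal one, but it should be made explicit---and it is precisely the point where the paper's excellence-based approach, which lets Temkin's machinery handle all gluing in one stroke, is cleaner.
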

				
				\begin{proof}
					Any dagger $L$-algebra is an excellent ring: It suffices to show that the Washnitzer algebra $W_n$ is excellent. By \cite[Proposition 1.5]{gross2000overconvergent}, the Washnitzer algebra $W_n$ is regular and equidimensional of dimension $n$, then it is clear that $W_n$ satisfies the conditions of \cite[Theorem 102]{matsumura1970commutative}. Then the same argument as in \cite[Sec. 5]{temkin2012resolution} gives the analogy result of \cite[Theorem 1.1.9]{temkin2018embeddedresolution}.
				\end{proof}
				
				It's worth to mention that we have therefore the finiteness results in our settings.
				
				\begin{prop} \label{finitenesshk}
					Let $X$ be a quasi-compact smooth dagger variety over $L=K$ or $C$, $U \subset X$ a quasi-compact open subset, $Z \hookrightarrow X$ a closed immersion, $T=X-(U\cup Z)$.  Let $i\ge 0$. Then the condensed cohomology group $H_{\operatorname{HK}}^i(T)$ (resp. $H_{\operatorname{dR}}^i(T)$) is a finite-dimensional condensed vector space over $L_F=F$ or $F^{\operatorname{nr}}$ (resp. over $L$). 
				\end{prop}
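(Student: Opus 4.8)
The plan is to reduce the assertion to the finiteness of overconvergent de Rham cohomology of smooth dagger varieties, which is Grosse-Kl\"onne's theorem, and to pass between the de Rham and Hyodo-Kato versions by the Hyodo-Kato isomorphism together with Galois descent.

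First I would reduce the Hyodo-Kato statement to the de Rham one. Over $L=C$, the Hyodo-Kato isomorphism (Proposition \ref{hkisomorphism}) gives $\rg_{\operatorname{HK}}(T)\otimes^{L_\blacksquare}_{F^{\operatorname{nr}}}C\simeq\dr(T)$, whence $H^i_{\operatorname{HK}}(T)\otimes_{F^{\operatorname{nr}}}C\simeq H^i_{\operatorname{dR}}(T)$; since $C$ is faithfully flat over $F^{\operatorname{nr}}$, finiteness of $H^i_{\operatorname{dR}}(T)$ over $C$ forces finiteness of $H^i_{\operatorname{HK}}(T)$ over $F^{\operatorname{nr}}$, of the same dimension. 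Over $L=K$ I would then invoke Galois descent (Proposition \ref{hkgaloisdescent}), $H^i_{\operatorname{HK}}(T)\simeq H^i_{\operatorname{HK}}(T_C)^{G_K}$: the right-hand side is the space of $G_K$-invariants of a finite-dimensional $F^{\operatorname{nr}}$-vector space with its semilinear Galois action, hence finite over $F$. Thus everything comes down to proving that $H^i_{\operatorname{dR}}(T)$ is finite-dimensional over $L$, where Grosse-Kl\"onne's de Rham cohomology is available over any base.

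For the de Rham finiteness I would argue by d\'evissage on $\dim X$, working throughout with Grosse-Kl\"onne's model of de Rham cohomology via Theorem \ref{drgk}; this model depends only on the reduced structure and satisfies the Mayer-Vietoris property for abstract blow-up squares (Proposition \ref{mvmain}). Writing $T=(X-U)-Z$, I would first apply embedded desingularization together with Mayer-Vietoris to reduce the closed locus $Z$ to a strict normal crossing divisor (or a smooth subvariety), and then treat the two removals separately. The removal of the closed subvariety $Z$ is governed by the Gysin sequence (Proposition \ref{gysin}) for smooth, resp. normal crossing, centres, which expresses the de Rham cohomology of the complement through that of the ambient smooth variety and of the strata of strictly smaller dimension, to which the inductive hypothesis applies; note this already produces non-quasi-compact spaces (the punctured disc being the simplest instance), yet keeps finiteness. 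The removal of the quasi-compact open $U$ I would route through compactly supported cohomology: by the cosheaf excision for $\drc$ (Propositions \ref{open-closed}, \ref{properopenclosed} and \ref{mvc}) the compactly supported cohomology of the complement of $U$ is the cofibre of a map between the compactly supported cohomologies of quasi-compact smooth varieties, and Poincar\'e duality (Theorem \ref{logpoincaredr}) identifies finiteness of $\dr$ with finiteness of $\drc$ on the smooth pieces. The base case $\dim X=0$ is trivial, and the building block at each stage is the finiteness of $\dr$ of a quasi-compact or partially proper smooth dagger variety, which is Grosse-Kl\"onne's finiteness theorem \cite{grosse2004derham}, transported to the overconvergent setting by Theorem \ref{drgk}.

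The main obstacle is that $T$ is in general neither quasi-compact nor smooth, so Grosse-Kl\"onne's finiteness cannot be applied to it directly; the content of the proof is precisely the d\'evissage that reduces it to spaces where finiteness is known. The genuinely delicate point is the removal of the open $U$: for ordinary (non-compactly-supported) de Rham cohomology there is no Gysin triangle for an open immersion, so one is forced to pass to compactly supported cohomology and invoke Poincar\'e duality, and this in turn requires the complementary locus to be smooth or normal crossing---which is exactly why the embedded resolution step must be carried out first. Organising the two removals so that each reduction lands on a quasi-compact smooth, partially proper smooth, or strictly lower-dimensional piece, while remaining compatible with the Hyodo-Kato structure, is where the care is needed.
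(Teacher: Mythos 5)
Your outer layer---reducing the Hyodo--Kato statement to the de Rham one via the Hyodo--Kato isomorphism (Proposition \ref{hkisomorphism}) over $C$ and Galois descent (Proposition \ref{hkgaloisdescent}) over $K$---is exactly what the paper does, and your treatment of the Zariski-closed locus $Z$ (embedded resolution, Mayer--Vietoris, Gysin) is sound for de Rham cohomology, where Proposition \ref{gysin} is available from \cite[Proposition 2.5]{grosse2004derham} without circularity. The genuine gap is in the removal of the quasi-compact open $U$, and it is fatal for two reasons. First, none of the results you invoke produces the excision triangle $\drc(U)\to\drc(X)\to\drc(X-U)$: Propositions \ref{open-closed} and \ref{properopenclosed} require the closed part of the decomposition to be a Zariski-closed smooth or normal crossings subvariety (or the ambient space to be proper), and Proposition \ref{mvc} concerns abstract blow-up squares; the complement $X-U$ of a quasi-compact open is not Zariski closed but a non-quasi-compact ``semi-open'' dagger variety. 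Second, and decisively, even if you had finiteness of $\drc(X-U)$, you could not convert it into finiteness of $\dr(X-U)$: Theorem \ref{logpoincaredr} applies only to partially proper rigid analytic varieties and to quasi-compact dagger varieties, and $X-U$ is neither; in fact duality fails outright for such spaces. Take $X$ the closed unit disk with its dagger structure and $U=\{|x|\le 1/p\}$, so $X-U=\{1/p<|x|\le 1\}$. The annuli $W_s=\{s\le |x|\le 1\}$, $s>1/p$, are cofinal among quasi-compact open subsets of $X-U$, and each restriction $\dr(X-U)\to\dr((X-U)-W_s)$ is a quasi-isomorphism (both sides have $H^0=L$ and $H^1=L\cdot dx/x$), so $\drc(X-U)=0$ while $\dr(X-U)\neq 0$. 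Hence finiteness of the compactly supported cohomology of $X-U$ carries no information about $\dr(X-U)$, and no d\'evissage through $\drc$ and Poincar\'e duality can supply this step.

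What your argument is missing is precisely the hard input that the paper takes as a black box: Grosse-Kl\"onne's finiteness theorems, namely \cite[Theorem 3.6]{grosse2004derham} for $X-U$ with $U$ a finite union of rational subdomains of a smooth dagger affinoid, and \cite[Corollary 3.5]{grosse2004derham} for $X-Z$, which are proved by overconvergence-specific arguments rather than by duality. The paper's proof consists of: reducing, as Grosse-Kl\"onne does, to these two model cases; quoting those theorems; upgrading to the condensed framework by writing $\dr(X-U)$ as a filtered colimit of de Rham complexes of Stein spaces with finite-dimensional cohomology (and invoking \cite{xsperiod} for the $X-Z$ case); and then running your outer layer (Hyodo--Kato isomorphism over $C$, Galois descent over $K$). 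So keep your reduction to de Rham and your handling of $Z$, but the open removal has to be imported from \cite{grosse2004derham}; it cannot be reconstructed from the paper's open-closed sequences and Poincar\'e duality.
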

				
				\begin{proof}
					When $X$ is defined over $C$, the proposition follows from the Hyodo-Kato isomorphism, together with \cite[Corollary 3.5]{grosse2004derham} and \cite[Theorem 3.6]{grosse2004derham}; it only remains to pass to the framework of condensed mathematics. We only need to do it for de Rham cohomology: in fact, similar to \cite{grosse2004derham}, it suffices to establish the proposition in two cases: for $X-U$ where $X$ is smooth dagger affinoid and $U$ is a rational subdomain of $X$, and for $X-Z$ where $X$ is smooth dagger affinoid and $Z$ is a strictly normal crossing divisor. The first case follows from \cite[Theorem 3.6]{grosse2004derham} and the fact that we can write $\dr(X-U)$ as filtered colimits of de Rham cohomology of Stein spaces (with finite de Rham cohomological groups), and the second case follows from \cite[Corollary 3.5]{grosse2004derham}, \cite[Theorem 2.8]{xsperiod} and \cite[Proposition 2.10]{xsperiod}. The result over $K$ then follows by Galois descent.
				\end{proof}

				\bibliography{ref}
				
				\bibliographystyle{halpha}	
				
			\end{document}